\numberwithin{equation}{section}
    \DeclareFontFamily{U}{wncy}{}
    \DeclareFontShape{U}{wncy}{m}{n}{<->wncyr10}{}
    \DeclareSymbolFont{mcy}{U}{wncy}{m}{n}
    \DeclareMathSymbol{\Sh}{\mathord}{mcy}{"58} 
\newcommand{\lthen}{\rightarrow}
\newcommand{\Index}[1]{\mathbb{#1}}
\newcommand{\Ideal}[1]{\mathcal{#1}}
\newcommand{\Bool}[2]{\mathcal{P}(\Index{#1})/\Ideal{#2} }
\newcommand\vartextvisiblespace[1][.5em]{%
  \makebox[#1]{%
    \kern.07em
    \vrule height.3ex
    \hrulefill
    \vrule height.3ex
    \kern.07em
  }% <-- don't forget this one!
}
\DeclareMathOperator{\MA}{MA}
\DeclareMathOperator{\OCAT}{OCA_T}
\DeclareMathOperator{\SL}{SL}
\DeclareMathOperator{\PSL}{PSL}
\DeclareMathOperator{\diag}{diag}
\DeclareMathOperator{\Aut}{Aut}
\DeclareMathOperator{\tp}{tp} 
\DeclareMathOperator{\qftp}{qftp} 
\DeclareMathOperator{\Fin}{Fin}
\DeclareMathOperator{\alg}{alg}
\DeclareMathOperator{\dom}{dom}
\newcounter{my_enumerate_counter}
\DeclareMathOperator{\Th}{Th}
\newcommand{\cP}{\mathcal P}
\newcommand{\cN}{\mathcal N}
\newcommand{\Sym}{\mathfrak S}
\newtheorem{thm}{Theorem}
\newtheorem{coro}[thm]{Corollary}
\newcommand{\bbN}{{\mathbb N}}
\newcommand{\bbJ}{{\mathbb J}}
\newcommand{\bbI}{{\mathbb I}}
\newcommand{\cA}{{\mathcal A}}
\newcommand{\calL}{{\mathcal L}}
\newcommand{\cU}{{\mathcal U}}
\DeclareMathOperator{\Id}{Id}
\newcommand{\cstar}{$\mathrm{C}^*$}
\DeclareMathOperator{\eq}{eq}
\newcommand{\bbP}{\mathbb P}
\DeclareMathOperator{\supp}{supp}
\newcommand{\bbZ}{{\mathbb Z}}
\newcommand{\bbT}{\mathbb T} 
\newcommand{\bbQ}{\mathbb Q}
\newcommand{\bbR}{\mathbb R}
\newcommand{\cJ}{{\mathcal J}}
\newcommand{\cI}{{\mathcal I}}
\newcommand{\calH}{{\mathcal H}}
\newcommand{\fC}{\mathfrak C}
\newcommand{\fD}{\mathfrak D}
\newcommand{\rs}{\restriction}
\newcommand{\cC}{\mathcal C}
\newcommand{\cF}{\mathcal F}
\newcommand{\cG}{\mathcal G}
\newcommand{\cB}{\mathcal B}
\newcommand{\cK}{\mathcal K}
\newcommand{\cM}{\mathcal M}
\newcommand{\cPN}{\cP(\bbN)}
\newcommand{\cPNF}{\cP(\bbN)/\Fin}
\let\strokeL\L
\DeclareRobustCommand{\L}{\ifmmode\mathbf{L}\else\strokeL\fi}
\renewcommand{\phi}{\varphi}
\DeclareMathOperator{\GL}{GL} 
\newcommand{\customlabel}[2]{%
	\protected@write \@auxout {}{\string \newlabel {#1}{{#2}{\thepage}{}{}{}}}}
\newtheorem{theorem}{Theorem}[section]
\newtheorem*{theorem*}{Theorem}
\newtheorem{proposition}[theorem]{Proposition}
\newtheorem{problem}[theorem]{Problem}
\newtheorem*{proposition*}{Proposition}
\newtheorem{lemma}[theorem]{Lemma}
\newtheorem*{lemma*}{Lemma}
\newtheorem{corollary}[theorem]{Corollary}
\newtheorem*{corollary*}{Corollary}
\newtheorem{fact}[theorem]{Fact}
\newtheorem*{fact*}{Fact}
\theoremstyle{definition}
\newtheorem{definition}[theorem]{Definition}
\newtheorem*{definition*}{Definition}
\newtheorem{claim}[theorem]{Claim}
\newtheorem*{claim*}{Claim}
\newtheorem*{conjecture*}{Conjecture}
\newtheorem{question}[theorem]{Question}
\theoremstyle{remark}
\newtheorem{example}[theorem]{Example}
\newtheorem*{example*}{Example}
\newtheorem{remark}[theorem]{Remark}
\newtheorem*{remark*}{Remark}
\newtheorem*{note*}{Note}
\newtheorem*{question*}{Question}
\newtheoremstyle{TheoremNum}
        {\topsep}{\topsep}              %%% space between body and thm
        {\itshape}                      %%% Thm body font
        {}                              %%% Indent amount (empty = no indent)
        {\bfseries}                     %%% Thm head font
        {.}                             %%% Punctuation after thm head
        { }                             %%% Space after thm head
        {\thmname{#1}\thmnote{ \bfseries #3}}%%% Thm head spec
\theoremstyle{TheoremNum}
\DeclareMathOperator{\head}{head}
\DeclareMathOperator{\tail}{tail}
\colorlet{KyleColor}{yellow}
\colorlet{PierreColor}{cyan}
\colorlet{IlijasColor}{red}
\DeclareMathOperator{\cw}{cw}
\author[Farah, I.]{Ilijas Farah}
\address{Department of Mathematics and Statistics\\
	York University\\
	4700 Keele Street\\
	North York, Ontario\\ Canada, M3J
	1P3 and Matemati\v{c}ki Institut SANU, Kneza Mihaila 36, Belgrade 11001, Serbia}
\urladdr{https://ifarah.mathstats.yorku.ca}
\email{ifarah@yorku.ca}
\author[Gannon, K.]{Kyle Gannon}
\address{ Beijing International Center for Mathematical Research (BICMR) \\ Peking University \\ Beijing, China.}
\urladdr{http://faculty.bicmr.pku.edu.cn/~kyle/}
\email{kgannon@bicmr.pku.edu.cn}
\author[Touchard, P.]{Pierre Touchard}
\address{Institut für Algebra, Technische Universität Dresden, 01062 Dresden, Germany}
\email{pierre.touchard@tu-dresden.de}
\title[Coordinate recognition]{Coordinate recognition: General theory, Groups, and other surprises}
\date{\today}
\subjclass{03C50, % Models with special properties (saturated, rigid, etc.)
	03C20, % Ultraproducts and related constructions
	03E35, % Consistency and independence results
	03C10, % Quantifier elimination, model completeness, and related topics}
	03E65, % Other set-theoretic hypotheses and axioms
	03E75.% Applications of set theory
}
\pgfplotsset{compat=1.18}
\begin{document}
\begin{abstract}
A class of structures \emph{recognizes coordinates} if any reduced product of structures from said class witnesses a certain kind of rigidity phenomenon. We provide several equivalent characterizations of this property. This property has (at least) two remarkable consequences, one set-theoretic and one model-theoretic, for reduced products of structures of the said class. First, under appropriate set-theoretic assumptions every isomorphism between such reduced products associated with the Fr\'echet ideal lifts (modulo a finite change) to an isomorphism between products of the original structures. Second, with an additional mild assumption, it implies a strong quantifier elimination result.  Of note, we show that a class recognizes coordinates if and only if an individual formula witnesses a certain syntactic property. We also consider many concrete classes of structures and determine whether or not they recognize coordinates. We place heavy emphasis on well-known classes of groups, such as permutation groups, acylindircally hyperbolic groups, quasisimple groups, free products, and graph products,  but we also discuss other classes of structures.  
\end{abstract}
	\maketitle
	\section{Introduction}
	This work is motivated by problems in both set theory and model theory. With regard to set theory, the central motivation arises from problems involving rigidity of quotient structures. 
	 In 1979, Shelah described a forcing extension of the universe where all automorphisms of the Boolean algebra $\cP(\bbN)/\Fin$ are induced by a bijection $f_\Phi$ between two cofinite subsets of~$\bbN$ (\cite{Sh:Proper}). Another way to state Shelah's result is that there is a Boolean algebra endomorphism $\Phi_*$ of $\cPN$ such that the following diagram  commutes (vertical arrows correspond to quotient maps). 
	 	 \begin{figure}[h]
	 	\begin{tikzpicture}
	 		\matrix[row sep=1cm,column sep=1.5cm] 
	 		{
	 			& & \node (M1) {$\cPN$}; & \node (M2) {$\cPN$};&
	 			\\
	 			& & \node (Q1) {$\cPN/\Fin$}; & \node (Q2) {$\cPN/\Fin$} ;
	 			\\
	 		};
	 		\draw (M1) edge [->] node [above] {$\Phi_*$} (M2);
	 		\draw (Q1) edge [->] node [above] {$\Phi$} (Q2);
	 		\draw (M1) edge [->]   (Q1);
	 		\draw (M2) edge [->] (Q2);
	 	\end{tikzpicture}
	 \end{figure}
	 
	 In other words, in Shelah's model  every automorphism\footnote{Whenever the category of morphisms is not mentioned explicitly, it is understood that it is the category at hand.} $\Phi$ of $\cPNF$ can be lifted by an endomorphism $\Phi_*$ of $\cPN$. 
	 	 Such automorphisms are called \emph{trivial}. By a 1956 result of W. Rudin, the Continuum Hypothesis implies that there are $2^{2^{\aleph_0}}$ nontrivial automorphisms of $\cPNF$. This is an immediate consequence of the fact that $\cPNF$ is $\aleph_1$-saturated, a notion not yet isolated at the time of Rudin's work. 

Shelah's conclusion held in a very specific forcing extension of the universe (Shelah's oracle-cc forcing notion was invented for the purpose of finding this extension!), but  it was soon proved to follow from forcing axioms\footnote{We refer the reader to \cite{moore2010proper} for background on forcing axioms.} (\cite{ShSte:PFA, Ve:OCA}).
Analogous lifting results from forcing axioms were proven for quotient algebras of the form $\cPN/\cI$ for numerous Borel ideals $\cI$ on $\bbN$ ($\cPN$ is given the Cantor set topology). These early results are summarized in  \cite{Fa:Rigidity,farah2022corona}. 
	
	New impetus to the study of set-theoretic rigidity theory was given by the solution to a prominent 1977 problem in the theory of operator  algebras (\cite{BrDoFi:Unitary}) asking whether the Calkin algebra has outer automorphisms. The Calkin algebra is the quotient of the algebra $\cB(H)$ of all bounded linear operators on the separable, infinite-dimensional, complex Hilbert space modulo the ideal of compact operators, and it is generally considered to be the noncommutative analog of $\cPNF$. The analogs of Rudin's and Shelah's results were proved in \cite{PhWe:Calkin} and \cite{Fa:All}, respectively, surprisingly showing that the answer to the Brown--Douglas--Fillmore question is independent from ZFC. (The fact that the proof that all automorphisms of the Calkin algebra are inner  owes a lot to the ideas presented in \cite{Fa:Rigidity} may be even more surprising.) A far-reaching generalization of~\cite{Fa:All}  showing rigidity of coronas of other separable \cstar-algebras was obtained in \cite{vignati2018rigidity}. 
	
	The algebra $\cPNF$ is isomorphic to the reduced product of two-element Boolean algebras modulo $\Fin$, and 
		it was generally believed by the experts that the chances of a sweeping extension of Shelah's rigidity result to reduced products of other structures were slim. However, in \cite{de2023trivial} it was shown that in categories of linear orders, trees, and sufficiently random graphs, forcing axioms imply that isomorphisms between reduced products are, with the appropriate definition, trivial. In  \cite{de2024saturation} it was proven that stable (in the model-theoretic sense) reduced products associated with $\Fin$ are automatically $2^{\aleph_0}$-saturated, giving the analog of Rudin's result. In the domain of continuous structures, the first results were on rigidity of coronas of \cstar-algebras (\cite{Fa:All}, \cite{Gha:FDD},  \cite{mckenney2020forcing}, \cite{vignati2022rigidity}; see \cite[\S 17]{Fa:STCstar} for an overview), and they have recently been extended to symmetric  groups with respect to the Hamming metric (\cite{de2024automorphism}). In the context  of metric structures the situation is made more complicated (or rather, more interesting) because all considerations necessarily involve Ulam stability of approximate homomorphisms (\cite{mckenney2018ulam}; see \cite{Ul:Problems}). 
			For the state of the art of rigidity of quotient structures, both discrete and metric, see \cite{farah2022corona} and in particular \S 5 for a discussion of the role of Ulam stability. 
	
	In the present paper we continue the investigation of set-theoretic rigidity. 
	 Fundamental questions from this perspective include the following: 
	\begin{enumerate}
		\item How does the isomorphism type of a reduced product $\prod_\cI\cM_{i}$ depend on the indexed family of structures $(\cM_i)_i$ and the ideal $\cI$?
		\item Can one isolate the \emph{right} notion of a trivial isomorphism between reduced products, and prove that forcing axioms imply all isomorphisms are trivial for appropriate classes of reduced products? 
	\end{enumerate}
	 Analogous rigidity questions for reduced products of groups were studied in \cite{KanRe:Ulam} and \cite{KanRe:New}. These results are concerned with rigidity of those isomorphisms between quotients that have Borel-measurable liftings (`topologically trivial' in the terminology of \cite{farah2022corona}) and are closely connected to the well-studied Ulam-stability of approximate homomorphisms (see \cite[\S 4]{farah2022corona}). This assumption is not necessary in our context as our results are applicable to arbitrary isomorphisms.  
	
    Both of the above questions admit partial, yet substantive answers when considering whether or not our reduced product is composed of structures from a class which \emph{recognizes coordinates}. The notion of a class of structures of the same language recognizing coordinates (Definition~\ref{D.recognizes-coordinates}) was isolated in  \cite{de2023trivial}. There, it was proved that if a class $\fC$ recognizes coordinates, then forcing axioms imply that isomorphisms between reduced products of structures from $\fC$ over the Fr\'echet ideal $\Fin$ are `trivial'. By trivial, we mean that every such isomorphism between reduced products can be lifted by a bijection $\pi$ between cofinite sets and bijections $f_i\colon \cN_i\to \cM_{\pi(i)}$ (\cite[Theorem~7]{de2023trivial}). If the language is finite, then in addition we may require that  each $f_i$ is an isomorphism. If the language is infinite, this is no longer true (see e.g., \cite[Example 2.3]{de2023trivial}). Instead, for every finite fragment of the language all but finitely many of the $f_i$ are isomorphisms. 
    
    The use of additional set-theoretic axioms is necessary because the Continuum Hypothesis implies that reduced products over $\Fin$ are saturated and so isomorphism between reduced products reduces to elementary equivalence. 
	 On the other hand, some classes of structures do not recognize coordinates. In \cite[Theorem~1]{de2024saturation}	 	
	 	 it was proved that if a reduced product  of countable structures over $\Fin$ has a stable theory, then it is saturated \emph{provably in ZFC}.  In particular, such a reduced product has $2^{2^{\aleph_0}}$ automorphisms and (since there are at most  $2^{\aleph_0}$ trivial automorphisms for any reasonable definition of `trivial') the underlying class of structures cannot recognize coordinates.  
	 
	 Consequently, whether a class of structures recognizes coordinates determines the rigidity behavior associated with reduced products from that particular class. We continue the study of this property and our results proceed in two directions: general theory and examples with an emphasis on groups. First,  we prove that recognizing coordinates is necessarily witnessed by a first-order condition. 
In the introduction to \cite[\S 2.2]{de2023trivial} it was pointed out that a satisfactory proof that a theory recognizes coordinates would proceed by proving the equivalence of \eqref{1.A} and \eqref{2.A} of Theorem~\ref{T.A} below. The equivalence of recognizing coordinates with \eqref{3.A}  provides an even more palpable (and necessary) criterion  for recognizing coordinates  (all natural classes of structures are full,  see Definition~\ref{Def.Full};  for the definition and relevance of $h$-formulas see~\S\ref{S.h-formulas}). 
			 
			 \begin{thm} \label{T.A} 
             %For a reduced product  $\cM:=\prod_\cI \cM_i$ the following are equivalent.
			 %	\begin{enumerate}
			% 		\item $\cM$ recognizes coordinates. 
			%		\item \label{2.A} Both the Boolean algebra $\cP(\Index I)/\cI$  and the system of  quotient structures $\cM\rs S$ and quotient maps $\pi_S$ (parametrized by $S\in \cP(\Index I)/\cI$) are interpretable in $\cM$.
             %   \end{enumerate}
             %   If $\Ideal{I}$ is the Fréchet Ideal $\Fin$, then these conditions are equivalent to:
             %   \begin{enumerate}
             %       \setcounter{enumi}{2}
             %       \item  \label{3.A} The formula $x=x' \rightarrow y=y'$ is equivalent to an $h$-formula in the theory of $\{\cM_i\mid i\in \Index I\setminus F\}$ for some $F\in \cI$. 
			 %	\end{enumerate}  

                 	For a full class $\fC$ of $\mathcal{L}$-structures, the following are equivalent. 
    	\begin{enumerate}
    		\item \label{1.A} $\fC$ recognizes coordinates.
    		\item \label{2.A} For every index set $\Index I$ and every ideal $\cI$ on $\Index I$, a reduced product $\cM:=\prod_{\Ideal I} \cM_i$ of structures from $\fC$ (uniformly) interprets the Boolean algebra $\cP(\Index I)/\cI$, the system of quotient structures $\cM\rs S$, and the quotient maps $\pi_S$ (parametrized by $S\in \cP(\Index I)/\cI$). 
    		\item \label{3.A} The formula $x=x' \rightarrow y=y'$ is equivalent to an $h$-formula in the common theory of structures from $\fC$, denoted $\Th(\fC)$.
    	\end{enumerate}
			 \end{thm}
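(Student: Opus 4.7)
I would establish the cycle \eqref{3.A} $\Rightarrow$ \eqref{2.A} $\Rightarrow$ \eqref{1.A} $\Rightarrow$ \eqref{3.A}. The implications from syntactic witness to interpretation to semantic rigidity should be largely constructive, while the return passage from recognizing coordinates to the existence of a single first-order witness is where the real work lies.

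For \eqref{3.A} $\Rightarrow$ \eqref{2.A}, I would use the $h$-formula $\psi(x,x',y,y')$ equivalent to $x=x' \rightarrow y=y'$ as the building block of the interpretation. The defining feature of $h$-formulas (as developed in \S\ref{S.h-formulas}) is that their truth values in a reduced product are controlled coordinate-wise by the underlying ideal, which is exactly what is needed to extract the index-set combinatorics from $\cM := \prod_\cI \cM_i$. Concretely, $\psi$ should let one define in $\cM$ the ``support'' relation on pairs of elements, and from this define equality of supports modulo $\cI$; these relations assemble into an interpretation of $\cP(\Index I)/\cI$, and by localizing the $h$-formula to a fixed $S \in \cP(\Index I)/\cI$ one recovers the quotient structure $\cM \rs S$ together with the projection $\pi_S$. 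The uniformity of the interpretation is automatic because every ingredient comes from the same formula $\psi$.

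The implication \eqref{2.A} $\Rightarrow$ \eqref{1.A} is formal: any isomorphism between reduced products of structures from $\fC$ must respect every uniformly interpreted structure, and hence induces a Boolean algebra isomorphism between the associated quotients $\cP(\Index I)/\cI$ and $\cP(\Index J)/\cJ$ together with compatible isomorphisms between the fibre structures $\cM\rs S$ and $\cN \rs \bar\Phi(S)$ commuting with the $\pi_S$. This is exactly the rigidity packaged into the definition of recognizing coordinates (Definition~\ref{D.recognizes-coordinates}), so nothing substantial is lost in this step.

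The main obstacle is \eqref{1.A} $\Rightarrow$ \eqref{3.A}. I would attempt a contrapositive: if no $h$-formula is equivalent to $x = x' \rightarrow y = y'$ in $\Th(\fC)$, then by a compactness argument inside the Lindenbaum algebra of $h$-formulas one should produce a structure $\cM^* \in \Mod(\Th(\fC))$ (and, via fullness, a family of structures from $\fC$ itself) in which any candidate $h$-formula fails to detect the equality of second coordinates from the equality of first coordinates. The aim would then be to promote this failure to a bona fide non-rigid isomorphism between two reduced products, contradicting that $\fC$ recognizes coordinates. The technical heart — and the step I expect to be the hardest to get right — is identifying the correct model-theoretic consequence of the negation of \eqref{3.A}, most plausibly some amalgamation or back-and-forth phenomenon for $h$-definable sets, and then running that phenomenon over a cofinite set of coordinates to build an isomorphism that provably cannot be of coordinate type. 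Fullness of $\fC$ and countable degree-1 saturation of the reduced product should be the two levers that make this construction feasible.
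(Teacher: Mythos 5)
Your sketches of (3) $\Rightarrow$ (2) and (2) $\Rightarrow$ (1) are essentially sound and match the paper's route: from the $h$-formula you define the relative support ordering $\supp_=(a,a')\subseteq\supp_=(b,b')$ (Lemma~\ref{lemma:RecognizingCoordinateEquivalence}), use Palyutin's \L o\'s-type transfer (Theorem~\ref{T.Palyutin}) together with Lemma~\ref{lemma:PhiSupport} to interpret the $\phi$-support functions and hence the quotients $\cM\rs S$ and the $\pi_S$ (Lemma~\ref{Claim.RestrictionInterpretable}), and then (2) $\Rightarrow$ (1) is the formal observation that isomorphisms preserve interpreted structure (Lemmas~\ref{lemma:equiv} and~\ref{L.RecognizingCoordinates}). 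So far so good.

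The genuine gap is in (1) $\Rightarrow$ (3), and it is not just that the step is ``the hardest'' as you flag: the plan you propose does not work. First, a compactness argument over the Lindenbaum algebra of $h$-formulas does not yield a single model of $\Th(\fC)$ in which every candidate $h$-formula fails simultaneously; the negation of (3) only says that for each $h$-formula there is \emph{some} model where it is not equivalent to $x=x'\rightarrow y=y'$, and the relevant failure conditions are not finitely satisfiable together in any obvious way. Second, even granting such a model, fullness does not let you replace it by a reduced product of structures from $\fC$, and there is no clear mechanism by which ``any $h$-formula fails'' produces a non-coordinate-respecting isomorphism. The paper's actual argument is entirely different and considerably heavier: it passes through an intermediate condition (interpretability of $\supp_=$, Theorem~\ref{T.InterpretingSupport}), reduces to countable index sets (Lemma~\ref{L.eq.ctble}) via the Feferman--Vaught theorem, and then proves implicit definability of $\supp_=$ using Beth's definability theorem together with a Keisler--Shelah-style ultrapower argument: under CH, two candidate support functions $s_1,s_2$ on an elementary extension yield, via ultrapowers and $\aleph_1$-saturation, an isomorphism of reduced products that recognizing coordinates forces to respect supports, killing the ambiguity. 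Crucially, the CH hypothesis is then removed by a \emph{forcing absoluteness} argument (Lemma~\ref{L.absoluteness}, a L\'evy collapse that preserves the reduced product while restoring CH), which is necessary because there are models of ZFC in which the ultrapower trick simply fails. Finally, upgrading from first-order definability of $\supp_=$ to equivalence with an $h$-formula in $\Th(\fC)$ relies on the Palmgren/Palyutin structure theory for atomless reduced products and Omarov's theorem (Propositions~\ref{lemma:Palmgren} and~\ref{Prop:Omarov}) plus the realization lemma~\ref{lemma:TheoryClass} that some reduced power realizes $\Th(\fC)$ exactly. None of these ingredients — Beth definability, the ultrapower/CH argument, the forcing absoluteness lemma, or Omarov's theorem — appears in your proposal, and the proposal's appeal to ``countable degree-1 saturation'' is not the right tool (that notion does not enter the argument at all; what is used is ordinary $\aleph_1$-saturation of ultrapowers under CH). You should rework (1) $\Rightarrow$ (3) from scratch along these lines.
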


Theorem~\ref{T.A} is part of the more detailed Theorem~\ref{T.eq}. It implies that if a class $\fC$ of structures of the same language recognizes coordinates, then so does the class of all models of its theory (Corollary~\ref{C.Th(C)}). 

We also remark that recent connections with operator algebras have invigorated the interest in reduced products associated with the Fr\'echet filter. Such reduced powers are called (asymptotic) sequence algebras and the interplay between them, the original algebra, and its ultrapower, plays a very important role in classification theory (see e.g., \cite[\S 6]{white2023abstract}). These connections resulted in new results about such reduced products, such as splitting of  the exact sequence $0\to \prod_{\Fin} A\to \prod_{\cU} A \to 0$ and transfer of information between the reduced power and the ultrapower (see the introduction to \cite{farah2022between}.

Second, we provide many new classes of structures which recognize (or do not recognize) coordinates. Surprisingly,  very little structure suffices for recognizing coordinates. For instance, linear orders and sufficiently random graphs all recognize coordinates by  \cite[Proposition~2.7]{de2023trivial}. In this paper, we place particular emphasis on groups, as they provide a rich class of structures which witness versatile behaviors with respect to coordinate recognition. 	Additionally, there is a historical precedent of studying automorphism groups of products of groups.  Our research has applications to this line of research
(see \S\ref{S.Products} for details).

	The following is proved below as \Cref{T.RC}.

	\begin{thm} \label{T.RecognizesCoordinates}
		Each of the following classes of groups recognizes coordinates.
		\begin{enumerate}[label=(\alph*)]
			\item The class of all non-abelian simple groups.
		%	\item The group $\Sym_3$, i.e., the symmetric group on 3 elements. 
			\item The class of all $\Sym_n$, for all $n\geq 3$; i.e., the symmetric groups on finite sets of size greater than or equal to 3. 
			\item The class of all dihedral groups $D_{2n+1}$, for $n\geq 1$.
			
			\item The class of all groups $\SL(n,F)$, for all $n\geq 2$ and every field $F$ with more than four elements. 
			
			\item $\{G\}$ where $G$ is any nontrivial free product. 
			
			\item The class of all graph products $\Gamma\mathcal{G}$, where $\mathcal{G} = \{G_v\}_{v\in V(\Gamma)}$ is a family of groups indexed by the vertices of $\Gamma$, such that the complement graph $\bar\Gamma$ is connected and $|G_v|\geq 3$ for at least one vertex $v$ (see \S\ref{S.GraphProducts}).
		\end{enumerate}
	\end{thm}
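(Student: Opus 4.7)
The plan is to invoke Theorem~\ref{T.A}, reducing the task to verifying condition \eqref{3.A} for each of the six classes: the formula $x=x'\to y=y'$ must be equivalent to an $h$-formula in the common theory. Since (in a group) $x=x'$ iff $x(x')^{-1}=1$, the target $h$-formula will be a positive statement roughly asserting that $y(y')^{-1}$ is constructible from $x(x')^{-1}$ by operations that fix $1$ (conjugation, products, commutators).

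For the simple groups in (a), the defining feature is that the normal closure of any non-identity element is the whole group. Hence $y(y')^{-1}$ is always a product of conjugates of $x(x')^{-1}$ whenever $x\neq x'$, and is trivially $1$ when $x=x'$. This suggests an $h$-formula of the shape
\[
\exists g_1,\dots,g_n \ \; y(y')^{-1} = \prod_{i=1}^n g_i\, \bigl(x(x')^{-1}\bigr)^{\epsilon_i}\, g_i^{-1},
\]
once one handles uniformity of $n$ across the class (or folds it into a schema valid in $\Th(\fC)$). The cases (b) $\Sym_n$ with $n\geq 3$ and (d) $\SL(n,F)$ with $n\geq 2$, $|F|>4$ reduce to (a): the commutator subgroups $A_n$ (for $n\geq 5$) and the simple quotients $\PSL(n,F)$ carry the relevant normal-closure property, while the center and abelianization are first-order definable, so the formula above (possibly composed with projection onto the simple quotient) transports back. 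For the small exceptional cases ($\Sym_3$, $\Sym_4$, $\SL(2,F)$ with $|F|=5$) one does a direct analysis from their explicit presentations.

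For the non-simple classes (c), (e), (f) separate structural arguments are needed. For (c) dihedral groups $D_{2n+1}$ of odd order, the rotation subgroup is characteristic (it is the unique subgroup of index two, hence first-order definable), every element is a rotation or a reflection, and this dichotomy admits an $h$-formula detecting the required coordinate information. For (e) nontrivial free products $G_1 * G_2$, the normal form theorem and malnormality of cyclic subgroups generated by noncentral elements allow an encoding via existentially quantified conjugators that behave correctly coordinatewise. For (f) graph products, the connectivity of the complement graph $\bar\Gamma$ ensures enough failure of commutation to propagate an inequality across the coordinates, while the hypothesis $|G_v|\geq 3$ for some $v$ provides the necessary ``width'' so that the produced formula lies in the $h$-fragment rather than collapsing trivially.

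The main obstacle is expected to be classes (e) and (f), where the normal-closure argument is unavailable and one must build case-specific formulas that genuinely belong to the $h$-fragment; in particular, verifying that the combinatorial encoding in graph products really produces an $h$-formula (rather than only one equivalent to one over an auxiliary theory) will require care. A secondary subtlety in the simple case (a) is the passage from pointwise properties of individual simple groups (with potentially unbounded commutator widths) to a single formula of $\Th(\fC)$; Theorem~\ref{T.A} tells us that this passage is possible in principle, but realizing it concretely is where the real work lies.
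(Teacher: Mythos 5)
Your overall strategy --- reduce to condition \eqref{3.A} of Theorem~\ref{T.A} and exhibit an $h$-formula equivalent to $x=e\to y=e$ in $\Th(\fC)$ for each class --- matches the paper. But the specific $h$-formula you propose for the central case (a) does not work, and this is not the ``secondary subtlety'' you suggest: it is the crux of the matter. Your formula
\[
\exists g_1,\dots,g_n \;\; y(y')^{-1} = \prod_{i=1}^n g_i\bigl(x(x')^{-1}\bigr)^{\epsilon_i}g_i^{-1}
\]
requires a uniform bound $n$ on the number of conjugates needed, which must hold across the entire class. No such bound exists for the class of all simple groups (covering numbers of simple groups are unbounded, and the same unboundedness phenomenon drives Theorem~\ref{T.Limiting}), so no single first-order sentence of this existential form belongs to $\Th(\fC)$. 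There is no way to ``fold $n$ into a schema'': a schema is an infinite set of sentences, not a formula, and Theorem~\ref{T.A}\eqref{3.A} demands a single $h$-formula. The paper sidesteps this entirely by going \emph{universal} rather than existential: the $h$-formula used (in Theorem~\ref{T.RecognizingCoordinates}\eqref{2.RecognizingCoordinates}) is
\[
(\forall z)\bigl(\,y\in C(z^G)\ \rightarrow\ x\in C(z^G)\,\bigr),
\]
which is an $h$-formula because $\varphi(x,z):=(\forall t)(xtzt^{-1}=tzt^{-1}x)$ is atomic-shaped, $\varphi(x,e)$ always holds, and the construct $(\exists z)\varphi\wedge(\forall z)(\varphi\to\psi)$ stays in the $h$-fragment. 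This works because in every (nonabelian) simple group, $C_G(a^G)=\{e\}$ for $a\neq e$, with no quantitative uniformity needed. This centralizer-of-conjugacy-class condition is the organizing principle the paper reuses for parts (b), (c), (e) and (f), while only part (d) genuinely uses a commutator-width argument (and there Thompson's theorem supplies the bound $2$ uniformly over all $\SL(n,F)$). For (b), (c), (e), (f) you gesture at ``separate structural arguments'' without producing formulas; the paper handles all of them by establishing the same centralizer condition (or its order-$p$ variant, Theorem~\ref{T.RecognizingCoordinates}\eqref{1.RecognizingCoordinates}). So the missing idea is precisely the universal $h$-formula via centralizers of conjugacy classes; without it, your argument for (a) fails, and the arguments for (e), (f) are not carried out.
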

	
	 Together with the results of \cite{de2023trivial}, Theorem~\ref{T.RecognizesCoordinates} gives numerous corollaries; the following is a consequence of Theorem~\ref{T.Rigidity}. 
	
	\begin{coro} \label{C.Rigidity} Forcing axioms imply the following. 
		\begin{enumerate}
			\item Every automorphism of $\prod_{n\in \mathbb{N}} \Sym_n/\Fin$ lifts to an automorphism of $\prod_n \Sym_n$. 
			\item For every sequence without repetitions of the form $(m_i, F_i)$, for $i\in \bbN$ such that $m_i\in \bbN$ and $F_i$ is  a finite field, every automorphism of $\prod_{\Fin} \SL(m_i,F_i)$ lifts to an automorphism of $\prod_i \SL(m_i,F_i)$. 
		\end{enumerate}
	\end{coro}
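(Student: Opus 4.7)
The plan is to feed the two items of Theorem~\ref{T.RecognizesCoordinates} into the general rigidity scheme (Theorem~\ref{T.Rigidity}, which originates in \cite[Theorem~7]{de2023trivial}): under appropriate forcing axioms, every isomorphism between reduced products $\prod_{\Fin} \cM_i$ of structures from a class $\fC$ that recognizes coordinates is \emph{trivial}, i.e.\ induced on the quotient by a family of isomorphisms $f_i \colon \cM_i \to \cN_{\pi(i)}$ along a bijection $\pi$ between cofinite subsets of the index set. What remains is to promote such cofinitely-defined data to genuine automorphisms of the unreduced product, which is where the rigidity of the individual factors enters.

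For (1), apply Theorem~\ref{T.RecognizesCoordinates}(b) to the family $(\Sym_n)_{n\geq 3}$; the indices $n\in\{1,2\}$ are absorbed into the finite exceptional set with no loss since they contribute only finitely many factors. Any automorphism $\Phi$ of $\prod_n \Sym_n/\Fin$ is then trivial, given by a bijection $\pi\colon A\to A'$ between cofinite subsets of $\bbN$ and isomorphisms $f_i \colon \Sym_i \to \Sym_{\pi(i)}$ for $i\in A$. Since $|\Sym_i|=i!$, the mere existence of $f_i$ forces $\pi(i)=i$, so each $f_i$ belongs to $\Aut(\Sym_i)$. Choosing arbitrary automorphisms on the finite complement $\bbN\setminus A$ assembles these into an automorphism $\Phi_*$ of $\prod_n \Sym_n$ whose image in the quotient equals $\Phi$.

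For (2), apply Theorem~\ref{T.RecognizesCoordinates}(d), restricted if necessary to the cofinite set of indices $i$ with $|F_i|\geq 5$. The resulting trivial lift produces isomorphisms $f_i\colon \SL(m_i,F_i)\to \SL(m_{\pi(i)},F_{\pi(i)})$. The classical fact that an isomorphism $\SL(m,F)\cong \SL(m',F')$ between special linear groups over finite fields implies $m=m'$ and $|F|=|F'|$ (modulo at most finitely many small-parameter coincidences), combined with the no-repetition hypothesis on the sequence $(m_i,F_i)$, forces $\pi$ to be the identity on its cofinite domain. Each $f_i$ is then an automorphism of $\SL(m_i,F_i)$, and arbitrary extensions over the finite complement yield the required lift to $\prod_i \SL(m_i,F_i)$.

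The main technical obstacle is the passage from a trivially lifted automorphism of the quotient to an honest automorphism of the entire product, including bookkeeping for the finitely many exceptional indices (the small $n$ for symmetric groups; the sporadic low-parameter $\SL$'s). This is handled uniformly by observing that any choice of automorphisms on the finite complement produces a product map that agrees with the prescribed trivial lift modulo $\Fin$, and hence induces the original automorphism on the quotient; the non-recognition of coordinates by classes containing these exceptional factors is irrelevant once they have been segregated into a set of size less than $\aleph_0$.
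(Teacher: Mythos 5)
Your overall architecture --- feed the recognition-of-coordinates results into Theorem~\ref{T.Rigidity}, observe that pairwise non-isomorphism of the factors forces the almost permutation to be the identity, and then extend arbitrarily over the finite exceptional set --- is exactly what the paper intends, and your treatment of part (1) is correct.

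Part (2), however, has a genuine gap: you restrict to the set $\{i : |F_i|\geq 5\}$ and treat it as cofinite, but nothing in the hypotheses makes it so. The sequence $(m_i, F_i) = (i+3, \bbF_2)$, $i\in\bbN$, is without repetitions, has $|F_i|=2$ for every $i$, and makes your ``cofinite'' set empty; the same problem arises whenever infinitely many $m_i\geq 3$ are paired with small fields. The class you should invoke is not the $\SL(n,F)$--class of Theorem~\ref{T.RecognizesCoordinates}(d) (which requires $|F|\geq 5$) but the class of \emph{finite quasisimple groups}, Theorem~\ref{T.RC}\ref{4.2.C}. Every $\SL(m,F)$ with $F$ finite, $m\geq 2$, and $(m,F)\notin\{(2,\bbF_2),(2,\bbF_3)\}$ is quasisimple of commutator width $\leq 2$ --- in particular this covers $m\geq 3$ with $|F|\in\{2,3,4\}$, which your criterion wrongly discards --- so after dropping the trivial factors $\SL(1,F)$ (which contribute nothing to either the product or the quotient) and the at most two sporadic non-quasisimple pairs $(2,\bbF_2)$, $(2,\bbF_3)$, the remaining family lies in a class that recognizes coordinates, and the rest of your argument (no repetitions implies $\pi=\mathrm{id}$, extend over the finite complement) goes through verbatim.
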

	
One can also find an infinite $X\subseteq \bbN$ such that for every infinite subset $Y\subseteq X$ with $X\setminus Y$ infinite,  the assertion  $\prod_{n\in X}\Sym_n /\Fin \cong \prod_{n\in Y} \Sym_n/\Fin$  is independent of ZFC (this is a very special case of Corollary~\ref{C.Ghasemi}).  

%For the current state-of-the-art on corona rigidity  see \cite{farah2022corona}.

	 In the opposite direction, we also isolate classes of groups which do not recognize coordinates.
The following is Theorem~\ref{T.eDNR}. 

\begin{thm}\label{T.example:DNR} Any class of groups that contains any of the following does not recognize coordinates. 
	\begin{enumerate}[label=(\alph*)]
		\item \label{DNR.abelian} Any group that has  a nontrivial direct summand.
		\item \label{DNR.decomposable} Any group that admits a nontrivial homomorphism into its center. More generally, if the class contains two groups such that there exists a nontrivial homomorphism from one into the center of the other, then it does not recognize coordinates. \label{DNR.S3SL25} In particular, any class of groups that contains both $\Sym_3$ and $\SL(2,5)$ does not recognize coordinates. 
		\item \label{DNR.GLn} The group $\GL(n,F)$ for $n\geq 2$ and any field $F$.
		\item  \label{DNR.Q8} The group $Q_{8} = \langle -1,i,j,k: (-1)^{2} = e, i^{2} = j^{2} = k^{2} =ijk = -1  \rangle$. 
		\item \label{DNR.D2n} The dihedral group $D_{2n}$  of the $2n$-gon for $n\geq 1$. 
		\item \label{DNR.nilpotent} Any nilpotent group. 
		\item \label{DNR.Graph} Any nontrivial graph product $\Gamma\cG$ such that the complement graph $\bar\Gamma$ is not connected (see \S\ref{S.GraphProducts}). 
	\end{enumerate} 
\end{thm}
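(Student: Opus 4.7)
My plan is to use Theorem~\ref{T.A}, which reduces the task to showing that $x = x' \to y = y'$ is not equivalent to any $h$-formula in the common theory of the class. In every case I will supply a concrete algebraic witness that disproves such an equivalence. The master tool, handling case~\ref{DNR.decomposable}, is the following \emph{twisting automorphism}: given a nontrivial homomorphism $\phi: G \to Z(H)$ with $G, H \in \fC$, the map $\alpha_\phi(g, h) := (g, \phi(g) h)$ is an automorphism of $G \times H$ because $\phi(g)$ is central in $H$. Choosing $g_0 \in G$ with $\phi(g_0) \neq e_H$ and implementing $\alpha_\phi$ in a single coordinate of the reduced product $\prod_i (G \times H)/\cI$ yields two pairs $(x, y), (x', y')$ of the same $h$-type but with $x = x'$ and $y \neq y'$, contradicting the putative $h$-formula equivalent of $x = x' \to y = y'$. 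The case of a single group $G$ with a nontrivial endomorphism $\phi: G \to Z(G)$ is just the diagonal instance $H = G$.

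The remaining items reduce to case~\ref{DNR.decomposable} by exhibiting the required central homomorphism. For~\ref{DNR.abelian}, a nontrivial direct summand in an abelian group is the image of a central idempotent endomorphism; for a non-abelian $G = H \times K$ one uses the projection onto an abelian direct factor of $G^{\mathrm{ab}}$ followed by an embedding into $Z(G)$ whenever such a target exists. For~\ref{DNR.GLn}, the composition $\GL(n,F) \xrightarrow{\det} F^\times \hookrightarrow Z(\GL(n,F))$ (scalar matrices) is nontrivial and central. For~\ref{DNR.Q8}, the abelianization $Q_8 \twoheadrightarrow (\bbZ/2)^2$ composed with projection onto a $\bbZ/2$ identified with $Z(Q_8) = \{\pm 1\}$ works. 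For~\ref{DNR.D2n}, the sign homomorphism $D_{2n} \to \bbZ/2$ composed with the embedding $\bbZ/2 \hookrightarrow Z(D_{2n})$ generated by the half-turn $r^n$ works. For~\ref{DNR.nilpotent}, any nontrivial nilpotent $G$ has nontrivial abelianization and nontrivial center, and decomposing $G$ along its Sylow subgroups (or, in the infinite case, along the lower central series) matches a nontrivial quotient of $G^{\mathrm{ab}}$ with a subgroup of $Z(G)$, producing the central homomorphism. For~\ref{DNR.Graph}, disconnectedness of $\bar\Gamma$ means $\Gamma\cG$ splits as a nontrivial direct product of graph products, reducing to case~\ref{DNR.abelian}.

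The main obstacle is the residual sub-case of~\ref{DNR.abelian} where neither direct factor of $G = H \times K$ admits a map into the center (for example $G = \Sym_3 \times A_5$, where $Z(G) = 1$, or analogous graph products of centerless perfect vertex groups). The twisting trick is unavailable, and one must argue directly from the splitting $\prod_i (H \times K) = \prod_i H \times \prod_i K$: the reduced product carries two independent layers of coordinate structure, and automorphisms permuting the $H$-coordinates and $K$-coordinates by unrelated permutations of the index set $\Index I$ are $h$-formula preserving. I expect this step to proceed by extracting, from such an automorphism, a pair of tuples realizing the same $h$-type but distinguished by $x = x' \to y = y'$, and this $h$-type analysis is where the bulk of the technical work will lie.
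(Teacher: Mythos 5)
Your master tool — the twisting automorphism $\tau_f(g,h)=(g,\phi(g)h)$ for $\phi\colon G\to Z(H)$ — is exactly the one the paper uses in Theorem~\ref{Fact:construction}, and the reductions for $\GL(n,F)$, $Q_8$, $D_{2n}$, and graph products match the paper's. But there are two genuine issues.

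First, and most seriously, you correctly identify but do not resolve the sub-case of~\ref{DNR.abelian} where $G=H\times K$ is decomposable yet no factor admits a nontrivial homomorphism into $Z(G)$ (your example $\Sym_3\times A_5$ is apt). The paper handles this via a separate, much simpler argument than the $h$-type analysis you anticipate: this is clause~\eqref{2.construction} of Theorem~\ref{Fact:construction}. The point is that $\prod_{\bbN}(H\times K)\cong\prod_{\bbN\sqcup\bbN}\cM_j$ is a reduced product of structures from $\fC$ (over the index set $\bbN\sqcup\bbN$ with $H$'s on one copy of $\bbN$ and $K$'s on the other), and the automorphism that shuffles the $H$-coordinates by a nontrivial permutation while fixing the $K$-coordinates cannot be isomorphically coordinate respecting: it forces the induced Boolean-algebra map $\alpha$ to simultaneously fix each $K$-atom and move some $H$-atom, contradicting that $\alpha$ must send atoms to atoms coherently. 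No $h$-type combinatorics is needed. Your suggested route of ``extracting a pair of tuples realizing the same $h$-type'' is not wrong in principle, but it is considerably more work than the direct counting-of-atoms argument.

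Second, your treatment of the nilpotent case~\ref{DNR.nilpotent} is too vague to carry weight: decomposing an infinite nilpotent group ``along the lower central series'' does not obviously yield a match between a quotient of $G^{\mathrm{ab}}$ and a subgroup of $Z(G)$ (these can be torsion/torsion-free of incompatible type). The paper instead proves Lemma~\ref{L.nilpotent0}: for $c$ in the penultimate term of the lower central series, the map $x\mapsto[x,c]$ is a group homomorphism into the center (the commutator identity is checked directly using centrality of $[x,c]$), and a suitable choice of $c$ makes it nontrivial. This is the precise replacement for your hand-waving and works uniformly for finite and infinite nilpotent groups. A minor additional point: ``implementing $\alpha_\phi$ in a single coordinate'' is imprecise — applying $\tau_f$ at one factor of $\prod_{\bbN}(G\times H)$ is coordinate-respecting; you must apply it at all (or $\cI$-positively many) coordinates as $\Sigma_f$ to get the desired violation.
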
 
	 
	  In the initial stages of this work we had hoped to isolate a clear-cut characterization of classes of groups that recognize coordinates. This revealed itself to be a difficult task, as the union of two classes which recognize coordinates does not necessarily recognize coordinates: often, different groups recognize coordinates for different reasons. For example,  while each one of $\Sym_3$ and $\SL(2,5)$ recognizes coordinates by itself, no class containing both of these groups recognizes coordinates (this is a consequence of Theorem~\ref{T.example:DNR}~\ref{DNR.decomposable}).  The following gives an even stronger obstruction to the existence of a clear-cut characterization of classes of groups that recognize coordinates (see Theorem~\ref{T.Limiting}).

	\begin{thm}\label{T.Limiting.0}
		There is a family $\fD$  of groups that  does not recognize coordinates, but every finite subset of $\fD$ does.  
	\end{thm}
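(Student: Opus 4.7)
The plan is to exploit the syntactic characterization of Theorem~\ref{T.A}\eqref{3.A}, which equates a class $\fC$ recognizing coordinates with the equivalence of the formula $\chi(x,x',y,y'):=(x=x'\rightarrow y=y')$ to some $h$-formula modulo $\Th(\fC)$. The task therefore reduces to exhibiting a family $\fD$ whose common theory admits no $h$-equivalent of $\chi$, while the common theory of every finite subfamily does. Since any subclass of a coordinate-recognizing class automatically recognizes coordinates (its common theory is at least as large, so any $h$-witness transfers), the family $\fD$ cannot be chosen inside a single class from Theorem~\ref{T.RC}: its members must be drawn from genuinely distinct sources so that finite subfamilies can be uniformly witnessed while the infinite union cannot.

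The plan is diagonal. First I would fix an enumeration $(\varphi_n)_{n\in\bbN}$ of all $h$-formulas in the variables $x,x',y,y'$ in the language of groups (up to logical equivalence in the group theory). Then for each $n$ I would produce a group $G_n$ that itself recognizes coordinates --- selected from one of the classes catalogued in Theorem~\ref{T.RC} --- but for which $\varphi_n$ fails to be equivalent to $\chi$ modulo $\Th(G_n)$, refuted on some concrete tuple inside $G_n$. Setting $\fD:=\{G_n:n\in\bbN\}$, no $\varphi_n$ can serve as an $h$-witness in $\Th(\fD)$, because already $\varphi_n\not\equiv_{G_n}\chi$ and $G_n\in\fD$; therefore $\fD$ does not recognize coordinates.

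The delicate half is verifying that every finite subfamily $F=\{G_{n_1},\ldots,G_{n_k}\}$ does recognize coordinates. The plan is to make this a design constraint on the construction of the $G_n$. Concretely, I would arrange the enumeration so that the $G_n$ fall into coordinate-recognizing classes $\fC_n$ from Theorem~\ref{T.RC} whose parameters vary with $n$ (for instance, $\SL(m_n,F_n)$ with $m_n$ and $|F_n|$ carefully chosen, or graph products $\Gamma_n\cG_n$ with differing complement graphs), and so that the $h$-witness for each finite subfamily can be assembled from a bounded Boolean/quantifier manipulation of witnesses coming from a larger ambient coordinate-recognizing class that still contains $F$. The unboundedness of the construction in $n$ then ensures that no single ambient class contains all of $\fD$, hence the inheritance argument fails in the limit.

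The main obstacle is achieving both goals simultaneously: the diagonalisation must refute every $\varphi_n$ at some $G_n\in\fD$, but I must also verify a positive existence of an $h$-witness for each finite subfamily. Concretely, this requires showing that along any finite stage of the construction, the combinatorial interplay between the chosen groups does not produce an unexpected obstruction of the kind in Theorem~\ref{T.example:DNR} (such as the $\Sym_3$/$\SL(2,5)$ clash), so that coordinate recognition persists. I expect the cleanest way to handle this is to draw the $G_n$ from families whose pairwise and finite combinations are already covered by Theorem~\ref{T.RC}, and then argue a compactness-style failure in the limit: the complexity of the necessary $h$-witness grows unboundedly with $|F|$, so no single $h$-formula suffices once all of $\fD$ is in play.
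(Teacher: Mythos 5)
Your proposal takes a genuinely different route from the paper. The paper constructs $\fD$ explicitly from two graded families: quasisimple groups $G_n = \SL(p_n-1,p_n)$ of commutator width $1$ whose reduced power has $\bbQ/\bbZ$ in its center (Proposition~\ref{P.Q/Z}), and simple groups $K_n$ (from Muranov) with $\cw(K_n)\to\infty$. All groups are perfect with finite commutator width and satisfy~($\dagger$), so Proposition~\ref{P.Perfect} provides, for each bound $m$, a single $h$-witness uniformly valid for all groups in $\fD$ of commutator width $\leq m$; this immediately disposes of every finite subfamily. Non-recognition for the full $\fD$ then follows because $\prod_{\Fin}K_n$ fails to be perfect, producing an abelian quotient that maps nontrivially into $Z\bigl(\prod_{\Fin}G_n\bigr)$ via the $\bbQ/\bbZ$ summand, and the automorphism of Theorem~\ref{Fact:construction} applies. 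The role of the unbounded commutator width is precisely to make the complexity of the required $h$-witness tend to infinity in a controlled way.

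Your diagonalization is an appealing alternative, and the deduction you record in the third paragraph is sound: since $\Th(\fD)\subseteq\Th(G_n)$, any $h$-witness for $\Th(\fD)$ would also be one for $\Th(G_n)$, so defeating $\varphi_n$ at $G_n$ defeats it for $\fD$. The obstacle, though, is that neither of the two load-bearing steps is established, and the interaction between them is exactly where the difficulty lies. First, you need that for every $h$-formula $\varphi$ there is a coordinate-recognizing group on which $\varphi$ fails to witness $\chi$; this is plausible (e.g., by the $\{\Sym_3,\SL(2,5)\}$ obstruction, for any $\varphi$ at least one of those two fails it), but is never argued. Second, and more seriously, the diagonal choice of $G_n$'s must simultaneously avoid the finite obstructions catalogued in Theorem~\ref{T.example:DNR} (such as two groups one of which maps nontrivially into the center of the other), so that every finite subfamily still has an $h$-witness. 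You call this a ``design constraint'' and gesture at ``a larger ambient coordinate-recognizing class'' and ``a compactness-style failure,'' but no construction is given, and there is no a~priori reason the two constraints can be met at once: the $\Sym_3$/$\SL(2,5)$ example shows that witnesses for different coordinate-recognizing groups can be mutually incompatible even for pairs. In short, the proposal correctly identifies the syntactic reformulation from Theorem~\ref{T.A} as the right lens and states a plausible plan, but the essential diagonal construction is not carried out, so the proof is incomplete.
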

    
    Classical model-theoretic considerations provide another important motivation for this work. Reduced products of $\mathcal{L}$-structures are themselves naturally equipped with an $\mathcal{L}$-structure, and the celebrated Feferman--Vaught theorem provides a natural expansion $\mathcal{L}^+$ where these products eliminate quantifiers. It is however not clear whether (or when) this expansion $\mathcal{L}^+$ is a definable expansion of the original language $\mathcal{L}$ in the reduced product. It is surprising that this natural question has remained unanswered. A key property turns out to be the definability of the support function. To our knowledge, this property was first identified by Medvedev and Van Abel in \cite{MvA18}, where they prove a general theorem about (non-reduced) products and apply it to products of finite fields. 
    We take the occasion to expand on their work and extend their analysis to arbitrary reduced products. In turn, we prove the following theorem (see Theorem~\ref{C.QuestionDecomp.1}; since this is a consequence of folklore results, we prove it in the preliminary section). 
    
	\begin{thm}\label{C.QuestionDecomp}
    Let $\mathcal{M}:=\prod_\Ideal{I} \mathcal{M}_i$ be a reduced product, and assume that $\mathcal{M}$ interprets the relative support function $\supp_=$. Assume there exists a fundamental set $\Phi$ of $h$-formulas for the pair $(\mathcal{M}_i)_{i \in \Index{I}}$ and $\Ideal{I}$. Then $\mathcal{M}$, as an $\mathcal{L}$-structure, already interprets all functions of the language $\calL^+$ and $\mathcal{M}$ eliminates quantifiers relative to $\Bool{I}{I}$ in the language $\calL_\Phi^+$.
      \end{thm}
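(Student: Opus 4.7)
The plan is to derive the result from the classical Feferman--Vaught theorem by exhibiting enough of its machinery as $\calL$-definable. The Feferman--Vaught theorem provides quantifier elimination for $\cM = \prod_{\cI} \cM_i$ in the expansion $\calL^+$, whose extra symbols encode, for each $\calL$-formula $\psi(\bar x)$, a support function assigning to $\bar a \in \cM$ the class in $\cP(\Index I)/\cI$ of $\{i : \cM_i\models\psi(\bar a(i))\}$, together with the Boolean operations on $\cP(\Index I)/\cI$. I would take this as a black box: the task then reduces to showing that, under our two hypotheses, each such $\calL^+$-function is already $\calL$-definable in $\cM$.

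The core step is to leverage the distinctive property of $h$-formulas. By definition of an $h$-formula $\phi \in \Phi$ (as laid out in \S\ref{S.h-formulas}), the support $\supp_\phi(\bar a)$ is built in a controlled, Boolean-combinatorial way from supports of the form $\supp_=(t(\bar a), t'(\bar a))$ for appropriate terms $t, t'$. Since the Boolean algebra $\cP(\Index I)/\cI$ is interpreted by hypothesis and $\supp_=$ is interpreted by hypothesis, this combination makes $\supp_\phi$ definable in the pure $\calL$-structure $\cM$. The assumption that $\Phi$ is \emph{fundamental} then propagates this definability to all $\calL$-formulas: every formula is, modulo the interpreted Boolean algebra, a Boolean combination of formulas from $\Phi$, so every support function of $\calL^+$ is a Boolean combination (in the interpreted $\cP(\Index I)/\cI$) of support functions of $h$-formulas from $\Phi$, hence $\calL$-definable.

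Having obtained that every function of $\calL^+$ is interpreted in $\cM$ as an $\calL$-structure, the Feferman--Vaught quantifier-elimination statement in $\calL^+$ translates verbatim into a quantifier-elimination statement in $\calL^+_{\Phi}$ relative to $\cP(\Index I)/\cI$, yielding the second conclusion. The first conclusion is exactly the definability statement proven in the preceding step.

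The step I expect to require the most care is the first half of the second paragraph: verifying in the precise framework of \S\ref{S.h-formulas} that an arbitrary $h$-formula's support is genuinely expressible using only $\supp_=$ and the interpreted Boolean operations, as opposed to requiring additional primitive support functions. Once this is pinned down on the syntactic side of the definition of $h$-formula, the remainder of the argument is the routine translation between Feferman--Vaught in the languages $\calL^+$ and $\calL^+_\Phi$.
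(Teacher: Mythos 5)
Your overall architecture is right and matches the paper: show that each $\supp_{\phi}$ for $\phi\in\Phi$ is already $\calL$-definable using the interpreted Boolean algebra and $\supp_=$, then use fundamentality of $\Phi$ together with Proposition~\ref{P.boolfund} to handle all of $\calL^+$, and finally translate the Feferman--Vaught quantifier elimination (Corollary~\ref{C.FV.reduced}) into $\calL^+_\Phi$. But the core step --- and you correctly flagged it as the one requiring the most care --- is sketched incorrectly. You propose that ``the support $\supp_\phi(\bar a)$ is built in a controlled, Boolean-combinatorial way from supports of the form $\supp_=(t(\bar a),t'(\bar a))$ for appropriate terms $t,t'$.'' This is false in general, and atomic formulas already refute it: if $R$ is a unary relation symbol, then $\supp_{R(x)}(a)=[\{i : \cM_i\models R(a_i)\}]_\cI$ has no syntactic connection to equality supports --- no Boolean combination of sets of the form $[\{i : t(a_i)=t'(a_i)\}]_\cI$ will in general reproduce it. So a syntactic induction on the structure of $h$-formulas, tracing everything back to $\supp_=$, cannot work as stated.

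The correct argument (Lemma~\ref{lemma:PhiSupport} in the paper) is semantic and uses the Łoś-type theorem for $h$-formulas (Theorem~\ref{T.Palyutin}), not any decomposition of $\phi$. Since $\phi$ is a satisfiable $h$-formula, fix some $b\in\cM$ with $\cM\models\phi(b)$. Given $a$ and $S\in\Bool{I}{I}$, the interpreted $\supp_=$ lets you $\calL$-define the unique ``patched'' element $a_S$ agreeing with $a$ on $S$ and with $b$ on $S^\complement$ (characterized by $\supp_=(a_S,a)\supseteq S$ and $\supp_=(a_S,b)\supseteq S^\complement$). Then Theorem~\ref{T.Palyutin} gives that $\cM\models\phi(a_S)$ iff $S\setminus[\![a]\!]_{\phi}\in\cI$, so $\supp_\phi(a)$ is exactly the largest $S$ with $\cM\models\phi(a_S)$, which is a definable maximum in the interpreted Boolean algebra. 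The $h$-formula hypothesis is used here to guarantee this coordinate-wise reading of $\phi$, not to decompose $\phi$ syntactically. Once this is fixed, the rest of your argument (propagation via $\Phi$ fundamental and Proposition~\ref{P.boolfund}, then reading off relative QE from Corollary~\ref{C.FV.reduced}) goes through as you describe.
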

    The language $\calL_\Phi^+$ is introduced in \Cref{C.FV.reduced}, and is a fragment of the Feferman--Vaught language $\mathcal{L}^+$. See Definition~\ref{Def.h-formulas} for $h$-formulas and Definition~\ref{D.Fundamehtal} for fundamental set of $h$-formulas. 
    As an application, we give an explicit definable expansion of a reduced power of a finite symmetric group $\Sym_n$ for  $n\geq 4$ with $n\neq 6$ which eliminates quantifiers (see \Cref{lem:cyclesformula}).
    
    The paper is outlined as follows: In Section~\ref{S.Preliminary}, we recall some basic preliminaries concerning reduced products. In Section~\ref{S.Main}, we prove that recognizing coordinates is equivalent to interpreting a definable support function. Sections~\ref{S.GroupsRecognizing} and \ref{S.GroupsNotRecognizing} are focused on classes of groups. In Section~\ref{S.GroupsRecognizing}, we prove that certain classes of groups recognize coordinates while in Section \ref{S.GroupsNotRecognizing} we provide several examples of classes of groups which do not recognize coordinates. Section~\ref{S.Other}  focuses on showing other families of structures recognize coordinates, including a fan favorite: the non-associative magma colloquially known as \emph{rock-paper-scissors}. Section~\ref{S.Limiting} focuses on \emph{limiting examples} or examples which imply that the general theory of recognizing coordinates is quite complicated. The final section contains concluding remarks, open questions, and some ties to loose ends.  

	\tableofcontents
    \subsection*{Acknowledgments} 
We are grateful to Patrick Lutz for pointing out the relevance of $\SL(2,5)$ (see \S\ref{S.Perfect}) and to Forte Shinko for his useful comment on the case $n=6$ of \Cref{T.RC}\ref{2.Cplus}, and for answering some questions in a preliminary draft of our paper. We would also like to thank the anonymous referee for making numerous suggestions that helped improve the paper. We are grateful to Montserrat Casals--Ruiz for pointing out to us the relevance of domains (in the sense of \cite{baumslag1999algebraic}) to our results. This remark substantially increased the class of known groups that recognize coordinates (see Corollary~\ref{C.Montse} and Theorem~\ref{T.GraphProduct.Montse}). We are also indebted to the organizers of the 2026 Beijing Model Theory conference for providing a pleasant and stimulative environment in which this interaction took place.
In the proof of  Theorem~\ref{T.Limiting}, Grok by xAI was used to verify the well-known fact that for every abelian group $G$ there is a nontrivial homomorphism from $G$ into $\bbQ/\bbZ$. 
IF was partially supported by NSERC. KG was supported by the Fundamental Research Funds for the Central Universities, Peking University, grant no.\ 7100604835 and by the National Natural Science Fund of China, grant no.\ 12501001. 
		\section{Preliminaries}\label{S.Preliminary}
        Our notation regarding first-order logic is standard.
	This section contains preliminaries concerning reduced products, recognizing coordinates, $h$-formulas, ultraproducts of reduced products, and the Feferman--Vaught theorem. We fix a language $\calL$ throughout.   
%	\subsection{Recognizing coordinates}
	
   \subsection{Reduced products}
	A class of $\mathcal{L}$-structures will often be denoted by~$\fC$. We use the symbols $\Index{I}, \Index{J}$ to denote index sets and the symbols $\Ideal{I},\Ideal{J}$ to denote ideals on those index sets. We recall that an ideal $\Ideal{I}$ on $\Index{I}$ is a collection of subsets of $\mathbb{I}$ which is downward closed, closed under finite unions, and does not contain $\mathbb{I}$. Given an ideal $\Ideal{I}$ on $\Index{I}$, one constructs the quotient Boolean algebra $\mathcal{P}(\Index{I})/\Ideal{I}$  via the following identification: 
    \[ A\simeq B \ \Leftrightarrow \ A \vartriangle B \in \Ideal{I}. \]
If $\Index{I}$ is an index set, then we let $\Fin(\Index{I})$ denote the ideal of all finite subsets of $\Index{I}$. When there is no possibility of confusion, we simply write $\Fin$ in place of $\Fin(\Index{I})$. We begin by recalling the definition of the reduced product. 
\begin{definition}[Reduced product]
Let $(\mathcal{M}_i)_{i\in \Index{I}}$ be a collection of $\mathcal{L}$-struc\-tu\-res. We let $\prod_{i \in \Index{I}} \mathcal{M}_i/\Ideal{I}$, or, more frequently,  $\prod_\Ideal{I}\mathcal{M}_i$, denote the quotient of the product $\prod_{ i \in \Index{I}} \mathcal{M}_i$ by the following equivalence relation: if $(a_i)_i$ and $(b_i)_i$ are in $ \prod_{i \in \Index{I}}\mathcal{M}_i$, then  
	\[ (a_i)_i\simeq (b_i)_i \ \Leftrightarrow \{i\in \Index{I} \mid a_i \neq b_i\} \in \Ideal{I}.\]
        Formally, we let $[(a_i)]_{\Ideal{I}}$ denote the equivalence class of $(a_i)_{i}$. This quotient is naturally equipped with an $\mathcal{L}$-structure. For any $n$-ary relation symbol $R\in \mathcal{L}$ and $[(a^{1}_i)]_{\Ideal{I}},\dots,[(a^{n}_i)]_{\Ideal{I}}$ in $\prod_\mathcal{I}\mathcal{M}_i$, 
	\[\models R([(a^{1}_i)]_{\Ideal{I}},\dots,[(a^{n}_i)]_{\Ideal{I}}) ~\Leftrightarrow ~ \{i\in \Index{I} \mid \mathcal{M}_i\models \neg R(a^{1}_i,\dots,a^{n}_i)\}\in \Ideal{I}, \]
	and for any $n$-ary function symbol $f \in \mathcal{L}$ and  $[(a^{1}_i)]_{\Ideal{I}},\dots,[(a^{n}_i)]_{\Ideal{I}}$ in $\prod_{\Ideal{I}}\mathcal{M}_{i}$,
	\[f([(a^{1}_i)]_\Ideal{I},\dots,[(a^{n}_i)]_\Ideal{I})= [f(a^{1}_i,\dots,a^{n}_i)]_\Ideal{I} .\]
    The structure $\prod_{\mathcal{I}} \mathcal{M}_i$ is called the \emph{reduced product}.
    We often abuse notation and identify elements in $\prod_{\Ideal{I}} \mathcal{M}_i$ with the elements in $\prod_{i \in \Index{I}} \mathcal{M}_i$ when there is no possibility of confusion or error.     
	If all $\cM_i$ are isomorphic to a fixed structure $\cN$, then the reduced product $\prod_\cI \cM_i$ is called a \emph{reduced power} of~$\cN$. 

    We remark that in the case where $\cI=\{\emptyset\}$, the reduced product reduces (!) to the product. 
\end{definition}

%The following conjecture would be, if true, a variant of Theorem 9.4.3 in Hodges:
\begin{definition}
	An ideal $\Ideal{I}$ on an index set $\Index{I}$ is \emph{atomless} if the quotient Boolean algebra $\cP(\Index I)/\Ideal I$ is atomless. By metonymy, we also call a reduced product $\prod_\Ideal{I}\mathcal{M}_i$ \emph{atomless} if $\Ideal{I}$ is atomless.
	% \emph{contains an atom-free ideal} if there is $\Index{J} \subseteq \Index{I}$ such that $\Bool{J}{J}$ is atom-free, where $\Ideal{J}:=\Ideal{I}\cap \mathcal{P}(\Index{J})$.    
\end{definition}

The following example illustrates a situation that we would like to avoid; where information is lost. Settings like the one below can lead to pathological behavior and annoying/unrewarding case work.   
\begin{example}\label{Ex.NotFull}
    Let $\mathcal{L}:=\{ \leq, R\}$ be a language with two binary relation symbols. Notice that both graphs $(G,R)$ and linear orders $(L,\leq) $ can be viewed as $\mathcal{L}$-structure by interpreting remaining symbol as empty. Fix a nontrivial linear order and a nontrivial graph relation on a set $A$, denoted by $\leq^{A}$ and $R^{A}$ respectively. Now consider the product $\mathcal{M}=\prod_{n \in \mathbb{N}} \mathcal{M}_n$ where each $\mathcal{M}_n$ has universe $A$ and 
    \[\mathcal{M}_n =\begin{cases}
        R^{\mathcal{M}_n} = \emptyset; \hspace{.2cm} \leq^{\mathcal{M}_n} = \leq^{A},  & \text{ if $n$ is even, }\\
        R^{\mathcal{M}_n} = R^{A}; \hspace{.2cm} \leq^{\mathcal{M}_n} = \emptyset,  & \text{ if $n$ is odd. }
    \end{cases}\]
    By the definition of the reduced product (with respect to the empty ideal), both relations in the structure $\mathcal{M}$ are empty, i.e., $R^\mathcal{M}=\leq^\mathcal{M} =\emptyset$.
\end{example}
In order to avoid the pathological situation described in Example~\ref{Ex.NotFull}, we will consider only classes $\fC$ of structures satisfying the following. 
    
\begin{definition}\label{Def.Full}
    A class $\fC$ of $\mathcal{L}$-structures is called \emph{full} if 
    \begin{enumerate}
        \item For every predicate $P$ in $\mathcal{L}$ and all $\mathcal{M}$ in $\fC$, $P^{\mathcal{M}} \neq \emptyset$. 
        \item Every $\mathcal{M}$ in $\fC$ has at least three elements. 
    \end{enumerate}
    
%\begin{equation}
%        \tag{$\ddagger$}\label{eq.full}\text{For every predicate $P$ in $\mathcal{L}$ and all $\mathcal{M}\in \fC$,}\ P^\mathcal{M} \neq \emptyset.
%           \end{equation}     and every $\cM\in \fC$ has at least three elements. 
\end{definition}
We remark that there is no issue in assuming that there exists some structure $\mathcal{M}$ in $\fC$ and some $n$-ary predicate $P$ such that $P^{\mathcal{M}}$ is $\mathcal{M}^{n}$.  
Notice that if $\mathcal{L}$ is a functional language, then all classes of $\mathcal{L}$-structures automatically satisfy condition $(1)$ above. 

    \subsection{Recognizing coordinates} Here we recall the definition of recognizing coordinates. Definition~\ref{D.recognizes-coordinates} below is \cite[Definition 2.5 and Definition 2.6]{de2023trivial} (see \S\ref{S.Products} for the variant of this notion for direct products). In order to state the definition, we introduce some notation which will be used throughout the text. 
    
Assume that $\Ideal{I}$ is an ideal on a set $\Index I$ and $(\mathcal{M}_i)_{i \in \Index{I}}$ is an indexed family of $\mathcal{L}$-structures. Consider the reduced product $\mathcal{M} := \prod_{\Ideal{I}} \mathcal{M}_i$. For 
$S\subseteq \Index I$ we write $\cM\rs S$ for the quotient $(\prod_{i\in S} \mathcal{M}_i)/(\Ideal I\rs S)$ of $\cM$. The quotient map is denoted~$\pi_S$. If $S\in \Bool II$ then  we write $\cM\rs S$ for $(\prod_{i\in \tilde S} \mathcal{M}_i)/(\Ideal I\rs \tilde S)$ and $\pi_S$ for $\pi_{\tilde S}$,  where $\tilde S\subseteq \Index I$ is any set that satisfies  $[\tilde S]_{\Ideal I}=S$. Clearly $\cM\rs S$ does not depend on the choice of representative $\tilde S$ for $S$.  

%This notation will also be used in case $\cI=\{\emptyset\}$, when the reduced product reduces (!) to the product. 

	\begin{definition}\label{D.recognizes-coordinates}
		 An isomorphism $\Phi$ between reduced products $\cM:=\prod_\Ideal{I} \cM_i$ and $\cN:=\prod_\Ideal{J} \cN_j$ is \emph{isomorphically coordinate respecting} if  there is an isomorphism $\alpha\colon \Bool II\to \Bool JJ$ such that for all $S\in \Bool II$ we have a function 
		\[
		\Phi_S\colon \cM\rs S\to \cN \rs \alpha(S),
		\]
		defined by 
		\[
		\Phi_S(\pi_S(a))=\pi_{\alpha(S)}(\Phi(a)), 
		\]
		making the following diagram commute:
		\begin{center}
			\begin{tikzpicture}
				\matrix[row sep=1cm,column sep=1cm]
				{
					& & \node (M1) {$\cM$}; && &\node (M2) {$\cN$};&
					\\
					& & \node (Q1) {$\cM\rs S$}; &&& \node (Q2) {$\cN\rs \alpha(S)$} ;
					\\
				};
				\draw (M1) edge [->] node [above] {$\Phi$} (M2);
				\draw (Q1) edge [->] node [above] {$\Phi_S$} (Q2);
				\draw (M1) edge [->] node [left] {$\pi_S$} (Q1);
				\draw (M2) edge [->] node [right] {$\pi_{\alpha(S)}$} (Q2);
			\end{tikzpicture}
		\end{center}
			A first-order theory $T$ is said to \emph{recognize coordinates} if every isomorphism between reduced products of models of $T$ is isomorphically coordinate respecting. 

		More generally, if $\fC$ is a class of structures of the same language (not necessarily axiomatizable), then $\fC$ is said to \emph{recognize coordinates} if every isomorphism  between arbitrary reduced products of structures from $\fC$ (possibly with repeated structures) is isomorphically coordinate respecting. 
		
		Moreover, a class $\fC$ is said to \emph{recognize coordinates in $\bbN$} \footnote{ In \cite{de2023trivial}, the notion of ``recognizing coordinates in $\mathbb{N}$" was simply called ``recognizing coordinates". We choose to change the terminology to clarify our presentation.} if every isomorphism  between arbitrary reduced products of the form $\prod_{n\in \bbN} \cM_n/\Ideal I$ of structures from $\fC$ (again, possibly with repeated structures) is isomorphically coordinate respecting. 
        In \Cref{T.eq}, we will show in particular that this distinction is superfluous and that a class recognizes coordinates in $\bbN$ if and only if it recognizes coordinates.  
	\end{definition}

	    \subsection{h-formulas}\label{S.h-formulas}

	In this section we first recall the definition of an $h$-formula. We then recall some analysis of $h$-formulas in reduced products by Palmgren and Omarov. Finally, we conclude with some analysis of our own concerning $h$-formulas and \emph{definable support functions}. We prove that a full class of structures admits such functions if and only if a certain formula (involving only logical symbols and equality) is equivalent to an $h$-formula. The following definition is taken from \cite{palmgren}, \cite{palyutin1}, \cite{palyutin2} (see \cite[\S 2]{de2024saturation}). 
	
	\begin{definition}[The class of $h$-formulas] \label{Def.h-formulas} 
		The class of \emph{$h$-formulas} is the smallest class of formulas $\mathcal C$ containing all atomic formulas and if $\varphi$ and~$\psi$ belong to $\mathcal C$, then so do
		\[
		\varphi\wedge\psi,\,\,(\exists x)\varphi,\,\,(\forall x)\varphi,\text{ and }(\exists x)\varphi\wedge (\forall x)(\varphi\rightarrow\psi).
		\]
        \end{definition}

It is not difficult to see that the class of formulas that contains all atomic formulas and is closed under the construct $(\exists x)\varphi\wedge(\forall x)(\varphi\rightarrow\psi)$ coincides with the class of $h$-formulas. 
The following fact will be used tacitly and frequently without being explicitly mentioned. 

\begin{fact} 
	 If $x$ and $\bar{y}$ are the variables occurring freely in $\phi$ and if theory $T$ implies  $(\forall \bar y) (\exists x)\varphi(x,\bar y)$ then $T$ implies that  $(\forall x)(\varphi\to \psi)$ is equivalent to an $h$-formula. 
\end{fact}

In the situation described by the fact above, we will slightly abuse terminology and say that the formula $(\forall x)(\varphi\to \psi)$ is an $h$-formula. The following is an analog of \L o\' s's theorem for reduced products (see \cite{palyutin2} for details). 
	
	\begin{theorem}\label{T.Palyutin}
		If $\cI$ is an ideal on an index set $\mathbb{I}$, $(\mathcal{M}_i)_{i \in \mathbb{I}}$ is an indexed family of $\mathcal{L}$-structures, $\mathcal{M}=\prod_\Ideal{I} \mathcal{M}_i$, $\bar{a}$ is an element of  $\mathcal{M}^{|\bar{x}|}$ with representative $(\bar{a}_i)$, and $\varphi(\bar x)$ is an $h$-formula, then
		\begin{align*}
			\mathcal{M}&\models \varphi(\bar a)\text{ if and only if } \{i\in \mathbb{I} \mid \mathcal{M}_i\models \neg \varphi(\bar a_i)\}\in \cI.
		\end{align*}
%		where $\pi\colon\prod_i\mathcal{M}_{i \in \mathbb{I}} \to \mathcal{M}$ denotes the quotient map.
	\end{theorem}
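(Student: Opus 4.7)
The plan is to prove the theorem by induction on the construction of the $h$-formula $\varphi(\bar x)$, treating each of the closure operations defining the class in turn. The atomic case is essentially built into the definition of the $\mathcal{L}$-structure on $\prod_{\cI}\mathcal{M}_i$: for relation symbols, membership in the reduced product's interpretation of $R$ is defined exactly in terms of the complementary set lying in $\cI$; for equality this is identical to the defining equivalence relation. The conjunction step is then immediate, since $\{i : \mathcal{M}_i \not\models \varphi \wedge \psi\}$ is the union of the two corresponding ``bad'' sets, and $\cI$ is closed under finite unions.

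For the existential quantifier $(\exists y)\varphi(\bar x,y)$, I would handle the forward direction by taking any representative $(b_i)_i$ of a witness $[b]\in\mathcal{M}$ and applying the induction hypothesis to conclude $\{i : \mathcal{M}_i\not\models\varphi(\bar a_i,b_i)\}\in\cI$; the set of $i$ where $\mathcal{M}_i\models(\exists y)\varphi(\bar a_i,y)$ then has $\cI$-small complement. For the reverse direction, if $J=\{i:\mathcal{M}_i\models(\exists y)\varphi(\bar a_i,y)\}$ has complement in $\cI$, then use the Axiom of Choice to select a witness $b_i\in\mathcal{M}_i$ for each $i\in J$ and take $b_i$ arbitrary off $J$; the induction hypothesis applied to $\varphi$ yields $\mathcal{M}\models\varphi(\bar a,[b])$.

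The universal case $(\forall y)\varphi(\bar x,y)$ is where I expect the only real subtlety. Note that a universal quantifier in front of an arbitrary formula is not generally preserved by reduced products; it is only the absence of negation in $h$-formulas that makes the induction go through, since every subformula we encounter remains an $h$-formula. For the nontrivial forward direction I would argue by contradiction: assuming $\mathcal{M}\models(\forall y)\varphi(\bar a,y)$ while $J=\{i:\mathcal{M}_i\not\models(\forall y)\varphi(\bar a_i,y)\}\notin\cI$, I would choose (again via AC) an element $c_i\in\mathcal{M}_i$ refuting $\varphi(\bar a_i,\cdot)$ for each $i\in J$, and arbitrary $c_i$ elsewhere; then by the induction hypothesis, $\mathcal{M}\models\varphi(\bar a,[c])$ would force $\{i:\mathcal{M}_i\not\models\varphi(\bar a_i,c_i)\}\in\cI$, but this set contains $J$, a contradiction. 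The reverse direction is direct from the induction hypothesis.

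Finally, the composite construct $(\exists y)\varphi\wedge(\forall y)(\varphi\to\psi)$ is subsumed, since by the remark preceding the theorem it is obtained from the previous cases already handled. The hardest conceptual point to articulate cleanly is the universal step: it is worth flagging in the write-up that the argument uses the inductive preservation of the $h$-class under subformulas together with the Axiom of Choice for witness selection, since these are precisely the ingredients that distinguish the reduced-product Łoś theorem from its ultraproduct counterpart.
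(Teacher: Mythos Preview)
Your induction handles atomic formulas, conjunction, $(\exists y)\varphi$, and $(\forall y)\varphi$ correctly, but the dismissal of the composite construct $(\exists y)\varphi\wedge(\forall y)(\varphi\to\psi)$ as ``subsumed'' is a genuine gap. The implication $\varphi\to\psi$ is \emph{not} an $h$-formula, so your universal case does not apply to $(\forall y)(\varphi\to\psi)$; and the remark preceding the theorem says only that atomic formulas together with the composite construct alone generate the full class of $h$-formulas, not that conjunction, existential, and universal quantification generate the composite. Put differently, the closure of atomic formulas under $\wedge$, $\exists$, $\forall$ is a \emph{proper} subclass of the $h$-formulas, so your induction as written does not reach every $h$-formula.

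The composite case needs its own argument, and it is in fact the heart of the matter. For the forward direction, assume $\mathcal{M}\models(\exists y)\varphi(\bar a,y)\wedge(\forall y)(\varphi(\bar a,y)\to\psi(\bar a,y))$ and suppose toward contradiction that $J=\{i:\mathcal{M}_i\not\models(\forall y)(\varphi(\bar a_i,y)\to\psi(\bar a_i,y))\}\notin\cI$. For $i\in J$ choose $c_i$ with $\mathcal{M}_i\models\varphi(\bar a_i,c_i)\wedge\neg\psi(\bar a_i,c_i)$; for $i\notin J$ set $c_i=b_i$ where $(b_i)_i$ represents a witness $b$ to the existential in $\mathcal{M}$. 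Then by the induction hypothesis on $\varphi$ one gets $\mathcal{M}\models\varphi(\bar a,[c])$, hence $\mathcal{M}\models\psi(\bar a,[c])$ by the universal implication, whence by the induction hypothesis on $\psi$ the set $\{i:\mathcal{M}_i\not\models\psi(\bar a_i,c_i)\}\in\cI$; but this set contains $J$, a contradiction. The backward direction is similar. The point is that the existential conjunct is essential: it supplies the values $b_i$ off $J$ that force $\varphi$ to hold in $\mathcal{M}$, which is exactly what fails for a bare $(\forall y)(\varphi\to\psi)$.
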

	
    Proposition~\ref{lemma:Palmgren} below is a uniform version of \cite[Lemma 3]{palmgren}. There it was credited to Palyutin. It is a consequence of \cite[Lemma 2]{palmgren} which implies that every atomless reduced product satisfies the so-called simple cover property, but we sketch a proof for the reader's convenience. This fact is a relative to the conclusion of McKinsey's Lemma, \cite[Lemma 9.1.7]{Hodg:Model}. 
			
			\begin{proposition}\label{lemma:Palmgren}
				For every $\mathcal{L}$-formula $\phi(\bar{x})$, there is a Boolean combination of $h$-formulas $\psi(\bar{x})$ such that 
				\[(\forall \bar{x})~ (\phi(\bar{x}) \leftrightarrow \psi(\bar{x})),\]
				holds in all atomless reduced products. 
			\end{proposition}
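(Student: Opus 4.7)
The plan is to proceed by induction on the complexity of $\phi(\bar x)$. Atomic formulas are themselves $h$-formulas, so the base case is immediate, and the Boolean cases $\phi \wedge \psi$ and $\neg\phi$ are handled trivially, since Boolean combinations of $h$-formulas are closed under Boolean operations. All the content of the proposition therefore lies in the quantifier step, and by duality ($\forall x \phi \equiv \neg \exists x \neg \phi$) it suffices to handle the existential quantifier.

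So suppose inductively that $\phi(x,\bar y)$ is already a Boolean combination of $h$-formulas, and that we need to rewrite $\exists x\, \phi(x,\bar y)$ as such a Boolean combination in all atomless reduced products. Put $\phi$ in disjunctive normal form, and distribute $\exists x$ over the disjunction. It then suffices to treat formulas of the shape
\[
\exists x\Bigl(\bigwedge_{i\le m} \alpha_i(x,\bar y)\wedge \bigwedge_{j\le n} \neg \beta_j(x,\bar y)\Bigr),
\]
where each $\alpha_i, \beta_j$ is an $h$-formula. Conjunctions of $h$-formulas are $h$-formulas, so after replacing $\bigwedge_i \alpha_i$ by a single $h$-formula $\alpha$ we reduce to $\exists x(\alpha(x,\bar y)\wedge \bigwedge_{j\le n}\neg \beta_j(x,\bar y))$.

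The key point is then to invoke the simple cover property for atomless reduced products (Palmgren's Lemma~2), which, read in conjunction with Theorem~\ref{T.Palyutin}, says that in an atomless reduced product an existential of the above shape is witnessed if and only if the index set can be partitioned (modulo $\cI$) into finitely many pieces on each of which a different ``pattern'' of the $\beta_j$'s can be avoided while $\alpha$ is satisfied. More precisely, $\exists x(\alpha\wedge \bigwedge_j \neg \beta_j)$ holds in $\cM$ iff for every $j\le n$ one of the $h$-formulas $\exists x(\alpha\wedge\neg \beta_j)$-type conditions holds on a positive (mod $\cI$) piece, and these pieces can be glued by atomlessness. The glueing condition is itself expressible as a Boolean combination of the $h$-formulas $\exists x(\alpha \wedge \gamma)$ for various conjunctions $\gamma$ of the $\beta_j$'s (which are $h$-formulas by the fact recalled after Definition~\ref{Def.h-formulas}) and their negations, in the parameters $\bar y$.

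The main obstacle, and the step where atomlessness is essential, is this last translation of the ``cover + glueing'' condition into an honest Boolean combination of $h$-formulas in $\bar y$. In an atomless quotient algebra any positive element can be further split, so that each combinatorial possibility for the partition pattern above is expressible by the conjunction of ``$\exists x(\alpha \wedge \gamma)$ holds on some positive piece'' for appropriate $\gamma$; by Theorem~\ref{T.Palyutin} each such condition is an $h$-property of $\bar y$, and the full decomposition is a Boolean combination of these. The uniformity claimed in the proposition, namely that the resulting equivalence holds in every atomless reduced product, is automatic from this translation, since the only ingredient used is the simple cover property, which Palmgren establishes uniformly across atomless reduced products.
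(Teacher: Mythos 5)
The paper provides no proof of this proposition: it is cited as a consequence of Palmgren's Lemma~2 (the simple cover property) and marked with \qed. Your outline — induction on complexity, reduce to DNF, handle an existential of the shape $\exists x(\alpha\wedge\bigwedge_{j}\neg\beta_j)$ with $\alpha,\beta_j$ $h$-formulas via atomlessness — follows the same route the paper attributes to Palmgren/Palyutin, so on the level of strategy you and the paper agree.

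However, the final translation step is misstated in a way that would fail if read literally. You claim the resulting Boolean combination is built from $h$-formulas of the form $\exists x(\alpha\wedge\gamma)$ with $\gamma$ a conjunction of $\beta_j$'s or their negations. If $\gamma$ contains a negated $\beta_j$, then $\exists x(\alpha\wedge\gamma)$ is simply not an $h$-formula. If $\gamma$ contains only positive $\beta_j$'s, the family $\{\exists x(\alpha\wedge\bigwedge_{j\in S}\beta_j):S\subseteq\{1,\dots,n\}\}$ is not expressive enough: already with $n=1$ it cannot distinguish the configuration in which, for every $i$, \emph{all} $\alpha$-witnesses in $\cM_i$ satisfy $\beta_1$, from the configuration in which, for every $i$, some do and some do not; both make $\exists x\,\alpha$ and $\exists x(\alpha\wedge\beta_1)$ true pointwise a.e., yet in the first case $\exists x(\alpha\wedge\neg\beta_1)$ fails in the reduced product and in the second it holds. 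The translation genuinely needs the relativized-universal clause $(\exists x)\varphi\wedge(\forall x)(\varphi\to\psi)$ from Definition~\ref{Def.h-formulas} — which is precisely what that clause is there for. Setting $\psi_j:=(\exists x)\alpha\wedge(\forall x)(\alpha\to\beta_j)$, one has, in every atomless reduced product,
\[
\exists x\Bigl(\alpha\wedge\bigwedge_{j\le n}\neg\beta_j\Bigr)\ \longleftrightarrow\ \exists x\,\alpha\ \wedge\ \bigwedge_{j\le n}\neg\psi_j,
\]
a Boolean combination of $h$-formulas. The forward direction is a short computation with Theorem~\ref{T.Palyutin}; the backward direction is where atomlessness (the cover property) is invoked to find pairwise disjoint $\cI$-positive sets $A_j\subseteq\{i:\cM_i\models\exists x(\alpha\wedge\neg\beta_j)(\bar a_i)\}$ and glue a single witness from them. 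So: right plan, but the $h$-formulas your Boolean combination must be built from are not of the existential shape you wrote.
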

			
			\begin{proof}
				In \cite[Lemma 2]{palmgren}, it was proved that every atomless reduced product has the simple cover property, and in \cite[Lemma 3]{palmgren} it was proved that the simple cover property implies $\varphi$ is equivalent to a Boolean combination of $h$-formulas. Although the latter asserts that the theory of the reduced product implies $\varphi$ is equivalent to a Boolean combination of $h$-formulas, the proof gives that the simple cover property implies this, and that the Boolean combination depends only on  $\varphi$. 
			\end{proof}

			By definition, a class $\fC$ recognizes coordinates if and only if every reduced product of models in $\fC$ does so. We will show that there is only one reduced product that matters, and recognizing coordinates in with respect to this model implies every other reduced product of structures from $\fC$ recognizes coordinates. 
			
			\begin{definition}\label{D.K(C)} For a class $\fC$ of $\calL$-structures, we define a reduced product $\cK(\fC)$ of structures in $\fC$. Let 
				\begin{align*}
				\bbT=\bbT(\fC)&=\{\Th(\cN): \cN\in \fC\},\\
				\bbI=\bbI(\fC)&=\bbN\times \bbT(\fC),  
				\end{align*}
and for each $(j,T)\in \bbI$, fix $\cN_{j,T}\in \fC$ such that $\cN_{j,T}\models T$. Let 
\[
\cK(\fC)=\prod_{(j,T)\in \bbI} \cN_{j,T}/\Fin(\bbI). 
\]
			\end{definition}

			The following proposition is a direct consequence of \cite[Lemma 4]{Oma91}. 

			\begin{proposition}[Omarov]\label{Prop:Omarov}
				Let $\fC$ be a class of $\mathcal{L}$-structures and let $\phi(\bar{x})$ be a satisfiable $\mathcal{L}$-formula with the following property: there is an $\mathcal{L}$-formula $\psi(\bar{x})$ such that for all atomless reduced products $\mathcal{M}=\prod_\Ideal{I}\mathcal{M}_i$ of structures from $\fC$ and for all $\bar{a} \in \prod_\Ideal{I}\mathcal{M}_i$, \begin{equation}\label{eq.Omarov}
					\prod_\Ideal{I}\mathcal{M}_i\models \phi(\bar{a}) \Leftrightarrow \{i \in \Index{I} \mid \mathcal{M}_i\models\neg \psi(\bar{a}_i)\} \in \Ideal{I}.
					\end{equation}
					\begin{enumerate}
						\item \label{1.Omarov} Then $\phi(\bar{x})$ is equivalent to an $h$-formula $\Phi(\bar{x})$ in the common theory of all atomless reduced products from $\fC$.
						\item \label{2.Omarov} Also, $\Phi(\bar{x})$ and $\psi(\bar{x})$ are equivalent modulo $\Th(\fC)$.  
					\end{enumerate}
					Moreover, if \eqref{eq.Omarov} holds for $\cK(\fC)$, then it holds in every reduced product of structures from $\fC$. 
	            \end{proposition}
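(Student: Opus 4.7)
The plan is to apply Palmgren's decomposition (\Cref{lemma:Palmgren}) to express $\phi$ as a Boolean combination of $h$-formulas in atomless reduced products, then exploit the hypothesis on $\psi$ to collapse this Boolean combination to a single $h$-formula.

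By \Cref{lemma:Palmgren}, $\phi(\bar{x})$ is equivalent, in every atomless reduced product of structures from $\fC$, to a Boolean combination of $h$-formulas. Putting it in disjunctive normal form and using closure of $h$-formulas under conjunction, we may write
\[
\phi(\bar{x})\ \leftrightarrow\ \bigvee_{j=1}^{m}\Bigl(\varphi_j(\bar{x})\wedge\bigwedge_{k=1}^{n_j}\neg\eta_{j,k}(\bar{x})\Bigr),
\]
where each $\varphi_j$ and each $\eta_{j,k}$ is an $h$-formula. For an $h$-formula $h$, set $A_h(\bar{a}):=\{i:\mathcal{M}_i\models\neg h(\bar{a}_i)\}$; by \Cref{T.Palyutin}, $\mathcal{M}\models h(\bar{a})$ iff $A_h(\bar{a})\in\Ideal{I}$. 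Combining this with the hypothesis on $\psi$, we obtain the uniform equivalence
\[
A_\psi(\bar{a})\in\Ideal{I}\ \iff\ \exists j:\ A_{\varphi_j}(\bar{a})\in\Ideal{I}\ \text{and}\ \forall k:\ A_{\eta_{j,k}}(\bar{a})\notin\Ideal{I},
\]
holding across all atomless reduced products from $\fC$ and all $\bar{a}$.

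The main difficulty---and the substantive content of \cite[Lemma~4]{Oma91}---is to show this uniform equivalence forces the disjunction to collapse to a single positive $h$-formula. The idea is to leverage the atomlessness of $\cP(\Index{I})/\Ideal{I}$ together with the freedom to choose $\bar{a}_i$ independently across $i$ from structures of $\fC$: if some disjunct genuinely requires a negative literal $\neg\eta_{j,k}$, one can split the index set using atomlessness and modify $\bar{a}$ on one piece so that $A_{\eta_{j,k}}(\bar{a})$ moves inside $\Ideal{I}$ while $A_{\varphi_j}(\bar{a})$ and $A_\psi(\bar{a})$ remain controlled, producing a configuration inconsistent with the uniform equivalence. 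This yields an $h$-formula $\Phi(\bar{x})$ (extracted from among the $\varphi_j$) with $\phi\leftrightarrow\Phi$ in all atomless reduced products from $\fC$, establishing (\ref{1.Omarov}).

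Finally, for (\ref{2.Omarov}), once $\phi\leftrightarrow\Phi$, the hypothesis becomes $A_\Phi(\bar{a})\in\Ideal{I}\Leftrightarrow A_\psi(\bar{a})\in\Ideal{I}$, uniformly. If the set $S$ of indices where $\mathcal{M}_i\not\models(\forall\bar{x})(\Phi(\bar{x})\leftrightarrow\psi(\bar{x}))$ were not in $\Ideal{I}$, we could pick witnesses $\bar{a}_i$ for $i\in S$ to this failure, and via atomlessness confine the discrepancy to a subset of $S$ that is not in $\Ideal{I}$, producing $\bar{a}$ for which $A_\Phi(\bar{a})$ and $A_\psi(\bar{a})$ differ modulo $\Ideal{I}$---a contradiction.
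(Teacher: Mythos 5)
Your starting point matches the paper's: apply \Cref{lemma:Palmgren} to express $\phi$ as a Boolean combination of $h$-formulas. From there, however, the two proofs diverge, and the divergence is exactly where your sketch stops being a proof. The paper does not pass to disjunctive normal form: writing $\phi$ as a Boolean combination $B$ of $h$-formulas $A_1,\dots,A_n$, it picks a \emph{maximal} index set $\{1,\dots,m\}$ (after reindexing) with $B\to A_1\wedge\cdots\wedge A_m$, and proves the converse $A_1\wedge\cdots\wedge A_m\to B$ by \emph{concatenating fresh disjoint copies} of the reduced product. Since the index sets $\Index I,\Index I_{m+1},\dots,\Index I_n$ are disjoint, the element $a^\frown a_{m+1}^\frown\cdots^\frown a_n$ (with $a\models A_1\wedge\cdots\wedge A_m$ on $\Index I$ and $a_j\models B\wedge\neg A_j$ on the $j$-th fresh copy) has the same $A_i$-satisfaction pattern as $b^\frown a_{m+1}^\frown\cdots^\frown a_n$ with $b\models B$, and one then pulls back to $\phi(a)$ on $\Index I$ via condition \eqref{eq.Omarov} and the definition of the ideal $\cI\oplus\cI_{m+1}\oplus\cdots\oplus\cI_n$.

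The step you flag as the substantive content and only give ``the idea'' for is a genuine gap, and the mechanism you propose---split $\Index I$ via atomlessness and modify $\bar a$ on one piece so that $A_{\eta_{j,k}}(\bar a)$ falls into $\cI$ while $A_{\varphi_j}(\bar a)$ and $A_\psi(\bar a)$ ``remain controlled''---is not clearly workable: changing $\bar a$ on a piece of $\Index I$ perturbs \emph{all} of the sets $A_h(\bar a)$ at once, and you give no argument that the $A_{\varphi_j}$ and $A_\psi$ supports can be held fixed modulo $\cI$ while one $A_{\eta_{j,k}}$ is pushed inside. The paper's concatenation sidesteps this interference entirely because the new data lives on a disjoint index set. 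For part \eqref{2.Omarov}, the paper likewise does not use atomlessness: it is a pigeonhole on the direction of failure of $\Phi\leftrightarrow\psi$, followed by filling in the remaining coordinates with witnesses of $\Phi$ (using that $\Phi$ is a satisfiable $h$-formula, so $(\exists\bar x)\Phi$ holds in all but $\cI$-many $\cM_i$); your phrase ``confine the discrepancy via atomlessness'' names a different and unnecessary tool and omits the satisfiability step needed to complete $\bar a$ to a full element.
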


            We had difficulty recovering the proof directly from the English translation (\cite{Oma91}). We also had difficulty understanding the proof from the original Russian version (\cite{omarov1991syntactical}). These proofs seem to be substantially different. Thus, we take the opportunity to write a proof using the work of Palmgren and Palyutin (\Cref{lemma:Palmgren}). 
%			
%			We first fix some notation used in the proof. If $\cI$ is an ideal on $\Index I$, ~$\cJ$ is an ideal on $\Index J$, and $\Index I\cap \Index J=\emptyset$, then we write $\cI\oplus\cJ$ for the ideal on $\Index I\cup \Index J$ generated by $\cI$ and $\cJ$. 
%			  Then any reduced product $\prod_{\cI\oplus\cJ} \cM_i$ is naturally identified with the direct product 
%			\(
%			\prod_{\cI}\cM_i\times \prod_{\cJ} \cM_i\). 
	
			\begin{proof}
					For simplicity, we denote the ideal $\Fin(\bbI)$ by $\Fin$ when the index set~$\bbI$ is clear from the context. 
				
				We first assume that \eqref{eq.Omarov} holds for $\cK(\fC)$ and prove \eqref{1.Omarov} and \eqref{2.Omarov}. 
				
				Note that if $\cK(\fC)$ satisfies \eqref{eq.Omarov}, then so does $\cK(\fC)^m$ (identified with the reduced product $\prod_{(i,j,T)\in m\times \bbN\times \bbT(\fC)} \cN_{j,T}/\Fin$) for every $m\geq 1$, because $\cK(\fC)\cong \cK(\fC)^m$ by an isomorphism that 
				lifts to an isomorphism between $\prod_{\xi\in T(\bbI)} N_\xi$ and $\prod_{\xi\in \bbI(\fC)\times m} N_\xi$. To be precise, this isomorphism corresponds to a bijection $\alpha\colon T(\bbI)\to T(\bbI)\times m$ such that 
				\[
				\alpha[\{(i,T): i\in \bbN\}]=\{ ((i,T),j): i\in \bbN, j<m\}
				\] 
				 for every $T\in T(\fC)$.
				
				For simplicity of notation, we work with formulas in a single variable. Throughout we let $T$ be the common theory of all atomless reduced products of structures from $\mathfrak{C}$ and all implications are modulo this theory. We first prove Statement \eqref{1.Omarov}. 
				 \Cref{lemma:Palmgren} implies that  there are $h$-formulas $A_1, \dots, A_n$ such that $\Th(\cK(\fC))$ implies that $\phi$ is equivalent to a Boolean combination $B$ of $A_1, \dots, A_n$.
				We will show that $\phi$ is equivalent to a subconjunction of $A_1,\dots, A_n$. Let $l_1<\dots<l_m \leq n$ such that $B \rightarrow A_{l_1} \wedge \cdots \wedge A_{l_m}  $ and assume that $\{l_1,\dots,l_m\}$ is a maximal collection of indices (under inclusion) with this property, in the sense that  for all $l_{m+1}\leq n$ with $l_{m+1}\neq l_j$, $j\leq m$, 
                \[B \not \rightarrow \bigwedge_{j\leq m+1}  A_{l_j}.\]
				To simplify notation, by re-indexing we may assume $l_j=j$ for all $j\leq m$. Therefore,  for all $j>m$, the formula $B \wedge \neg A_j$ is consistent. We will show that $B$ is equivalent to $A_1 \wedge \cdots \wedge A_m$. 
				For each  $m< j\leq n$, let $\cK_j$ be an isomorphic  copy of $\cK(\fC)$; denote $\bbI_j$ its index set, and let $a_j$ be an element of $\cK_j$ satisfying $B \wedge \neg A_j$. Let $\cK_{m}$ be another isomorphic copy of $\cK(\fC)$, with index set $\bbI_m$, and let $a$ be an element of $\cK_m$ satisfying $A_1 \wedge \cdots \wedge A_m$. As $B$ is consistent, there is an element $b$ in $\cK_{m}$ satisfying $B$ (and thus it also satisfies $A_1 \wedge \cdots \wedge A_m$). The structure 
                \[
                \cK=\prod_{m\leq j\leq n} \cK_j
                \] 
                is isomorphic to $\cK(\fC)^{n-m+1}$ and (as mentioned at the beginning of this proof) also to $\cK(\fC)$, via an isomorphism that respects the reduced product structure. Specifically, with $\bbJ= (n-m+1)\times \bbN\times \bbT(\fC) = \bbI_m  \sqcup \cdots \sqcup \bbI_n $ we have that $\prod_{(i,j,T)\in \bbI}\cN_{j,T}/\Fin$ is naturally isomorphic to $\cK$. For convenience we write $\bbI=(n-m+1)\times \bbN\times \bbT(\fC)$ and $\cM_l$ for $\cN_{(j,T)}$ if $l=(i,j,T)$, so that 
                \[
                \cK=\prod_{l\in \bbI} \cM_l/\Fin. 
                \]  
                Since the isomorphism respects the reduced product structure, \eqref{eq.Omarov} holds for this representation of $\cK$. 
                
                %We will slightly abuse notation and for $c_j\in \cK_j$, for $m\le j\leq n$, write $c_m^\frown c_{m+1}^\frown \dots ^\frown c_n$  for the corresponding element of $\cK$. 	
     
                In $\cK$
                 the element $b^\frown a_{m+1}^\frown \cdots ^\frown a_n$ satisfies $B$ (by condition \eqref{eq.Omarov}).
                 Moreover, since the $A_i$'s are $h$-formulas, multiple applications of \Cref{T.Palyutin} imply that both $a^\frown a_{m+1}^\frown \cdots ^\frown a_n$ and $b^\frown a_{m+1}^\frown \cdots ^\frown a_n$ satisfy, \[A_1\wedge \cdots \wedge A_m \wedge \neg A_{m+1} \wedge \dots \wedge \neg A_n.\]

\begin{figure}[h]
    \centering
                        
\begin{tikzpicture}
  % Set A
  \begin{scope}
    \draw (-1,1) ellipse (2.2 and 1.4);
    \fill[opacity=0.1] (-1,1) ellipse (2.2 and 1.4);
    \node at (-3.2,2) {$A_1$};
  \end{scope}

  % Set B
  \begin{scope}
    \draw (1,1) ellipse (2.2 and 1.4);
    \node at (3.2,1.8) {$A_2$};
  \end{scope}

  % Set C
  \begin{scope}
    \draw (0,2.2) ellipse (2.2 and 1.4);
    \node at (0,3.8) {$A_3$};
  \end{scope}

  % Set D
  \begin{scope}
    \draw (0,-0.2) ellipse (2.2 and 1.4);
    \node at (0,-1.8) {$A_4$};
  \end{scope}

    \filldraw (0,1) circle (1pt) (0,1)  node [text=black,right] {$b$};
    \filldraw (-1.5,1.8) circle (1pt) (-1.5,1.8)  node [text=black,above] {$a_2$};
    \filldraw (0,-0) circle (1pt) (0,-0)  node [text=black,above] {$a_3$};
    \filldraw (0,2) circle (1pt) (0,2)  node [text=black,below] {$a_4$};
    \filldraw (-2.4,0.7) circle (1pt) (-2.4,0.7)  node [text=black,left] {$a^\frown a_{2}^\frown a_3 ^\frown a_4$};
    \filldraw (-2.4,1.5) circle (1pt) (-2.4,1.5)  node [text=black,left] {$b^\frown a_{2}^\frown a_3 ^\frown a_4$};
    \node at (5,1) {for $n=4$, $m=1$};
\end{tikzpicture}
    \end{figure}
                                 Therefore, as $B$ is a Boolean combination of the $A_i$'s, $a^\frown a_{m+1}^\frown \cdots ^\frown a_n$ also satisfies $B$. By condition \eqref{eq.Omarov}, if we let $(c_i)_i\in \prod_{i\in \bbI} \cM_i$ be a representative of $a^\frown a_{m+1}^\frown \cdots ^\frown a_n$, then %(writing $\Fin$ for $\Fin(\bbI)$)
                    \[
                    \{i\in \Index{J}\mid \mathcal{M}_i\models \neg \psi(c_i) \} \in \Fin(\bbJ).
                    \]
It follows that
\[
                 \{i\in \Index{I}_m \mid \mathcal{M}_i\models \neg \psi(c_i) \}\in \Fin(\bbI_m).
                 \]
                By Condition \eqref{eq.Omarov} again, 
                $\cK=\prod_\Ideal{I}\mathcal{M}_i $ satisfies $\phi(a).$
                As the element $a$ was chosen arbitrarily to satisfy $A_1 \wedge \cdots \wedge A_m$, this shows that, 
                \[A_1 \wedge \cdots \wedge A_m \rightarrow B,\]
                and thus we conclude that $\phi$ is equivalent to an $h$-formula (namely, the conjunction of the formulas $A_1,\dots,A_m$). We let $\Phi$ be this $h$-formula.
                We now prove Statement \eqref{2.Omarov}. By our hypothesis and Theorem~\ref{T.Palyutin}, we have for any $(a_i)_i$, 
                \[\tag{*}\{i \in \Index{I} \mid \mathcal{M}_i \models \neg \psi(a_i)\}\in \Ideal{I} \ \Leftrightarrow \ \{i \in \Index{I} \mid \mathcal{M}_i \models \neg \Phi(a_i)\} \in \Ideal{I}. \]
                Assume towards a contradiction that there is an $\Ideal{I}$-positive set $J \subseteq \Index{I}$ such that, for any $i\in J$, we have, 
                \[\mathcal{M}_i \models (\exists x ) \neg (\psi(x)  \leftrightarrow \Phi(x)). \]
                For $i\in J$, let $ a_i$ be an element in $\mathcal{M}_i$ such that $\neg (\psi(a_i)  \leftrightarrow \Phi(a_i))$ holds. At the cost of restricting to a smaller positive set and swapping the role of $\psi$ and $\Phi$, we may assume that for any $i\in J$, $\neg \psi(a_i) \wedge \Phi(a_i)$ holds in $\mathcal{M}_i$. For $i \in \Index{I}\setminus J$, we choose $a_i$ in $\mathcal{M}_i$ 
            such that $\Phi(a_i)$ holds (if it exists), otherwise take $a_i$ to be anything. Note that since $\Phi$ is a satisfiable $h$-formula, the indices in which $\Phi$ are not satisfiable is a subset of $\Ideal{I}$. Then we conclude that, 
                 \[\{i \in \Index{I} \mid \mathcal{M}_i \models \neg \psi(a_i)\}\notin \Ideal{I}, \]
                 and 
                 \[\{i \in \Index{I} \mid \mathcal{M}_i \models \neg \Phi(a_i)\}\in \Ideal{I}, \]
                 contradicting $(*)$.

                 This proves that if $\cK(\fC)$ satisfies \eqref{eq.Omarov}, then $\varphi$ and $\psi$ are equivalent to the same $h$-formula $\Phi$ modulo $T(\fC)$. By Palyutin's theorem (Theorem~\ref{T.Palyutin}), \eqref{1.Omarov} and \eqref{2.Omarov} hold in every reduced product of structures in $\fC$. This concludes the proof.  
			\end{proof}
%            
%            \begin{remark}

            We will now focus on particular kinds of $h$-formulas, those only involving logical connectives and equality. As stated in the introduction, it turns out that whether or not a certain class of structures recognizes coordinates is equivalent to whether or not a particular formula is logically equivalent to an $h$-formula. To be more formal, we must first define a theory relative to an indexed family of structures and a fixed ideal.            

	\begin{definition}\label{Def:ThI}
        Let $\Index{I}$ be an index set, $\Ideal{I}$ an ideal on $\Index{I}$, and $(\mathcal{M}_i)_{i \in \Index{I}}$ an indexed family of $\mathcal{L}$-structures. We let $\Th_\Ideal{I}(\mathcal{M}_i, i\in \mathbb{I})$ denote the common theory of $\mathcal{M}_i$ for all but $\Ideal{I}$-many $i\in \Index{I}$.  In other words,
		\[
		\Th_\Ideal{I}(\mathcal{M}_i, i \in \mathbb{I}):=\{\phi\mid  \{i \in \Index{I} \mid \mathcal{M}_i \models \neg \phi\} \in \Ideal{I}\}.
		\] 
        We sometimes write $\Th_{\Ideal{I}}(\mathcal{M}_i, i \in \Index{I})$ as $\Th_{\Ideal{I}}(\mathcal{M}_i)$ when there is no possibility of confusion.  
	\end{definition}

If $\cI$ is a proper ideal (which will always be the case), then $\Th_\cI(M_i)$ is a consistent theory because the conjunction of any finite set of sentences in it is satisfied in all but $\cI$ many $\cM_i$. A glance at Definition~\ref{D.K(C)} is sufficient to verify the following. 

\begin{lemma}\label{L.ThFinK(C)}
For any class $\fC$, with $\bbI(\fC)$ and $\cN_{j,T}$ for $(j,T)\in \bbI(\fC)$ as in Definition~\ref{Def:ThI}, we have that $\Th_{\Fin(\bbI(\fC))} (\cN_{j,T}:(j,T)\in \bbI(\fC))=\Th(\fC)$. \qed 
\end{lemma}
        Two variants of $h$-formulas are useful for our purposes. These $h$-formulas should be thought of as \emph{comparing elements in the reduced product on coordinates}. The next proposition shows that if the \emph{relative comparison formula} is an $h$-formula then the \emph{comparison formula} is an $h$-formula. 
        
        \begin{proposition}\label{prop:interpretingAllZsupport} 
		Let $\Index{I}$ be an index set, $\Ideal{I}$ an ideal on $\Index{I}$, and $(\mathcal{M}_i)_{i \in \Index{I}}$ be an indexed family of $\mathcal{L}$-structures. Suppose that for all but $\Ideal{I}$-many $i\in \Index{I}$, $\mathcal{M}_i$ has at least $3$ elements. Furthermore, suppose the formula $x=z \rightarrow y=z$ is equivalent to an $h$-formula modulo $\Th_\Ideal{I}(\mathcal{M}_i, i\in \mathbb{I})$. Then the formula  $x=z \rightarrow y=w$ is  equivalent to an $h$-formula modulo $\Th_\Ideal{I}(\mathcal{M}_i, i\in \mathbb{I})$. 
			\end{proposition}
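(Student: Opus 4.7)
The plan is to produce an explicit $h$-formula equivalent to $\phi'(x,y,z,w):=(x=z\to y=w)$ modulo $\Th_\Ideal{I}(\mathcal{M}_i)$, using as a building block the given $h$-formula $\theta(x,y,z)$ that is equivalent to $x=z\to y=z$.

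First I would record a warm-up: inequality is itself expressible as an $h$-formula. Indeed, in any structure with at least two elements, $\forall v\,\theta(x,v,z)$ is equivalent to $x\neq z$: instantiating $v\neq z$ (which exists by the three-element hypothesis) forces $x\neq z$, while in the case $x\neq z$ the formula $\theta(x,v,z)$ is vacuously true for every $v$. Since the class of $h$-formulas is closed under $\forall$, this yields $x\neq z$ as an $h$-formula.

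Next, the key logical equivalence, valid in any structure with at least two elements:
\[
(x=z\to y=w)\;\Longleftrightarrow\;\exists u\,\bigl(\theta(x,u,z)\wedge (u=z\to y=w)\bigr).
\]
For the forward direction take $u:=z$, so $\theta(x,u,z)$ is trivially true and $u=z\to y=w$ reduces to $\phi'$ itself; for the backward direction, if $x=z$ then $\theta(x,u,z)$ forces $u=z$, and then $u=z\to y=w$ gives $y=w$. Distributing the inner disjunction $u\neq z \vee y=w$ across the existential shows that the right-hand side is equivalent to $(x\neq z)\vee(y=w)$, that is, a disjunction of two $h$-formulas by the warm-up (the second disjunct uses that $\exists u\,\theta(x,u,z)$ is provable by taking $u=z$, so $(y=w)\wedge\exists u\,\theta(x,u,z)$ collapses to $y=w$).

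The main obstacle is the remaining step: packaging this disjunction as a single $h$-formula. Since the class of $h$-formulas is not closed under arbitrary disjunctions, this is not automatic. My plan is to apply the construct $\exists u\,\varphi(u)\wedge\forall u\,(\varphi(u)\to\chi(u))$, choosing $\varphi$ and $\chi$ as substitutions of $\theta$ and atomic equalities in such a way that the three-element hypothesis provides the witnesses needed to collapse the construct to the required disjunction in every $\mathcal{M}_i$ of size at least three. An alternative route, likely cleaner, is to apply the preceding Proposition~\ref{Prop:Omarov} to the coordinate condition $\{i\in \Index{I}\mid x_i=z_i\wedge y_i\neq w_i\}\in\Ideal{I}$ in atomless reduced products: use the hypothesis on $\theta$ to realise this coordinate condition by a first-order $\mathcal{L}$-formula in reduced products, then transfer the resulting $h$-formula back to almost all individual $\mathcal{M}_i$ via part~\eqref{2.Omarov} of Omarov's proposition.
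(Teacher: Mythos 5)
Your warm-up step is correct and coincides with Claim~1 of the paper's own proof: from the $h$-formula $\theta$ equivalent to $x=z\to y=z$, the formula $\forall v\,\theta(x,v,z)$ is an $h$-formula equivalent to $x\neq z$.  The reduction of $x=z\to y=w$ to the disjunction $(x\neq z)\vee(y=w)$ is also sound, although the justification of the forward direction of your displayed equivalence is slightly off: with $u:=z$, the conjunct $u=z\to y=w$ reduces to $y=w$, not to $\phi'$; you are implicitly using the hypothesis $x=z\to y=w$ to infer $y=w$ in that case.

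The genuine gap is the step you explicitly defer: producing a single $h$-formula equivalent to $(x\neq z)\vee(y=w)$. This is the entire content of the proposition --- the class of $h$-formulas is closed under $\wedge$, $\exists$, $\forall$ and the $\exists\varphi\wedge\forall(\varphi\to\psi)$ construct but \emph{not} under disjunction, so no generic combination of the two disjuncts is available --- and neither of your two suggested routes fills it in. Your first route (``choose $\varphi$ and $\chi$ as clever substitutions'') names no formulas and therefore contains no argument; this is precisely where the work lives, and the paper's own proof of the corresponding claim supplies an explicitly engineered formula with three auxiliary variables $u,x',y'$: a reference element $u$ nowhere equal to $x$ or $y$, and transformed elements $x',y'$ built so that $x'=u$ encodes $x\neq z$ and $y'=u$ encodes $y\neq w$, thereby reducing the disjunction to the common-target comparison $y'=u\to x'=u$, which the hypothesis does cover. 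Your second route (apply Proposition~\ref{Prop:Omarov}) is circular: to invoke it you must first exhibit an $\mathcal{L}$-formula $\phi$ such that $\prod_\Ideal{I}\mathcal{M}_i\models\phi(a,c,b,d)$ if and only if $\supp_=(a,c)\subseteq\supp_=(b,d)$ --- the inclusion with \emph{two distinct} reference elements $c$ and $d$. But via $\theta$ and Theorem~\ref{T.Palyutin} the hypothesis only hands you the \emph{common}-reference inclusion $\supp_=(a,c)\subseteq\supp_=(b,c)$, and bridging from the common-reference case to the two-reference case is exactly what the proposition asserts. Until you supply the explicit $h$-formula (or some noncircular route to it), the proof is not done.
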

	
	\begin{proof} 
		The proposition is the immediate consequence of the following two claims. 
		
		\begin{claim}\label{Claim.psineq}
			With respect to the theory of any structure $\mathcal{M}$ with at least two elements, the formula $x \neq y$ is equivalent to $$\psi_{\neq}(x,y):= (\forall z) \ \big[(x=y \rightarrow z=y) \rightarrow (x=y \rightarrow x=z)\big],$$ and is therefore equivalent to an $h$-formula.
		\end{claim}
            \begin{proof}Fix $\cM$ with at least two elements. We first prove that $\psi_\neq$ is equivalent to an $h$-formula in $\Th(\cM)$.  
                By assumption $(x=y \rightarrow z=y)$ and $(x=y \rightarrow x=z)$ are equivalent to $h$-formulas. Moreover, $(\exists z) (x=y \rightarrow z=y)$ trivially hold  for all $x$ and $y$ (take $z=y$). Therefore, $\psi_{\neq}(x,y)$ is equivalent to an $h$-formula. It remains to show that if $\mathcal{M}$ is a structure with at least two elements, $\psi_\neq (x,y)$ is equivalent to $x\neq y$. Towards this we consider two cases. 
                
                If $x\neq y$ then the consequent $x=y \rightarrow x=z$ vacuously holds for all $z$, and $\psi_{\neq}(x,y)$ follows. 
                
                If $x=y$, then $x=z \rightarrow y=z$ holds for all $z$, but the consequent $x=y \rightarrow x=z$ holds only if $z=y=x$. Since $\mathcal{M}$ has at least two elements,  $\psi_{\neq}(x,y)$ doesn't hold.
            \end{proof}

				\begin{claim}
					 With respect to the theory of any structure $\mathcal{M}$ with at least three elements, the formula $x=z \rightarrow y=w$ is equivalent to the following formula:
					\begin{align}\label{eq.Zsupp}
						\begin{split}(\exists u)(\exists x')(\exists y') &  \bigl(\psi_{\neq}(u,x)\wedge \psi_{\neq}(u,y)\bigr.
						\\
						\wedge   &(x=z \leftrightarrow x'=z)\\ \wedge  &(\forall x'') \left ( (x=z \leftrightarrow x''=z) \rightarrow (x''=u \rightarrow x'=u) \right)\\
						\wedge  & (y=w \leftrightarrow y'=w)\\ \wedge &(\forall y'') \left ( (y=w \leftrightarrow y''=w) \rightarrow (y''=u \rightarrow y'=u)\right)\\
						 \bigl.\wedge  &y'=u \rightarrow x'=u\bigr).
						\end{split}
					\end{align}
										%   It follows that $x=z \rightarrow y=w$ is equivalent to an $\Ideal{h}$-formula.
				\end{claim}
                \begin{proof}
                    To facilitate the argument, we view the variables $x,y,z,w,\dots $ as functions from a set $A$ to the structure $\mathcal{M}$. The claim then follows if we take $A$ to be a singleton and identify  $\mathcal{M}$ with the set of functions from $A$ to~$\mathcal{M}$.
                    The first line of (\ref{eq.Zsupp}) introduces $u$, a function nowhere equal to $x$ and $y$. The second line defines $x'$ as the function which coincides with $x$ whenever $x$ coincides with $z$, and coincides with $u$ everywhere else. The third line defines similarly $y'$ as the function which coincides with~$y$ whenever $y$ coincides with $w$, and coincides with $u$ everywhere else. The fourth line compares where $x'$ and $y'$ coincide with $u$. 
                    
                            \begin{tikzpicture}
            	\draw (0,4) node{$\phantom{.}$};
            	\draw (0,3) -- (10,3);
            	\draw (-0.2,3) node{$u$};
            	\draw (10.2,3) node{$u$};
            	
            	\draw[decorate,decoration={zigzag},color=red]   (0,3) -- (4,3);\draw[decorate,decoration=zigzag,color=red]   (6,3) -- (10,3);
            	
            	\draw[decorate,decoration=coil,segment length=4pt,color=blue] (0,3) -- (2,3);
            	\draw[decorate,decoration=coil,segment length=4pt,color=blue] (8,3) -- (10,3);
            	
            	%curves y,w
            	\draw (0,0) ..  controls (0.7,-1) .. (2,0);
            	\draw (0,0.5) ..  controls (0.5,1.5) and (1.5,-0.5) .. (2,0);
            	\draw (-0.2,0) node{$y$};
            	\draw (-0.2,0.5) node{$w$};
            	
            	\draw (2,0) ..  controls (3.5,1) and ((5,-1.5)  .. (8,0);
            	\draw[decorate,decoration=coil,segment length=4pt,color=blue] (2,0) ..  controls (3.5,1) and ((5,-1.5)  .. (8,0);
            	\draw[color=blue] (5,-0.7) node{$y'$};
            	
            	\draw (8,0) ..  controls (8.8,0.6) and (9.2,-1.5) .. (10,-0.5);
            	\draw (8,0) ..  controls (8.8,0.6) and (9.2,-0.5) .. (10,0.5);
            	\draw (10.2,-0.5) node{$y$};
            	\draw (10.2,0.5) node{$w$};
            	%curves x,y
            	\draw (0,2) ..  controls (0.7,0.5) and (1.5,2.5) .. (4,2);
            	\draw (0,2.5) ..  controls (0.7,1.5) and (1.5,2.5) .. (4,2);
            	\draw (-0.2,2) node{$x$};
            	\draw (-0.2,2.5) node{$z$};
            	
            	\draw (4,2) ..  controls (5,1.8) and (5.5,1.9)  .. (6,2);\draw[decorate,decoration=zigzag,color=red]   (4,2) ..  controls (5,1.8) and (5.5,1.9)  .. (6,2);
            	\draw[color=red] (5,1.6) node{$x'$};
            	
            	\draw (6,2) ..  controls (7.5,2.2) and (9.2,1.5) .. (10,1.5);
            	\draw (6,2) ..  controls (7.5,2.2) and (9.2,1.5) .. (10,2.5);
            	\draw (10.2,2.5) node{$z$};
            	\draw (10.2,1.5) node{$x$};
            	% %
            	\draw[dotted] (2,0)-- (2,3);
            	\draw[dotted] (8,0)-- (8,3);
            	\draw[dotted] (4,2)-- (4,3);
            	\draw[dotted] (6,2)-- (6,3);
            \end{tikzpicture}

                    To see that \eqref{eq.Zsupp} is equivalent to the original formula, notice that $x'$ coincides with $u$ more often than $y'$ does  if and only if $y$ coincides with $w$ more often  than $x$ coincides with $z$. 
                \end{proof}

                We proceed to prove Proposition~\ref{prop:interpretingAllZsupport}.  We may assume that $\cM_i$, for $i\in \bbI$, are structures with at least three elements each and that $x=z\to y=z$ is equivalent to an $h$-formula modulo $\Th_\cI(\cM_i, i\in \bbI)$. In order to prove that $x=z \rightarrow y=z$ is equivalent to an $h$-formula modulo $\Th_\Ideal{I}(\mathcal{M}_i, i\in \mathbb{I})$ it suffices to prove that each of the conjuncts in the body of the formula~\eqref{eq.Zsupp} is equivalent to an $h$-formula.  This is true for the first line by Claim~\ref{Claim.psineq}. 
                
                Note that 
                $x=z \leftrightarrow x'=z$ is an abbreviation for
                \[
                x=z \rightarrow x'=z \wedge x'=z \rightarrow x=z,
                \]
                and is therefore equivalent to an $h$-formula. This takes care of the second, fourth, and sixth conjuncts. 

                Since the formula
                \[(\exists x') ~(x=z \leftrightarrow x'=z ),\]
                trivially holds for all $x$ and $z$, we deduce that the remaining two conjuncts in  \eqref{eq.Zsupp} are equivalent to $h$-formulas.  
                %Therefore, since $x=z \rightarrow y=z$ can be expressed with an $h$-formula, so can $x=z \rightarrow y=w$. 
					\end{proof}

            We now connect the syntax of $h$-formulas with the semantics of reduced products. The \emph{comparison formulas} from the previous proposition corresponds to certain kinds of support functions. The \emph{relative support function of} $\prod_\Ideal{I}\mathcal{M}_i$, denoted $\supp_=$,  is the map which associates to two elements $a$ and $b$ with representations $(a_i)_i$ and $(b_i)_i$, the element of $\Bool{I}{I}$ where these two elements coincide: 
    \[
    ((a_i)_i,(b_i)_i) \buildrel{\supp_=}\over\longmapsto \big[\{ i \mid a_i=b_i\}\big]_\Ideal{I}.  
    \]
    Intuitively, by the variant of \strokeL o\'s's theorem for reduced products, the definability of the \emph{relative support function} in all reduced products should be an intrinsic property of the class of structures. This is what we show in \Cref{lemma:RecognizingCoordinateEquivalence} and \Cref{T.InterpretingSupport} below (see Definition~\ref{Def:ThI} for $\Th_\cI$).

			\begin{lemma} \label{lemma:RecognizingCoordinateEquivalence}
				Let $\Index{I}$ be an index set, $\Ideal{I}$ be an ideal on $\Index{I}$, and $(\mathcal{M}_i)_{i \in \Index{I}}$ be an indexed family of $\mathcal{L}$-structures. Then the following Condition (\ref{item:RCE1}) implies Condition (\ref{item:RCE2}). 
				
				%    For all atomless ideals, the Conditions \ref{item:RCE1} and \ref{item:RCE2} are equivalent.
				\begin{enumerate}
					\item\label{item:RCE1} The formula,     \[ x=x' \rightarrow y=y',\]
					is equivalent to an $h$-formula $\Phi(x,x',y,y')$ in $\Th_\Ideal{I}(\mathcal{M}_i, i\in \mathbb{I})$.
					\item\label{item:RCE2} The Boolean algebra $\Bool{I}{I}$ and the relative support function,
                    \[\supp_=: \prod_\Ideal{I} \mathcal{M}_i \rightarrow \Bool{I}{I} \text{ via } 
                     ((g_i)_i, (g_i')_i) \rightarrow \big[\{ i \in \Index{I} \mid  g_i = g_i'\}\big]_\Ideal{I},\] 
                    are interpretable in $\prod_\Ideal{I} \mathcal{M}_i$. 
				\end{enumerate}
			\end{lemma}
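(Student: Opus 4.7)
The plan is to interpret the Boolean algebra $\Bool{I}{I}$ in $\cM := \prod_\Ideal{I} \mathcal{M}_i$ using the full square $\cM^2$ as the domain of representatives, where a pair $(a,b)$ represents the class $\supp_=(a,b) = [\{i \in \Index{I} : a_i = b_i\}]_\Ideal{I}$ (I work with the ``agreement'' convention; the ``disagreement'' version is just the complement, and either is definable from the other once the Boolean algebra is interpreted). The key reduction uses \Cref{T.Palyutin}. Since the $h$-formula $\Phi(x,x',y,y')$ provided by \eqref{item:RCE1} is equivalent in $\Th_\Ideal{I}(\mathcal{M}_i)$ to $x=x' \rightarrow y=y'$, it follows that for all $a,a',b,b' \in \cM$ with representatives $(a_i),(a'_i),(b_i),(b'_i)$,
\[
\cM \models \Phi(a,a',b,b') \ \iff\  \{i : a_i = a'_i \text{ and } b_i \neq b'_i\} \in \Ideal{I} \ \iff\  \supp_=(a,a') \leq \supp_=(b,b')
\]
in $\Bool{I}{I}$. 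Thus $\Phi$ defines exactly the preorder $\preceq$ on $\cM^2$ induced by $\supp_=$.

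From this, the symmetrization $(x,x') \sim (y,y') :\Longleftrightarrow \Phi(x,x',y,y') \wedge \Phi(y,y',x,x')$ is a definable equivalence relation, and the quotient $\cM^2/{\sim}$ is in bijection with $\Bool{I}{I}$ via $[(a,b)]_{\sim} \mapsto \supp_=(a,b)$. All Boolean operations are first-order definable on the quotient from the definable order $\preceq$ by their usual lattice-theoretic characterizations, with quantifiers over $\Bool{I}{I}$ replaced by quantifiers over $\cM^2$. For instance, $[(e,f)]_{\sim} = [(a,b)]_{\sim} \wedge [(c,d)]_{\sim}$ is defined by $(e,f) \preceq (a,b) \wedge (e,f) \preceq (c,d) \wedge (\forall (g,h))\bigl((g,h) \preceq (a,b) \wedge (g,h) \preceq (c,d) \rightarrow (g,h) \preceq (e,f)\bigr)$. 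The top element is represented by any $[(a,a)]_{\sim}$, and the bottom by any pair $(a,b)$ with $a_i \neq b_i$ for $\Ideal{I}$-almost all $i$. The relative support function $\supp_=$ is then interpreted simply as the canonical quotient map $\cM^2 \to \cM^2/{\sim}$, which is trivially definable.

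The only point requiring care is surjectivity of $[(a,b)]_\sim \mapsto \supp_=(a,b)$ onto $\Bool{I}{I}$. Given $S \in \Bool{I}{I}$ with representative $\tilde S \subseteq \Index{I}$, one fixes any $a \in \cM$ and builds $b$ by setting $b_i = a_i$ for $i \in \tilde S$ and $b_i \neq a_i$ for $i \notin \tilde S$. This requires that $\Ideal{I}$-almost every $\mathcal{M}_i$ has at least two elements. If this fails, i.e., if $\{i : |\mathcal{M}_i| \leq 1\} \notin \Ideal{I}$, then the reduced product collapses to a quotient where the relevant Boolean algebra is a proper quotient of $\Bool{I}{I}$ and the statement is either vacuous (the whole reduced product is a single element) or applies verbatim to that smaller Boolean algebra; in the main case of interest (consistent with a nontrivial hypothesis \eqref{item:RCE1}), this surjectivity is immediate, and is the only real verification hiding behind an otherwise routine translation between the semantics of the $h$-formula $\Phi$ and the lattice structure on $\Bool{I}{I}$.
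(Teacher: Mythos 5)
Your proof is correct and takes essentially the same route as the paper's: apply \Cref{T.Palyutin} to deduce that the $h$-formula $\Phi$ defines the preorder $\supp_=(a,a')\subseteq\supp_=(b,b')$ on $\cM^2$, then interpret $\Bool{I}{I}$ as the ordered quotient of $\cM^2$ by the induced equivalence relation (with the Boolean operations recovered from the order in the usual way), and read off $\supp_=$ as the quotient map. The only thing you add is the remark about surjectivity of $\cM^2/{\sim}\to\Bool II$. That concern is real and the paper's proof silently elides it, but it is resolved not by ``nontriviality of hypothesis (1)'' (which does not force the $\cM_i$ to have two or more elements) but by the fact that the lemma is only ever invoked for \emph{full} classes in the sense of Definition~\ref{Def.Full}, which guarantees $|\cM_i|\geq 3$ for every $i$; worth keeping in mind that the surjectivity really comes from that standing hypothesis rather than from (1).
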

			\begin{proof}
            Assume Condition \eqref{item:RCE1}. Fix $a,a',b$, and $b'$ in  $\prod_\Ideal{I} \mathcal{M}_i$ with representatives $(a_i)_{i},(b_i)_{i},(a'_i)_{i},$ and $(b'_i)_{i}$ respectively. Then,
				\begin{align*}
					\supp_=(a,a') \subseteq \supp_=(b,b') & \Leftrightarrow  \{i \in \Index{I} \mid a_i \neq a_i' \wedge b_i = b_i'\} \in \Ideal{I}\\
					& \Leftrightarrow  \{i \in \Index{I}  \mid \neg (a_i = a_i' \rightarrow b_i = b_i')\} \in \Ideal{I}\\
					&  \Leftrightarrow  \{i \in \Index{I}  \mid \mathcal{M}_i \models \neg \Phi(a_i,a_i',b_i,b_i')\} \in \Ideal{I} \\
					& \Leftrightarrow   \prod_\Ideal{I} \mathcal{M}_i \models \Phi(a,a',b,b').
				\end{align*}
				It follows that $\Bool{I}{I}$ is interpretable in $\prod_\Ideal{I} \mathcal{M}_i$  as a poset. Moreover, all Boolean operations are interpretable in this poset. For example, $A\cap B$ is the maximal (under~$\leq$) $C$ such that $C\leq A$ and $C\leq B$, and $A\setminus B$ is the maximal $C$ such that $C\leq A$ and $C\cap B=\emptyset$. 
				 Therefore, $\Bool{I}{I}$ is interpretable as a Boolean algebra.  The natural projection is the relative support function, therefore Condition~(\ref{item:RCE2}) holds.
			\end{proof}
			\begin{theorem}\label{T.InterpretingSupport}
				Let $\fC$ be a full class of $\mathcal{L}$-structures. 
				The following conditions are equivalent: 
				\begin{enumerate}
					\item\label{item:IntSupp1} The relative support function $\supp_=$ is interpretable in all reduced products of structures from $\fC$.
					\item\label{item:IntSupp2} The relative support function $\supp_=$ is interpretable in $\cK(\fC)$ (see Definition~\ref{D.K(C)}). 
					%all atomless reduced products of structures from $\fC$. 
					\item\label{item:IntSupp3} The formula $x=x' \rightarrow y=y'$ is equivalent to an $h$-formula in the common theory $\Th(\fC)$ of $\fC$.  
				\end{enumerate} 
			\end{theorem}

			\begin{proof}
				The implication $\eqref{item:IntSupp1}\Rightarrow \eqref{item:IntSupp2}$ is obvious and $\eqref{item:IntSupp3}\Rightarrow \eqref{item:IntSupp1}$ follows from ~\Cref{lemma:RecognizingCoordinateEquivalence}.
				It remains to show that $\eqref{item:IntSupp2}$ implies $\eqref{item:IntSupp3}$. Assume that $\eqref{item:IntSupp2}$ holds and consider $\cK(\fC)$ as in Definition~\ref{D.K(C)}. Let $\phi(x,x',y,y')$ be the $\mathcal{L}$-formula defining the relation $\supp_=(a,a') \subseteq \supp_=(b,b')$ in $\prod_\Ideal{I} \mathcal{M}_i$. It follows that for all $a,a',b,b' \in \prod_\Ideal{I} \mathcal{M}_i$, with respective representatives $(a_i)_i,(a'_i)_i,(b_i)_i,(b'_i)_i$:
              \[
				\prod_\Ideal{I} \mathcal{M}_i \models \phi(a,a',b,b') \Leftrightarrow \{i \in \Index{I} \mid \neg (a_i = a_i' \rightarrow b_i = b_i')\} \in \Ideal{I}.
				\]
				By \Cref{Prop:Omarov} \eqref{1.Omarov}, in $\Th(\cK(\fC))$ the formula $\phi(x,x',y,y')$ is equivalent to an $h$-formula $\Phi(x,x',y,y')$, and by Theorem~\ref{T.Palyutin} we have 
                \[
                 \prod_\Ideal{I} \mathcal{M}_i \models \Phi(a,a',b,b') \Leftrightarrow \{i \in \Index{I} \mid \neg \Phi(a_i,a_i',b_i,b_i')\} \in \Ideal{I}.
                \]
                By \Cref{Prop:Omarov} \eqref{2.Omarov}, for all but  finitely many $i\in \Index{I}$ we have that  
                \[
                \mathcal{M}_i \models \forall x,x',y,y'~\left(\Phi(x,x',y,y') \leftrightarrow (x=x'\rightarrow y=y')\right).
                \]
                Since $\Th_{\Fin(\bbI(\fC))} (\cN_{j,T}:(j,T)\in \bbI(\fC))=\Th(\fC)$ (Lemma~\ref{L.ThFinK(C)}), we have the equivalence in $\Th(\fC)$, as required, and this concludes the proof.
                \end{proof}

			\subsection{Ultraproducts of reduced products}
			
			An ultraproduct is a special kind of reduced product. The purpose of this subsection is to show that the class of (atomless) reduced products constructed from a fixed class of $\mathcal{L}$-structures $\fC$ is closed under ultraproducts. In the following proposition, it is understood that $\cI_j$ is an ideal on an index set $\bbI_j$, and similarly for $\cJ$ on $\bbJ$. 
			
            %Recall that an ideal $\cI$ is \emph{atomless} if $\cP(\bbI)/\cI$ is an atomless Boolean algebra and a reduced product $\prod_{\Ideal{I}} \mathcal{M}_i$ \emph{atomless} if $\cI$ is atomless. 
			
			\begin{proposition}\label{P.ultraproduct}
				Given a class $\fC$ of $\mathcal{L}$-structures (elementary or not), the class of all reduced products of structures in~$\fC$ is closed under taking ultraproducts. Also, the class of all atomless reduced products of structures from $\fC$ is closed under taking ultraproducts. 
			\end{proposition}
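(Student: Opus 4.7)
The plan is to realize any ultraproduct of reduced products of $\fC$-structures as itself a reduced product of $\fC$-structures. Write $\cM_j = \prod_{\cI_j} \cM_{j,i}$ with $\cM_{j,i} \in \fC$ and $\cI_j$ an ideal on $\bbI_j$, and fix an ultrafilter $\cU$ on $\bbJ$. Form the disjoint union $\bbK := \bigsqcup_{j \in \bbJ} \bbI_j$, put $\cN_{(j,i)} := \cM_{j,i}$, and define an ideal $\cK$ on $\bbK$ by declaring, for $A \subseteq \bbK$,
\[
A \in \cK \quad \Longleftrightarrow \quad \{j \in \bbJ : A \cap \bbI_j \notin \cI_j\} \notin \cU.
\]
A routine verification shows $\cK$ is an ideal: downward closure follows from the upward closure of $\cU$; closure under finite unions follows from closure of $\cU$ under finite intersections together with closure of each $\cI_j$ under finite unions; and $\bbK \notin \cK$ because $\{j : \bbK \cap \bbI_j \notin \cI_j\} = \bbJ \in \cU$.

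Next I would define
\[
\Psi\colon \prod_{\cK} \cN_{(j,i)} \longrightarrow \prod_{\cU} \cM_j, \qquad \bigl[(a_{j,i})_{(j,i) \in \bbK}\bigr]_\cK \longmapsto \Bigl[\bigl([(a_{j,i})_{i \in \bbI_j}]_{\cI_j}\bigr)_{j \in \bbJ}\Bigr]_\cU,
\]
and check that it is an isomorphism of $\calL$-structures. The key point is that the disagreement set $\{(j,i) : a_{j,i} \neq b_{j,i}\}$ lies in $\cK$ iff $\{j : [(a_{j,i})_i]_{\cI_j} \neq [(b_{j,i})_i]_{\cI_j}\} \notin \cU$, which, since $\cU$ is an ultrafilter, is exactly $\cU$-equivalence of the induced sequences of $\cI_j$-classes. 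This makes $\Psi$ well defined and injective; surjectivity is immediate by choosing representatives, and preservation of function and relation symbols follows because on both sides these are computed coordinatewise modulo the appropriate quotient. This settles the first assertion.

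For the atomless part, suppose each $\cI_j$ is atomless and let $S \subseteq \bbK$ be $\cK$-positive, so $J_0 := \{j \in \bbJ : S \cap \bbI_j \notin \cI_j\} \in \cU$. For each $j \in J_0$, atomlessness of $\cI_j$ yields a partition $S \cap \bbI_j = T_{j,1} \sqcup T_{j,2}$ with both pieces $\cI_j$-positive. Setting $S_1 := \bigsqcup_{j \in J_0} T_{j,1}$ and $S_2 := S \setminus S_1$, both $S_1$ and $S_2$ are $\cK$-positive (witnessed by $J_0$, since $S_1 \cap \bbI_j = T_{j,1}$ and $S_2 \cap \bbI_j \supseteq T_{j,2}$ for $j \in J_0$), and they give a nontrivial splitting of $S$ modulo $\cK$.

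The only real obstacle is careful bookkeeping across the two layers of equivalence ($\cI_j$'s on the inside, $\cU$ on the outside); all the substantive content is packaged into $\cU$ being an ultrafilter, which lets membership flip cleanly to nonmembership when passing to complements. Principal ultrafilters collapse the construction to a single reduced product $\cM_{j_0}$, which already lies in the class (and remains atomless when $\cI_{j_0}$ is).
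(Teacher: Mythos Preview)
Your proof is correct and follows essentially the same approach as the paper: both construct the ideal on the disjoint union of the index sets by declaring a set small iff the set of fibers where it is $\cI_j$-positive is $\cU$-small, then verify the canonical bijection is an isomorphism and split positive sets fiberwise for atomlessness. The only differences are cosmetic---the paper phrases the defining condition via the dual ideal $\cU_*$ rather than ``$\notin \cU$'', and reindexes on $\bbJ \times \bbI$ after extending each $\cI_j$ to the full union---but the content is identical.
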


			\begin{proof}  
				Let $\cU$ be an ultrafilter on a set $\bbJ$ and let $\cI_j$, for $j\in \bbJ$, be a family of ideals. 
				We will produce an ideal $\cJ$ such that the ultraproduct $\prod_\cU (\prod_{\cI_j} \mathcal{M}_{ij})$  is isomorphic to the reduced product $\prod_\cJ \mathcal{M}_{ij}$ for every (appropriately indexed) family of structures $\mathcal{M}_{ij}$. We will also show that if all $\cI_j$ are atomless then so is $\cJ$, thus proving both parts of Proposition~\ref{P.ultraproduct}.  
				
				It will be convenient to assume that all $\cI_j$ are ideals on the same index set~$\bbI$. Towards this let $\bbI=\bigsqcup_{j\in \bbJ} \bbI_j$ (the disjoint union of $\bbI_j$)  and replace~$\cI_j$ with the ideal on $\bbI$ generated by $\cI_j$ and the set $\bigcup_{k\in \bbJ, k\neq j} \bbI_k$ for every $j$. Clearly this does not affect $\prod_{\cI_j} \mathcal{M}_{ij}$ (take $\mathcal{M}_{ij}$ to be arbitrary for $i\in \bbI\setminus \bbI_j$).

				Let $\cU_*$ be the ideal on $\bbJ$ dual to $\cU$ and note that for any index family of $\mathcal{L}$-structures $(A_j)_{j \in \mathbb{J}}$, the ultraproduct $\prod_{\cU} A_j$ is literally the same as the reduced product $\prod_{\cU_*} A_j$. 	
				For $X\subseteq \bbJ\times \bbI$ and $j\in \bbJ$, let $X_j=\{i\mid (j,i)\in X\}$. 
				On the set $\bbJ\times \bbI$ define $\cJ$ to be the set of all $X\subseteq \bbJ\times \bbI$ such that, 
				\[
				\{j\mid X_j\notin \cI_j\}\in \cU_*. 
				\]
				\begin{claim}\label{Claim.Identity}
					For all structures $\mathcal{M}_{ij}$, $i\in \bbI$, $j\in \bbJ$ of the same language the structures $\prod_\cU (\prod_{\cI_j} \mathcal{M}_{ij})$ and $\prod_\cJ \mathcal{M}_{ij}$ are isomorphic. Moreover, the identity map on $\prod_{i\in \bbI, j\in \bbJ} \cM_{ij}$ is a lifting of an isomorphism. 
				\end{claim}
				
				\begin{proof} Each one of the reduced products  in the statement of this claim is a quotient of $\prod_{i\in \bbI, j\in \bbJ} \cM_{ij}$.  Let $(a_{ij})$ and $(b_{ij})$ be two sequences indexed by $\bbJ\times \bbI$. Then the set $X:=\{(i,j)\mid a_{i,j}\neq b_{i,j}\}$ belongs to $\Ideal{J}$ if and only if $(\cU j) X_j\in \cI_j$ (where $(\cU j)$ is the quantifier `for $\cU$-many $j\in \bbJ$').  Therefore $(a_{ij})$ and $(b_{ij})$ are  equal modulo $\cJ$ if and only if  for $\cU$-many $j$, $a_{ij}$ and $b_{ij}$ are equal modulo $\cI_j$. This implies that the identity map on $\prod_{i\in \bbI, j\in \bbJ} \cM_{ij}$ is a lifting of an isomorphism between $\prod_\cU (\prod_{\cI_j} \mathcal{M}_{ij}))$ and $\prod_\cJ \mathcal{M}_{ij}$. 
					The same argument shows that this bijection preserves relation symbols, and (since we may interpret functions and constants by relations) is therefore an isomorphism as required. 
				\end{proof}
				
				\begin{claim}\label{C.atomless} 
					 The ideal $\cJ$ is atomless if and only if $\cI_j$ is atomless for $\cU$-many~$j$.
				\end{claim}
				
				\begin{proof} Assume that $\cI_j$ is atomless for $\cU$-many $j$. 
					Let $X\subseteq \bbJ\times \bbI$ be a $\cJ$-positive set. Then the set $Y=\{j\mid X_j\notin \cI_j^+\}$  belongs to $\cU$. Since $\cI_j$ is atomless, there is a partition $X_j=X_j^0\sqcup X_j^1$ into $\cI_j$-positive sets. Then the sets $X^k=\bigcup\{\{j\}\times X_j^k\mid j\in Y\}$, for $k=0,1$, form a partition of $X$ into $\cJ$ positive sets. Since $X$ was arbitrary, the conclusion follows. 
					
					For the converse, since $\cU$ is an ultrafilter we may assume that $\cI_j$ has an atom for $\cU$-many $j$. The union of these atoms gives an atom in $\cJ$. 
				\end{proof}

				The conclusion of the first part of Proposition~\ref{P.ultraproduct} follows by the two claims. 
			\end{proof}
            \begin{remark}
%            One can see that the construction in the proof of Proposition~\ref{P.ultraproduct} preserves the notion of support. More precisely, using the notation in this proof, Boolean algebras $ \cP(\Index{J}\times \Index{I})/\Ideal{J} 
%                    $ and $\prod_\cU \mathcal{P}( \mathbb{I}_j){/\mathcal{I}_j}  $ are isomorphic via 
%                    \[
%                    [X]_{ \Ideal{J}} \mapsto \Big[  \big \{[X_j]_{ \Ideal{I}_j} \big | ~ j\in \mathbb{J}\big \} \Big ]_{\cU_*}.
%                    \]
%                    Moreover, for  $a,b \in \prod_\cU(\prod_{\Ideal{I}_j} \cM_{ij})$ 
%                    \[
%                    \big[\{(i,j) \mid  a_{i,j}=b_{i,j}\}\big]_{\Ideal{J}} \mapsto \bigg [\Big \{ \big[\{ i \mid  a_{i,j}=b_{i,j}\} \big]_{ \Ideal{I}_J} \big | ~ j \in \Index{J} \Big\} \bigg ] _{\cU_*}
%                    \]
%                gives the isomorphism
It is clear that the isomorphism between             $\prod_\cU (\prod_{\cI_j} \mathcal{M}_{ij})$ and $\prod_\cJ \mathcal{M}_{ij}$    as in Claim~\ref{Claim.Identity} respects the Boolean algebra and  $\supp_=$, and we therefore have  
\[
                      \textstyle (\prod_\cJ \mathcal{M}_{ij}, \cP(\Index{J}\times \Index{I})/\Ideal{J},\supp_=)\simeq \prod_\cU (\prod_{\cI_j} \mathcal{M}_{ij}, \cP(\Index{I}_j)/\Ideal{I}_j, \supp_=) .
                      \]
            \end{remark}

	\subsection{A folklore reformulation of Feferman--Vaught}\label{section:FV}
	
	As the title of this subsection suggests, we discuss a folklore variant of the Feferman--Vaught theorem in the context of reduced products. In broad strokes, this theorem explains how to understand the theory of the reduced product by understanding the theory of the indexed models along with the quotient Boolean algebra. As usual, we let $\Index{I}$ be an index set, $\Ideal{I}$ be an ideal on $\Index{I}$, and $(\mathcal{M}_i)_{i \in \Index{I}}$ be an indexed family of $\mathcal{L}$-structures. We denote by~$\mathcal{L}_{\textrm{Bool}}$ the language of Boolean algebra:
	\[\mathcal{L}_{\textrm{Bool}}:=(\cap ,\cup, 0, 1, ^\complement).\]

    \begin{definition}
	Let $\phi(\bar{x})$ be a formula with free variables $\bar{x}$ and let $\bar{a} \in (\prod_\Ideal{I} \mathcal{M}_i)^{\vert \bar{x}\vert}$. The $\phi$-support of $\bar{a}$, denoted by %$[\![ \bar a]\!]_{\phi(\bar x)}$  or 
	$\supp_{\phi(\bar{x})}(\bar{a})$, is  
	the element, 
    \[\big[\{ i\in \Index{I} \mid \mathcal{M}_i \models \phi(\bar{a}_i)\} \big]_\Ideal{I}  \in \Bool{I}{I}.\]
	We call the function, 
	\[\supp_{\phi(\bar{x})}: \left(\prod_\Ideal{I} \mathcal{M}_i\right)^{\vert \bar{x} \vert } \rightarrow \Bool{I}{I}, \text{ via } (\bar{a}_i) \mapsto \supp_{\phi(\bar{x})}(\bar{a}) %[\![ \bar a]\!]_{\phi(x)},
	\]
	the \emph{$\phi$-support function} of $\prod_\Ideal{I} \mathcal{M}_i$.
    If $\theta$ is an $\mathcal{L}$-sentence, we denote by $c_\theta$  the following element of $\Bool{I}{I}$:
	\[c_\theta:=\big [\{ i\in \Index{I}  \mid \mathcal{M}_i \models \theta \}\big ]_\Ideal{I}.\] 

\end{definition}
     The theorem of Feferman--Vaught can be adapted to reduced products (see e.g. \cite[Theorem 6.3.2]{CK90}). We give below a folklore reformulation of this theorem in terms of relative quantifier elimination. We refer to \cite[Appendix A]{Rid17} for definitions of relative quantifiers elimination and related concepts.
     
	\begin{proposition}[Feferman--Vaught for reduced products] \label{P.FV.reduced}
		The reduced product~$\mathcal{M}\coloneqq \prod_\Ideal{I} \mathcal{M}_i$ eliminates quantifiers relative to  $\Bool{I}{I}$ in the following two-sorted language $\mathcal{L}^+$: 
    \[\mathcal{L}\cup \left\{\supp_{\phi(\bar{x})} : \phi(\bar{x}) \text{ an }  \mathcal{L}\text{-formula} \right\} \cup \mathcal{L}_{\textrm{Bool}}\cup \{c_\theta : \theta \text{ an }  \mathcal{L}\text{-sentence} \}.\]
		%Furthermore:
        %\begin{itemize}
        %   \item if $\Bool{I}{I}$ is atomless, then $\prod_\Ideal{I} \mathcal{M}_i$ has (absolute) quantifier elimination in the same language.
        %    \item if $\Bool{I}{I}$ is atomic, then $\prod_\Ideal{I} \mathcal{M}_i$ has (absolute) quantifier elimination in $\mathcal{L}\cup \{\supp_{\phi(x)}\} \cup \mathcal{L}_{\textrm{Bool}}\cup \{A_n \mid n\in \mathbb N\}\cup \{c_\theta \}$ where $A_n$ is a predicate for elements in the Boolean algebra composed of exactly $n$ atoms.
        %\end{itemize}
	\end{proposition}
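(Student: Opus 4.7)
The plan is to proceed by induction on the complexity of an $\mathcal{L}^+$-formula $\Phi(\bar x, \bar u)$, with $\bar x$ from the main sort and $\bar u$ from the Boolean sort, showing that $\Phi$ is $\mathcal{M}$-equivalent to an $\mathcal{L}^+$-formula in which every quantifier ranges over the Boolean-algebra sort $\Bool{I}{I}$. The guiding intuition throughout is that every question about main-sort elements can be recast as a statement about $\phi$-supports in $\Bool{I}{I}$.

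For the atomic cases, an $\mathcal{L}$-atomic formula $R(\bar t(\bar x))$ in the main sort is equivalent to $\supp_{R(\bar x)}(\bar t(\bar x)) = 1$, since $\mathcal{M} \models R(\bar a)$ iff $\{i : \mathcal{M}_i \not\models R(\bar a_i)\} \in \Ideal{I}$, equivalently iff the support equals the top element of $\Bool{I}{I}$. Equalities are handled analogously via the relative support $\supp_=$, function terms via their graphs, and the sentence constants $c_\theta$ are atomic terms of the Boolean sort. Boolean connectives pass through cleanly because $\supp$ commutes with them: $\supp_{\phi \wedge \psi}(\bar a) = \supp_\phi(\bar a) \cap \supp_\psi(\bar a)$, $\supp_{\neg \phi}(\bar a) = (\supp_\phi(\bar a))^\complement$, and so on. Quantifiers already on the Boolean sort are retained unchanged.

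The crux is the elimination of an existential quantifier $\exists y$ ranging over the main sort. By induction we reduce to formulas of the shape $\exists y\, \Psi(\supp_{\phi_1}(\bar x, y), \ldots, \supp_{\phi_k}(\bar x, y), \bar u)$ with $\Psi$ a purely Boolean-sort formula. For each sign pattern $\epsilon \in \{+,-\}^k$, adopting the usual convention $\phi_j^+ := \phi_j$ and $\phi_j^- := \neg \phi_j$, set $\phi_\epsilon(\bar x, y) := \bigwedge_j \phi_j^{\epsilon_j}(\bar x, y)$ and $S_\epsilon(\bar a) := \supp_{\exists y\, \phi_\epsilon(\bar x, y)}(\bar a)$; the latter is already an $\mathcal{L}^+$-term in $\bar a$. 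The key \emph{pasting lemma} asserts that a tuple $(s_1, \ldots, s_k) \in \Bool{I}{I}^k$ equals $(\supp_{\phi_j}(\bar a, b))_j$ for some $b \in \mathcal{M}$ precisely when there is a partition $(e_\epsilon)_\epsilon$ of $1$ in $\Bool{I}{I}$ with $e_\epsilon \leq S_\epsilon(\bar a)$ and $s_j = \bigvee_{\epsilon : \epsilon_j = +} e_\epsilon$ for each $j$. The ``only if'' direction takes $e_\epsilon := \bigcap_j \supp_{\phi_j^{\epsilon_j}}(\bar a, b)$; the ``if'' direction lifts each $e_\epsilon$ to a representative $E_\epsilon \subseteq \Index{I}$, forming a partition of $\Index{I}$ modulo an $\Ideal{I}$-set, and uses $e_\epsilon \leq S_\epsilon(\bar a)$ together with the axiom of choice to pick, for almost every $i \in E_\epsilon$, a local witness $b_i \in \mathcal{M}_i$ to $\phi_\epsilon(\bar a_i, y)$; these assemble into the desired $b$. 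Consequently $\exists y\, \Psi$ becomes an $\mathcal{L}_{\textrm{Bool}}$-quantification over partitions subordinate to $(S_\epsilon(\bar a))_\epsilon$.

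The main obstacle is precisely this pasting lemma: one must justify that every Boolean partition in $\Bool{I}{I}$ subordinate to $(S_\epsilon(\bar a))_\epsilon$ is realized by a genuine element $b \in \prod_\Ideal{I} \mathcal{M}_i$, which requires lifting from $\Bool{I}{I}$ back to subsets of $\Index{I}$ and exercising choice on the fibers. Once this lemma is in place, the remaining induction is bookkeeping, and the quantifier elimination follows the standard Feferman--Vaught template.
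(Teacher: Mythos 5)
Your proof is correct, and it takes a genuinely different route from the paper's.

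The paper's proof of Proposition~\ref{P.FV.reduced} is essentially a citation: it first observes that the Boolean sort is \emph{closed} in the sense that there are no function symbols into the main sort and no mixed predicates, then asserts that every $\mathcal{L}^+$-formula reduces modulo $\Th(\mathcal{M})$ to a Boolean combination of pure $\mathcal{L}$-formulas $\phi(\bar x)$ and Boolean-sort formulas $\psi_B(\supp_\phi(\bar x))$, invokes the classical Feferman--Vaught theorem (\cite[Theorem~6.3.2]{CK90}) to convert each $\phi(\bar x)$ into the second type, and concludes by \cite[Remark~A.8]{Rid17} on relative quantifier elimination over a closed sort. The heavy lifting --- the elimination of main-sort quantifiers --- is outsourced entirely to Chang--Keisler's theorem.

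You instead reprove the key content from scratch. Your atomic and connective steps are exactly Proposition~\ref{P.boolfund} in the paper, and your \emph{pasting lemma} is precisely the combinatorial heart of the classical Feferman--Vaught induction (what you call $S_\epsilon$, the paper would obtain from the $\psi_j$'s appearing in the invocation of \cite[Theorem~6.3.2]{CK90}). I checked the pasting lemma in both directions: for ``only if'', taking $e_\epsilon = \bigcap_j \supp_{\phi_j^{\epsilon_j}}(\bar a, b)$ does form a partition of $1$ subordinate to the $S_\epsilon(\bar a)$ with $s_j = \bigvee_{\epsilon_j = +} e_\epsilon$; for ``if'', lifting the $e_\epsilon$ to a genuine partition of $\Index{I}$ (absorbing the defect into one block), then choosing local witnesses $b_i$ on $E_\epsilon \cap T_\epsilon$ where $T_\epsilon = \{i : \mathcal{M}_i \models \exists y\,\phi_\epsilon(\bar a_i, y)\}$ and arbitrary elsewhere, yields $b$ with $\supp_{\phi_j}(\bar a, b) = \bigvee_{\epsilon_j = +} e_\epsilon = s_j$ modulo $\Ideal{I}$. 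The existential over the partition $(e_\epsilon)_\epsilon$ lives in the Boolean sort, which is allowed under relative quantifier elimination. Your approach buys transparency and self-containment (it makes the lifting/choice step explicit, which the paper's citation-based proof conceals); the paper's version buys brevity by delegating to known results. Both are valid; yours would require spelling out the bookkeeping you label as ``standard,'' e.g. absorbing compound terms into the subscript of $\supp$ via substitution, and handling the $c_\theta$ constants, but nothing there is problematic.
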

    The Feferman--Vaught theorem also asserts that there is an algorithmic procedure for quantifier elimination in this setting. Since we are not concerned with decidability, we will never (ever) mention this algorithm again. 
%	\begin{proposition}[Feferman--Vaught]
%		The product structure $\prod M_n$ eliminates quantifiers relative to 
%		\[B:= (\mathcal{P}(\mathbb{N}), \cap ,\cup, 0, 1, c_\theta , \theta \ \mathcal{L}\text{-sentence} \}\]
%		in the following language:
%		\begin{itemize}
%			\item $\prod M_n$
%			\item $B:= (\mathcal{P}(\mathbb{N}), \cap ,\cup, 0, 1,  c_\theta , \theta \ \mathcal{L}\text{-sentence} \} $
%			\item $\supp_{\phi(\Bar{x})}: \prod M_n^{\vert \Bar{x} \vert } \rightarrow B$ for each $\mathcal{L}$-formula $\phi(\Bar{x})$ .  
%		\end{itemize}
		
%		Furthermore, if all structures $M_n$ are elementary equivalent, then $\prod M_n$ eliminates quantifiers in the language:
		
%		\begin{itemize}
%			\item $\prod M_n$
%			\item $B:= (\mathcal{P}(\mathbb{N}), \cap ,\cup, 0, 1,  A_n , n\in \mathbb{N} \} $, 
%			where, for $n\in \mathbb{N}$, $A_n$ is the set of subsets composed of $n$-atoms.
%			\item $\supp_{\phi(\Bar{x})}: \prod M_n^{\vert \Bar{x} \vert } \rightarrow B$ for each $\mathcal{L}$-formula $\phi(\Bar{x})$ .  
%		\end{itemize}
		
%	\end{proposition}
	
	\begin{proof}
		We first observe that the sort,
        \[
        B:= (\Bool{I}{I};\cap ,\cup, 0, 1, \{c_\theta : \theta \text{ an } \mathcal{L}\text{- sentence} \} ),
        \]
        is \emph{closed} in the following sense: in the language, there are neither functions from $B$ to the main sort $\mathcal{M}$, nor predicates in $B^n\times \mathcal{M}^m$ for strictly positive integers $n,m$. Any (parameter-free) $\mathcal{L}^{+}$-formula is equivalent, in the theory of the reduced product, to a Boolean combination of formulas of the form:
		\begin{itemize}
			\item $\psi_B(\supp_{\phi}(\Bar{x}))$
			\item $\phi(\Bar{x})$
		\end{itemize}
		where $\phi$ is an $\mathcal{L}$-formula, and $\psi_B$ is a formula  in the language $\mathcal{L}_{\textrm{Bool}} \cup \{c_\theta: \theta \text{ an } \mathcal{L}\text{-sentence} \}$.
		By \cite[Theorem 6.3.2]{CK90}, a formula of the second kind can be expressed as a formula of the first kind. Therefore, we have eliminated all quantifiers in the main sort $\prod_\Ideal{I} \mathcal{M}_i$. Since the sort is closed, relative quantifier elimination follows by \cite[Remark A.8]{Rid17}. %The `furthermore´ statement follows from Tarski's classical result of quantifier eliminations in Boolean algebras \cite{Tar49}. 
	\end{proof}

	This language for quantifier elimination can sometimes be optimized by using the notion of a \emph{fundamental} set of formula. 
	
	\begin{definition}\label{D.Fundamehtal}
	We call a set $\Phi$ of $\mathcal{L}$-formulas  \emph{fundamental} for the pair $(\mathcal{M}_i)_{i \in \Index{I}}$ and $\Ideal{I}$ if every $\mathcal{L}$-formula is equivalent, in $\Th_{\cI}(\mathcal{M}_i)$, to a Boolean combination of formulas in $\Phi$.
	\end{definition}
	
 The following (easy) proposition implies that we may obtain quantifier elimination in a fragment of $\mathcal{L}^{+}$ which contains the support functions $\supp_\phi$ where $\phi$ ranges over a fundamental set $\Phi$. 
	
	\begin{proposition}\label{P.boolfund}
		Let $\phi(\bar{x})$ and $\psi(\bar{x})$ be two $\mathcal{L}$-formula. Then every $\bar{a}$ in $ \left(\prod_\Ideal{I}\mathcal{M}_i\right)^{|\bar{x}|}$ satisfies the following: 
\begin{align*}
		\supp_{\neg\phi}(\bar{a})& = \supp_{\phi}(\bar{a})^\complement,\\
\supp_{\phi\wedge\psi}(\bar{a})&=\supp_\phi(\bar{a})\cap\supp_\psi(\bar{a}),\\
\supp_{\phi\vee\psi}(\bar{a})&=\supp_\phi(\bar{a})\cup\supp_\psi(\bar{a}). 
\end{align*}
	\end{proposition}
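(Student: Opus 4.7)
The proposition is essentially a direct unwinding of the definition of the support function combined with the tautological behavior of models of $\mathcal{L}$-structures under propositional connectives. My plan is to fix a representative $(\bar a_i)_{i\in\Index{I}}$ for $\bar a$, verify the three identities at the level of representative subsets of $\Index I$, and then project down to $\Bool{I}{I}$.

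First, for any $\mathcal{L}$-formula $\chi(\bar x)$ the set $S_{\chi}:=\{i\in\Index I:\mathcal{M}_i\models\chi(\bar a_i)\}$ is a well-defined representative of $\supp_\chi(\bar a)$. For each fixed $i$ the structure $\mathcal{M}_i$ is a model of classical first-order logic, so
\[
\mathcal{M}_i\models\neg\phi(\bar a_i)\iff i\notin S_\phi,\qquad
\mathcal{M}_i\models(\phi\wedge\psi)(\bar a_i)\iff i\in S_\phi\cap S_\psi,
\]
and similarly with $\cup$ in place of $\cap$ for disjunction. Hence, as subsets of $\Index I$,
\[
S_{\neg\phi}=\Index I\setminus S_\phi,\quad S_{\phi\wedge\psi}=S_\phi\cap S_\psi,\quad S_{\phi\vee\psi}=S_\phi\cup S_\psi.
\]

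Second, the quotient map $\mathcal{P}(\Index I)\to\Bool{I}{I}$ is a Boolean homomorphism, so it sends $\Index I\setminus S_\phi$, $S_\phi\cap S_\psi$, and $S_\phi\cup S_\psi$ respectively to $\supp_\phi(\bar a)^\complement$, $\supp_\phi(\bar a)\cap\supp_\psi(\bar a)$, and $\supp_\phi(\bar a)\cup\supp_\psi(\bar a)$. Combining the two steps yields the three stated identities, and the verification does not depend on the choice of representative $(\bar a_i)$.

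There is no real obstacle here; the only thing to be careful about is to keep straight the distinction between the subset of $\Index I$ defined by $\chi$ and its class in $\Bool{I}{I}$, and to notice that no hypothesis on $\Ideal{I}$ (such as atomlessness, properness beyond the standing definition, etc.) is being used: the identities are already true at the level of subsets, so they descend to any quotient.
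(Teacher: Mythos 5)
Your argument is correct and is exactly the obvious unwinding of the definitions; the paper states \Cref{P.boolfund} without proof (it is introduced as an ``easy'' proposition), and what you have written is precisely the routine verification the authors had in mind: check the three identities at the level of representative subsets of $\Index I$ using the pointwise Tarski semantics of $\neg,\wedge,\vee$ in each $\cM_i$, then observe that the quotient map $\cP(\Index I)\to\Bool{I}{I}$ is a Boolean homomorphism. Your closing remark that no hypothesis on $\Ideal I$ is used is also accurate and worth keeping.
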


\begin{corollary}\label{C.FV.reduced} If $\Phi$ is a fundamental set of $\mathcal{L}$-formulas for the pair $(\mathcal{M}_i)_{i \in \Index{I}}$ and $\Ideal{I}$ then 
	the reduced product $\mathcal{M}\coloneqq \prod_\Ideal{I} \mathcal{M}_i$ eliminates quantifiers relative to  $\Bool{I}{I}$ in the following two-sorted language $\mathcal{L}^+_\Phi$: 
	\[
	\mathcal{L}\cup \left\{\supp_{\phi(\bar{x})} : \phi(\bar{x})\in \Phi \right\} \cup \mathcal{L}_{\textrm{Bool}}\cup \{c_\theta : \theta\in \Phi \text{ is a sentence} \}.
	\]
\end{corollary}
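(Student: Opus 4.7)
The plan is to deduce this corollary from Proposition~\ref{P.FV.reduced} together with the Boolean identities recorded in Proposition~\ref{P.boolfund}. Proposition~\ref{P.FV.reduced} already provides relative quantifier elimination for $\mathcal{M}=\prod_\Ideal{I}\mathcal{M}_i$ in the large language $\mathcal{L}^+$, which contains $\supp_\psi$ for \emph{every} $\mathcal{L}$-formula $\psi$ and $c_\theta$ for every $\mathcal{L}$-sentence $\theta$. The task therefore reduces to verifying that each such $\supp_\psi$ and each such $c_\theta$ is definable, in $\Th(\mathcal{M})$, from the symbols of the smaller language $\mathcal{L}^+_\Phi$; substituting these definitions into any quantifier-free $\mathcal{L}^+$-formula produced by Proposition~\ref{P.FV.reduced} then yields an equivalent quantifier-free $\mathcal{L}^+_\Phi$-formula, and relative quantifier elimination in $\mathcal{L}^+_\Phi$ follows.

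First I would handle the support functions. Fix an $\mathcal{L}$-formula $\psi(\bar x)$. By the assumption that $\Phi$ is fundamental, there exist $\phi_1,\dots,\phi_n\in\Phi$ and a Boolean combination $B$ such that $\psi(\bar x)\leftrightarrow B(\phi_1(\bar x_1),\dots,\phi_n(\bar x_n))$ holds in $\mathcal{M}_i$ for all but $\Ideal I$-many $i\in\Index I$, where each $\bar x_j$ is a subtuple of $\bar x$. Consequently, for any tuple $\bar a$ in $\mathcal{M}^{|\bar x|}$ with representatives $(\bar a_i)$, the sets $\{i:\mathcal{M}_i\models\psi(\bar a_i)\}$ and $\{i:\mathcal{M}_i\models B(\phi_1(\bar a_{1,i}),\dots,\phi_n(\bar a_{n,i}))\}$ agree modulo $\Ideal I$, so in $\Bool II$ one has the identity $\supp_\psi(\bar a)=\supp_{B(\phi_1,\dots,\phi_n)}(\bar a)$. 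A straightforward induction on the complexity of $B$, using the three identities of Proposition~\ref{P.boolfund} at each step (for $\neg$, $\wedge$, $\vee$), rewrites the right-hand side as an $\mathcal{L}_{\text{Bool}}$-term in $\supp_{\phi_1}(\bar a_1),\dots,\supp_{\phi_n}(\bar a_n)$, every constituent of which is a symbol of $\mathcal{L}^+_\Phi$.

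For the sentence constants $c_\theta$, the fundamental property applied to the closed formula $\theta$ yields $\phi_1,\dots,\phi_n\in\Phi$ and a Boolean combination $B(\phi_1,\dots,\phi_n)$ equivalent to $\theta$ in all but $\Ideal I$-many $\mathcal{M}_i$. Because $\theta$ is closed, this equivalence holds uniformly in the free variables of the $\phi_j$, so the truth value of $B(\phi_1(\bar b),\dots,\phi_n(\bar b))$ in each such $\mathcal{M}_i$ is independent of the tuple~$\bar b$; evaluating at any fixed tuple $\bar a$ of $\mathcal{M}$ of appropriate arity then expresses $c_\theta$ as the same $\mathcal{L}_{\text{Bool}}$-term in the supports $\supp_{\phi_j}(\bar a_j)$, once more within $\mathcal{L}^+_\Phi$. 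The argument is essentially a routine Boolean manipulation on the quotient sort $\Bool II$; the only point requiring care is this last step for sentence constants, where one must observe that a Boolean combination of open $\Phi$-formulas equivalent to a closed formula collapses to an expression independent of its parameters, and so lies in the $\mathcal{L}^+_\Phi$-definable closure. This completes the reduction, and hence the proof.
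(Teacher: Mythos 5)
Your proof follows the same route as the paper's: apply Proposition~\ref{P.FV.reduced} to obtain relative quantifier elimination in the larger language $\mathcal{L}^+$, then use the fundamental property of $\Phi$ together with the Boolean identities of Proposition~\ref{P.boolfund} to rewrite each $\supp_{\psi}$ as the corresponding Boolean combination of the $\supp_{\phi_j}$ with $\phi_j\in\Phi$. You go further than the paper's terse proof by treating the sentence constants $c_\theta$ explicitly; the only point worth tightening there is that replacing $c_\theta$ by $\supp_{B}(\bar a)$ at a ``fixed tuple $\bar a$'' introduces a parameter rather than a genuine $\mathcal{L}^+_\Phi$-expression, so to stay parameter-free one should substitute an existing main-sort free variable of the formula being transformed for $\bar a$ (and when there is none the formula is an $\mathcal{L}^+_\Phi$-sentence, decided outright by $\Th(\mathcal{M})$).
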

\begin{proof}
It is enough to show that every function of the form $\supp_{\psi(\bar x)}$, where $\psi(\bar x)$ is any $\mathcal{L}$-formula, is equivalent to a quantifier-free $\mathcal{L}^+_\Phi$-formula.
    Fix an $\mathcal{L}$-formula $\psi(\bar x)$. By assumption, $\psi(\bar x)$ is equivalent to a Boolean combination of formulas from $\Phi$, say $\phi_1(\bar x),\dots,\phi_k(\bar x)$. By \Cref{P.boolfund}, $\supp_{\psi(\bar x)}$ is the same Boolean combination of $\supp_{\phi_i(\bar x)}$, $i\leq k$. Therefore, it is expressed without quantifiers in the language $\mathcal{L}^+_\Phi$, as desired.    
\end{proof}

A priori, the language $\mathcal{L}^{+}$ in the Feferman--Vaught theorem can be much larger than the language needed for quantifier elimination. However, it can also be already definable in our original language. Thus the following question naturally arises:

    \begin{question}
        For which indexed families of $\mathcal{L}$-structures $(\mathcal{M}_i)_{i \in \mathbb{I}}$ and ideals $\mathcal{I}$ on $\mathbb{I}$ does the reduced product $\prod_\Ideal{I} \mathcal{M}_i$ interpret the Boolean algebra $\Bool{I}{I}$ and the support functions $\supp_{\phi(\bar{x})}$ in the language $\mathcal{L}$? In other words, in which reduced products is the language $\mathcal{L}^+$ a definable expansion of the language $\mathcal{L}$?
    \end{question}
    We provide a positive answer to the question above if our reduced product satisfies two conditions: (1)~the relative support function is interpretable and (2)~there exists a fundamental set of $h$-formulas.

First, we need the following lemma:

\begin{lemma}\label{lemma:PhiSupport}
	Fix an index set $\Index{I}$, and ideal $\Ideal{I}$ on $\Index{I}$ and a family of $\mathcal{L}$-structures $(\mathcal{M}_i)_{i \in \Index{I}}$. Assume that the reduced product $\prod_\Ideal{I} \mathcal{M}_i$ interprets the Boolean algebra $\Bool{I}{I}$ and the relative support function:
	\[\supp_= \colon (a_i)_i,(b_i)_i \mapsto \big [\{ i \mid a_i=b_i\} \big]_\Ideal{I}.\]
	Then, for every satisfiable $h$-formula $\phi(\bar x)$, the $\phi$-support function $\supp_{\phi(\bar{x})}$ is interpretable in $\prod_\Ideal{I} \mathcal{M}_i$. 
\end{lemma}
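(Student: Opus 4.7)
The plan is to characterize $\supp_\phi(\bar a)$ as the maximum element, in the (interpreted) Boolean algebra $\Bool{I}{I}$, of a family easily definable from $\supp_=$ and $\phi$. Specifically, I will establish the identity
\[ \supp_\phi(\bar a) \;=\; \max\bigl\{\supp_=(\bar a, \bar b) \,:\, \bar b \in \cM \text{ and } \cM \models \phi(\bar b)\bigr\}, \]
where the maximum is taken in the Boolean order on $\Bool{I}{I}$ and is attained.

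The upper bound will be immediate from Theorem~\ref{T.Palyutin} (Palyutin's \L o\'s-type theorem for $h$-formulas): if $\cM \models \phi(\bar b)$, then $\{i : \cM_i \models \phi(\bar b_i)\}$ has $\cI$-small complement, and on the coincidence set of $\bar a$ and $\bar b$ this forces $\cM_i \models \phi(\bar a_i)$ modulo $\cI$, which yields $\supp_=(\bar a, \bar b) \subseteq \supp_\phi(\bar a)$ in $\Bool{I}{I}$. For attainment, I will fix $\bar c \in \cM$ with $\cM \models \phi(\bar c)$, which exists by satisfiability of $\phi$ in $\cM$; another application of Theorem~\ref{T.Palyutin} gives that $\{i : \cM_i \not\models \phi(\bar c_i)\} \in \cI$. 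I then define a representative $\bar b \in \prod_{i \in \Index{I}} \cM_i$ coordinatewise by $\bar b_i := \bar a_i$ whenever $\cM_i \models \phi(\bar a_i)$, and $\bar b_i := \bar c_i$ otherwise. The failure set $\{i : \cM_i \not\models \phi(\bar b_i)\}$ is then contained in $\{i : \cM_i \not\models \phi(\bar c_i)\} \in \cI$, so Theorem~\ref{T.Palyutin} gives $\cM \models \phi(\bar b)$, while $\supp_=(\bar a, \bar b) \supseteq \supp_\phi(\bar a)$ holds by construction.

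With the identity in hand, interpretability of $\supp_\phi$ follows by writing its graph as
\[ \psi_\phi(S, \bar x)\;:\; \exists \bar y\bigl(\phi(\bar y) \wedge S \subseteq \supp_=(\bar x, \bar y)\bigr) \;\wedge\; \forall \bar y\bigl(\phi(\bar y) \rightarrow \supp_=(\bar x, \bar y) \subseteq S\bigr), \]
where the first conjunct witnesses attainment at $S$ and the second forces $S$ to be an upper bound; hence $\psi_\phi(S, \bar a)$ holds precisely when $S = \supp_\phi(\bar a)$. Since $\Bool{I}{I}$ and $\supp_=$ are by hypothesis interpretable in $\cM$ and $\phi$ is an $\mathcal{L}$-formula, $\psi_\phi$ unpacks to an $\mathcal{L}$-formula in $\cM$, yielding the desired interpretation. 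I do not anticipate a significant obstacle: the argument rests on two applications of Theorem~\ref{T.Palyutin} together with a routine coordinatewise splicing of $\bar a$ against a fixed witness $\bar c$ of $\phi$, and the only mildly delicate point is verifying that the spliced tuple $\bar b$ is a bona fide element of $\prod_{i \in \Index{I}} \cM_i$, which is immediate from its pointwise definition.
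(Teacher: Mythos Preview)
Your proof is correct and follows essentially the same idea as the paper's: both fix a witness of $\phi$, splice it coordinatewise against $\bar a$, and invoke Theorem~\ref{T.Palyutin} to identify $\supp_\phi(\bar a)$ as a maximum in the interpreted Boolean algebra. The paper phrases this as ``the largest $S$ such that $\phi(a_S)$ holds'' for the splice $a_S$ relative to a fixed parameter $b$, then argues that the answer is independent of $b$; your formulation quantifies directly over all $\bar y$ with $\phi(\bar y)$, which sidesteps the parameter-elimination step and is slightly cleaner for that reason, but the underlying mechanism is the same.
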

\begin{proof} We may assume that $\bar x$ is a single variable, as the proof changes only notationally. 
	Fix $b\in \prod_\Ideal{I} \mathcal{M}_i$ satisfying $\phi(x)$. For every element $a \in \prod_\Ideal{I} \mathcal{M}_i$ and $S\in \Bool{I}{I}$, there is a unique element $a_S$ which coincides with $a$ on $S$ and with $b$ on $S^\complement$. Formally, $a_S$ is the unique element such that:
	\begin{itemize}
		\item $\supp_=(a_S,a) \supseteq S$,
		\item $\supp_=(a_S,b) \supseteq S^\complement$.
	\end{itemize}
	Then, $\phi(a_S)$ holds if and only if $\cM_i\models \phi(a_S(i))$ for all but $\cI$-many~$i$'s, and this is equivalent to $S\setminus \supp_{\phi(\bar{x})}(\bar{a}) %[\![ a]\!]_{\phi(x)}
	\in \cI$. 
	Therefore $\supp_{\phi(\bar{x})}(\bar{a})$  %$[\![ a]\!]_{\phi(x)}$ 
	is the largest $S \in \Bool{I}{I} $ with respect to the inclusion such that $\phi(a_S)$ holds. Since $S$ does not depend on the choice of $b$, we get that the $\phi$-support function $\supp_{\phi(x)}$ is interpretable without parameters.
\end{proof}

We can now prove Theorem~\ref{C.QuestionDecomp}.

	\begin{theorem}\label{C.QuestionDecomp.1}		    Assume that a reduced product $\mathcal{M}:=\prod_\Ideal{I} \mathcal{M}_i$ interprets the relative support function $\supp_=$. Assume that there exists $\Phi$ a fundamental set of satisfiable $h$-formulas for  the pair $(\mathcal{M}_i)_{i \in \Index{I}}$ and $\Ideal{I}$. Then $\mathcal{M}$, as an $\mathcal{L}$-structure, already interprets all functions of the language $\calL^+$ and $\mathcal{M}$ eliminates quantifiers relative to $\Bool{I}{I}$ in the language $\calL_\Phi^+$.
	\end{theorem}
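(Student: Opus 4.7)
The plan is to assemble the preparatory lemmas of this subsection: the quantifier elimination half of the conclusion is essentially Corollary~\ref{C.FV.reduced}, so the only non-trivial content is showing that every function symbol of $\calL^+$ is already interpretable in $\mathcal{M}$ in the language $\calL$.

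First, I would use the hypothesis that $\supp_=$ is interpretable in $\mathcal{M}$. By Lemma~\ref{lemma:RecognizingCoordinateEquivalence} (the implication $(1)\Rightarrow (2)$, whose interpretation was read off from the ordering $\supp_=(a,a')\subseteq \supp_=(b,b')$), this automatically delivers an interpretation of the Boolean algebra $\Bool II$ together with the quotient map. The constants $c_\theta$ for $\calL$-sentences $\theta$ are then specific elements of this interpreted Boolean algebra, computed from the truth pattern of $\theta$ across the $\mathcal{M}_i$, so they are definable from the empty tuple. For each satisfiable $\phi\in \Phi$, Lemma~\ref{lemma:PhiSupport} (which applies precisely because $\Phi$ consists of $h$-formulas) gives a parameter-free interpretation of $\supp_\phi$; unsatisfiable $\phi\in \Phi$, if any, simply yield $\supp_\phi\equiv 0_{\Bool II}$ and are trivially definable.

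To pass from the formulas in $\Phi$ to arbitrary $\calL$-formulas, I would invoke the fundamentality assumption: any $\calL$-formula $\psi(\bar x)$ is equivalent modulo $\Th_{\cI}(\mathcal{M}_i)$ to a Boolean combination of some $\phi_1,\ldots,\phi_k\in \Phi$, and Proposition~\ref{P.boolfund} then guarantees that $\supp_{\psi(\bar x)}$ is the corresponding Boolean combination of $\supp_{\phi_1(\bar x)},\ldots,\supp_{\phi_k(\bar x)}$ inside the (already interpreted) Boolean algebra $\Bool II$. Hence $\supp_{\psi(\bar x)}$ is interpretable in $\mathcal{M}$ for every $\calL$-formula $\psi$. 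This accounts for every function symbol of $\calL^+$, and the relative quantifier elimination in $\calL^+_\Phi$ is then a direct citation of Corollary~\ref{C.FV.reduced}.

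There is no genuinely hard step; the proof is a bookkeeping combination of Lemma~\ref{lemma:RecognizingCoordinateEquivalence}, Lemma~\ref{lemma:PhiSupport}, Proposition~\ref{P.boolfund}, and Corollary~\ref{C.FV.reduced}. The only care required is to notice that Proposition~\ref{P.boolfund} transports a Boolean combination of formulas to exactly the corresponding Boolean combination of support functions, so that definability transfers from the fundamental set $\Phi$ to all of $\calL$ without loss.
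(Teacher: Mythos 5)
Your proof is correct and takes essentially the same approach as the paper's: invoke Lemma~\ref{lemma:PhiSupport} to get $\supp_\phi$ definable for $h$-formulas, use the fundamentality of $\Phi$ together with Proposition~\ref{P.boolfund} to transport this to arbitrary $\calL$-formulas, and then cite Corollary~\ref{C.FV.reduced} for the quantifier elimination. If anything, your write-up is slightly more careful than the paper's (which only explicitly handles satisfiable $h$-formulas and is terse about the $c_\theta$'s); the one small wrinkle is that you cite Lemma~\ref{lemma:RecognizingCoordinateEquivalence}$(1)\Rightarrow(2)$ to obtain the interpretation of the Boolean algebra, but that lemma's hypothesis is the existence of an $h$-formula for $x=x'\to y=y'$, not the interpretability of $\supp_=$ — in fact no lemma is needed here, since the theorem's hypothesis that $\supp_=$ is interpretable already includes having $\Bool{I}{I}$ interpreted as its codomain, and the paper simply fixes such an interpretation at the outset.
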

    \begin{proof} We fix an interpretation of the Boolean algebra $\Bool{I}{I}$ and of the map $\supp_=$.
    By \Cref{lemma:PhiSupport}, for any $h$-formula $\phi(x)$ the function $\supp_{\phi(\bar{x})}$ is definable in the language,  \[
	\mathcal{L} \cup \mathcal{L}_{\textrm{Bool}}\cup \left\{\supp_{=}\right\}.
	\]
    Relative quantifier elimination follows directly from \Cref{C.FV.reduced}.
    \end{proof}

If $\mathcal{M}$ is an atomless reduced product, then it admits automatically a fundamental set of $h$-formulas (i.e., \Cref{lemma:Palmgren}). %\Cref{lemma:PhiSupport} and \Cref{C.QuestionDecomp.1} provide the details. 
    
For non-reduced products $\prod_{i\in\Index{I}} \cM_i$, the interpretability of $\supp_{\phi(\bar x)}$, for any $\mathcal{L}$-formula $\phi(x)$, follows directly from the interpretability of the relative support function, and there is therefore no need of a fundamental set of $h$-formulas.
This was observed by Macintyre and Derakhshan, at least in the case of products of connected rings. See \cite[Corollary 7.2]{DM22}.

    \section{Proof of Theorem \ref{T.A}}

        %In \cite[\S 2.2]{de2023trivial} it was conjectured that the `right' proof of recognition of coordinates should follow from model-theoretical considerations. It turns out that indeed one may formulate a simple characterization, using the relative support function.
  %  In particular, we see that any coordinate respecting isomorphism respects the relative support function. 
    %This confirms the conjecture from \cite[\S 2.2]{de2023trivial}. This characterization is proven in these section. The result are then gathered in   \Cref{T.eq}. 

As mentioned in the introduction, in \cite[\S 2.2]{de2023trivial} it was pointed out that 
    from the model-theoretic point of view, a \emph{morally} satisfactory proof that a theory recognizes coordinates would proceed by exhibiting a copy of $\cP(\bbN)/\cI$ as well as the projections $\pi_S$, for $S\in \Bool {\bbN} \cI$ inside every reduced product $\prod_{\Fin} \cM_n$ of models of $T$. In this section, we prove that a theory recognizes coordinates if and only if every reduced product interprets both the appropriate Boolean algebra along with the appropriate coordinate projections. Moreover, we prove that recognizing coordinates is equivalent to a simple characterization using the relative support function. This section provides much of substance of the proof of \Cref{T.A}. We gather the results together at the end of the section (see \Cref{T.eq}).
    
    \subsection{Interpreting supports implies recognizing coordinates}
    We show in this subsection that if all reduced products from a class of structures interpret the relative support function, then said class recognizes coordinates. The proof breaks nicely into two steps: (1) if a reduced product $\prod_\cI \mathcal{M}_i$ interprets the relative support function, then it also (uniformly) interprets the quotient structures $\prod_\cI \mathcal{M}_i\restriction S$ for any $S \in \Bool{I}{I}$ and (2) if all reduced products from a class of structures interpret the appropriate Boolean algebra and quotients (coherently), then the class recognizes coordinates. We prove Step (1) and then Step (2). 
    
    Consider a reduced product $\mathcal{M}:=\prod_\cI \mathcal{M}_i$ and nonzero $S \in \Bool{I}{I}$, with a representative $\tilde S\subseteq \Index{I}$. Then  $\Ideal{I}$ induces the ideal $\Ideal{I}_S:=\{\tilde S \cap J \mid J \in \Ideal{I}\}$ on~$\tilde S$ and $\mathcal{M} \restriction S$ denotes the reduced product $\prod_{\Ideal{I}_S} \mathcal{M}_i$. We let $a\rs S$ denote the natural projection of an element $a$ of $\mathcal{M}$ to $\mathcal{M} \restriction S$. 

    We first show Step (1), i.e., interpreting the relative support function implies that the relevant family of quotients is also interpretable. Before proving the following sequence of lemmas, we recall that the concept of a \emph{full class} was defined in Definition~\ref{Def.Full}. 
	\begin{lemma}\label{Claim.RestrictionInterpretable}Let $\fC$ be a full class, $\Index{I}$ be an index set, $\Ideal{I}$ be an ideal on $\Index{I}$, and $(\cM_i)_{i \in \Index{I}}$ a fixed family of $\mathcal{L}$-structures from $\fC$. Let $\mathcal{M} \coloneq \prod_\Ideal{I} \mathcal{M}_i$ be the reduced product. Assume that   
	$\mathcal{M}$ interprets the Boolean algebra $\Bool{I}{I}$ and the relative support function:
	\[\supp_= \colon (a_i)_i,(b_i)_i \mapsto \big[\{ i \mid a_i=b_i\}\big]_\Ideal{I}.\]
        Then both the restriction $\mathcal{M} \restriction S$, with its natural $\mathcal{L}$-structure, and the natural projection $\pi_S: \mathcal{M} \rightarrow \mathcal{M} \restriction S$ via  $a\mapsto a\restriction S$ are (uniformly) interpretable with parameter $S$. 
	\end{lemma}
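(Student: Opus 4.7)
The plan is to realize $\cM\rs S$ as an interpretable quotient of $\cM$ itself, using $\supp_=$ (and the interpreted Boolean algebra $\Bool II$) to define both the equivalence relation giving the domain and the predicates of the $\calL$-structure on the quotient. For each $S\in\Bool II$, define the binary relation
\[
 a\sim_S b\quad\Longleftrightarrow\quad S\subseteq \supp_=(a,b).
\]
Since $\supp_=$ and $\Bool II$ are interpretable in $\cM$, this is a definable (with parameter $S$) equivalence relation. A direct unpacking shows that $a\sim_S b$ if and only if $\tilde S\cap\{i\mid a_i\neq b_i\}\in\Ideal I$, which is exactly the kernel of the quotient map $\pi_S\colon\cM\to\cM\rs S$. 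Since $\pi_S$ is obviously surjective (extend a representative arbitrarily off $\tilde S$), the map $[a]_{\sim_S}\mapsto \pi_S(a)$ is a bijection between $\cM/{\sim_S}$ and $\cM\rs S$.

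Next I would transfer the $\calL$-structure. For an $n$-ary function symbol $f$, the operation on the quotient is simply $f([a_1]_{\sim_S},\dots,[a_n]_{\sim_S}) := [f^{\cM}(a_1,\dots,a_n)]_{\sim_S}$; because $f^{\cM}$ acts coordinate-wise and $\sim_S$ agrees with coordinate-wise equality on~$\tilde S$ modulo $\Ideal I$, the operation is well defined and clearly definable (with parameter $S$). Constants are the special case $n=0$. For an $n$-ary relation symbol $R$, view $R(\bar x)$ as an atomic formula; it is satisfiable in $\cM$ because $\fC$ is full ($R^{\cM_i}\neq\emptyset$ for every $i$, so a witness in $\cM$ is obtained by gluing witnesses coordinate-wise). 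Atomic formulas are $h$-formulas, so \Cref{lemma:PhiSupport} gives that the $R$-support function
\[
 \supp_{R(\bar x)}\colon \cM^n\longrightarrow \Bool II
\]
is interpretable in $\cM$. The rule
\[
 R^{\cM\rs S}\bigl([a_1]_{\sim_S},\dots,[a_n]_{\sim_S}\bigr)\quad\Longleftrightarrow\quad S\subseteq \supp_{R(\bar x)}(a_1,\dots,a_n)
\]
reproduces the natural relation on the reduced product $\cM\rs S$, since $R$ holds of $\pi_S(\bar a)$ in $\cM\rs S$ exactly when $\tilde S\setminus\{i\mid \cM_i\models R(\bar a_i)\}\in \Ideal I$. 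Independence of representatives of the $\sim_S$-classes follows because the symmetric difference of the respective $R$-supports lies in $\Ideal I\rs \tilde S$, hence is disjoint from $S$ in $\Bool II$.

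Finally, the projection $\pi_S$ is literally the quotient map $a\mapsto[a]_{\sim_S}$ transported across the bijection above, so it is interpretable with parameter $S$ by the very formula defining $\sim_S$. Uniformity in $S$ is then immediate: exactly the same $\calL$-formulas (together with the fixed interpretations of $\Bool II$ and of each $\supp_=$, $\supp_R$) define the domain, the functions, the relations, and the projection, with $S$ appearing only as a parameter. The only point requiring attention is that atomic formulas must be satisfiable in $\cM$ in order to invoke \Cref{lemma:PhiSupport}, and this is exactly where the fullness hypothesis in Definition~\ref{Def.Full} is used; everything else is a direct translation between the quotient $\cM/{\sim_S}$ and the reduced product $\cM\rs S$.
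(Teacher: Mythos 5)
Your proof is correct and follows exactly the same route as the paper: realize $\cM\rs S$ as the quotient of $\cM$ by the $S$-parametrized equivalence relation defined from $\supp_=$, interpret function symbols coordinate-wise, and interpret relation symbols via the $\supp_R$-functions obtained from \Cref{lemma:PhiSupport} (where fullness guarantees $R$ is a satisfiable $h$-formula). The only difference is that you spell out a few details the paper leaves implicit (well-definedness modulo $\sim_S$, surjectivity of $\pi_S$, and uniformity in $S$), which is fine.
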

	
	\begin{proof}
		We first recover the base set of $\mathcal{M} \restriction S$ as the quotient of $\mathcal{M}$ by the equivalence relation $\simeq$ given by: for all $a,b \in \mathcal{M}$, 
		\[a\simeq b  \Leftrightarrow \supp_=(a,b) \supseteq S.\]
		It remains to interpret the $\mathcal{L}$-structure of $\mathcal{M} \restriction S$. Let $R$ be a predicate in~$\mathcal{L}$. Since $\fC$ is full, $R$ is a satisfiable $h$-formula. By \Cref{lemma:PhiSupport}, the support function $\supp_{R(\bar{x})}$ is interpretable in $\mathcal{M}$. Then, for any $\bar{a}\restriction S$ in  $\left(\mathcal{M}\restriction S\right)^{|\bar{x}|}$, we have that $\mathcal{M}\restriction S \models R( \bar{a}\restriction S )$  if and only if $\supp_{R(\bar{x})} (\bar{a}) \supseteq S$.
		The interpretations of function symbols $f$ from $\mathcal{L}$ in $\mathcal{M}\restriction S$ are also straightforward: for any $\bar{a}\restriction{S}\in \left(\mathcal{M}\restriction S\right)^{|\bar{x}|}$, set 
        \begin{equation*}f(\bar{a}\restriction{S})=f(\bar{a})\restriction{S}. \qedhere
        \end{equation*}
	\end{proof}

    \begin{lemma}\label{lemma:equiv} Let $\fC$ be a full class, $\Index{I}$ be an index set, $\Ideal{I}$ be an ideal on $\Index{I}$, and $(\cM_i)_{i \in \Index{I}}$ a fixed family of $\mathcal{L}$-structures from $\fC$. Let $\mathcal{M} \coloneq \prod_\Ideal{I} \mathcal{M}_i$ be the reduced product. Then the following are equivalent: 
\begin{enumerate}
    \item The formula $\phi(x,x',y,y') := \supp_{=}(x,x') \subseteq \supp_{=}(y,y')$ is an  $\emptyset$-definable subset of $\mathcal{M}^{4}$. 
    \item The Boolean algebra $(\mathcal{P}(\Index{I})/\mathcal{I},\subseteq)$ and the relative support function $\supp_{=}: \mathcal{M}^{2} \to \mathcal{P}(\Index{I})/\mathcal{I}$ are interpretable in $\mathcal{M}$. 
\end{enumerate}
\end{lemma}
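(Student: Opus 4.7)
The plan is to handle the two directions separately, with the substantive work lying in $(1)\Rightarrow (2)$.

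For $(2)\Rightarrow(1)$, the argument is essentially formal. If the Boolean algebra $\Bool{I}{I}$ is interpretable in $\mathcal{M}$, then the partial order $\subseteq$ is, by definition, a $\emptyset$-definable relation on the interpreting sort; likewise interpretability of $\supp_=$ gives a $\emptyset$-definable function. Composing, the set $\{(x,x',y,y')\in\mathcal{M}^4:\supp_=(x,x')\subseteq\supp_=(y,y')\}$ is $\emptyset$-definable, which is exactly $\phi$.

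For $(1)\Rightarrow(2)$, the strategy is to construct the interpretation of $\Bool{I}{I}$ directly on $\mathcal{M}^2$ using $\phi$. First, introduce the $\emptyset$-definable equivalence relation
$$(x,x')\sim(y,y') \iff \phi(x,x',y,y')\wedge\phi(y,y',x,x'),$$
which captures precisely that $\supp_=(x,x')=\supp_=(y,y')$. The map $\supp_=$ then descends to an injection $\mathcal{M}^2/\sim\hookrightarrow\Bool{I}{I}$. Under the mild assumption that $\mathcal{M}_i$ has at least two elements for a co-$\Ideal{I}$ set of $i$ — which holds in every setting of interest in this paper, since $\fC$ is full — this injection is onto: given any $[\Index{S}]_{\Ideal{I}}\in\Bool{I}{I}$, one builds $x,x'\in\prod_{\Ideal{I}}\mathcal{M}_i$ that agree precisely on $\Index{S}$. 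This identifies $\mathcal{M}^2/\sim$ as a set with $\Bool{I}{I}$.

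Second, transport the Boolean structure to the quotient. Under the identification above, the order $\subseteq$ on $\Bool{I}{I}$ pulls back to $\phi$, hence is $\emptyset$-definable. Because $(\Bool{I}{I},\subseteq)$ is a Boolean algebra, all of its operations are uniquely recoverable from the order by first-order formulas: $\cap$ and $\cup$ as the binary meet and join, $0$ and $1$ as the extrema, and $x\mapsto x^\complement$ as the unique $y$ satisfying $x\cap y=0$ and $x\cup y=1$. Each of these definitions is expressible in the language containing only $\subseteq$, so all Boolean operations on $\mathcal{M}^2/\sim$ are $\emptyset$-definable, completing the interpretation of $\Bool{I}{I}$. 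The support function $\supp_=$ is then realized as the interpretable quotient map $\mathcal{M}^2\to\mathcal{M}^2/\sim$.

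The main (minor) delicacy is the surjectivity step. If some $\mathcal{M}_i$ were singletons on an $\Ideal{I}$-positive set, the image of $\supp_=$ would only be a proper subalgebra of $\Bool{I}{I}$; this is why the fullness hypothesis (Definition~\ref{Def.Full}) used throughout the paper avoids any pathology, and the lemma is invoked in contexts where this issue does not arise.
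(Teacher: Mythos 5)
Your proposal is correct and follows the same route as the paper: in $(1)\Rightarrow(2)$ you form the $\emptyset$-definable equivalence relation $\phi(x,x',y,y')\wedge\phi(y,y',x,x')$, identify the quotient with $\Bool{I}{I}$, and pull back the order; in $(2)\Rightarrow(1)$ you read off $\phi$ as a composition of pieces of the interpretation (the paper phrases this via $\mathcal{M}^{\mathrm{eq}}$, which is the same observation). You are also right to flag the surjectivity of $\supp_=$ onto $\Bool{I}{I}$ as requiring that $\cI$-almost every $\mathcal{M}_i$ have at least two elements: the paper's proof asserts ``$\mathcal{M}^2/E$ is naturally isomorphic to $\Bool{I}{I}$'' without justification, and this step genuinely fails for singleton factors; the lemma's statement omits this hypothesis, but it is harmless in practice since the lemma is only invoked for full classes (Definition~\ref{Def.Full}).
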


\begin{proof} $(1) \to (2)$. Suppose that $\phi(x,x',y,y')$ is a definable set. Consider the equivalence relation given by $E := \phi(x,x',y,y') \cap \phi(y,y',x,x')$. Then $\mathcal{M}^{2}/E$ is naturally isomorphic to $\Bool{I}{I}$. Let $\pi_{E}:\mathcal{M}^{2} \to \mathcal{M}^{2}/E$ be the quotient map. Notice that if $S, T \in \Bool{I}{I}$ then $S \subseteq T$ if and only if $\exists a,b,c,d$ in $\mathcal{M}$ such that $\pi_{E}(a,b) = S$, $\pi_{E}(c,d) = T$ and $\phi(a,b,c,d)$. Hence the relation $\subseteq$ is interpretable. Notice that $\pi_{E} = \supp_{=}$ which concludes this direction.   

$(2) \to (1)$. By construction, the set $\phi(x,x',y,y')$ is a definable subset of $\mathcal{M}^{4}$ in $\mathcal{M}^{\eq}$. Since there are no new definable subsets of $\mathcal{M}^{n}$ in $\mathcal{M}^{\eq}$, we see that $\phi(x,x',y,y')$ is already definable in $M$. 
\end{proof}

\begin{lemma}\label{L.RecognizingCoordinates} Let $\fC$ be a full class. Suppose that there exists an $\mathcal{L}$-formula $\phi(x,x',y,y')$ such that for any index set $\Index{I}$, ideal $\Ideal{I}$ on $\Index{I}$, and family $(\cM_i)_{i \in \Index{I}}$ of $\mathcal{L}$-structures from $\fC$, the reduced product $\mathcal{M} := \prod_{\Ideal{I}} \mathcal{M}_i$ satisfies,
\begin{equation*}
\mathcal{M} \models \phi(a,b,c,d) \Longleftrightarrow \supp_{=}(a,b) \subseteq \supp_{=}(c,d).
\end{equation*}
Then the class $\fC$ recognizes coordinates. 
\end{lemma}
\begin{proof}
Assume that  $\mathcal{M}= \prod_\Ideal{I}\mathcal{M}_i$ and $\mathcal{N}=\prod_\Ideal{J}\mathcal{N}_j$ are reduced products of structures from $\fC$ and let $\Phi\colon \mathcal{M}\to \mathcal{N}$ be an isomorphism. Then $\Phi$ extends to an isomorphism $\mathcal{M}^{\eq}\rightarrow \mathcal{N}^{\eq}$ and by \Cref{lemma:equiv}, it induces an isomorphism $\alpha: \Bool{I}{I} \rightarrow \Bool{J}{J}$. Then, we can name the parameter $S$, and $\Phi$ gives rise to an isomorphism between $\cM\rs S$ and $\cN \rs{\alpha(S)}$ by \Cref{Claim.RestrictionInterpretable}. %It extends to an isomorphism $\mathcal{M}_S^{\eq} \rightarrow \mathcal{N}_{\alpha(S)}^{\eq}$
    Hence the following diagram commutes
	
	\begin{center}
		\begin{tikzpicture}
			\matrix[row sep=1cm,column sep=1cm]
			{
				& & \node (M1) {$\cM$}; && &\node (M2) {$\cN$};&
				\\
				& & \node (Q1) {$\cM\rs S$}; &&& \node (Q2) {$\cN \rs {\alpha(S)}$} ;
				\\
			};
			\draw (M1) edge [->] node [above] {$\Phi$} (M2);
			\draw (Q1) edge [->] node [above] {$\Phi_S$} (Q2);
			\draw (M1) edge [->] node [left] {$\pi_S$} (Q1);
			\draw (M2) edge [->] node [right] {$\pi_{\alpha(S)}$} (Q2);
		\end{tikzpicture}
	\end{center}
	This shows that $\fC$ recognizes coordinates.
\end{proof}

\begin{remark}
Uniformity comes for free: assume otherwise, that for all formulas $\varphi(x, x', y, y')$, there is a reduced product $\prod_{\mathcal{I}_\varphi} \mathcal{M}_i$ where $\supp_=(x, x') \subseteq \supp_=(y, y')$ is not equivalent to $\varphi(x, x', y, y')$. Then $\supp_=(x, x') \subseteq \supp_=(y, y')$ is not $\emptyset$-definable in the reduced product $\prod_{\mathcal{I}} \mathcal{M}_i$, where $\Ideal{I} \coloneq\bigoplus_\varphi \mathcal{I}_\varphi$.
\end{remark}

\subsection{Recognizing coordinates implies interpretability of the support}
In this section, we prove the main implication (\ref{1.A} $\Rightarrow$ \ref{2.A}) of Theorem~\ref{T.A}. We show that if a class of structures recognizes coordinates, then all reduced products from said class interpret the relative support function. In fact we show that recognizing coordinates in $\mathbb{N}$ is enough. This is at the cost of some additional work and requires a forcing argument.  For full classes of structures see Definition~\ref{Def.Full}. 

\begin{proposition}\label{P.eq.countable}   Let  $\calL$ be a countable language and let 
	$\fC$ be a full class of $\mathcal{L}$-structures. If~$\fC$ recognizes coordinates in $\bbN$, then any reduced product  of structures from~$\fC$ defines the relative support function. 
	%as well as the quotients $\cM_{S}$ and quotient maps $\pi_S\colon \cM\to \cM\rs S$, for all $S\in \Bool II$.
\end{proposition}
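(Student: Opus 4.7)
The plan is to prove the contrapositive: assuming $\supp_=$ fails to be $\emptyset$-interpretable in some reduced product of structures from $\fC$, I construct an automorphism of a reduced product over $\mathbb{N}$ of structures from $\fC$ that is not isomorphically coordinate-respecting, thereby witnessing failure of recognizing coordinates in $\mathbb{N}$. First, I invoke Theorem~\ref{T.InterpretingSupport}: non-interpretability of $\supp_=$ in some reduced product of $\fC$ is equivalent to the failure of $x=x'\to y=y'$ to be equivalent to an $h$-formula in $\Th(\fC)$. Combining Lemma~\ref{lemma:TheoryClass} with the countability of $\calL$, I fix a specific atomless reduced product $\mathcal{N}:=\prod_\mathcal{I}\mathcal{M}_n$ of structures from $\fC$ over $\mathbb{N}$ whose $\mathcal{I}$-almost-everywhere coordinate theory equals $\Th(\fC)$. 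Proposition~\ref{Prop:Omarov} ensures that if $\supp_=$ were $\emptyset$-definable in $\mathcal{N}$ then the defining formula would yield an $h$-formula equivalence for $x=x'\to y=y'$ in $\Th(\fC)$; so by our assumption $\supp_=$ is not definable in $\mathcal{N}$, and by Lemma~\ref{lemma:equiv} the four-ary relation $R(x,x',y,y'):=\supp_=(x,x')\subseteq\supp_=(y,y')$ is not $\emptyset$-definable in $\mathcal{N}$.

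The heart of the argument is to extract from this non-definability an automorphism $\Phi$ of $\mathcal{N}$ mapping a tuple satisfying $R$ to one violating $R$: any such $\Phi$ cannot be isomorphically coordinate-respecting, since an isomorphically coordinate-respecting automorphism would induce an automorphism of the quotient Boolean algebra $\mathcal{P}(\mathbb{N})/\mathcal{I}$ preserving the order $\subseteq$, and hence would preserve $R$. Non-definability of $R$ produces tuples $\bar a,\bar b$ in $\mathcal{N}$ with the same complete $\calL$-type but distinguished by $R$. To turn this into an automorphism, I pass to a forcing extension where CH holds, in which $\mathcal{N}$ becomes $\aleph_1$-saturated (reduced product analog of Rudin's saturation theorem, as referenced in the introduction), and a standard back-and-forth argument produces an automorphism $\Phi$ of $\mathcal{N}$ with $\Phi(\bar a)=\bar b$.

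The step I expect to be the main obstacle is descending the counterexample from the forcing extension back to the ground model $V$; the proposition requires failure of recognizing coordinates to be witnessed in $V$, not merely in a generic extension. The plan to handle this is an absoluteness argument: failure of `$\fC$ recognizes coordinates in $\mathbb{N}$' can be coded as the existence of two reduced products of countable structures from $\fC$ together with an isomorphism between them failing the coordinate-respecting diagram on a specific parameter $S$; after coding everything as countable data this becomes a $\Sigma^1_2$ assertion and is absolute by Shoenfield. An alternative route, avoiding forcing altogether, is to run the back-and-forth construction directly in $V$, using the non-definability of $R$ as a combinatorial diagonalizer to produce, at each finite stage of an $\omega$-length recursion, witnesses that the partial isomorphism being built can be extended while eventually violating $R$; verifying that this recursion can be carried out (despite $\mathcal{N}$ not being saturated in $V$) is the technical heart of the argument.
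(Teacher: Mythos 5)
Your contrapositive plan diverges substantially from the paper's proof, and the divergence exposes a genuine gap at exactly the step you flag as the ``main obstacle''.

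The paper proves the implication directly via Beth's definability theorem: under CH (and cardinality constraints), take two candidate support predicates $s_1,s_2$ on an elementary extension of $\cM$, form $\aleph_1$-saturated ultrapowers, obtain isomorphisms $\sigma,\tau$ preserving the support structures, and apply the hypothesis ``$\fC$ recognizes coordinates'' to the composite automorphism $\Phi=\tau^{-1}\circ\sigma$ to force it to preserve $\supp_=$; this yields implicit definability, Beth converts it to a defining formula, and the forcing is removed by Lemma~\ref{L.absoluteness}. The crucial feature is that the object produced in the CH extension is a \emph{formula}, a finite sequence of symbols in the ground model, and since $\cM$ and $\cP(\bbN)$ are unchanged by the L\'evy collapse the formula defines the same set in $V$. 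You instead produce an \emph{automorphism} of a reduced product in the extension and need to pull \emph{it} (or at least the fact of its existence) back to $V$. That descent is the gap.

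Your first proposed repair, Shoenfield absoluteness, does not apply. The structures $\cM_i\in\fC$ need not be countable, and countable elementary substructures of them need not lie in $\fC$, so the reduced products cannot be coded by reals; even granting countable factors, an automorphism of $\prod_n\cM_n/\cI$ is a function on a set of cardinality $2^{\aleph_0}$ and is not a projective object. The assertion ``there exists a non-coordinate-respecting isomorphism between reduced products of structures from $\fC$'' is therefore not $\Sigma^1_2$, and Shoenfield is not available. (Note also that this assertion is not obviously \emph{downward} absolute under the L\'evy collapse even informally, since the collapse can add new automorphisms; so ``failure in $V[G]$'' does not yield ``failure in $V$'' by any soft argument.) Your second proposed repair, running the back-and-forth directly in $V$, requires $\cN$ to be saturated (or at least homogeneous in all relevant cardinalities) in $V$, which is precisely what CH was invoked to ensure; without it the recursion stalls because you cannot realize the types needed to extend partial automorphisms.

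The first half of your plan is fine and, modulo the direction, tracks the paper: the reduction via Lemma~\ref{lemma:TheoryClass} to a single $\cN=\prod_{n\in\bbN}\cM_n/\Fin$ with $\Th_{\Fin}(\cM_n)=\Th(\fC)$, the use of Theorem~\ref{T.InterpretingSupport} and Proposition~\ref{Prop:Omarov} to transport non-definability of $\supp_=$ into $\cN$, Lemma~\ref{lemma:equiv} to convert this into non-definability of the four-ary relation $R$, and the observation that a coordinate-respecting automorphism must preserve $R$ are all correct and appear (in the opposite direction) in the paper. What you are missing is a mechanism to convert ``$R$ not definable in $\cN$'' into ``a bad automorphism of $\cN$ exists in $V$''. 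The paper's insight is precisely to avoid needing such a mechanism: by arguing from the hypothesis rather than from its negation, and letting Beth's theorem do the work, the witness that has to cross the forcing barrier is a formula, for which absoluteness is trivial. Your contrapositive framing trades that absolute witness for a non-absolute one, and no version of absoluteness you propose recovers it.
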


The proof of this proposition requires two lemmas. In one of the lemmas, we use a Keisler--Shelah style argument to compare isomorphic ultrapowers.
Since there is a forcing extension in which two elementarily equivalent countable structures have no isomorphic ultrapower associated with an ultrafilter on $\bbN$ (\cite{shelah1992vive}), our proof involves set theory. More precisely, we need a slight modification of a standard theorem, first proven in \cite{platek1969eliminating}, asserting that if $\varphi$ is a projective statement provable in ZFC+CH, then $\varphi$ is provable in ZFC.

%We say that a class $\fC$ of structures of the same language is \emph{closed under elementary equivalence} of $\cM\in \fC$ and $\cN\equiv \cM$ implies $\cN\in \fC$. 
%Such class is coded by a set of (a priori unrelated) theories $\cT$, as the union of classes of all models of each theory in $\cT$.

For  a reduced product $\cM:=\prod_\cI \cM_i $ consider the structure
\[
\cM^+:=\textstyle (\cM, \cP(\bbN)/\cI, \supp_=)
\]
in the language of $\cM$ expanded by a sort for $\cP(\bbN)/\cI$ equipped with its natural Boolean algebra structure. 

\begin{lemma}\label{L.absoluteness}
	Let  $\calL$ be a countable language and let 
	$\fC$ be a full class of $\mathcal{L}$-structures that is closed  under elementary equivalence. 	For a formula $\psi(\bar x)$ of the expanded language  (possibly with parameters from $\cM$), consider the statement,
	\begin{itemize}
		\item [$\theta_{\psi}$:] 	
		For every reduced product $\cM$ of structures in $\fC$  associated with an ideal $\cI$ on~$\bbN$, the set definable by $\psi$ in~$\cM^+$ is first-order definable in $\cM$ (with the same parameters). 
	\end{itemize}
	Then   ZFC+CH implies $\theta_{\psi}$ if and only if ZFC implies $\theta_{\psi}$. 
\end{lemma}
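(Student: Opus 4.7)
The direction $\mathrm{ZFC}\vdash\theta_{\psi}\Rightarrow\mathrm{ZFC}+\mathrm{CH}\vdash\theta_{\psi}$ is vacuous, so I focus on the converse. I would argue by contrapositive: starting from a model $V\models\mathrm{ZFC}+\neg\theta_{\psi}$, construct a generic extension $V[G]\models\mathrm{ZFC}+\mathrm{CH}+\neg\theta_{\psi}$, contradicting $\mathrm{ZFC}+\mathrm{CH}\vdash\theta_{\psi}$.

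The first step is a L\"owenheim--Skolem reduction to countable factor structures. Working in $V$, fix a witness $(\cM_i)_{i\in\bbN}$ from $\fC$, an ideal $\cI$ on $\bbN$, parameters $\bar b\in\cM:=\prod_\cI\cM_i$, together with a family $(\bar a^{\phi})_{\phi}$ indexed by the (countably many) $\mathcal{L}$-formulas $\phi(\bar x,\bar y)$ such that $\cM\models\phi(\bar a^{\phi},\bar b)\not\Leftrightarrow\cM^+\models\psi(\bar a^{\phi},\bar b)$. Pick representatives in $\prod_i\cM_i$ for each of these elements, and for every $i$ choose a countable elementary substructure $\cM'_i\preceq\cM_i$ containing the $i$th coordinate of every representative. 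Closure of $\fC$ under elementary equivalence gives $\cM'_i\in\fC$. Proposition~\ref{P.FV.reduced} (Feferman--Vaught) provides relative quantifier elimination for $\cM^+$ in terms of supports $\supp_{\phi}$ and the Boolean sort $\cP(\bbN)/\cI$; because $\cM'_i\preceq\cM_i$, these supports coincide on tuples from $\cM':=\prod_\cI\cM'_i$, while the Boolean sort is literally identical in both expansions. It follows that $\cM'^+\preceq\cM^+$ in the two-sorted language, and each $\bar a^{\phi}$ still witnesses $\phi(\bar x,\bar b)\not\equiv\psi(\bar x,\bar b)$ inside $\cM'$; thus $\cM'$ witnesses $\neg\theta_{\psi}$ using only countable factors.

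The second step is a routine forcing-to-CH absoluteness argument. In $V$, let $G$ be generic for the L\'evy collapse $\bbP:=\mathrm{Col}(\omega_1,(2^{\aleph_0})^V)$. The forcing $\bbP$ is $\omega_1$-closed, so $V[G]$ contains no new reals and no new countable sequences of reals; since the entire witness---the ideal $\cI$, the countable structures $\cM'_i$, the reduced product $\cM'$, the Boolean sort, $\supp_=$, and the parameters $\bar b,\bar a^{\phi}$---is coded by reals in $V$, it is unchanged in $V[G]$. The class $\fC$ is coded in $V$ by the set $\cT$ of theories of its members (using closure of $\fC$ under elementary equivalence), and $\cT$ is absolute, so $\cM'_i\in\fC^{V[G]}$. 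The set of $\mathcal{L}$-formulas is also absolute as a set of finite syntactic objects. Hence the same data witnesses $V[G]\models\neg\theta_{\psi}$. At the same time $V[G]\models\mathrm{CH}$, since $\bbP$ collapses $(2^{\aleph_0})^V$ to $\omega_1$ without adding reals, yielding the desired contradiction.

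The main obstacle is the L\"owenheim--Skolem step: one must verify that the inclusion $\cM'^+\hookrightarrow\cM^+$ is genuinely $\mathcal{L}^+$-elementary, including the interaction with the Boolean sort and with any parameters of $\psi$ (which, being in $\cM$, can always be absorbed into the substructure). This should fall out of Proposition~\ref{P.FV.reduced} combined with the fact that the support $\supp_{\phi}(\bar a)$ of a tuple $\bar a$ from $\cM'$ depends only on the $\mathcal{L}$-theory of each coordinate $\bar a_i$ in the factor $\cM_i$, hence is preserved when $\cM_i$ is replaced by $\cM'_i\preceq\cM_i$. Once this is in hand, the collapse step is a textbook absoluteness-by-$\omega_1$-closed-forcing argument.
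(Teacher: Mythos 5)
Your proof is correct and rests on the same forcing engine as the paper's: a $\sigma$-closed L\'evy collapse forces CH without adding new $\omega$-sequences of ground-model objects, so $\cM$, $\cM^+$, $\cI$, and all witnessing elements survive verbatim into the extension, and closure of $\fC$ under elementary equivalence keeps the factors in $\fC^{V[G]}$. You argue by contrapositive while the paper argues directly (force CH, apply $\theta_\psi$ in $V[G]$, observe the witnessing formula $\varphi$ is a finite syntactic object and hence in $V$); these are logically the same.

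The only real departure is your L\"owenheim--Skolem preprocessing, and here the paper's route is simply shorter: it collapses $\kappa:=\max(\sup_n|\cM_n|,2^{\aleph_0})$ rather than just $2^{\aleph_0}$, and observes that $<\aleph_1$-closedness already freezes $\prod_n\cM_n$, $\cM$, $\cM^+$, and $\cI$ regardless of the sizes of the $\cM_n$, so nothing is gained by shrinking the factors first. Your extra step is nonetheless correct --- the inclusion $\cM'^+\hookrightarrow\cM^+$ is $\calL^+$-elementary because the Feferman--Vaught normal form is uniform across reduced products (every $\calL^+$-formula is a fixed Boolean-sort formula applied to $\phi$-supports, and $\supp_{\phi}(\bar a)$ for $\bar a$ in $\cM'$ is unchanged when each $\cM_i$ is replaced by $\cM'_i\preceq\cM_i$, likewise the constants $c_\theta$) --- but it costs you an additional invocation of closure under elementary equivalence and a lemma the paper never needs. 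If you strip out the L--S step and collapse $\kappa$ instead of $2^{\aleph_0}$, you recover the paper's argument exactly.
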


\begin{proof}Only the direct implication requires a proof. Suppose that ZFC and CH together imply $\theta_{\psi}$. Fix $\cM_n$, for $n\in \bbN$, in $\fC$ and an ideal $\cI$ on~$\bbN$.  
	Let $\kappa:=\max(\sup_{n\in \bbN} |\cM_n|, 2^{\aleph_0})$, and let~$\bbP$ be the forcing notion whose conditions are functions $p\colon \gamma_p\to \kappa$, where $\gamma_p$ is a countable ordinal, ordered by the reverse extension (hence $p\leq q$, i.e., $p$ is a stronger condition than $q$, if $\gamma_p\geq \gamma_q$ and $p\rs \gamma_q=q$). 
	
	This forcing notion is the L\'evy collapse of $\kappa$ to $\aleph_1$. We need two standard facts about forcing with $\bbP$. 
	First,  in the generic extension there is a surjection from $\aleph_1$ to $\kappa$ (\cite[Lemma 15.21]{jech2006set}).  
	Second,  $\bbP$ is $<\aleph_1$-closed (see the last two lines of the proof of \cite[Lemma 15.21]{jech2006set}). 
	Therefore $\bbP$ does not add any 	bounded subsets of $\kappa$ (\cite[Lemma 15.8]{jech2006set}) and in particular it does not add any new subsets of $\cP(\bbN)$, or any new elements of $\prod_n \cM_n$. 
	
	These two facts imply that CH holds in the forcing extension and that each one of the structures $\prod_{n} \cM_n$, $\cM$, and $\cM^+$ is unchanged.  Moreover, since $\fC$ is assumed to be closed under elementary equivalence, each $\cM_n$ still belongs to $\fC$ in the forcing extension. 
	Since ZFC+CH imply $\theta_{\psi}$, in the forcing extension  there exists a formula $\varphi$ such that for all $\bar a$ in $\cM$ we have that $\cM^+\models \psi(\bar a)$ if and only if $\cM\models \varphi(\bar a)$. Note that $\varphi$ belongs to the ground model, as it is a finite sequence of symbols in $\calL$.

	Since $\prod_n \cM_n$ is unchanged by forcing, and since $\cI$ is the same (since $\cP(\bbN)$ is unchanged by forcing, $\cI$ remains an ideal on $\bbN$ in the forcing extension),~$\cM$ is unchanged by forcing. Therefore  $\varphi$ and $\psi$ define the same set in the ground model, as required.  
\end{proof}
Lemma~\ref{L.absoluteness}  will  be applied in situations when $\psi$ is the formula 
\[\psi(x_1,x_2,y_1,y_2)\coloneq \supp_=(x_1,x_2) \subseteq \supp_=(y_1,y_2).\]

\begin{lemma}
	\label{L.eq.ctble} Let  $\calL$ be a countable language and let 
	$\fC$ be a class of $\mathcal{L}$-structures. 
	If every atomless reduced product $\prod_{i\in \bbN}\cM_i/\Ideal I$ of structures from~$\fC$ defines $\supp_=$, then every atomless reduced product $\prod_{i\in \Index I}\cM_i /\Ideal I$ of structures from $\fC$ defines  $\supp_=$. 
\end{lemma}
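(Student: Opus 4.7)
The plan is to extract from the hypothesis a single $h$-formula $\Phi(x,x',y,y')$ that is logically equivalent to $x = x' \rightarrow y = y'$ in every $\cM \in \fC$. Once this is achieved, Theorem~\ref{T.Palyutin} together with Lemma~\ref{lemma:equiv} immediately delivers interpretability of $\supp_=$ in every atomless reduced product of structures from $\fC$, over any index set whatsoever. The first task is to upgrade the hypothesis to a uniform one. Since $\calL$ is countable, there are only countably many candidate defining formulas $\phi(x,x',y,y')$. If no single $\phi$ defined the relation $\supp_=(x,x')\subseteq\supp_=(y,y')$ in every $\bbN$-indexed atomless reduced product of~$\fC$, then selecting a counterexample $\prod_{i\in\bbN_\phi}\cM_{\phi,i}/\cI_\phi$ for each $\phi$ and taking the disjoint sum over $\phi$ would produce a single atomless reduced product of structures from $\fC$ indexed by the countable set $\bigsqcup_\phi \bbN_\phi$ in which no $\phi$ could work; this contradicts the hypothesis. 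This is the same argument as in the remark immediately following Lemma~\ref{L.RecognizingCoordinates}, with Claim~\ref{C.atomless} (or a direct check) ensuring that the disjoint sum of atomless ideals remains atomless. This pins down a uniform $\phi$.

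Next I would apply Proposition~\ref{Prop:Omarov} with this $\phi$ and the witnessing formula $\psi(x,x',y,y') := (x=x' \rightarrow y=y')$. Unpacking the definition of the support function, the relation $\supp_=(a,a')\subseteq\supp_=(b,b')$ holds in $\cM = \prod_\cI \cM_i$ if and only if $\{i : \cM_i \not\models \psi(a_i,a_i',b_i,b_i')\}\in\cI$, so the hypothesis of Omarov's proposition is met across $\bbN$-indexed atomless reduced products. A quick reinspection of its proof confirms that the auxiliary products constructed there are finite disjoint sums of the original reduced product, and hence remain $\bbN$-indexed and atomless; the proposition therefore applies verbatim in the restricted setting. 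We obtain an $h$-formula $\Phi(x,x',y,y')$ equivalent to $\phi$ in the common theory of all $\bbN$-indexed atomless reduced products of $\fC$, with the additional property that $\Phi \leftrightarrow \psi$ holds in $\cM_i$ for all but $\cI$-many $i$. Specializing to the reduced power $\cM^{\bbN}/\Fin$, which is itself an $\bbN$-indexed atomless reduced product with all factors equal to $\cM$, this forces $\Phi \leftrightarrow (x=x'\rightarrow y=y')$ to hold in $\cM$ itself, for every $\cM \in \fC$.

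To conclude, fix any atomless reduced product $\cN := \prod_{i \in \Index I}\cM_i/\cI$ of structures from $\fC$, with $\Index I$ an arbitrary index set. The formula $\Phi$ is an $h$-formula equivalent to $x=x'\rightarrow y=y'$ in each factor $\cM_i$, so by Theorem~\ref{T.Palyutin}, $\cN \models \Phi(a,a',b,b')$ holds if and only if $\{i : \cM_i \not\models (a_i = a_i' \rightarrow b_i = b_i')\} \in \cI$, which is exactly the condition $\supp_=(a,a') \subseteq \supp_=(b,b')$. Lemma~\ref{lemma:equiv} then gives interpretability of $\supp_=$ in $\cN$, as required. The main obstacle is the uniformity step paired with the verification that Omarov's proposition transfers to the $\bbN$-indexed setting; once these are in place, the rest is a clean application of the $h$-formula transfer machinery already developed in the paper.
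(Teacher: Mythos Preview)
Your approach is genuinely different from the paper's. The paper argues by contradiction: starting from a putative large-index counterexample $\cM=\prod_{\Index I}\cM_i/\cI$, it builds a countable Boolean subalgebra $\cA$ of $\cP(\Index I)$ capturing all the Feferman--Vaught data for each candidate formula $\varphi$ together with the supp-witnesses, extracts a countable $\Index I_0$ separating $\cA$ from $\cI$, and then uses quantifier elimination in the atomless Boolean algebras $\Bool{I}{I}$ and $\cP(\Index I_0)/\cI_0$ to show that the formula defining supp in the small product must agree with supp at the original witnesses in the big one. You instead try to extract an $h$-formula equivalent to $x=x'\to y=y'$ in $\Th(\fC)$ and transfer by Palyutin---more structural, and it would even give definability in non-atomless reduced products.

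There is, however, a real gap in your Step~1. The inference ``$\phi_0$ fails to define supp in the factor $\cN_{\phi_0}$, hence it fails in the disjoint sum $\cN$'' is not justified. If $\phi_0$ were to define supp in $\cN$, lifting the $\cN_{\phi_0}$-witnesses to $\cN$ by fixing the other coordinates does preserve the supp relation, but the truth value of $\phi_0$ need not transfer between $\cN_{\phi_0}$ and $\cN$: applying Feferman--Vaught to $\cN\cong\cN_{\phi_0}\times\prod_{\varphi\neq\phi_0}\cN_\varphi$, you only conclude that \emph{some other} formula (a Boolean combination of the FV constituents of $\phi_0$, with truth values of the fixed coordinates plugged in as constants) defines supp in $\cN_{\phi_0}$, which is no contradiction. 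The remark after Lemma~\ref{L.RecognizingCoordinates} that you cite is equally terse on this point.

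Your strategy can be repaired by replacing Step~1 with a different argument. Use Lemma~\ref{lemma:TheoryClass} (countable $\calL$ gives countable $\Index I$), but pass to $\Index I\times\bbN$ so that each factor $\cM_i$ is repeated infinitely often. The resulting reduced product $\cN'$ over $\Fin$ is then isomorphic to each of its finite self-sums via a factor-preserving bijection of indices, so the single $\phi$ that defines supp in $\cN'$ (by hypothesis) also defines it in every self-sum. Your correct observation that the proof of Proposition~\ref{Prop:Omarov} only needs self-sums of the given product now yields the $h$-formula $\Phi$; infinite repetition of each factor forces $\Phi\leftrightarrow(x=x'\to y=y')$ in \emph{every} $\cM_i$, and since $\bigcap_i\Th(\cM_i)=\Th_{\Fin}(\cM_i)=\Th(\fC)$ by construction, the equivalence lands in $\Th(\fC)$. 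Your Step~4 then finishes the job.
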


\begin{proof} 
	Assume the contrary, that there are an atomless ideal $\cI$ on some index set $\Index I$ and  reduced product $\cM:=\prod_{i\in \Index I} \cM_i/\Ideal I$ of structures from $\fC$ such that  no formula  $\varphi$ defines $\supp_=$ in $\cM$. 
	Let $\cF_4$ be the set of all $\calL$-formulas with at most four free variables. 
	
	For every formula $\varphi\in \cF_4$ fix $a^\varphi_1,a^\varphi_2,b^\varphi_1,b^\varphi_2$ in $\cM$ such that
	\begin{equation}
		\label{eq.def.supp-not=}
		\supp_=(a^\varphi_1,a^\varphi_2)\subseteq \supp_=(b^\varphi_1,b^\varphi_2)\qquad \not\Leftrightarrow \qquad 
		\cM\models \varphi(a^\varphi_1,a^\varphi_2,b^\varphi_1,b^\varphi_2). 
	\end{equation}		
	Consider the Boolean subalgebra $\cA$ of $\cP(\Index I)$ generated by the sets 
	\[
	\zeta^\varphi_\psi:=\{i\in \Index I\vert \cM_i\models \psi(a^\varphi_1,a^\varphi_2,b^\varphi_1,b^\varphi_2)\}
	\]
	and the sets 
	\[
	\supp_=(a^\varphi_1,a^\varphi_2)\setminus \supp_=(b^\varphi_1,b^\varphi_2),
	\]
	where $\varphi$ and $\psi$ range over $\cF_4$. 
	
	Let $\theta$ be a regular cardinal sufficiently large to have all of the objects mentioned so far---$\calL$, $\bbI$, $\cI$, $\langle \cM_i: i\in \bbI\rangle$, $\cF_4$, the assignment $\varphi\mapsto (a^\varphi_1,a^\varphi_2,b^\varphi_1,b^\varphi_2)$ belong to the set $H_\theta$ of all sets whose hereditary closure has cardinality less than $\theta$ (see e.g., \cite[Definition~I.13.27]{Ku:Set}). 

Let $N\preceq H_\theta$ be a countable elementary submodel that contains all of the objects listed below. Since the Boolean algebra $\cA$ is countable, it is included in $N$. Define
\begin{align*}
	\bbI_0&=N\cap \bbI, \\
	\cI_0&=\{A\cap \bbI_0: A\in N\cap \cI\}. 
\end{align*}
	By elementarity, $\cI_0$ is a proper atomless  ideal on~$\Index I_0$. 
		Let $\cM_0:=\prod_{i\in \Index I_0} \cM_i/\cI_0$. Since $\Index I_0$ is countable, our assumption implies that there is a formula $\varphi$ such that 
	\begin{equation*}
		\supp_=(a_1,a_2)\subseteq \supp_=(b_1,b_2)\qquad \Leftrightarrow \qquad 
		\cM_0\models \varphi(a_1,a_2,b_1,b_2)
	\end{equation*}
	for all $a_1,a_2,b_1,b_2$ in $\cM$. In particular, this holds for $a_1=a_1^\varphi\rs \Index I_0$ (i.e., $a_1=\pi_{\Index I_0}(a_1^\varphi)$), and analogously defined $a_2,b_1$, and $b_2$. 
	
	By our choice of $\Index I_0$ and $\cI_0$, we have that 
	\begin{equation}\label{eq.supp=}
		\supp_=(a_1,a_2)\subseteq \supp_=(b_1,b_2)\quad \Leftrightarrow\quad \supp_=(a_1^\varphi,a_2^\varphi)\subseteq \supp_=(b_1^\varphi,b_2^\varphi). 
	\end{equation}
	By the Feferman--Vaught theorem as stated in \cite[Theorem 6.3.2]{CK90}, there are a finite list $\psi_j$, for  $1\leq j\leq k$, of $\calL$-formulas and a formula $\Theta(z_1,\dots, z_k)$ in  the language of Boolean algebras such that 
	\[
	\cM\models \varphi(a^\varphi_1,a^\varphi_2,b^\varphi_1,b^\varphi_2)\quad\Leftrightarrow\quad\cP(\Index I)/\cI\models \Theta([\zeta^\varphi_{\psi_1}]_{\cI},\dots, [\zeta^\varphi_{\psi_k}]_{\cI})
	\]
	and also 
	\[
	\cM_0\models \varphi(a_1,a_2,b_1,b_2)\quad\Leftrightarrow\quad\cP(\Index I_0)/\cI_0\models \Theta([\zeta^\varphi_{\psi_1}\cap \Index I_0]_{\cI_0},\dots, [\zeta^\varphi_{\psi_k}\cap \Index I_0]_{\cI_0}). 
	\]
	Both $\Bool II$ and $\mathcal{P}(\Index{I}_0)/\Ideal{I}_0$ are atomless Boolean algebras and we may therefore assume $\Theta$ is quantifier-free\footnote{Atomless Boolean algebra admit quantifier elimination, e.g., \cite[Theorem 6.21]{poizat2000course}}.Thus the truth of $\Theta(z_1,\dots, z_k)$ depends only on knowing which of the intersections $\bigcap_{j=1}^k z_j^{\xi(j)}$ are trivial, where $\xi(j)\in \{\vartextvisiblespace ,\complement\}$, with $z_j^{\vartextvisiblespace } := z_j$. The  choice of $N\preceq H_\theta$, the set $\Index I_0$, and the ideal $\cI_0$ assure that  for all such $\xi$ we have $\bigcap_{j=1}^k (\zeta^\varphi_{\psi_k})^{\xi(j)}\in \cI$ if and only if $\bigcap_{j=1}^k (\zeta^\varphi_{\psi_k}\cap \Index I_0)^{\xi(j)}\in \cI_0$ (where the complement is evaluated with respect to~$\Index I_0$). 
	
	Therefore $	\cM\models \varphi(a^\varphi_1,a^\varphi_2,b^\varphi_1,b^\varphi_2)$ if and only if $\cM_0\models \varphi(a_1,a_2,b_1,b_2)$. Together with \eqref{eq.supp=} this implies the negation of \eqref{eq.def.supp-not=}; contradiction. 
\end{proof}

\begin{proof}[Proof of Proposition~\ref{P.eq.countable}]  
	Suppose that $\calL$ is a countable language and $\fC$ is a full class of $\mathcal{L}$-structures which recognizes coordinates in $\bbN$, and fix a reduced product $\mathcal{M}\coloneqq \prod_{i\in \Index I}\cM_i/\Ideal I$ of structures from $\fC$. 
	We need to prove that $\cM$  defines the relative support function $\supp_=$, quotients $\cM_{S}$, and quotient maps $\pi_S\colon \cM\to \cM\rs S$, for all $S\in \Bool II$. By Theorem~\ref{T.InterpretingSupport} we may assume that the ideal $\cI$ is atomless.
	By Lemma~\ref{L.eq.ctble} it suffices to prove this in the case when~$\cM=\prod_{i\in \bbN}\cM_i/\Ideal I$ is a reduced product associated with an atomless ideal $\Ideal I$ on~$\bbN$. 
	
	We use Beth's definability theorem. For $\supp_=$, it suffices to prove that for every reduced product $\cM$ of structures from $\fC$, the support function on $\cM$ is implicitly interpretable.  This means that for  every elementary extension $\mathcal{N}$ of $\mathcal{M}$, and any two `support' functions $s_1$ and $s_2$ on $\cN$ such that, 
	\[
	(\mathcal{N},s_1) \equiv (\mathcal{M},\supp_=) \equiv (\mathcal{N},s_2),  
	\]
	we have that for all $a,a',b,b' \in \mathcal{N}$, we have $s_1(a,a')\subseteq s_1(b,b') $ if and only if  $s_2(a,a')\subseteq s_2(b,b')$.
	
	Fix $\cM$, $\cN$, $s_1$, and $s_2$ and let $\cU$ be a nonprincipal ultrafilter on $\bbN$. If the Continuum Hypothesis (CH) holds and all $\cM_i$ as well as $\cN$ have cardinality no greater than $2^{\aleph_0}$,  then the ultrapowers  
	$\prod_\mathcal{U}(\mathcal{N},s_1)$,  $\prod_\mathcal{U}(\mathcal{M},\supp_=)$, and $\prod_\mathcal{U}(\mathcal{N},s_2)$ have cardinality $2^{\aleph_0}$ and are $\aleph_1$-saturated.  Since they are also elementarily equivalent, we have  two isomorphisms $\sigma: \prod_\mathcal{U}\mathcal{M} \rightarrow \prod_\mathcal{U}\mathcal{N} $ and
	$\tau: \prod_\mathcal{U}\mathcal{M} \rightarrow \prod_\mathcal{U}\mathcal{N}$
	that moreover respect $\supp_=$, $s_1$,  and $s_2$ so that the following holds,
	\begin{equation}\label{eq.eq.countable}
		\textstyle (\prod_\mathcal{U}\mathcal{N},s_1) \overset{\sigma}{\simeq} (\prod_\mathcal{U}\mathcal{M},\supp_=)\overset{\tau}{\simeq}(\prod_\mathcal{U}\mathcal{N},s_2).
	\end{equation}
	We will complete the proof under this assumption and then show how it can be removed.

	By \Cref{P.ultraproduct}, there is an ideal $\Ideal{J}$ on the index set $\Index J=\bbN\times \bbN$ and an enumeration $\mathcal{M}_{ij}$, for $(i,j)\in \bbN\times \Index \bbN$ (with many repetitions), such that $\prod_\mathcal{U}(\mathcal{M}, \supp_=) = (\prod_\Ideal{J}\mathcal{M}_{ij},\supp_=)$. 
	
	Let $\rho= \tau^{-1}\circ \sigma$. This map is an automorphism of $\prod_\Ideal{J}\cM_{ij}$. We need to show that this is an automorphism of $(\prod_\Ideal{J}\cM_{ij}, \supp_=)$.

	Since $\fC$ recognizes coordinates, there is an automorphism $\alpha: \Bool{J}{J} \rightarrow \Bool{J}{J}$ such that for all $S\in \Bool{J}{J}$  the following diagram commutes (see Definition~\ref{D.recognizes-coordinates}):
	
	\[\begin{tikzcd}
		{(\mathcal{N},s_1)} &&& {(\mathcal{N},s_2)} \\
		{\prod_\mathcal{U}(\mathcal{N},s_1)} & {(\prod_\Ideal{J}\cM_{ij},\supp_=)} & {(\prod_\Ideal{J}\cM_{ij},\supp_=)} & {\prod_\mathcal{U}(\mathcal{N},s_2)} \\
		%{\prod_\mathcal{U}(\mathcal{N},s_1)\upharpoonright \sigma(S)} 
		& {\prod_\Ideal{J}\cM_{ij}\upharpoonright S} & {\prod_\Ideal{J}\cM_{ij}\upharpoonright \alpha(S)} & %{\prod_\mathcal{U}(\mathcal{N},s_1)\upharpoonright \tau(\alpha(S))}
		\arrow["\diag", from=1-1, to=2-1]
		\arrow["\diag"', from=1-4, to=2-4]
		\arrow["\sigma"', from=2-2, to=2-1]
		%\arrow["{\pi_{\sigma(S)}}", from=2-1, to=3-1]
		\arrow["\rho", from=2-2, to=2-3]
		\arrow["{\pi_S}", from=2-2, to=3-2]
		\arrow["\tau", from=2-3, to=2-4]
		\arrow["{\pi_{\alpha(S)}}"', from=2-3, to=3-3]
		%	\arrow["{\pi_{\tau(\alpha(S))}}"', from=2-4, to=3-4]
		%\arrow["{\sigma_S}"', from=3-2, to=3-1]
		\arrow["{{\rho_{S}}}", dashed, from=3-2, to=3-3]
		%	\arrow["{\tau_S}", from=3-3, to=3-4]
	\end{tikzcd}\]
	
	To show that $(\rho,\alpha)$ is an automorphism of $(\prod_\Ideal{J} \cM_{ij}, \supp_=)$, we only need to show that it respects the support function. Take a pair $(g,g')$ of elements of $\prod_\mathcal{J} \cM_{ij}$ and set $S=\supp_=(g,g')$. Then $\pi_S(g)=\pi_S(g')$ and since the diagram commutes, we have,
	\[
	\pi_{\alpha(S)}(\rho(g))= \pi_{\alpha(S)}(\rho(g')). 
	\]
	It follows that $  \alpha(\supp_=(g,g'))  \subseteq \supp_=(\rho(g),\rho(g'))$.   
	By considering the isomorphism $\rho^{-1}$ and $\alpha^{-1}$ instead of $\rho$ and $\alpha$, by the analogous argument we have that, 
	\[ \alpha^{-1}(\supp_=(\rho(g),\rho(g')) \subseteq
	\supp_=(\rho^{-1}(\rho(g)),\rho^{-1}(\rho(g'))),
	\]
	which gives the other inclusion: 
	\[
	\alpha(\supp_=(g,g')) \subseteq \supp_=(\rho(g),\rho(g')).
	\]
	Therefore, for all $g,g'$, we have $\alpha(\supp_=(g,g')) = \supp_=(\rho(g),\rho(g'))$ and  $(\rho,\alpha)$ is an automorphism of $(\prod_\Ideal{J}\cM_{ij},\supp_=)$.

	The rest of the proof is straightforward: let $a,a',b,b' \in \mathcal{N}$, such that $s_1(a,a') \subseteq s_1(b,b')$. Then in the ultraproduct, we have, 
	\[\prod_\mathcal{U}(\mathcal{N},s_1) \models s_1(\diag(a),\diag(a')) \subseteq s_1(\diag(b),\diag(b'))\]
	where $\diag$ is the canonical embedding of $\mathcal{N}$ in the ultrapower. Since $\Id =\sigma^{-1}\circ \rho \circ \tau$ is a composition of isomorphisms preserving the support, we also have,
	\[
	\prod_\mathcal{U}(\mathcal{N},s_2)\models s_2(\diag(a),\diag(a')) \subseteq s_2(\diag(b),\diag(b')),
	\]
	and therefore,
	\[
	(\mathcal{N},s_2) \models s_2(a,a') \subseteq s_2(b,b').
	\]
	By the Beth definability theorem, this proves that the relation $\supp_=(x,x')\subseteq \supp_=(y,y')$ is first-order definable in $\cM$ if CH holds and all $\cM_n$ and $\cN_n$ have cardinality not greater than the continuum.

	The Feferman--Vaught  theorem implies that if $\cM_i\equiv \cN_i$ for all $i$ and $\supp_=$ is first-order definable in  $\prod_\cI\cM_i$, then $\supp_=$ is first-order definable in $\prod_\cI \cN_i$, by the same definition.\footnote{The proof of this assertion is very similar to the use of the Feferman--Vaught theorem at the end of the proof of Lemma~\ref{L.eq.ctble} and is therefore omitted.} Therefore we may apply 
    %the class $\fC$ is closed under elementary equivalence and we can apply 
	Lemma~\ref{L.absoluteness} to the closure of $\fC$ (under elementary equivalence). By  applying it to $\psi$ chosen to be  $\supp_=(x_1,x_2)\subseteq \supp_=(y_1,y_2)$,   this conclusion follows already in ZFC. 		
	Therefore $\supp_=$ is definable in reduced products of structures in~$\fC$ associated with ideals on~$\bbN$. As pointed out at the beginning of the proof, this implies the general case. 
\end{proof}	
%		An argument as in \Cref{L.RecognizingCoordinates},  shows that for all $S\in \Bool{I}{I}$, the restriction to $S$, $\cM\restriction S$, and natural projection maps $\pi_S$ are first-order interpretable.

\subsection{Proof of the main result}\label{S.Main}
%    
%    The main technical result of \cite{de2023trivial},  Theorem 7, states that if a first-order theory $T$ recognizes coordinates, then forcing axioms imply that isomorphisms between reduced products of countable modes of $T$ over the Frech\'et ideal $\Fin$ are \emph{trivial}, with the appropriate definition of `trivial'.  Informally, an isomorphism between two reduced products is trivial if it is given by a bijection $\sigma$ between cofinite subsets of $\bbN$ and a sequence of bijections $\Phi_n\colon \cM_{\sigma(n)}\to \cM_n$; see \cite[Definition~2.1]{de2023trivial}.  If the language is finite, then the bijections $\Phi_n$ can be chosen to be automorphisms (\cite[Lemma~2.2 (3)]{de2023trivial}), but this is not true in general (\cite[Example~2.3]{de2023trivial}). 
%    For state of the art in the subject see \cite{farah2022corona}. 
%    

If $\fC$ is a class of $\mathcal{L}$-structures, we let $\Th(\fC)$ denotes the common theory of the structures in $\fC$, that is, the set of all $\calL$-sentences that are true in all structures in $\fC$. The following theorem implies Theorem~\ref{T.A}.

\begin{theorem}\label{T.eq}
	For a  countable language $\calL$ and a full class $\fC$ of $\mathcal{L}$-structures, the following are equivalent. 
	\begin{enumerate}
		
		\item \label{1.T.eq} $\fC$ recognizes coordinates.
		\item \label{1.countable.T.eq} $\fC$ recognizes coordinates in $\bbN$. 
		\item \label{2b.T.eq} For every ideal $\Ideal{I}$ on an index set $\Index I$, a reduced product $\cM:=\prod_{\Ideal I} \cM_i$ of structures from $\fC$ interprets both the Boolean algebra $\cP(\Index I)/\cI$  and the system of  quotient structures $\cM\rs S$ and quotient maps $\pi_S$ (pa\-ra\-me\-tri\-zed by $S\in \cP(\Index I)/\cI$).
		\item \label{2.T.eq} For every ideal $\Ideal{I}$ on an index set $\Index I$, a reduced product $\cM:=\prod_{\Ideal I} \cM_i$ of structures from $\fC$ interprets the Boolean algebra $(\Bool{I}{I}, \subseteq)$ and the relative support function 
		\[
		\supp_=: \mathcal{M}^2\rightarrow \mathcal{P}(\mathbb{I})/\mathcal{I}, \ ((a_i)_i,(a_i')_i) \mapsto \big[\{i \mid a_i = a_i'\}\big]_\mathcal{I}.
		\]
		%and the quotient maps $\pi_S\colon \cM\to \cM\rs S$, for $S\in \Bool II$, 
		\item \label{3.T.eq} The formula $x=x' \rightarrow y=y'$ is equivalent to an $h$-formula in the common theory $\Th(\fC)$ of $\fC$.
	\end{enumerate}
	%            If all structures in $\fC$ has at least three elements, then these conditions are equivalent to the following: 
	\begin{enumerate} 
    \setcounter{enumi}{5}
		\item \label{3.b.T.eq} The formula $x=z \rightarrow y=z$ is equivalent to an $h$-formula in the common theory $\Th(\fC)$ of $\fC$.
	\end{enumerate}
	In case when $\calL$ is not necessarily countable, assertions \eqref{1.T.eq} and \eqref{2b.T.eq}--\eqref{3.T.eq} are equivalent. 
\end{theorem}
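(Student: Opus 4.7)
The plan is to establish a cycle
\[
(1)\;\Rightarrow\;(2)\;\Rightarrow\;(4)\;\Rightarrow\;(3)\;\Rightarrow\;(4)\;\Rightarrow\;(1),
\]
with the side equivalences $(4)\Leftrightarrow(5)\Leftrightarrow(6)$ handled separately. All the building blocks have been assembled in the preceding subsections, so the theorem is largely a matter of correctly organizing the citations.

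First I would dispatch the implications among $(3)$--$(6)$, which are language-independent. The equivalence $(4)\Leftrightarrow(5)$ is exactly Theorem~\ref{T.InterpretingSupport}. The implication $(5)\Rightarrow(6)$ follows from the tautological substitution $x'=z$, $y'=z$ in the formula witnessing $(5)$. The converse $(6)\Rightarrow(5)$ is Proposition~\ref{prop:interpretingAllZsupport}, whose hypothesis that structures have at least three elements is guaranteed by fullness of $\fC$. The implication $(4)\Rightarrow(3)$ is Lemma~\ref{Claim.RestrictionInterpretable}. For the converse $(3)\Rightarrow(4)$, observe that if $\Bool{I}{I}$ and the quotient maps $\pi_S$ are interpreted uniformly in the parameter $S$, then $\supp_=(a,b)$ is recovered as the largest $S\in\Bool{I}{I}$ with $\pi_S(a)=\pi_S(b)$, which is a first-order condition in the interpreted expansion.

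The main cycle then closes as follows. $(1)\Rightarrow(2)$ is immediate from the definitions, since reduced products over $\bbN$ are a subclass of all reduced products. When $\calL$ is countable, $(2)\Rightarrow(4)$ is exactly Proposition~\ref{P.eq.countable}, and $(4)\Rightarrow(1)$ is Lemma~\ref{L.RecognizingCoordinates}. This settles the countable case.

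For possibly uncountable $\calL$, the plan is to bypass $(2)$ entirely and prove $(1)\Rightarrow(4)$ directly, by a mild adaptation of the argument of Proposition~\ref{P.eq.countable}. That proof has two stages: Lemma~\ref{L.eq.ctble} first reduces $\supp_=$-definability in an arbitrary reduced product to the case of $\bbN$-indexed reduced products (countability of $\calL$ is essential here, as the Boolean algebra $\cA$ built in its proof must be countable to extract the countable subindex $\Index I_0$), followed by a Keisler--Shelah/Beth-definability argument that invokes recognizing coordinates on a suitable ultrapower, realized as another reduced product via Proposition~\ref{P.ultraproduct}. Assuming the full strength of $(1)$, the first stage is no longer needed, and the Keisler--Shelah/Beth argument applies directly to any reduced product of structures from $\fC$. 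The absoluteness step of Lemma~\ref{L.absoluteness} still goes through for arbitrary $\calL$, since its proof uses only that the Lévy collapse is countably closed (so reduced products are unchanged) and that each formula is a finite string in $\calL$ lying in the ground model. The main obstacle I anticipate is the cardinality bookkeeping in the ultrapower step: the ultrapowers must be $\aleph_1$-saturated with cardinality allowing the application of $(1)$, which requires first collapsing to a cardinal exceeding $|\cM|+|\calL|$ and only then appealing to absoluteness to erase the forcing.
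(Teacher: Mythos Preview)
Your treatment of the countable-language case is correct and essentially matches the paper's proof, with only a cosmetic difference: the paper closes the cycle via $(3)\Rightarrow(1)$ (citing Lemma~\ref{lemma:equiv} together with Lemma~\ref{L.RecognizingCoordinates}) rather than your $(3)\Rightarrow(4)\Rightarrow(1)$, but your extra step is sound and the routing is equivalent.

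The uncountable-language case, however, has a real gap and is not merely bookkeeping. Your plan is to run the ultrapower/Beth argument of Proposition~\ref{P.eq.countable} directly, using the countably-closed L\'evy collapse of Lemma~\ref{L.absoluteness} to arrange CH and the needed cardinality constraints. The problem is that $\Coll(\omega_1,\kappa)$ collapses $|\calL|$ only to $\aleph_1$, not to $\aleph_0$, and with an uncountable language two elementarily equivalent $\aleph_1$-saturated models of cardinality $\aleph_1$ need not be isomorphic; so the crucial isomorphisms $\sigma,\tau$ in~\eqref{eq.eq.countable} are not available. Collapsing $|\calL|$ to $\aleph_0$ would require $\Coll(\omega,|\calL|)$, which is not countably closed and therefore changes $\prod_n\cM_n$ and the ideal, destroying the absoluteness you need. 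The paper avoids this entirely: instead of ultrapowers, it replaces $(\cN,s_1)$, $(\cM,\supp_=)$, $(\cN,s_2)$ by \emph{saturated} elementary extensions of the same cardinality (whose existence is justified by a separate absoluteness argument, see the footnote), and the rest of the Beth-definability computation goes through verbatim. So your instinct that an absoluteness trick is involved is right, but it is applied to obtain saturated models, not to salvage the ultrapower construction.
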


\begin{proof} The implication \eqref{1.T.eq} $\Rightarrow$ \eqref{1.countable.T.eq} is trivial. \eqref{1.countable.T.eq} $\Rightarrow$ \eqref{2.T.eq} is the main implication (see Proposition~\ref{P.eq.countable}); it is also the only implication that uses the assumption that $\calL$ is countable.  The equivalence between \eqref{2.T.eq} and  \eqref{3.T.eq} is \Cref{T.InterpretingSupport}. As all structures in a full class $\fC$ have at least three elements, the equivalence of \eqref{3.T.eq} and \eqref{3.b.T.eq} is \Cref{prop:interpretingAllZsupport}. The implication \eqref{2.T.eq} $\Rightarrow$ \eqref{2b.T.eq} follows from \Cref{Claim.RestrictionInterpretable}. Finally, \eqref{2b.T.eq} $\Rightarrow$ \eqref{1.T.eq}  is \Cref{lemma:equiv} and \Cref{L.RecognizingCoordinates}. 

\begin{center}
	\begin{minipage}{0.9\linewidth}
	\small
	\xymatrix{
		\eqref{1.T.eq} \ar@{=>}[r]^{\text{triv.}} & \eqref{1.countable.T.eq} \ar@{=>}[d]^{\ref{P.eq.countable}} &\\
		\eqref{2b.T.eq} \ar@{=>}[u]^{\ref{L.RecognizingCoordinates}} &\eqref{2.T.eq}\ar@{=>}[l]^{\ref{Claim.RestrictionInterpretable}} \ar@{<=>}[r]_{\ref{T.InterpretingSupport}}&  \eqref{3.T.eq} \ar@{<=>}[r]_{\ref{prop:interpretingAllZsupport}}&  \eqref{3.b.T.eq} }
	\end{minipage}
\end{center}
	
	In case when $\calL$ is not necessarily countable it suffices to prove that \eqref{1.T.eq} implies \eqref{2.T.eq}. The proof follows the proof of Proposition~\ref{P.eq.countable} closely. The only difference is that instead of $\aleph_1$-saturated ultrapowers $\prod_\mathcal{U}(\mathcal{N},s_1)$,  $\prod_\mathcal{U}(\mathcal{M},\supp_=)$, and $\prod_\mathcal{U}(\mathcal{N},s_2)$ one needs to choose saturated elementary extensions of $(\cN,s_1)$, $(\cM,\supp_=)$, and $(\cN,s_2)$\footnote{Such models need not exist, but another absoluteness argument shows that there is no harm in assuming they do. See \cite[\S 3]{halevi2023saturated} or \cite[\S 8.1]{de2024saturation} for more details.}  of the same cardinality; the remaining part of the proof is identical. 
\end{proof}

It is not enough to just interpret the quotient Boolean algebra. As a limiting example, we consider the group of 8 elements corresponding to the basis of the Hamiltonians, commonly referred to as $Q_8$. We show that the reduced product $\prod_{\Fin} Q_8$ interprets the quotient $\Bool\bbN\Fin$, but the relative support function $\supp_=$ is not interpretable. Consequently, we prove that $Q_8$ does not recognize coordinates (see Example~\ref{C.Q8}).

%Recall that if $\fC$ is a class of structures of the same language $\calL$, then $\Th(\fC)$ denotes the common theory of all structure in $\fC$. 
By the Keisler--Shelah theorem, the class of all models of $\Th(\fC)$ is the class of all elementary submodels of ultraproducts of models in $\fC$. 

\begin{corollary}\label{C.Th(C)}
	
	A full class $\fC$ of $\mathcal{L}$-structures recognizes coordinates if and only if $\Th(\fC)$ recognizes coordinates. 
\end{corollary}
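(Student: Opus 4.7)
The plan is to reduce the corollary to the syntactic characterization provided by Theorem~\ref{T.eq}\eqref{3.T.eq}, which asserts that a full class $\fC$ recognizes coordinates if and only if the formula $x=x' \rightarrow y=y'$ is equivalent to an $h$-formula modulo $\Th(\fC)$. Since this condition depends only on the first-order theory $\Th(\fC)$ and not on the particular class realizing it, it should hold for $\fC$ exactly when it holds for the class $\Mod(\Th(\fC))$ of all models of $\Th(\fC)$, which is precisely the content of the corollary.

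To make this precise, I would first observe that $\Th(\Mod(\Th(\fC))) = \Th(\fC)$, as $\fC \subseteq \Mod(\Th(\fC))$ gives one inclusion and the other is immediate. Next, I would verify that fullness passes from $\fC$ to $\Mod(\Th(\fC))$: by Definition~\ref{Def.Full}, fullness requires that every predicate $P\in \calL$ has nonempty interpretation and that every structure has at least three elements. Both requirements are expressible by first-order sentences (namely $(\exists \bar x)P(\bar x)$ for each $P$, and $(\exists x_1,x_2,x_3)\bigwedge_{i<j} x_i\neq x_j$), hence they lie in $\Th(\fC)$ and therefore hold in every model of $\Th(\fC)$. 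Thus $\Mod(\Th(\fC))$ is a full class in the sense of Definition~\ref{Def.Full}.

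Now I can apply Theorem~\ref{T.eq} twice. By the equivalence \eqref{1.T.eq} $\Leftrightarrow$ \eqref{3.T.eq}, the class $\fC$ recognizes coordinates iff $x=x' \rightarrow y=y'$ is equivalent to an $h$-formula in $\Th(\fC)$. Applying the same equivalence to the full class $\Mod(\Th(\fC))$ (and using that its common theory is $\Th(\fC)$), the class $\Mod(\Th(\fC))$ recognizes coordinates iff $x=x' \rightarrow y=y'$ is equivalent to an $h$-formula in $\Th(\fC)$. As both conditions coincide, the two classes recognize coordinates simultaneously; by Definition~\ref{D.recognizes-coordinates}, this is precisely the assertion that $\Th(\fC)$ recognizes coordinates as a theory.

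There is essentially no obstacle here: the entire argument is bookkeeping once Theorem~\ref{T.eq} is available, since the theorem has already traded the semantic rigidity property for a purely syntactic condition on $\Th(\fC)$. The only subtle point to mention is the verification that fullness transfers along taking all models of the theory, which would be worth stating explicitly so that the application of Theorem~\ref{T.eq} to $\Mod(\Th(\fC))$ is justified.
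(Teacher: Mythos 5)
Your proposal is correct and follows the same route as the paper's one-line proof: apply the equivalence between recognizing coordinates and the syntactic condition on $\Th(\fC)$ from Theorem~\ref{T.eq}\eqref{3.T.eq}, noting that this condition depends only on the theory. The one thing you spell out that the paper elides is the check that $\Mod(\Th(\fC))$ is itself full (so that Theorem~\ref{T.eq} applies to it), which is a worthwhile observation and is indeed needed for a rigorous application.
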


\begin{proof}
	This is immediate, as by \Cref{T.eq} \eqref{3.T.eq}, recognizing coordinates is a property of $\Th(\fC)$.
\end{proof}

Another amusing consequence of Theorem~\ref{T.eq} is the following. Suppose that a class $\fC$  recognizes coordinates and $\cM$ is an (atomless) reduced product of structures in $\fC$. Then every model  $\cN$ of $\Th(\cM)$ interprets an (atomless) Boolean algebra and a system of quotients $\pi_S\colon \cN\to \cN_S$ for $S$ ranging in this Boolean algebra that behave as the maps in the diagram in Definition~\ref{D.recognizes-coordinates}. In other words, every model of $\Th(\cM)$ `thinks' that it is an (atomless)  reduced product.  
\section{Groups recognizing coordinates}\label{S.GroupsRecognizing}

In this section, we shift our attention to classes of groups. We first prove some basic preparatory results to work in this setting. We then show that a large variety of familiar classes of groups recognize coordinates. In later sections, we will consider classes of groups which do not recognize coordinates. 

\subsection{General criteria for recognizing coordinates}\label{S.Recognizing}

We begin by fixing some notation for this section. Suppose that $G$ is a group and $a$ is an element of $G$. We will always use the symbol $\cdot$ for group multiplication and the symbol $e$ for the identity element of $G$. We let $a^{G}$ denote the conjugacy class of $a$ inside the group $G$. We remark that if $z$ is a variable then $x \in z^{G}$ is shorthand for $\exists y (yzy^{-1} = x)$. If $S \subseteq G$, we let $C_{G}(S)$ denote the centralizer of $S$, i.e.
\[
C_{G}(S) := \{ g \in G \mid gs = sg \text{ for all } s \in S \}.
\]
If $S$ is a definable subset of $G$, then so is $C_{G}(S)$. We let $Z(G)$ denote the center of $G$. If $n$ is a natural number, then $a^{n}$ denotes the element $a$ to the $n$-th power while if $g \in G$, then $a^{g}$ denotes the conjugate of $a$ by $g$, i.e. $a^{g} := gag^{-1}$.

It will be convenient to use the set relation~$\subseteq$ when referring to elements of $\Bool{I}{I}$. If $\prod_\Ideal{I} G_i$ is a reduced product of groups, and $a$ is an element of $\prod_\Ideal{I} G_i$ with representative $(a_i)_i$, we define the support of $a$ as,
\[
\supp(a) \coloneqq \big[\{ i \in  \Index{I} \mid a_i\neq e\}\big]_\Ideal{I}. 
\]
In this context, \Cref{T.eq} can be rewritten as follows: 
\begin{theorem}[Groups recognizing coordinates]\label{T.eq.groups}
    Let $\fC$  be a full class of groups in a language $\{\cdot, {}^{-1}, e, \dots\}$ with potentially additional structure. The following are equivalent:
    \begin{itemize}
        \item $\fC$ recognizes coordinates.
        \item The relation $\supp(y) \subseteq \supp(x)$ is definable in all reduced products of structures from $\fC$.\footnote{In order to avoid omitting a triviality that together with other trivialities may add up to an impasse, we should point out that $x=e\rightarrow y=e$ corresponds to the formula $\supp(y)\subseteq \supp(x)$.}
        \item The formula $x=e \rightarrow y=e$ is equivalent to an $h$-formula in the common theory $\Th(\fC)$ of $\fC$. 
    \end{itemize}
 
\end{theorem}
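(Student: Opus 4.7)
\medskip

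The plan is to derive this result as a direct corollary of Theorem~\ref{T.eq}. The key observation is that in any group, the group operation makes the ``relative'' support function $\supp_=$ and the ``absolute'' support function $\supp$ interdefinable: for any $a, b$ in a reduced product $\prod_\Ideal{I} G_i$ of groups, I have
\[
\supp_=(a, b) \;=\; [\{ i \in \Index{I} \mid a_i \neq b_i \}]_{\Ideal{I}} \;=\; [\{ i \in \Index{I} \mid a_i \cdot b_i^{-1} \neq e \}]_{\Ideal{I}} \;=\; \supp(a \cdot b^{-1}),
\]
and conversely $\supp(a) = \supp_=(a, e)$. Similarly, the formula $x = x' \rightarrow y = y'$ is equivalent, in any group theory, to the substitution instance $(x \cdot (x')^{-1} = e) \rightarrow (y \cdot (y')^{-1} = e)$ of the formula $u = e \rightarrow v = e$. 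So one expects each of the three displayed conditions of Theorem~\ref{T.eq.groups} to correspond to the ``analogous'' condition in Theorem~\ref{T.eq} under this reparametrization.

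First I would verify that a full class in the sense of Definition~\ref{Def.Full} is exactly what one should assume in this setting: when the language is the pure group language there are no predicates to worry about and condition (1) of Definition~\ref{Def.Full} is vacuous, so fullness reduces to the $|G| \geq 3$ requirement; when additional structure is present the hypothesis of the theorem (that $\fC$ is full) handles condition (1). Next I would carry out the three translations explicitly. For the implication from the third condition to the second: if $x = e \rightarrow y = e$ is equivalent to an $h$-formula $\Phi(x,y)$ in $\Th(\fC)$, then Theorem~\ref{T.Palyutin} gives that $\Phi$ defines $\supp(y) \subseteq \supp(x)$ in every reduced product. Conversely, if $\phi(x,y)$ defines $\supp(y) \subseteq \supp(x)$ in reduced products, Proposition~\ref{Prop:Omarov} upgrades $\phi$ to an equivalent $h$-formula in $\Th_{\Ideal{I}}(G_i)$, and Lemma~\ref{lemma:TheoryClass} (applied with $\Ideal{I} = \Fin(\Index{I})$ on the appropriate ``large'' index set) identifies $\Th_{\Ideal{I}}(G_i)$ with $\Th(\fC)$.

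For the equivalence between the second condition and coordinate recognition, I would pass through the full relative support via the group-theoretic identities above. If $\supp(y) \subseteq \supp(x)$ is defined by $\phi(x,y)$, then $\supp_=(a,a') \subseteq \supp_=(b,b')$ is defined by $\phi(b \cdot (b')^{-1}, a \cdot (a')^{-1})$, putting us in condition (4) of Theorem~\ref{T.eq}; conversely, interpretability of $\supp_=$ yields interpretability of $\supp$ by plugging in~$e$. Once all three conditions of Theorem~\ref{T.eq.groups} are identified with conditions (1), (4), and (5) of Theorem~\ref{T.eq} via these translations, the result follows from Theorem~\ref{T.eq}.

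I do not anticipate a real obstacle here: the entire argument is a bookkeeping exercise in rewriting the canonical $\supp_=$ formalism in terms of the group identity. The only subtlety worth flagging explicitly (in a footnote, as the statement of the theorem already does) is that the formula $x = e \rightarrow y = e$ literally \emph{is} the formula whose definability expresses $\supp(y) \subseteq \supp(x)$, so one can read the last two conditions of the theorem as two sides of the same coin, one semantic and one syntactic, mediated by Palyutin's theorem and Omarov's proposition.
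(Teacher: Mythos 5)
Your proposal is correct and follows essentially the same route as the paper: derive the result from Theorem~\ref{T.eq} by observing that the group operation makes $\supp$ and $\supp_=$ interdefinable (and that $x = e \rightarrow y = e$ is a substitution instance of $x = x' \rightarrow y = y'$ and vice versa, with substitution into an $h$-formula remaining an $h$-formula). The paper factors this bookkeeping into Lemma~\ref{L.RecognizingCoordinatesGroup}, whereas you carry it out inline and partially re-derive Theorem~\ref{T.InterpretingSupport}, but the substance of the argument is the same.
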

We first note that the formula $x=z \rightarrow y=z$ is equivalent to an $h$-formula if and only if $xz^{-1}=e \rightarrow yz^{-1}=e$ is equivalent to an $h$-formula. The theorem above follows from \Cref{T.eq} and the following lemma:

\begin{lemma}\label{L.RecognizingCoordinatesGroup}
	Let $\fC$ be a class of groups. Fix a reduced product $G=\prod_\Ideal{I} G_i$ of groups from $\fC$. The following are equivalent:
	
	\begin{enumerate}
		\item \label{RC1} The relation, 
		\[
		\supp(a)\subseteq \supp(b).
		\]
		is first-order definable in $G$.
		\item \label{RC2} The support function,
		\[\supp: (a_i)_i \mapsto \big[\{ i \mid a_i \neq e\}\big]_\Ideal{I},\]
		is first-order interpretable in $G$.
		\item \label{RC3} the relative support function,
		\[\supp_= \colon (a_i)_i,(b_i)_i \mapsto \big[\{ i \mid a_i=b_i\}\big]_\Ideal{I},\]
		is first-order interpretable in $G$.
	\end{enumerate} 
	
\end{lemma}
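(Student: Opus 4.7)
The proof reduces to verifying the cycle of implications $(1)\Rightarrow(2)\Rightarrow(3)\Rightarrow(1)$. The key algebraic observation, peculiar to the group setting, is that the support and relative support functions translate into each other through the group operation: for all $a,b\in G$,
\[
\supp_=(a,b)=\supp(ab^{-1})^\complement\quad\text{and}\quad \supp(a)=\supp_=(a,e)^\complement.
\]
Once the Boolean algebra (in particular, the complement operation) is available, these equalities convert an interpretation of one function into an interpretation of the other.

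The implication $(3)\Rightarrow(1)$ is essentially immediate. If $\supp_=$ together with the Boolean algebra $(\Bool{I}{I},\subseteq)$ is interpretable in $G$, then the 4-ary relation $\supp_=(a,b)\subseteq\supp_=(c,d)$ is definable on $G$. Substituting $b=d=e$ gives the relation $\supp(c)\subseteq\supp(a)$, which is the required definability condition (up to swapping variables).

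For $(1)\Rightarrow(2)$ I would follow the template of \Cref{lemma:equiv}. Given that the relation $R(a,b):\;\supp(a)\subseteq\supp(b)$ is first-order definable, the conjunction $E(a,b):=R(a,b)\wedge R(b,a)$ is a definable equivalence relation whose quotient is in canonical bijection with $\Bool{I}{I}$, with quotient map precisely $\supp$. The ordering $\subseteq$ descends to the quotient, so $\Bool{I}{I}$ is interpretable as an ordered set. Since the resulting poset is known to be a Boolean algebra, the constants $0,1$ (least/greatest elements) and the operations $\cap,\cup,{}^\complement$ are uniquely determined by, and hence definable from, the order: for instance $c=a\cap b$ iff $c\leq a$, $c\leq b$, and $(\forall w)(w\leq a\wedge w\leq b\to w\leq c)$, and similarly for the other operations. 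This gives an interpretation of the Boolean algebra $\Bool{I}{I}$ together with the function $\supp$, so $(2)$ holds.

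Finally, for $(2)\Rightarrow(3)$ one uses the first identity of the preliminary observation directly: having interpreted $\Bool{I}{I}$ and the function $\supp$, set
\[
\supp_=(a,b)\;:=\;\supp(ab^{-1})^\complement,
\]
which is a composition of interpretable objects (group multiplication, inversion, the interpreted map $\supp$, and Boolean complementation), hence itself interpretable. No step requires a genuinely hard argument; the only mild subtlety is verifying that definability of the order $\subseteq$ on the interpreted quotient suffices to interpret the full Boolean algebra structure, which is handled by the explicit order-theoretic definitions of the operations above.
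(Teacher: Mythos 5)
Your proof is correct and follows essentially the same route as the paper: the same cycle $(1)\Rightarrow(2)\Rightarrow(3)\Rightarrow(1)$, using the same identities $\supp_=(a,b)=\supp(ab^{-1})^\complement$ and $\supp(a)\subseteq\supp(b)\Leftrightarrow\supp_=(b,e)\subseteq\supp_=(a,e)$. The only difference is that you spell out the order-theoretic recovery of the Boolean operations in $(1)\Rightarrow(2)$, a detail the paper leaves implicit.
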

\begin{proof}
	
	$(\ref{RC1} \rightarrow \ref{RC2})$ Assume the relation $a \sqsubseteq b$ defined by $ \supp(a)\subseteq \supp(b)$ for $a,b\in G $ is definable. This relation is a pre-order, and we denote by $\sim$ the associated equivalence relation: for $a,b\in G$, $a\sim b$ holds if and only if $a$ and $b$ have the same support. The ordered quotient $(G/\sim, \sqsubseteq)$ is therefore isomorphic to $(\Bool{I}{I},\subseteq)$ and the natural projection is, after identification, equal to the support function $\supp$.\\
	$(\ref{RC2} \rightarrow \ref{RC3})$
	This implication is immediate: for all $a,b\in G$, we have, 
    \[\supp_=(a,b) = \supp(a\cdot b^{-1})^\complement. \]
    $(\ref{RC3} \rightarrow \ref{RC1})$ This is immediate, since for all $a,b\in G$, $\supp(a)\subseteq \supp(b)$ holds if and only if $\supp_=(b,e) \subseteq \supp_=(a,e)$.
\end{proof}
    %
	%        Fix groups $G_n$ in the class $\fC$  an ideal $\cI$ on $\bbN$ (possibly $\cI=\{\emptyset\}$) and write
	%		\[
	%		G=\prod_n G_n/\cI.
	%		\] 
	%	We will prove that both the equivalence relation 
	%	\[
	%		a\sim b\text{ if and only if }\supp(a)=\supp(b)
	%		\] 
	%		and the (quasi)order relation
	%		\[
	%		a\sqsubseteq b\text{ if and only if }\supp(a)\subseteq \supp(b)
	%		\]
	%		are definable in $G$.
	
	%		Since $\supp(a)=\supp(b)$ if and only if $\supp(a)\supseteq \supp(b)$ and $\supp(a)\subseteq \supp(b)$, this implies that the equivalence relation $\sim$ is definable. Since Boolean operations in a Boolean algebra are first-order definable from the order, we conclude that $\cP(\bbN)/\cI$ is in $G^{\eq}$.

Now that we have finished the preliminaries of this section, we are able to give our first classes of groups which recognize coordinates (very exciting). We will use these first examples to show that more familiar classes of groups recognizes coordinates. 

\begin{theorem}\label{T.RecognizingCoordinates}
	\begin{enumerate}
		\item \label{2.RecognizingCoordinates} Let $\mathcal{G}$ be the class of all groups $G$ such that for every $a \in G\backslash \{e\}$, $C_G(a^G)=\{e\}$. Then $\mathcal{G}$ recognizes coordinates.
        \item \label{1.RecognizingCoordinates} For a prime $p$, let $\mathcal{G}_{p}$ be the class of all groups $G$ which have elements of order $p$ and for every $a\in G$ of order $p$, $C_G(a^G)=\{e\}$. Then $\mathcal{G}_{p}$ recognizes coordinates. 
 
	\end{enumerate}
\end{theorem}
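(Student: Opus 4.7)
The plan is to apply Theorem~\ref{T.eq.groups}, which for a full class of groups $\fC$ reduces recognizing coordinates to the question of whether the formula $x = e \to y = e$ is equivalent to an $h$-formula in $\Th(\fC)$. I will exhibit explicit witnessing $h$-formulas for both $\mathcal{G}$ and $\mathcal{G}_p$. The key underlying observation is the centralizer symmetry
\[
z \in C_G(x^G) \;\Longleftrightarrow\; x \in C_G(z^G),
\]
which follows by the substitution $g \leftrightarrow g^{-1}$ inside the universal quantifier defining the centralizer of a conjugacy class.

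For part~(1), I take
\[
\phi(x,y) \;:=\; \forall z\, \Bigl(\forall g\,(z\cdot gxg^{-1} = gxg^{-1}\cdot z) \;\to\; \forall g\,(z\cdot gyg^{-1} = gyg^{-1}\cdot z)\Bigr),
\]
which asserts that every $z$ centralizing the conjugacy class of $x$ also centralizes that of $y$. To see $\phi(x,y)\Leftrightarrow(x=e\to y=e)$ inside any $G\in\mathcal{G}$: if $x\neq e$ then $C_G(x^G)=\{e\}$ by hypothesis, so only $z=e$ satisfies the antecedent, at which the consequent is trivially true; if $x=e$ the antecedent is vacuous, and the statement collapses to $C_G(y^G)=G$, which by the hypothesis applied to $y$ (combined with $|G|\geq 3$ from fullness) forces $y=e$. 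The inner sub-formula $\forall g\,(z\cdot gxg^{-1} = gxg^{-1}\cdot z)$ is a universal of an atomic formula, hence an $h$-formula; since $z=e$ trivially satisfies it, the Fact following Definition~\ref{Def.h-formulas} upgrades the outer implication into an $h$-formula.

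For part~(2) the same strategy works, except that the centralizer hypothesis is only available for elements of order~$p$. Since $p$ is prime, the set of such elements together with $e$ is cut out by the atomic condition $a^p=e$, so I use
\[
\phi_p(x,y) \;:=\; \forall a\, \Bigl(a^p = e \wedge \forall g\,(a\cdot gxg^{-1} = gxg^{-1}\cdot a) \;\to\; \forall g\,(a\cdot gyg^{-1} = gyg^{-1}\cdot a)\Bigr).
\]
The case $a=e$ makes the implication trivially true. For $a$ of order $p$, the centralizer symmetry combined with $C_G(a^G)=\{e\}$ reduces the verification to exactly the case split of part~(1); crucially, the assumption that $G\in\mathcal{G}_p$ \emph{contains} an element of order $p$ is what prevents $\phi_p(e,y)$ from being vacuously true when $y\neq e$. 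The antecedent of the inner implication is again realized by $a=e$, so the Fact produces an $h$-formula, and Theorem~\ref{T.eq.groups} concludes.

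The only point demanding care is ensuring that the universally quantified implication actually becomes an $h$-formula; this is exactly what the Fact after Definition~\ref{Def.h-formulas} delivers once the antecedent is provably satisfiable, and both $\phi$ and $\phi_p$ have been engineered so that $z=e$ (respectively $a=e$) witnesses this satisfiability automatically.
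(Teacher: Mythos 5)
Your proof is correct and follows the same approach as the paper: reduce via Theorem~\ref{T.eq.groups} to showing that $x=e\to y=e$ is equivalent to an $h$-formula modulo $\Th(\fC)$, and exhibit an explicit witness built from centralizers of conjugacy classes, with the antecedent realized by the identity element so that the Fact following Definition~\ref{Def.h-formulas} applies. Your formulation via $z\in C_G(x^G)$ matches the paper's $x\in C_G(z^G)$ through the conjugation symmetry you isolate at the start; in fact your implication direction in part~(1) is the correct one, since the paper's formula $(*)$ as printed, namely $(\forall z)\bigl(y\in C(z^G)\to x\in C(z^G)\bigr)$, is actually equivalent to $y=e\to x=e$ rather than $x=e\to y=e$ (a harmless transposition of the two variables, but your version avoids it).
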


\begin{proof}
(\ref{2.RecognizingCoordinates}) We show that $x=e \rightarrow y=e$ is equivalent to the $h$-formula:
     \begin{itemize}
		\item [(*)]	$(\forall z) ~ \left(y\in C(z^G)\rightarrow  x\in C(z^G)\right)$.
        \end{itemize}
To see this, let $\varphi(x,z)$ be the formula $(\forall t)(xtzt^{-1}=tzt^{-1}x)$, or equivalently, $x\in C_G(z^G)$. 
This is an $h$-formula and since $\varphi(x,e)$ is true in every group, $(\forall z)(\varphi(y,z)\lthen \varphi(x,z))$ is also an $h$-formula. This formula is clearly equivalent to (*). The assumption that $C_G(a^G)=\{e\}$ for every $G\in \cG$ and every $a\neq e$ in $G$ implies that  (*) is equivalent to $x=e\to y=e$. 

         The conclusion follows by \Cref{T.eq.groups} \\

    (\ref{1.RecognizingCoordinates}) Fix a prime $p$. For a group in $\mathcal{G}_p$, we have that $x=e \rightarrow y=e$ is equivalent to 
    \[\tag{**} \label{Eq.1.re}	(\forall z) ~ \left( [x\in C(z^G)\wedge z^p=e] \rightarrow y\in C(z^G) \right).\]
        Indeed, notice that for all $z$ of order $p$ or $1$,  if $x= e$, then $x\in C(z^G)$. If  $x\neq e$, then $x\in C(z^G)$ only if $z=e$. Therefore, $x = e \to y = e$ holds if and only if, 
        \[\{z \mid z^p=e , x\in C(z^G) \} \subseteq \{z \mid z^p=e , y\in C(z^G) \}, \]
        and the equivalence with (\ref{Eq.1.re}) is clear. We observe that (\ref{Eq.1.re}) is an $h$-formula as 
        \[(\exists z) ~ \left( x\in C(z^G)\wedge z^p=e \right)\]
        is always satisfied. The statement holds by \Cref{T.eq.groups}.
   \end{proof}

 A group $G$ satisfies \eqref{2.RecognizingCoordinates} of Theorem~\ref{T.RecognizingCoordinates} if and only if for any nontrivial normal subgroup $H$ of $G$, $C_G(H)=\{e\}$. This is equivalent to being a domain in the sense of \cite[Definition~1]{baumslag1999algebraic}. 
	
	In \cite[Theorem A]{kvaschuk2005algebraic}  it was proved that the first-order theory of a finite product of domains $\prod_{i<n} G_i$ ``determines $n$" and interprets the theories of all $G_i$, for $i<n$. Together with an easy argument using the  Keisler--Shelah theorem, this is equivalent to the assertion that  domains recognize coordinates in finite products. Our methods give the following far-reaching generalization of that result (we will not need to use the definition of an acylindrically hyperbolic group, it suffices to say that this class is extensive and well-studied,  see \cite{osin2016acylindrically}). 
	
	\begin{corollary}\label{C.Montse}
		The class of all domains recognizes coordinates. 
		In particular, the class of all acylindrically hyperbolic groups without a non-trivial finite normal subgroup recognizes coordinates. 
		\end{corollary}
		
		\begin{proof}
						By Theorem~\ref{T.RecognizingCoordinates} \eqref{2.RecognizingCoordinates},  the class of all domains recognizes coordinates. By \cite[Theorem 1.3]{cassella2025first-order}, every acylindrically hyperbolic group without a non-trivial finite normal subgroup is a domain. 
		\end{proof}

\begin{proposition}\label{P.2}
	Suppose that $T$ is a theory of (nontrivial) groups and that for some $m\geq 1$ and  $n\geq 1$, $T$ includes an axiom asserting the following: 
	\begin{enumerate}
		\item \label{1.P.1}For all $a,b$ in $G\setminus\{e\}$ of order dividing $m$ there is some $c$ in $a^G$ such that $bc\neq cb$. 
		\item\label{2.P.1} Every $c$ in $G$ is the product of at most $n$ elements, each one of which has order dividing $m$.   
	\end{enumerate}
	Then $T$ recognizes coordinates
\end{proposition}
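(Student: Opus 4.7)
The plan is to invoke Theorem~\ref{T.eq.groups} and produce an $h$-formula in $T$ equivalent to $x=e\rightarrow y=e$, following the centralizer template from Theorem~\ref{T.RecognizingCoordinates} but bridging to arbitrary group elements via the factorization supplied by hypothesis~\eqref{2.P.1}.

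First, I would introduce the auxiliary formula
\[
A(u,z) := z^m=e \wedge (\exists u_1,\ldots,u_n)\Bigl(u=u_1\cdots u_n \wedge \bigwedge_{j=1}^{n} u_j^m = e \wedge \bigwedge_{j=1}^{n} (\forall t)\bigl[u_j\cdot tzt^{-1}=tzt^{-1}\cdot u_j\bigr]\Bigr),
\]
which is an $h$-formula (atomic formulas closed under $\wedge$, $\exists$, and $\forall$). Its purpose is to make the condition ``$u=e$'' detectable by a quantified centralizer witness $z$. Three observations: (i) $A(u,e)$ holds for every $u$, since any factorization guaranteed by~\eqref{2.P.1} has each factor commuting with $tet^{-1}=e$; (ii) for $z\neq e$ with $z^m=e$, hypothesis~\eqref{1.P.1} forbids any non-trivial element of order dividing $m$ from lying in $C_G(z^G)$, so in such a factorization each $u_j$ is forced to equal $e$, hence $A(u,z)\iff u=e$; (iii) for $z$ with $z^m\neq e$, $A(u,z)$ fails outright.

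I then set $\Phi(x,y) := (\forall z)(A(x,z)\rightarrow A(y,z))$. By (i), $(\exists z)\,A(x,z)$ is trivially witnessed by $z=e$, so the Fact after Definition~\ref{Def.h-formulas} places $\Phi$ (in $T$) inside the $h$-fragment via the closure rule $(\exists z)\varphi\wedge(\forall z)(\varphi\rightarrow\psi)$. It remains to verify $\Phi(x,y)\iff(x=e\rightarrow y=e)$ in every $G\models T$. If $x=e$, observations (i)--(iii) give $A(e,z)\iff z^m=e$; picking any non-trivial $z$ with $z^m=e$ (which exists because $G$ is non-trivial and \eqref{2.P.1} forces some such $z$ to appear as a non-trivial factor of any non-identity element of $G$), $\Phi$ forces $A(y,z)$, hence $y=e$. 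If $x\neq e$, then (ii) shows $A(x,z)$ fails at every non-trivial $z$ with $z^m=e$, while at $z=e$ we have $A(y,e)$ by (i), so the implication $A(x,z)\rightarrow A(y,z)$ holds trivially in all cases. Theorem~\ref{T.eq.groups} then delivers the conclusion.

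The principal difficulty is that the implication $x=e\rightarrow y=e$ does not lie in the $h$-fragment on its own; the resolution is to route the ``$u=e$'' test through a quantified parameter $z$ using the predicate $A$, so that the desired statement reshapes into the only implication-like construct allowed by the $h$-formula closure rules.
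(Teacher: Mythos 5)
Your proof is correct, and it in fact differs from the paper's in a way that matters. The paper's formula (*) is
\[
(\exists a_1,\dots,a_n \in G_{[m]})\Bigl[y=a_1\cdots a_n \wedge (\forall a\in G_{[m]})\bigl(x\in C(a^G)\cap G_{[m]}\rightarrow \textstyle\bigwedge_i a_i\in C(a^G)\bigr)\Bigr],
\]
which is verified to be \emph{semantically} equivalent to $x=e\rightarrow y=e$, but the paper never checks that it lies in the $h$-fragment, and in fact the standard Fact does not directly apply: the antecedent of the inner universal is (up to unfolding the bounded quantifier) $a^m=e\wedge x\in C(a^G)\wedge x^m=e$, and $T$ does \emph{not} prove $(\exists a)(a^m=e\wedge x\in C(a^G)\wedge x^m=e)$ for free $x$ (the witness $a=e$ fails whenever $x^m\neq e$). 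One cannot simply drop the conjunct $x^m=e$, because then hypothesis~\eqref{1.P.1} gives no control over an $x$ of order not dividing $m$ that happens to centralize $a^G$; nor can one conjoin the existential, since that would force $x^m=e$ and break the equivalence. Your formula $\Phi(x,y)=(\forall z)(A(x,z)\rightarrow A(y,z))$ avoids this precisely because $A(x,z)$ tests $x$ through a factorization into elements of $G_{[m]}$ rather than requiring $x$ itself to lie in $G_{[m]}$: $(\exists z)A(x,z)$ is then always witnessed by $z=e$ (via hypothesis~\eqref{2.P.1}, padding with identities), so the Fact applies and $\Phi$ is genuinely equivalent to an $h$-formula in $T$. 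In short, you reversed the roles — the paper factors $y$ in the outer existential and compares $x$ by raw centralizer membership, whereas you quantify the ``probe'' $z$ outermost and compare $x$ and $y$ symmetrically via their $G_{[m]}$-factorizations — and this reversal is exactly what repairs the $h$-membership issue. The observations (i)--(iii) and the appeal to nontriviality of $G$ to produce a nontrivial $z\in G_{[m]}$ are all correctly argued. You might explicitly note, as a small check, that $T$-models are forced to be nonabelian by~\eqref{1.P.1} together with~\eqref{2.P.1}, so the full-class requirement of Theorem~\ref{T.eq.groups} is met.
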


\begin{proof}
We proceed as in the proof of Theorem~\ref{T.RecognizingCoordinates}.
We will show that modulo the theory $T$, the formula  $ x=e \rightarrow y=e $ is equivalent to the following (let $G_{[m]}$ denote the set of elements  of $G$ whose order divides $m$). 
\begin{align*}
    \tag{*} \label{Eq.1.P.2}	(\exists a_1,\dots,a_n \in G_{[m]})& \  [y =a_1 \cdot \dots \cdot a_n   \\ \ \wedge  ( & \forall a \in G_{[m]}) ~ ( x\in C(a^G)\cap G_{[m]} \rightarrow \bigwedge_{i\leq n}a_i\in C(a^G) )].  
\end{align*}
Assume that (\ref{Eq.1.P.2}) holds, witnessed by $y =a_1 \cdot \dots \cdot a_n$ with $a_i\in G_{[m]}$, $i\leq n$. Moreover assume $x=e$. Then for every $a\in G_{[m]} \setminus\{e\}$ we have $x\in C(a^G)\cap G_{[m]}$. By \eqref{Eq.1.P.2}, we have for $i\leq n$ that $a_i\in C(a^G)\cap G_{[m]}$.  Since \eqref{1.P.1} implies that this set is equal to $\{e\}$, it follows that $a_i=e$ for all $i$ and therefore $y=a_1\cdot ... \cdot a_n=e$.

Conversely, assume that $x=e \rightarrow y=e $ holds. If $x=y=e$, then (\ref{Eq.1.P.2}) trivially holds, witnessed by $a_1= \dots= a_n=e$. Assume $x\neq e$. By \eqref{2.P.1} there are $a_1,\dots, a_n\in G_{[m]}$ such that $y=a_1\cdot {}\dots{} \cdot a_n$.  We check that the consequent of \eqref{Eq.1.P.2} holds for every $a\in G_{[m]}$.  First,~\eqref{1.P.1} implies that $x\notin C(a^G)\cap G_{[m]}$ for every $a\neq e$.  If  $a=e$, then $a_i\in C(a^G)$ for all $i\leq n$, and  therefore, (\ref{Eq.1.P.2}) also holds.
\end{proof}

\subsection{Classes of groups that recognize coordinates}\label{S.GroupsRecognizingCoordinates}

The following theorem gathers classes of groups which we know recognize coordinates.  For the definition of perfect groups and quasisimple groups, see \S\ref{S.Perfect}. 
For an integer $n\geq 2$, we let $\Sym_n$ denote the symmetric group of $n$ elements.

\begin{theorem} \label{T.RC}
	Each of the following classes of groups recognizes coordinates. 
	\begin{enumerate}[label=(\alph*)]
		\item \label{1.C} The class of all non-abelian simple groups.

%		\item \label{2.C} The class of all $\Sym_n$, for $n\geq 4$.
%		\item \label{1.5.C} The group $\Sym_3$. 
        \item \label{2.Cplus} The class of all $\Sym_n$, $n\geq 3$. 
		\item \label{3.C} The class of all dihedral groups $D_{2n+1}$, for $n\geq 1$. 
		
		\item \label{4.1.C} The class of all quasisimple groups of commutator width $\leq n$, for any fixed $n$. 
		
		\item \label{4.2.C} The class of all finite  quasisimple groups.%, in particular the class of all $\SL(n,K)$ where $n\geq 2$ and $K$ is a finite field with at least four elements. 

		\item \label{4'.C}
		The class of groups of the form $\SL(n,F)$, for $2\leq n$ and  $|F|\geq 4$
        .%The group $\SL(n,F)$ recognizes coordinates for all $n\geq 2$ and every field $F$ with more than four elements. 
		
		\item \label{5.C} The class of groups which are free products of two nontrivial groups at least one of which has cardinality at least 3. 
		
		\item \label{6.C} $\{G\}$ where $G$ is any nontrivial free product. 
		
		\item \label{7.C} The class of all graph products $\Gamma\cG$ such that the complement graph $\bar\Gamma$ is connected and $|G_v|\geq 3$ for at least one $v\in V(\Gamma)$ (see \S\ref{S.GraphProducts}). 
		
		\item \label{8.C} The class of all domains (in the sense of \cite{baumslag1999algebraic}) and in particular 	the class of all acylindrically hyperbolic group (\cite{osin2016acylindrically}) without a non-trivial finite normal subgroup. 
	\end{enumerate}
\end{theorem}

\begin{proof} 
	\ref{8.C}  is Corollary~\ref{C.Montse}, included in the present list for completeness. 
	
	We will use Theorem~\ref{T.RecognizingCoordinates}. 
	
	\ref{1.C} It suffices to prove that every non-abelian simple group satisfies \eqref{2.RecognizingCoordinates} of Theorem~\ref{T.RecognizingCoordinates}.  Suppose $G$ is a non-abelian simple group and $a\in G\setminus \{e\}$. Since the subgroup generated by $a^G$ is obviously normal, it is equal to $G$ and in particular $C_G(a^G)=\{e\}$.

	%\ref{2.C} 
	%If $n\geq 4$, then every nontrivial $a\in \Sym_4$ has the property that the centralizer of $a^{\Sym_n}$ is trivial, hence the class $\{\Sym_n\mid n\geq 4\}$ recognizes coordinates by \eqref{2.RecognizingCoordinates} of Theorem~\ref{T.RecognizingCoordinates}. 
    
	%\ref{1.5.C}
	%The group $\Sym_3$ satisfies  \eqref{1.RecognizingCoordinates} of Theorem~\ref{T.RecognizingCoordinates} with $p=2$. 
	%This is because every element of $\Sym_3$ of order $2$ is a transposition, the transpositions generate $\Sym_n$, and they are pairwise conjugate.\footnote{Note that \eqref{2.RecognizingCoordinates} of Theorem~\ref{T.RecognizingCoordinates} fails in $\Sym_3$, because the conjugacy class of a 3-cycle is a proper abelian subgroup.} %It also follows from \cref{2.Cplus} below.
	
	\ref{2.Cplus} This is Corollary~\ref{C.SymAllExceptSix}. We can remark here that the class $\{\Sym_n\mid n\geq 4\}$ recognizes coordinates by \eqref{2.RecognizingCoordinates} of Theorem~\ref{T.RecognizingCoordinates} since that, if $n\geq 4$, every nontrivial $a\in \Sym_n$ has the property that the centralizer of $a^{\Sym_n}$ is trivial. We also note that the group $\Sym_3$ satisfies  \eqref{1.RecognizingCoordinates} of Theorem~\ref{T.RecognizingCoordinates} with $p=2$ and therefore recognize coordinates. 
	This is because every element of $\Sym_3$ of order $2$ is a transposition, the transpositions generate $\Sym_n$, and they are pairwise conjugate.\footnote{Note that \eqref{2.RecognizingCoordinates} of Theorem~\ref{T.RecognizingCoordinates} fails in $\Sym_3$, because the conjugacy class of a 3-cycle is a proper abelian subgroup.}
	
	\ref{3.C} We  verify \eqref{1.RecognizingCoordinates} of Theorem~\ref{T.RecognizingCoordinates} holds for $D:=D_{2n+1}$ with $p=2$. We identify $D$ with the group of symmetries of a regular $2n+1$-gon. Every element of order $2$ is a reflection and since all reflections are conjugate, we verify easily that $C_{D}(r^{D})=\{e \}$ for all nontrivial reflection $r$. Since every rotation is a composition of two reflections, the conclusion follows.

	%		\eqref{4.C}  This is Proposition~\ref{P.Perfect} below. 

	\ref{4.1.C} This is Corollary~\ref{C.SL(2.5)} below.

	\ref{4.2.C} 
	By \cite[Corollary 2]{liebeck2011commutators}, in every finite quasisimple group the commutator width is at most 2 and the result follows from Corollary~\ref{C.SL(2.5)} below. 
	%		This is closely related to \eqref{4.1.C}.  By \cite{thompson1961commutators}, if $n\geq 2$ and $K$ is a field (possibly infinite) with at least four elements then the  group $\SL(n,K)$ is perfect and with commutator width $\leq 2$. Since $\PSL(n,K)=\SL(n,K)/Z(\SL(n,K)$ is simple, these groups are quasisimple and the result follows from Corollary~\ref{C.SL(2.5)} below. 

	\ref{4'.C} is Corollary~\ref{C.SL(n,F)} (a special case of Proposition~\ref{P.Perfect}). 
	
	\ref{5.C} This is Proposition~\ref{P.FreeProduct} below. 
	
	\ref{6.C} The only nontrivial product not covered by \ref{5.C} is $(\bbZ/2\bbZ)*(\bbZ/2\bbZ)$. It recognizes coordinates by Proposition~\ref{P.Z/2Z*Z/2Z}. 
	
	\ref{7.C} This is Theorem~\ref{T.GraphProduct}.
\end{proof}

\subsection{Perfect groups}\label{S.Perfect}
We remark that up to this point in the paper, it is not immediately obvious if there exists groups with nontrivial centers which recognize coordinates. We will see in a later section that if a group admits a \emph{nontrivial homomorphism to its center}, then it cannot recognize coordinates. Yet, not every group with a nontrivial center admits a nontrivial homomorphism into its center. An example is provided by the so-called \emph{perfect} groups. These are the groups $G$ with commutator subgroup $G' = [G,G]$ equal to $G$ itself. A homomorphism from $G$ into some group has an abelian range if and only if its kernel includes the commutator subgroup. In particular,  since for $H\triangleleft G$ we have that $G/H$ is abelian if and only if $H\supseteq G'$, a group is perfect if and only if it has no nontrivial abelian quotients. Hence a perfect group~$G$ with nontrivial center does not admit any nontrivial homomorphism into its center. 

The \emph{commutator width} of a perfect group is the minimal $n$ such that every element of $G$ is the product of $n$ commutators. There are perfect groups with arbitrarily large commutator width, see  \cite[\S 2]{nikolov2004commutator}. 
In an early draft of this paper we asked whether there is a perfect, indecomposable, group $G$ with infinite commutator width and nontrivial center, and whether such $G$ can be chosen so that a group elementarily equivalent to $G$ admits a nontrivial homomorphism into its center. 
Answering part of this question, Forte Shinko proved that $C_{\textrm{Homeo}^+(\bbR)}(\bbZ)$, the centralizer of the natural copy of  $\bbZ$ inside the group of orientation-preserving homeomorphisms of $\bbR$,   is a perfect indecomposable group with infinite commutator width and nontrivial center (\cite{shinko2025recognzing}). A positive answer to the other portion of this question would give a perfect, indecomposable group that does not recognize coordinates.

Note that `$G$ is a perfect group of commutator width $\leq  n$' is axiomatizable, but `$G$ is a perfect group' is not. Indeed, if $(G_n)_n$ is a sequence of perfect groups with arbitrarily large commutator width, then any non-principle ultraproduct $\prod_\cU G_n$ is imperfect.

\begin{proposition}\label{P.Perfect}
	For $m\geq 2$ let $\fC_m$ be the class of all perfect groups $G$ of commutator width $\leq  m$ that satisfy the following condition. 
	\begin{itemize}
		\item [$(\dagger)$] If $H\trianglelefteq G$ is nonabelian, then $C_G(H)=Z(G)$. 
	\end{itemize}
	Then $\fC_m$ recognizes coordinates. 
\end{proposition}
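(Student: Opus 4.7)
The plan is to apply Theorem~\ref{T.eq.groups} and exhibit an $h$-formula in $\Th(\fC_m)$ equivalent to $x=e\rightarrow y=e$. The template is Proposition~\ref{P.2}, with commutators taking the role of ``elements of order dividing $m$'': the commutator-width bound $\le m$ replaces condition~(2) of Proposition~\ref{P.2}, providing for every $y$ a decomposition $y=[u_1,v_1]\cdots[u_m,v_m]$; and condition $(\dagger)$ replaces condition~(1), pinning down $C_G(a^G)=Z(G)$ whenever the normal closure $\langle a^G\rangle$ is nonabelian.

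Concretely, I would propose the candidate formula
\[
\phi(x,y)\;:=\;(\exists u_1,v_1,\ldots,u_m,v_m)\Bigl[\,y=\prod_{i=1}^{m}[u_i,v_i]\;\wedge\;(\forall a)\bigl(\alpha(x,a)\rightarrow\textstyle\bigwedge_{i}\,[u_i,v_i]\in C_G(a^G)\bigr)\Bigr],
\]
where $\alpha(x,a)$ is a centralizer-type clause (a first natural guess being $x\in C_G(a^G)$, possibly augmented to screen out the case $x\in Z(G)\setminus\{e\}$). The first conjunct is atomic up to existential quantification, and the inner universal implication is an $h$-formula via the closure rule together with the ``abuse of terminology'' fact, since the antecedent is satisfied by $a=e$ (for which $x\in C_G(e^G)=G$ trivially). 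Provided the syntactic form of $\alpha$ remains so, $\phi$ is an $h$-formula.

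The equivalence is then verified in two directions. Forward: if $\phi(x,y)$ holds and $x=e$, the centralizer clause is triggered for every $a$; iterating over $a$ forces each $[u_i,v_i]$ to commute with every conjugacy class, so to lie in $Z(G)$, and then a further step using $(\dagger)$ applied to the (abelian) normal subgroup generated by the $[u_i,v_i]$, together with perfectness of $G$, must be invoked to conclude $[u_i,v_i]=e$ and hence $y=e$. Converse: if $x=e\rightarrow y=e$ holds in $G$, then either $x=e$ (in which case $y=e$ and the trivial decomposition $u_i=v_i=e$ witnesses $\phi$), or $x\neq e$, in which case the commutator-width hypothesis provides a decomposition of $y$, and $(\dagger)$ is used to show $\alpha(x,a)$ fails for every non-identity $a$ whose $\langle a^G\rangle$ is nonabelian (else $x\in C_G(\langle a^G\rangle)=Z(G)$), so the centralizer clause is satisfied vacuously or trivially.

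The main obstacle is the presence of nontrivial central elements in some groups of $\fC_m$: e.g.\ in $\mathrm{SL}(2,5)$, the center contains $-I$, which is itself a commutator. Any centralizer-only formula cannot distinguish $x=e$ from $x\in Z(G)\setminus\{e\}$, and the naive implication ``$[u_i,v_i]\in Z(G)\Rightarrow[u_i,v_i]=e$'' fails. Thus the formula $\phi$ must be calibrated so that, when $x=e$, the combined centralizer constraints squeeze the $[u_i,v_i]$ not merely into $Z(G)$ but to the identity, while remaining satisfiable for arbitrary $y$ when $x\in Z(G)\setminus\{e\}$. This requires a careful interplay between $(\dagger)$ (controlling centralizers of nonabelian normal subgroups), perfectness (which gives $Z(G)\subseteq [G,G]$ and allows one to pass commutator structure to $G/Z(G)$), and the commutator-width bound. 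Pinning down the exact syntactic form of $\alpha$ (and if necessary strengthening $\chi$ with clauses tying the $u_i,v_i$ themselves to $x$ via iterated centralizer conditions) constitutes the technical heart of the proof.
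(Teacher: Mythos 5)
You correctly identify the central obstacle---that a centralizer-only condition on $x$ cannot separate $x=e$ from $x\in Z(G)\setminus\{e\}$, because a central $x$ commutes with every conjugacy class. Your candidate formula, which decomposes only $y$ and ties $x$ into the formula solely through a clause of the shape $\alpha(x,a)$, inherits this problem directly: with $\alpha(x,a):=x\in C_G(a^G)$, a central $x\neq e$ would force every $[u_i,v_i]$ into $Z(G)$ and hence $y\in Z(G)$, even though $x=e\to y=e$ holds vacuously and so $\phi(x,y)$ must hold for arbitrary $y$. You correctly flag that $\alpha$ must be ``calibrated'' to avoid this, but you leave the calibration open---and that is precisely the heart of the proof, not a detail.

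The paper's resolution (Lemma~\ref{L.perfect}) is to quantify over decompositions of $x$ itself, not over conjugacy-class parameters $a$. The outer quantifier is universal over $z_1,t_1,\dots,z_m,t_m$ with $x=\prod_{i\le m}[z_i,t_i]$, and for each such decomposition it demands the existence of $x_1,y_1,\dots,x_m,y_m$ with $y=\prod_i[x_i,y_i]$ and $C_G(\langle z_i^G,t_i^G:i\le m\rangle)\subseteq C_G(\langle x_i^G,y_i^G:i\le m\rangle)$. The dichotomy that makes this work is: if $x\neq e$ then every decomposition of $x$ has some $[z_i,t_i]\neq e$, so $\langle z_i^G,t_i^G\rangle$ is a nonabelian normal subgroup and $(\dagger)$ gives $C_G(\cdot)=Z(G)$, which is contained in \emph{every} centralizer, so the inclusion holds for free regardless of $y$. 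If instead $x=e$, the trivial decomposition $z_i=t_i=e$ is available, for which the centralizer is all of $G$; the inclusion then forces $C_G(\langle x_i^G,y_i^G\rangle)=G$, which by $(\dagger)$ forces all $x_i,y_i$ to commute pairwise up to conjugacy and hence $y=\prod[x_i,y_i]=e$. In particular $x\in Z(G)\setminus\{e\}$ is treated identically to any other nonidentity $x$---no separate ``screening'' clause is needed because a nonidentity central element still has no decomposition into commutators with abelian normal closure. This is the idea your proposal does not reach: shift from centralizers of $a^G$ parameterised by a free $a$, to centralizers of the conjugate closure of the commutator witnesses for $x$, and let the universal quantifier over those witnesses do the work of detecting $x=e$.
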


In Theorem~\ref{T.Limiting} we will prove that the class of all perfect groups does not recognize coordinates. 

\begin{proof}
	By \Cref{T.eq.groups}, it suffices to show that $x=e\rightarrow y=e$ is equivalent to an $h$-formula in the common theory $\Th(\fC_m)$ of $\fC_m$. 
	
\begin{claim}\label{L.perfect}
	Suppose that $G$ is in $\mathfrak{C}_{m}$. Then for all $a$ and $b$ in $G$ the following condition holds if and only if $a= e \rightarrow b=e$ holds. 		
	\begin{enumerate}
		\item [(*)] For all $z_i, t_i$, $i\leq m$,  such that $a=\prod_{i\leq m} [z_i,t_i]$ there are $x_i,y_i$, $i\leq m$ such that   $b=\prod_{i \leq m} [x_i,y_i]$, and such that we have 
		\[
		C_G(z_i^G,t_i^G:i\leq m) \subseteq  C_G(x_i^G,y_i^G:i\leq m). 			\]
	\end{enumerate}

\end{claim}	\label{C.4.8}
		%It follows in particular, that, for every $m$, the formula $x=e\rightarrow y=e$ is equivalent to an $h$-formula $\phi(x,y)$ in the common theory of the class $\fC_m$.
\begin{proof}
      By ($\dagger$), for every $c\in G\setminus \{e\}$, if $c=\prod_i [x_i,y_i]$ then $C_G(\langle x_i^G, y_i^G: i \leq m\rangle)=Z(G)$. On the other hand, if  $c=e$, then we can choose $x_i=y_i=e$ for all $i$ and therefore $C_G(\langle x_i^G, y_i^G: i \leq m\rangle)=G$. Therefore if $a\neq e$ or $b= e$ then (*) holds. 
Conversely, assume  $a= e$ and that (*) holds. Write $b=\prod_i [x_i,y_i]$. Then $C_G(\langle x_i^G, y_i^G: i \leq m \rangle) \supseteq G$ and $b$ must be equal to $e$.
This concludes the proof.
\end{proof}  

Claim~\ref{C.4.8} implies that the relation $\supp(a)\subseteq \supp(b)$ is definable in reduced products of groups in $\fC_m$, and the conclusion follows by Theorem~\ref{T.eq.groups}. 
\end{proof}

Since the class of perfect groups is closed under direct products, some perfect groups are not indecomposable and therefore do not recognize coordinates. Thus the following question remains open. 
\begin{question}
    Does the class of perfect indecomposable groups recognize coordinates?
\end{question}

A group $G$ is called \emph{quasisimple} if it is perfect and $G/Z(G)$ is simple. This for example includes all groups of the form  $\SL(n,F)$ for $n\geq 2$ and $F$ a field with $|F|\geq 4$. 

\begin{corollary}\label{C.SL(2.5)}
	If for some $n$, $\fC$ is the class of all quasisimple groups $G$ of commutator width $\leq  n$  then $\fC$ recognizes coordinates. 
\end{corollary}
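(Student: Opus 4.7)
The plan is to reduce the corollary immediately to Proposition~\ref{P.Perfect} by verifying that every quasisimple group satisfies condition~($\dagger$). Since a quasisimple group is by definition perfect, the only content is to check that, for every nonabelian normal subgroup $H\trianglelefteq G$ of a quasisimple group $G$, one has $C_G(H)=Z(G)$.

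First I would observe that if $H\trianglelefteq G$, then the image $\bar H:=HZ(G)/Z(G)$ is a normal subgroup of $G/Z(G)$. By quasisimplicity, $G/Z(G)$ is simple, so $\bar H$ is either trivial or all of $G/Z(G)$. If $\bar H$ is trivial, then $H\subseteq Z(G)$ and $H$ is abelian; hence the assumption that $H$ is nonabelian forces $\bar H=G/Z(G)$, i.e.\ $G=HZ(G)$.

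Now suppose $g\in C_G(H)$ and pick an arbitrary $k\in G$. Writing $k=hz$ with $h\in H$ and $z\in Z(G)$, we compute $gk=g(hz)=(gh)z=(hg)z=h(gz)=h(zg)=kg$, so $g\in Z(G)$ and $C_G(H)\subseteq Z(G)$. The reverse inclusion is automatic, giving $C_G(H)=Z(G)$ as required. This establishes~($\dagger$) for every quasisimple group, so the class $\fC$ of quasisimple groups of commutator width $\leq n$ is contained in the class $\fC_n$ of Proposition~\ref{P.Perfect} (taking $m=\max(n,2)$, say), and hence recognizes coordinates.

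I do not anticipate any serious obstacle: the only step requiring care is the normal-subgroup dichotomy in the middle paragraph, where one must remember that simplicity of $G/Z(G)$ applies to $\bar H$ rather than to $H$ itself, and that passing through $G=HZ(G)$ is precisely what allows centralizing elements of $H$ to centralize all of $G$ modulo the center.
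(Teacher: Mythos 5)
Your proof is correct and follows essentially the same strategy as the paper's: reduce to Proposition~\ref{P.Perfect} by checking condition~$(\dagger)$ for quasisimple groups. The paper's own proof invokes the Jordan--H\"older theorem to conclude that ``$Z(G)$ is the only nontrivial normal subgroup of $G$'' (which is overstated when $Z(G)$ has proper nontrivial subgroups, though the real content --- that every \emph{proper} normal subgroup is contained in $Z(G)$, hence abelian --- is correct) and then observes $(\dagger)$ is vacuous for proper normal subgroups and trivial for $H=G$. Your version is slightly more self-contained: from simplicity of $G/Z(G)$ you get $G=HZ(G)$ for any nonabelian normal $H$, and you compute $C_G(H)=Z(G)$ directly from that identity without needing to push through to $H=G$ via perfectness. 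Both routes are short and correct; yours sidesteps the paper's imprecise intermediate claim, and your remark about taking $m=\max(n,2)$ to satisfy the $m\geq 2$ hypothesis of Proposition~\ref{P.Perfect} is a detail the paper leaves implicit.
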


\begin{proof}
	By the Jordan--H\" older theorem (see e.g., \cite[Theorem 13.6]{judson2020abstract}), if $G$ is in $\fC$ then $Z(G)$ is the only nontrivial normal subgroup of $G$ and in particular $G$ has no proper nonabelian normal subgroup. Therefore the conclusion follows by Proposition \ref{P.Perfect}. 
\end{proof}

\begin{corollary} \label{C.SL(n,F)} The class of groups of the form $\SL(n,F)$, for $2\leq n$ and  $|F|\geq 4$ recognizes coordinates. Following a common convention, we write $\SL(n,q)$ for $\SL(n,F_q)$, $q$ a prime power.  
	
	Since the center of $\SL(2,5)$ is $\bbZ/2\bbZ$, this gives an example of a group with nontrivial center that recognizes coordinates. 
\end{corollary}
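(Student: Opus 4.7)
The plan is to deduce Corollary~\ref{C.SL(n,F)} directly from Corollary~\ref{C.SL(2.5)} by verifying that the class $\fC = \{\SL(n,F) : n\geq 2,\ |F|\geq 4\}$ consists of quasisimple groups of uniformly bounded commutator width. First I would recall the three classical facts that make the machinery of Proposition~\ref{P.Perfect} applicable: (i) for $n\geq 2$ and $|F|\geq 4$ the group $\SL(n,F)$ is perfect, i.e., $[\SL(n,F),\SL(n,F)] = \SL(n,F)$; (ii) the center $Z(\SL(n,F))$ consists of scalar matrices $\zeta I$ with $\zeta^n = 1$, and the quotient $\PSL(n,F) = \SL(n,F)/Z(\SL(n,F))$ is simple; and (iii) there is a universal constant $m$ such that every element of every $\SL(n,F)$ with $n\geq 2$ and $|F|\geq 4$ is a product of at most $m$ commutators. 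Facts (i) and (ii) together assert that every such $\SL(n,F)$ is quasisimple.

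The main obstacle is (iii), the uniform bound on commutator width. This is not elementary, but it is a well documented consequence of the work surrounding the Ore conjecture: in the finite case, the Ore conjecture (established by Liebeck--O'Brien--Shalev--Tiep) gives commutator width $1$ for every finite non-abelian simple group, and a short lifting argument from $\PSL(n,F)$ to $\SL(n,F)$ yields commutator width at most $2$, matching \cite[Corollary 2]{liebeck2011commutators} cited in the proof of Theorem~\ref{T.RC}\ref{4.2.C}. For infinite $F$, a bound of the same small order (e.g.\ $2$) follows from known results on $\SL(n,F)$ via Gauss decomposition and conjugation arguments that express any element as a bounded product of commutators involving elementary matrices. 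In any case, what one needs is just the existence of some $m$ uniform over all admissible $(n,F)$.

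Given (i)--(iii), every $G \in \fC$ satisfies the hypotheses of Proposition~\ref{P.Perfect}: $G$ is perfect with commutator width $\leq m$, and the quasisimplicity together with the Jordan--H\"older argument used in the proof of Corollary~\ref{C.SL(2.5)} shows that the only nontrivial normal subgroup of $G$ is $Z(G)$, which is abelian; hence condition $(\dagger)$ holds vacuously (any nonabelian normal subgroup would have to be $G$ itself, and then $C_G(G) = Z(G)$). Thus $\fC$ is contained in the class of quasisimple groups of commutator width $\leq m$, which recognizes coordinates by Corollary~\ref{C.SL(2.5)}. Since recognizing coordinates is clearly inherited by subclasses, $\fC$ recognizes coordinates. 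For the final remark, I would simply observe that $\SL(2,5) \in \fC$ and that its center is $\{\pm I\} \cong \bbZ/2\bbZ$, providing the promised example of a group with nontrivial center which recognizes coordinates.
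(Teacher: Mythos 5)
Your proposal follows exactly the paper's route: verify that $\SL(n,F)$ for $n\geq 2$, $|F|\geq 4$ is quasisimple of uniformly bounded commutator width and invoke Corollary~\ref{C.SL(2.5)}. The only difference is sourcing of the commutator-width bound: the paper cites Thompson's 1961 result, which gives width $\leq 2$ uniformly across all such $(n,F)$ in one stroke, whereas you piece it together from the Ore conjecture (finite case) plus a somewhat hand-waved Gauss-decomposition argument for infinite $F$ --- correct in spirit, but the paper's single reference is tighter.
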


\begin{proof} The commutator width of each of these groups is at most 2 by \cite{thompson1961commutators}, hence they are all perfect with uniformly bounded commutator width. 
	
	If $G=\SL(n,F)$ then $G/Z(G)$ is $\PSL(n,F)$, which is (assuming $n\geq 2$ and $|F|\geq 4$) well-known to be a simple group (see e.g., \cite{conrad2020simplicity}). Hence this class of groups is a subclass of all quasisimple groups with commutator width bounded by $2$, and Corollary~\ref{C.SL(2.5)} applies.
\end{proof}

\subsection{Free products of groups}
In this subsection we prove \ref{5.C} and \ref{6.C} of Theorem~\ref{T.RecognizingCoordinates}.
\begin{definition}
	Suppose that $G=H_0*H_1$ and both $H_0$ and $H_1$ are nontrivial groups. Then every element of $G\setminus \{e\}$ has the form $x_1 x_2 \dots x_{n}$ for some $n\geq 1$ where $x_i\in (H_0\setminus \{e\})\cup (H_1\setminus\{e\})$ for all $i\leq n$  and $x_i \in H_0$ if and only if $x_{i+1}\in H_1$ for all $i\leq n-1$. With these conventions we define the following ($\lambda$ stands for `length' while $L$' and `$R$' are for `left' and `right') 
	\begin{align*}
		\lambda(x_0x_1 \dots x_{n-1})&=n,\\
		\lambda(e)&=0\\
		L(x_1x_2\dots x_n)&=j\qquad \text{ if } x_1\in H_j\\
		R(x_1x_2\dots x_n)&=j\qquad \text{ if } x_n\in H_j. 
	\end{align*}
\end{definition}
The following lemma is well-known (e.g. \cite{magnus2004combinatorial}) but we include a proof for the reader's convenience. 

\begin{lemma}\label{L.FreeProduct}
	Suppose that $G=H_0*H_1$ is a free product of nontrivial groups and $a,b$ are in $G\setminus \{e\}$. Then we have the following. 
	\begin{enumerate}
		\item $\lambda(a)=\lambda(a^{-1})$. 
		\item $|\lambda(a)-\lambda(b)|\leq \lambda(ab)\leq \lambda(a)+\lambda(b)$. 
		\item $\lambda(a)>\lambda(b)$ implies $L(ab)=L(a)$ and $R(ba)=R(a)$. 
		\item If $R(a)\neq L(b)$ then $L(ab)=L(a)$ and $R(ab)=R(b)$. 
		\item \label{4.FreeProduct} If $|H_0|\geq 3$ then for every $a\in G\setminus \{e\}$ there are $b$ and $c$ in $a^G$ such that $L(b)=R(b)=0$ and $L(c)=R(c)=1$. 
	\end{enumerate}
\end{lemma}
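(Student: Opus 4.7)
The plan is to rely throughout on the normal form theorem for free products: every $g\in G\setminus\{e\}$ has a unique expression $g = x_1 x_2 \cdots x_n$ with $x_i \in (H_0\setminus\{e\}) \cup (H_1\setminus\{e\})$ and consecutive letters in different factors. Items $(1)$--$(4)$ are routine syntactic consequences of this theorem, while item \eqref{4.FreeProduct} requires a case analysis that uses $|H_0|\ge 3$ to avoid accidental cancellations during conjugation.

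First I would handle $(1)$--$(4)$ together by analyzing the product $ab = x_1 \cdots x_n \cdot y_1 \cdots y_m$ of normal forms. A reduction can only occur at the junction $x_n y_1$. If $R(a) \neq L(b)$, the letters $x_n, y_1$ lie in different factors and the concatenation is already in normal form, giving $(4)$ and also $\lambda(ab) = \lambda(a) + \lambda(b)$. Otherwise $x_n y_1$ lies in a single factor $H_i$ and is either $e$ (triggering cancellation and possibly a cascade) or a single non-identity letter (amalgamation). Each reduction decreases the combined length by exactly $2$, yielding the upper bound in $(2)$; since at most $\min(\lambda(a),\lambda(b))$ letters can ever be consumed from either side, the lower bound follows, and when $\lambda(a) > \lambda(b)$ the leftmost letter $x_1$ survives untouched, giving $L(ab) = L(a)$ (and symmetrically $R(ba) = R(a)$), which is $(3)$. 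Item $(1)$ is immediate: $x_n^{-1} \cdots x_1^{-1}$ is the normal form of $a^{-1}$.

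For item \eqref{4.FreeProduct}, I would first reduce to the cyclically reduced case, where either $\lambda(a) \le 1$ or $L(a) \neq R(a)$. If $\lambda(a) \ge 2$ and $L(a) = R(a) = i$, then $x_1^{-1} a x_1 = x_2 \cdots x_{n-1}(x_n x_1)$ has strictly smaller length; iterating yields a cyclically reduced conjugate, so I may replace $a$ by it. Three sub-cases then arise: $a \in H_0\setminus\{e\}$, $a \in H_1\setminus\{e\}$, or $\lambda(a) \ge 2$ with, up to symmetry, $L(a) = 0$ and $R(a) = 1$. The key observation is that since $|H_0| \ge 3$, for any $x \in H_0$ one may choose $h \in H_0\setminus\{e, x^{-1}\}$, ensuring $hx \in H_0\setminus\{e\}$. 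Applied with $x = x_1$, this produces $b := h a h^{-1}$ whose normal form (by the junction analysis above) begins with $hx_1 \in H_0$ and ends with $h^{-1} \in H_0$, so $L(b) = R(b) = 0$; the length-one subcases $a\in H_i$ are easier and handled by a single conjugation.

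The hard part will be constructing $c$ with $L(c) = R(c) = 1$, since we are \emph{not} assuming $|H_1|\ge 3$. When $|H_1| = 2$, the unique non-identity $t\in H_1$ may coincide with the final letter $x_n$ of $a$, so conjugation by $t$ alone collapses $x_n t^{-1}$ to $e$. To circumvent this, my plan is a two-step conjugation: first form $a' := h a h^{-1}$ with $h \in H_0\setminus\{e, x_1^{-1}\}$ as above, producing a word whose endpoints both lie in $H_0$; then set $c := t a' t^{-1}$ with any $t\in H_1\setminus\{e\}$. Since the endpoints of $a'$ lie in $H_0$ and $t, t^{-1} \in H_1$, no reduction occurs at either junction, so the normal form of $c$ begins with $t$ and ends with $t^{-1}$. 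The length-one sub-cases reduce to a single conjugation by an element of the opposite factor. This completes item \eqref{4.FreeProduct}.
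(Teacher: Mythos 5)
Your proposal is correct and takes essentially the same route as the paper: items $(1)$--$(4)$ are read off from the normal-form theorem (the paper states them as immediate), and for item $\eqref{4.FreeProduct}$ both arguments first build $b\in a^G$ with $L(b)=R(b)=0$ by conjugating with an element $h\in H_0\setminus\{e,x_1^{-1}\}$ chosen to prevent cancellation (this is exactly where $|H_0|\ge 3$ is used), and then take $c=tbt^{-1}$ for any $t\in H_1\setminus\{e\}$. Your preliminary cyclic-reduction pass is a mild cosmetic rearrangement of the paper's direct case split on the pair $(L(a),R(a))$ and does not constitute a different method.
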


\begin{proof}
	The first four items are immediate consequences of the definition of the free product. To prove the fifth, fix $a$ in $G\setminus \{e\}$. We will first find $b\in a^G$ such that $L(b)=R(b)=0$. If $L(a)=R(a)=0$ then let $b=a$, and if $L(a)=R(a)=1$ then let $b=xax^{-1}$ for some $x\in H_0\setminus \{e\}$. We may therefore assume $a=yd$ or $a=dy$ for some $y\in H_0\setminus \{e\}$ and $d$ such that $L(d)=R(d)=1$ or $d=e$. Since $|H_0|\geq 3$, we can choose  choose $z \in H_0 \backslash\{e,y^{-1}\}$. Then if $a = yd$, let $b=zy dz^{-1}$ and if $a = dy$, let $b=z^{-1} d y z$. Then $b$ is as required. 
	We can now set $c= xbx^{-1}$ for any $x\in H_1\setminus \{e\}$. 
\end{proof}

\begin{proposition}
	\label{P.FreeProduct} The class of all groups that are free products of two nontrivial groups at least one of which has cardinality at least 3 recognizes coordinates.  
\end{proposition}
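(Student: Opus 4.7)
The plan is to verify the hypothesis of Theorem~\ref{T.RecognizingCoordinates}\eqref{2.RecognizingCoordinates}: I will show that for every $G = H_0 * H_1$ with both factors nontrivial and (without loss of generality) $|H_0| \geq 3$, and every $a \in G\setminus\{e\}$, the centralizer $C_G(a^G)$ of the conjugacy class of $a$ is trivial. Once this is established, Proposition~\ref{P.FreeProduct} follows immediately.

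Fix such $G$ and $a \neq e$, take any $g \in C_G(a^G)$, and aim to show $g = e$. Since the hypothesis depends only on the conjugacy class of $a$, I would first replace $a$ by a cyclically reduced conjugate. The argument then splits according to $\lambda(a)$. If $\lambda(a) = 1$, then without loss of generality $a \in H_0 \setminus \{e\}$; a routine length-function argument based on Lemma~\ref{L.FreeProduct} shows that $C_G(a) = C_{H_0}(a) \subseteq H_0$. Applying the same computation to $yay^{-1}$ for any $y \in H_1 \setminus\{e\}$ gives $g \in yH_0y^{-1}$ as well. Since conjugate factors in a free product intersect trivially, that is, $H_0 \cap yH_0y^{-1} = \{e\}$ for $y \notin H_0$ (again, a direct consequence of Lemma~\ref{L.FreeProduct}), we conclude $g = e$.

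The case $\lambda(a) \geq 2$ is handled by invoking the classical centralizer theorem for free products: $C_G(a) = \langle z \rangle$ is infinite cyclic, generated by a cyclically reduced element $z$ of length $\geq 2$ with $a = z^k$ for some minimal $k \geq 1$. Hence $g = z^j$ for some $j \in \bbZ$, and since $g$ commutes with every conjugate $hah^{-1}$, we have $g \in \bigcap_{h \in G} h\langle z\rangle h^{-1}$. The hypothesis $|H_0| \geq 3$ will be used precisely here to pick $h \in H_0 \setminus\{e\}$ outside the normalizer of $\langle z \rangle$, and then the malnormality of maximal infinite cyclic subgroups generated by cyclically reduced elements of length $\geq 2$ in a free product (another standard length-function consequence of Lemma~\ref{L.FreeProduct}) forces $z^j = e$, so $g = e$.

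The main obstacle is the second case, and the hypothesis $|H_0| \geq 3$ is essential there: in the infinite dihedral group $\bbZ/2\bbZ * \bbZ/2\bbZ$ the element $a = xy$ already satisfies $C_G(a^G) = \langle a \rangle \neq \{e\}$, which is exactly why that degenerate case is treated separately in Proposition~\ref{P.Z/2Z*Z/2Z}. The classical free-product facts invoked above (centralizers of elements of a factor, trivial intersection of conjugate factors, and malnormality of cyclic subgroups generated by cyclically reduced words of length $\geq 2$) are all provable by direct, if tedious, case analysis from Lemma~\ref{L.FreeProduct}. An alternative plan that stays closer to the tools explicitly developed in the paper is to use Lemma~\ref{L.FreeProduct}\eqref{4.FreeProduct} to pick concrete conjugates $b, c \in a^G$ with $L(b)=R(b)=0$ and $L(c)=R(c)=1$ and derive a contradiction from explicit computations with reduced forms; this bypasses the centralizer theorem but introduces more combinatorial bookkeeping.
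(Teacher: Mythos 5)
Your reduction to Theorem~\ref{T.RecognizingCoordinates}\eqref{2.RecognizingCoordinates}, i.e.\ to showing $C_G(a^G)=\{e\}$ for all $a\neq e$, is the same as the paper's. The paper then dispenses with everything in a single stroke: rephrasing $C_G(a^G)=\{e\}$ for all $a\neq e$ as ``for all $a,b\in G\setminus\{e\}$ there are conjugates $a'\in a^G$, $b'\in b^G$ with $a'b'\neq b'a'$'', it applies Lemma~\ref{L.FreeProduct}\eqref{4.FreeProduct} to choose $a'$ with $L(a')=R(a')=0$ and $b'$ with $L(b')=R(b')=1$, and then $L(a'b')=0\neq 1=L(b'a')$ forces $a'b'\neq b'a'$. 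No case split, no centralizer theorem. This is the ``alternative plan'' you mention in passing at the end, and it is substantially shorter and cleaner than what you propose.

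Your case $\lambda(a)=1$ is correct. The case $\lambda(a)\geq 2$, however, contains a genuine gap. The statement you invoke --- that maximal infinite cyclic subgroups generated by a cyclically reduced element $z$ of length $\geq 2$ are \emph{malnormal} in a free product --- is false. For a counterexample, take $H_0$ to contain an involution $x$, $H_1$ to contain an involution $y$, and $z=xy$. Then $xzx^{-1}=yx=z^{-1}$, so $x\in N_G(\langle z\rangle)\setminus\langle z\rangle$ and $x\langle z\rangle x^{-1}\cap\langle z\rangle=\langle z\rangle\neq\{e\}$, although $\langle z\rangle$ is maximal cyclic. What is true, and what your argument actually needs, is the weaker assertion that $h\langle z\rangle h^{-1}\cap\langle z\rangle\neq\{e\}$ forces $h\in N_G(\langle z\rangle)$ (because both $\langle z\rangle$ and $h\langle z\rangle h^{-1}$ would then equal the unique maximal cyclic subgroup containing a common nontrivial power). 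Since you choose $h$ outside the normalizer, the conclusion is salvageable, but the stated lemma is wrong. Moreover, you never justify the crucial step that $|H_0|\geq 3$ supplies an $h\in H_0\setminus\{e\}$ with $h\notin N_G(\langle z\rangle)$; this requires an argument (e.g.\ a parity-of-length count showing that at most one nontrivial $h\in H_0$ can conjugate $z$ into $\langle z\rangle$ when $L(z)=0$). Without these repairs the second case does not go through as written, whereas the paper's one-line argument via Lemma~\ref{L.FreeProduct}\eqref{4.FreeProduct} sidesteps all of this.
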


\begin{proof} By Theorem~\ref{T.RecognizingCoordinates} \eqref{2.RecognizingCoordinates}  it suffices to prove that  for every $a\neq e$, $C_G(a^G)=\{e\}$. This is equivalent to asserting that for all $a$ and $b$ in $G\setminus \{e\}$ there are $a'\in a^G$ and $b'\in b^G$ such that $a'b'\neq b'a'$. Fix $a$ and $b$ in $G\setminus \{e\}$. By Lemma~\ref{L.FreeProduct} \eqref{4.FreeProduct}  there are $a'\in a^G$ with $L(a')=R(a')=0$ and $b'\in b^G$ with $L(b')=R(b')=1$. Then $L(a'b')=L(a')=0$ and $L(b'a')=L(b')=1$, therefore $a'b'\neq b'a'$, as required. 
\end{proof}

\begin{proposition}\label{P.Z/2Z*Z/2Z} The group $(\bbZ/2\bbZ)*(\bbZ/2\bbZ)$ recognizes coordinates. 
\end{proposition}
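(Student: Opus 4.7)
The plan is to apply Proposition~\ref{P.2} with $m = n = 2$, taking $G := (\bbZ/2\bbZ) * (\bbZ/2\bbZ)$ generated by two involutions $a$ and $b$. The temptation to use Theorem~\ref{T.RecognizingCoordinates}\eqref{2.RecognizingCoordinates} must be resisted: writing $t := ab$, the infinite cyclic subgroup $\langle t\rangle$ is its own centralizer (since reflections conjugate $t$ to $t^{-1}$), so $C_G(t^G) = \langle t\rangle \neq \{e\}$ and that theorem does not apply. Fortunately, $G$ is generated by involutions in a uniformly bounded way, which is exactly the setup of Proposition~\ref{P.2}.

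To verify condition \eqref{2.P.1}, I identify $G$ with $\langle t\rangle \rtimes \langle a\rangle$, where $a$ acts on $\langle t\rangle$ by inversion. Every element of $G$ then has the form $t^k$ or $t^k a$ for some $k \in \bbZ$; a direct calculation gives $(t^k a)^2 = t^k (a t^k) a = t^k \cdot t^{-k} a^2 = e$, so each reflection $t^k a$ is an involution. The identities $t^k = (t^k a) \cdot a$ and $e = a \cdot a$ then exhibit every element of $G$ as a product of at most two involutions.

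To verify condition \eqref{1.P.1}, I compute that conjugation of $t^k a$ by $t^n$ yields $t^{2n+k} a$ and conjugation by $t^m a$ yields $t^{2m-k} a$. The first computation shows that the conjugacy class of any involution is infinite; combining both computations shows that the centralizer $C_G(t^k a)$ equals the two-element subgroup $\{e, t^k a\}$. Consequently, for any two nontrivial involutions $c_1, c_2 \in G$, the infinite set $c_1^G$ cannot be contained in the two-element set $C_G(c_2)$, so some conjugate of $c_1$ fails to commute with $c_2$. Both conditions are first-order and hold in $G$, hence in $\Th(G)$; Proposition~\ref{P.2} together with Corollary~\ref{C.Th(C)} then yields the conclusion. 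The only conceptual hurdle is identifying Proposition~\ref{P.2} (rather than Theorem~\ref{T.RecognizingCoordinates}\eqref{2.RecognizingCoordinates}) as the appropriate tool; all subsequent verifications are elementary.
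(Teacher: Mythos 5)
Your proof is correct and proves the same two conditions of Proposition~\ref{P.2} with $m=n=2$ that the paper does, but by a genuinely different route. The paper argues directly with alternating words in the free product: it observes that an element has order~$2$ if and only if its word length is odd (so that every even-length word splits as a product of two odd-length words, giving condition~\eqref{2.P.1}), and then compares first/last letters (the $L$ and $R$ of Lemma~\ref{L.FreeProduct}) to show that two involutions can be conjugated into non-commuting position (condition~\eqref{1.P.1}). You instead identify $G$ with the infinite dihedral group $\langle t\rangle\rtimes\langle a\rangle$ and do explicit coordinate computations, obtaining the sharper facts that each nontrivial involution $t^k a$ has centralizer exactly $\{e, t^k a\}$ and infinite conjugacy class $\{t^j a : j\equiv k\pmod 2\}$, from which condition~\eqref{1.P.1} follows by a cardinality argument. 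Your route buys explicit normal forms and completely transparent centralizer/conjugacy information; the paper's route stays within the free-product machinery of Lemma~\ref{L.FreeProduct} and avoids choosing a presentation. Your verification that $C_G(t^G)=\langle t\rangle$, so that part~\eqref{2.RecognizingCoordinates} of Theorem~\ref{T.RecognizingCoordinates} is unavailable, is correct; it is worth noting, however, that your own centralizer computation also shows $C_G(c^G)=\{e\}$ for every involution $c$ (since $c^G$ is infinite and $C_G(c)$ has two elements while $c\notin C_G(c^G)$), so part~\eqref{1.RecognizingCoordinates} of Theorem~\ref{T.RecognizingCoordinates} with $p=2$ applies directly and would give a still shorter proof, exactly as it does for the finite dihedral groups $D_{2n+1}$ in part~\ref{3.C} of Theorem~\ref{T.RC}. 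The citation of Corollary~\ref{C.Th(C)} at the end is harmless but unnecessary: Proposition~\ref{P.2} already asserts that $\Th(G)$ recognizes coordinates, which subsumes the claim for the single structure $G$.
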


\begin{proof} By Proposition~\ref{P.2}, with $m=n=2$, it suffices to prove that (i) for any two elements $a$ and $b$ of order 2 there is $a'\in a^G$ such that $a'b\neq ba'$ and (ii) every element of $G\setminus \{e\}$ is a product of at most two elements of order 2. 
	
	Let $G=(\bbZ/2\bbZ)*(\bbZ/2\bbZ)$ and let  $x$ and $y$ denote the generators of the two copies of $\bbZ/2\bbZ$. Then every nontrivial word in $G$ is an alternating sequence of $x$'s and $y$'s. We need two straightforward properties of such words $w$.  First, the inverse of $w$ is obtained by taking $w$ in  the reverse order. Second, the first and the last symbols of $w$  are equal (in the terminology of Lemma~\ref{L.FreeProduct}, $L(w)=R(w)$) if and only if its length is odd. These facts together imply that $w^2=e$ if and only if its length is odd. Assume $a$ and $b$ are of an odd length.  If $L(a)\neq R(b)$ then since $R(a)=L(a)$ and $R(b)=L(b)$, we have $ab\neq ba$. Otherwise, we can take $a'=xax
	$ or $a'=yay$ to obtain $a'\in a^G$ that does not commute with $b$.

	Finally, every nontrivial word of even length is clearly a product of two nontrivial words each one of which has an odd length and Proposition~\ref{P.2}, with $m=n=2$, applies to show that $G$ recognizes coordinates.
\end{proof}

\subsection{Graph products of groups}\label{S.GraphProducts}
Assume $\Gamma=(V(\Gamma),E(\Gamma))$ is a graph (we will write $(V,E)$ when $\Gamma$ is clear from the context) and $\cG=\{G_v\vert v\in V\}$ is a family of groups indexed by its vertices. Then the \emph{graph product} (\cite{green1990graph}) is the group $\Gamma \cG$  defined as the quotient of the free product $\ast_{v\in V} G_v$ modulo the normal subgroup generated by the commutators 
\[
\{aba^{-1} b^{-1}\vert a\in G_v, b\in G_w, \{v,w\}\in E\}. 
\]
Thus in the case when $\Gamma$ is a complete graph, $\Gamma \cG$ is the direct product $\prod_{v\in V} G_v$ while in the case when $\Gamma$ is the null graph $\Gamma\cG$ is the free product $\ast_{v\in V} G_v$.
Two prominent cases of this construction are the right-angled Coxeter groups (when $G_v\cong \bbZ/2\bbZ$ for all $v$) and the right-angled Artin groups (when $G_v\cong \bbZ$ for all $v$).  Although in some of the literature the graph $\Gamma$ is required to be finite, we do not impose any restriction on the cardinality of $\Gamma$ or groups $G_v$.  
We say that a graph product $\Gamma\cG$ is \emph{nontrivial} if  $|V|\geq 2$ and  $|G_v|\geq 2$ for all $v\in V$.  
This is not a loss of generality  since $\Gamma\cG$ is isomorphic to $\Gamma'\cG'$ where $\Gamma'$ is the induced subgraph on $V'=\{v\in V\mid |G_v|\geq 2\}$ and $\cG'=\{G_v\vert v\in V'\}$. 

A remarkable converse to the Feferman--Vaught theorem for graph products was proved in \cite{casals-ruiz2024on}.

Theorems~\ref{T.GraphProduct.Montse} and  \ref{T.GraphProduct} below are stated more naturally in terms of the complement of $\Gamma$, denoted $\bar \Gamma$ (to be specific, if $\Gamma=(V,E)$ then $\bar\Gamma=(V,\bar E)$ where $\bar E$ is the complement of $E$).  Since free products are a special case of graph products associated with the null graph, the following is a generalization of Proposition~\ref{P.FreeProduct}. 
Following \cite{minasyan2019acylindrically}, we  say that a graph $\Gamma$ is \emph{irreducible} if its complement graph $\bar\Gamma$  is connected.

We  learned that  Theorem~\ref{T.GraphProduct.Montse} below follows easily from our results (taken together with some standard facts) from Montserrat Casals--Ruiz.  This theorem confirms a conjecture stated in the original version of this paper, where  a slightly weaker result,  included below as Theorem~\ref{T.GraphProduct} together with a self-contained proof,   was proved.

\begin{theorem}\label{T.GraphProduct.Montse} 
	A nontrivial graph product $\Gamma\cG$ associated with a finite graph~$\Gamma$ with at least two vertices  
	%such that $|G_{\tilde v}|\geq 3$ for some $\tilde v\in V$   
	recognizes coordinates if and only if the complement $\bar\Gamma$ of $\Gamma$ is connected.
\end{theorem}

\begin{proof} Suppose that $G=\Gamma\cG$ is a graph product of non-trivial groups with respect to some finite graph $\Gamma$ with at least two vertices whose complement is connected. 
By \cite[Corollary 2.13]{minasyan2019acylindrically}, $G$ is either virtually cyclic or acylindrically hyperbolic (A group is virtually cyclic if it has a cyclic subgroup of finite index).  We claim that the infinite dihedral group  $(\bbZ/2\bbZ)*(\bbZ/2\bbZ)$ is the   only virtually cyclic graph product $G$ such that~$\Gamma$ has at least two vertices and~$\bar \Gamma$ is connected. 
This is well-known to the experts but as non-experts we  feel compelled to sketch a proof.  Assume that $G$ is virtually cyclic.
By \cite[Theorem~12]{epstein1962ends}, $G$ is virtually cyclic if and only if it has exactly two ends (the latter notion will be blackboxed in this proof, but the definition can be found in \cite[p. 115]{epstein1962ends}). 
Since every subgroup of a virtually cyclic group is virtually cyclic, we may assume that all~$G_v$ are finitely generated by replacing every~$G_v$ which is not finitely generated with a nontrivial finitely generated subgroup.  
Graph products of finitely generated groups with exactly two ends are classified in \cite[Proposition~B]{varghese2020on}. By our assumption that $\bar \Gamma$ is connected, (i) of this Proposition does not apply.  Also, the set of all vertices $v$ that are adjacent to all $w\neq v$ is empty, therefore (ii) of this Proposition reduces to $G\cong (\bbZ/2\bbZ)*(\bbZ/2\bbZ)$, as required. This group recognizes coordinates by Proposition~\ref{P.Z/2Z*Z/2Z}.

We may therefore assume that $G$ is acylindrically  hyperbolic, and by Corollary~\ref{C.Montse}  it remains to prove that it has no nontrivial finite normal subgroups. 
Assume otherwise and let $F$ be a finite normal  subgroup of $G$. Then $F$  is conjugate to a subgroup of some vertex group $G_v$, because the normal form of elements of $G$ implies that every element of finite order  is conjugate to an element of some $G_v$. We may assume $F$ is a subgrouop of $G_v$. Let $w$ be any vertex of $\Gamma$ not adjacent to $v$. Then the group $G_v*G_w$ is a homomorphic image of $G$, and $F$ is a normal subgroup of it. Since $G_w$ is nontrivial, this is absurd, and the proof is complete. 
\end{proof}

The first part of Theorem~\ref{T.GraphProduct} below is a slight weakening of Theorem~\ref{T.GraphProduct.Montse} that was present in the original version of our paper. Although it takes up this entire subsection,  our original, self-contained, proof of  this theorem is included because we believe it is of independent interest, and also because it applies to infinite graphs.  We conjecture that the conclusion of   Theorem~\ref{T.GraphProduct.Montse} also holds without the restriction that the graph $\Gamma$ be finite.

\begin{theorem}\label{T.GraphProduct} 
	A nontrivial graph product $\Gamma\cG$ 
	such that $|G_{\tilde v}|\geq 3$ for some $\tilde v\in V$   
	recognizes coordinates if and only if the complement $\bar\Gamma$ of $\Gamma$ is connected.
	Moreover, the class $\cC$ of all nontrivial graph products $\Gamma\cG$ such that~$\bar{\Gamma}$  is connected and $|G_v|\geq 3$ for some $v\in V(\Gamma)$  recognizes coordinates. 
\end{theorem}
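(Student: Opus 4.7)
The plan is to prove the two directions separately, with the bulk of the work in the positive direction. For the easy direction, suppose $\bar\Gamma$ is disconnected, so $V(\Gamma)=V_1\sqcup V_2$ with no $\bar\Gamma$-edge between $V_1$ and $V_2$; equivalently, every vertex of $V_1$ is $\Gamma$-adjacent to every vertex of $V_2$. The defining relations of the graph product then produce an isomorphism $\Gamma\cG \cong (\Gamma[V_1])(\cG|_{V_1})\times (\Gamma[V_2])(\cG|_{V_2})$, and since every $G_v$ is nontrivial both factors are nontrivial. Hence $\Gamma\cG$ has a nontrivial direct summand and fails to recognize coordinates by Theorem~\ref{T.example:DNR}~\ref{DNR.abelian}.

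For the converse together with the moreover statement, my plan is to verify condition \eqref{2.RecognizingCoordinates} of Theorem~\ref{T.RecognizingCoordinates} uniformly across $\cC$: for every $G=\Gamma\cG\in \cC$ and every $a\in G\setminus\{e\}$, $C_G(a^G)=\{e\}$. The same $h$-formula $(\forall z)(y\in C(z^G)\to x\in C(z^G))$ will then witness recognition of coordinates in every member of $\cC$ simultaneously, hence recognition by the class. The problem reduces to the following group-theoretic assertion: for any $a,b\in \Gamma\cG\setminus\{e\}$, exhibit conjugates $a'\in a^G$ and $b'\in b^G$ with $a'b'\neq b'a'$. To accomplish this I would pass to cyclically reduced representatives, examine the supports $\supp(a),\supp(b)\subseteq V(\Gamma)$, and invoke Green's normal form theorem for graph products, which controls commutation in $\Gamma\cG$ via the edges of $\Gamma$. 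Connectivity of $\bar\Gamma$ supplies a non-edge of $\Gamma$ linking the relevant supports, along which carefully chosen elements fail to commute, and the vertex $\tilde v$ with $|G_{\tilde v}|\geq 3$ provides---in the spirit of Lemma~\ref{L.FreeProduct}\eqref{4.FreeProduct}---enough room to conjugate an element into one whose normal form begins and ends with a prescribed letter, mirroring the argument for free products (Proposition~\ref{P.FreeProduct}).

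The main obstacle is precisely this centralizer computation. Graph products interpolate between free products, handled by Proposition~\ref{P.FreeProduct}, and direct products, which fail to recognize coordinates; the normal form analysis must therefore track the link and star of the support of each element. An element whose support sits inside a clique of $\Gamma$ behaves as in a direct product, so one must use $\bar\Gamma$-connectivity to find a non-edge of $\Gamma$ that separates the supports of $a$ and $b$, and then invoke $|G_{\tilde v}|\geq 3$---just as Proposition~\ref{P.FreeProduct} does and Proposition~\ref{P.Z/2Z*Z/2Z} circumvents---to produce the necessary conjugation freedom. Templated on the free-product proofs, the argument must be promoted to the richer graph-product setting, which is where the bulk of the technical work resides.
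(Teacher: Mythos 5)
Your outline is correct in both its easy direction and its overall strategy for the hard direction, and it matches the paper's approach: decomposability via Theorem~\ref{T.example:DNR}~\ref{DNR.abelian} when $\bar\Gamma$ is disconnected, and verification of \eqref{2.RecognizingCoordinates} of Theorem~\ref{T.RecognizingCoordinates} when $\bar\Gamma$ is connected. But the proposal is a plan, not a proof: the entire technical content of the positive direction is compressed into the sentences ``carefully chosen elements fail to commute'' and ``produce the necessary conjugation freedom,'' and you explicitly defer ``the bulk of the technical work'' without doing it. What is actually needed is the paper's Lemma~\ref{L.GraphConjugacy}: for every prescribed vertex $v$, every $g\neq e$ is conjugate to some $\tilde g$ with $L(\tilde g)=R(\tilde g)=\{v\}$. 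Proving this requires choosing a spanning tree of $\bar\Gamma$, conjugating $g$ along carefully ordered paths toward a chosen root, and using the vertex $\tilde v$ with $|G_{\tilde v}|\geq 3$ to dodge cancellation when pushing a head syllable from $G_{\tilde v}$ out of the way. None of this is present in your proposal.

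Moreover, your stated starting point---``pass to cyclically reduced representatives''---is a red flag. The paper explicitly warns (in the remark following Lemma~\ref{L.GraphConjugacy}) that Green's cyclic reduction, which only gives $L(g')\cap R(g')=\emptyset$, does \emph{not} suffice and cannot be used to remove the hypothesis $|G_{\tilde v}|\geq 3$. What is needed is the much stronger normalization $L(\tilde g)=R(\tilde g)=\{v\}$ for a prescribed $v$ simultaneously across both $a$ and $b$ (with two $\bar\Gamma$-adjacent choices of $v$), and the route from cyclic reduction to that conclusion is precisely what requires the spanning-tree conjugation argument. Until Lemma~\ref{L.GraphConjugacy} or an equivalent is actually established, the positive direction is not proven.
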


The easy direction of this theorem is \Cref{L.graph.disconnected}.  

The assertion that $|G_v|\geq 3$ for some $v$ is equivalent to the assertion that $\Gamma\cG$ is not a right-angled Coxeter group.

\begin{lemma}
	\label{L.graph.disconnected} If $\bar \Gamma$ is not connected then $\Gamma\cG$ is decomposable.   
\end{lemma}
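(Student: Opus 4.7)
The plan is to exhibit an explicit direct product decomposition using the universal property of graph products. Suppose $\bar\Gamma$ is disconnected, so we may partition $V=V(\Gamma)$ as $V=V_1\sqcup V_2$ with $V_1,V_2$ both nonempty and no edge of $\bar\Gamma$ crossing the partition. By definition of $\bar\Gamma$, this means that in $\Gamma$ every vertex of $V_1$ is adjacent to every vertex of $V_2$. Let $\Gamma_i$ be the induced subgraph of $\Gamma$ on $V_i$ and $\cG_i=\{G_v\mid v\in V_i\}$ for $i=1,2$. Since the graph product is nontrivial, both $V_1$ and $V_2$ contain a vertex $v$ with $|G_v|\geq 2$, so both $\Gamma_1\cG_1$ and $\Gamma_2\cG_2$ are nontrivial.

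First I would show that $\Gamma\cG\cong \Gamma_1\cG_1\times \Gamma_2\cG_2$. For the forward map: the canonical inclusions $\iota_v\colon G_v\hookrightarrow \Gamma_i\cG_i\hookrightarrow \Gamma_1\cG_1\times\Gamma_2\cG_2$ (where $v\in V_i$ is sent into the appropriate factor) collectively define a homomorphism from the free product $\ast_{v\in V}G_v$ to $\Gamma_1\cG_1\times\Gamma_2\cG_2$. To see this descends to $\Gamma\cG$, it suffices to check that every defining commutator $[a,b]$, with $a\in G_v$, $b\in G_w$, $\{v,w\}\in E(\Gamma)$, is killed. If $v,w$ lie in the same $V_i$, then $\{v,w\}\in E(\Gamma_i)$, so $a$ and $b$ already commute in $\Gamma_i\cG_i$; if instead $v\in V_1$ and $w\in V_2$ (or vice versa), then $\iota_v(a)$ and $\iota_w(b)$ lie in different factors of the direct product and so commute automatically. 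Either way, $[\iota_v(a),\iota_w(b)]=e$.

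For the inverse map I would use the universal property of each $\Gamma_i\cG_i$. The inclusions $G_v\hookrightarrow \Gamma\cG$, for $v\in V_i$, satisfy all the defining commutation relations of $\Gamma_i\cG_i$ (since $E(\Gamma_i)\subseteq E(\Gamma)$), hence induce a homomorphism $\psi_i\colon \Gamma_i\cG_i\to \Gamma\cG$. Moreover, because every $v\in V_1$ is $\Gamma$-adjacent to every $w\in V_2$, the images $\psi_1(\Gamma_1\cG_1)$ and $\psi_2(\Gamma_2\cG_2)$ commute elementwise in $\Gamma\cG$. Therefore $\psi_1,\psi_2$ combine into a homomorphism $\psi\colon \Gamma_1\cG_1\times\Gamma_2\cG_2\to\Gamma\cG$. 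Both maps are identities on the generating sets $\bigcup_v G_v$, so they are mutually inverse isomorphisms.

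The only real content is verifying the commutation claim used to produce $\psi$; everything else is a routine application of the universal properties of free and graph products. Since the resulting decomposition has two nontrivial factors, $\Gamma\cG$ is decomposable, as desired.
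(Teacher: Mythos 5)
Your proof is correct and follows exactly the same route as the paper's: partition $V$ along the disconnection of $\bar\Gamma$, observe that this is a complete bipartite join in $\Gamma$, and conclude $\Gamma\cG\cong\Gamma_1\cG_1\times\Gamma_2\cG_2$. The only difference is that you spell out the universal-property verification of the isomorphism, which the paper elides with ``the definition of $\Gamma\cG$ implies.''
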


\begin{proof}
	Let $V(\Gamma)=X\sqcup Y$ be a partition of into nonempty sets such that no vertex in $X$ is $\bar\Gamma$-adjacent to a vertex in $Y$. In other words, the complete bipartite graph with bipartition $X,Y$ is a subgraph of $\Gamma$.  Let $\Gamma_X$ and $\Gamma_Y$ be the induced subgraphs of $\Gamma$ on $X$ and $Y$. Then, with $\cG_X=\{G_v\vert v\in X\}$ and $\cG_Y=\{G_v\vert v\in Y\}$  the definition of $\Gamma\cG$ implies that $\Gamma\cG\cong \Gamma_X \cG_X\times \Gamma_Y\cG_Y$. Since each $G_v$ is nontrivial, $\Gamma\cG$ is decomposable as a product of nontrivial groups. 
\end{proof}

Our positive result will require considerably more work and references to the literature.
The \emph{normal form} for words in graph products appears in \cite[Theorem ~3.9, also see Definition~3.5]{green1990graph} and \cite[\S 4]{hsu1999on} for a shorter proof. It asserts that for every $g\in \Gamma\cG\setminus \{e\}$ there are $n\geq 1$, $v(i)\in V$, $g(i)\in G_{v(i)}\setminus \{e\}$, for $1\leq i\leq n$ such that $g=g_1\cdot\ldots \cdot g_n$, and for all $1\leq i\leq k<j\leq n$ such that 
$[g_i,g_l]=e=[g_m,g_j]$ for all $i+1\leq l \leq k$ and all $k+1\leq m \leq j$, we have $v(i)\neq v(j)$. A moment taken to parse the latter condition on commutators reveals that it implies $g=g_1\cdot\ldots\cdot g_{i-1} g_{i+1}\cdot\ldots \cdot g_k g_i g_j g_{k+1}\cdot\ldots \cdot g_{j-1} g_{j+1}\cdot\ldots \cdot g_n$, in which case $v(i)=v(j)$ would imply that $g_i g_j\in G_{v(i)}$ and that $g$ could be presented as a word of length $n-1$.  	The elements $g_i$ in a normal form of $g$ are called \emph{syllables} of $g$.  
Note that the normal form of $g$ is not unique, since if $\{v(i),v(i+1)\}\in E$ then $g_i$ and $g_{i+1}$ can be swapped without changing the value of $g$. However, the set of syllables of $g$ is uniquely determined. 

Following \cite{genevois2019automorphisms}, for $g\in \Gamma\cG$  the \emph{head} of $g$, denoted $\head(g)$, is the set of all first syllables appearing in the reduced words representing $g$. The \emph{tail} of $g$, denoted $\tail(g)$, is the set of all last syllables appearing in the reduced words representing $g$. Clearly these sets depend on $g$ and not on the normal form used to represent it. For $g\in \Gamma\cG$ with the normal form $g=g_1\cdot \ldots \cdot g_n$ we write 
\begin{align*}
	L(g)&=\{v\in V\vert ((\exists a\in \head(g)) a\in G_{v}\},\\
	R(g)&=\{v\in V\vert ((\exists a\in \tail(g)) a\in G_{v}\},\\
	V(g)&=\{v\in V\vert (\exists i) g_i\in G_{v}\}. 
\end{align*}
(A reasonable notation for $V(g)$ would be $\supp(g)$ but in other sections of the present paper $\supp$ has a different meaning.)

Lemma~\ref{L.GraphGeneralities} below collects straightforward facts that will be used in the proof of lemmas leading towards the nontrivial part of Theorem~\ref{T.GraphProduct}. 
\begin{lemma}
	\label{L.GraphGeneralities}
	Suppose that $\Gamma\cG$ is a nontrivial graph product and let  $g=g_1\cdot\ldots \cdot g_m$ and $h=h_1\cdot\ldots \cdot h_n$ be  elements of $\Gamma\cG\setminus \{e\}$ in normal form. 
	\begin{enumerate}
		\item \label{1.GraphGeneralities} If $g_i\in G_{v(i)}\setminus \{e\}$ for $1\leq i\leq n$, then $v\in L(g)$ if and only if there is $i$ such that $v=v(i)$ and  all $1\leq j<i$ satisfy $\{v(j),v(i)\}\in E$. 
		\item \label{2.GraphGeneralities} If $V(g)\cap V(h)=\emptyset$ then $\head(g)\subseteq \head(gh)\subseteq \head(g)\cup \head(h)$. 
		\item \label{3.GraphGeneralities} If $V(g)\cap V(h)=\emptyset$ and for every $v\in L(h)$ there is $w\in V(g)$ such that $\{v,w\}\notin E$ then $L(g)=L(gh)$.
		%		 \item \label{4.GraphGeneralities} If $v\in L(g)$ then there is exactly one  $c\in G_v$ such that $v\notin L(cg)$. 
	\end{enumerate}
	Since $\tail(g)=\head(g^{-1})$ assertions analogous to the above  hold for $\tail(g)$ and $\tail(h)$. \qed
\end{lemma}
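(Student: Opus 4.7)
The plan is to deduce all three assertions from the normal form theorem for graph products (Green \cite{green1990graph}, Theorem~3.9), together with the accompanying ``shuffle'' description of when two normal forms represent the same element: two normal forms $g_1\cdots g_m$ and $g_1'\cdots g_m'$ represent the same element of $\Gamma\cG$ if and only if the second is obtained from the first by a finite sequence of transpositions of adjacent syllables $g_i,g_{i+1}$ satisfying $\{v(i),v(i+1)\}\in E(\Gamma)$. In particular, the multiset of syllables (with their vertices) is an invariant of $g$, and $\head(g)$ consists exactly of those syllables which can be shuffled into the first position.

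For part \eqref{1.GraphGeneralities} I would argue both directions directly from the shuffle description. If some index $i$ satisfies $\{v(j),v(i)\}\in E$ for every $j<i$, then the syllable $g_i$ commutes in turn with $g_{i-1},g_{i-2},\dots,g_1$, so iterated transposition places $g_i$ at the front; hence $g_i\in\head(g)$ and $v=v(i)\in L(g)$. Conversely, if $v\in L(g)$, fix some $a\in\head(g)\cap G_v\setminus\{e\}$ appearing in position $1$ of some normal form of $g$. By shuffle equivalence, $a$ equals some $g_i$ of the original normal form, and the shuffle moves $g_i$ past each $g_j$ with $j<i$. At the step when $g_i$ crosses $g_j$ they are adjacent and therefore must satisfy $\{v(i),v(j)\}\in E$; note $v(j)\ne v(i)$ is automatic since two syllables in the same vertex group could never be made adjacent in a normal form without triggering a collapse (length would drop, contradicting invariance of the syllable set under shuffles).

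Part \eqref{2.GraphGeneralities} is a short corollary of \eqref{1.GraphGeneralities}. Because $V(g)\cap V(h)=\emptyset$, no syllable of $h$ equals or combines with any syllable of $g$, so the concatenation $g_1\cdots g_m h_1\cdots h_n$ is already a normal form for $gh$. For the first inclusion: any $g_i\in\head(g)$ commutes with $g_1,\dots,g_{i-1}$ by \eqref{1.GraphGeneralities}; the letters preceding $g_i$ in the concatenation are exactly the same, so \eqref{1.GraphGeneralities} applied to $gh$ gives $g_i\in\head(gh)$. For the second inclusion: if a syllable in $\head(gh)$ is of the form $g_i$, then it commutes with $g_1,\dots,g_{i-1}$ so lies in $\head(g)$; if it is of the form $h_j$, then it commutes with \emph{all} preceding syllables of $gh$, hence in particular with $h_1,\dots,h_{j-1}$, so lies in $\head(h)$.

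Part \eqref{3.GraphGeneralities} then follows by combining the two inclusions from \eqref{2.GraphGeneralities}. The inclusion $L(g)\subseteq L(gh)$ is immediate from $\head(g)\subseteq\head(gh)$. For $L(gh)\subseteq L(g)$, take $v\in L(gh)$ witnessed by some $x\in\head(gh)\cap G_v$. By \eqref{2.GraphGeneralities}, $x\in\head(g)\cup\head(h)$. If $x\in\head(g)$ we are done. If instead $x\in\head(h)$, then $v\in L(h)$; by hypothesis there is $w\in V(g)$ with $\{v,w\}\notin E(\Gamma)$. But $x$ is preceded in the concatenation by every syllable of $g$, including one lying in $G_w$, so \eqref{1.GraphGeneralities} forces $\{v,w\}\in E(\Gamma)$, a contradiction. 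The analogous statements for $\tail$ follow by applying the above to $g^{-1}$, $h^{-1}$, since inversion reverses the order of syllables and $\tail(g)=\head(g^{-1})$. I anticipate that the only real subtlety is the backward direction of \eqref{1.GraphGeneralities}; the rest is bookkeeping on top of normal form invariance.
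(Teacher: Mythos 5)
The paper does not give a proof of this lemma; it is marked with \qed and described as ``straightforward facts,'' so there is no argument of the paper's to compare against. Your proof is correct and follows the standard route: deduce everything from Green's normal-form theorem together with the shuffle-equivalence characterization of when two normal forms represent the same element, with the observation that disjointness of $V(g)$ and $V(h)$ makes the concatenated word already a normal form for $gh$. The only place that needed genuine care — the backward direction of \eqref{1.GraphGeneralities}, where one must argue that moving $g_i$ to the front forces it to cross, and hence commute with, every earlier syllable — you handle correctly by the relative-order argument; the rest is, as you say, bookkeeping.
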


\begin{lemma}\label{L.GraphConjugacy} In a nontrivial graph product $\Gamma\cG$ such that $\bar\Gamma$ is connected  and $|G_{\tilde v}|\geq 3$ for some $\tilde v\in V$  the following holds. For  every  $v\in V$, every $g\in \Gamma\cG\setminus \{e\}$ is conjugate to some $\tilde g$ such that $L(\tilde g)=R(\tilde g)=\{v\}$. 
\end{lemma}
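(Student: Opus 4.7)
My plan is to argue in three stages: reduce $g$ to a cyclically reduced conjugate, establish a single ``shift'' step that moves the head/tail set along a non-edge of $\Gamma$, and then iterate along a path in $\bar\Gamma$ to reach the target vertex $v$.

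First I would use Green's normal form theorem (\cite{green1990graph}) to replace $g$ by a cyclically reduced conjugate $g'$. Specifically: if some head syllable $g_1$ of $g$ has its inverse $g_1^{-1}$ appearing as a tail syllable, then $g_1^{-1} g g_1$ is a conjugate whose normal form is strictly shorter (by two syllables). Iterating terminates at a cyclically reduced nonidentity element $g'$, with $L(g')$ and $R(g')$ nonempty.

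Second, I would isolate the following shifting step: if $h \in \Gamma\cG\setminus\{e\}$ satisfies $L(h) = R(h) = \{u\}$, if $u'$ is any vertex with $\{u,u'\}\notin E$, and if $b \in G_{u'}\setminus\{e\}$, then $L(b h b^{-1}) = R(bhb^{-1}) = \{u'\}$. To verify this I would apply Lemma~\ref{L.GraphGeneralities}: the leading $b$ in $bhb^{-1}$ cannot be absorbed by any syllable of $h$, because every head syllable of $h$ lies in $G_u$ and does not commute with $b \in G_{u'}$ (as $\{u,u'\}\notin E$); conversely, no other syllable of $h$ can become a head syllable of $bhb^{-1}$, since to do so it would need to lie in $L(h) = \{u\}$ (to pass the remaining syllables of $h$) \emph{and} commute with $b$, contradicting $\{u,u'\}\notin E$. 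The tail is handled symmetrically.

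Third, I would combine these ingredients using the connectedness of $\bar\Gamma$. Pick any $u_* \in V(g')$ and any path $v = u_0, u_1, \ldots, u_k = u_*$ in $\bar\Gamma$, so consecutive vertices are non-$\Gamma$-adjacent. A subsidiary claim (discussed below) produces a conjugate $\tilde g_k$ of $g$ with $L(\tilde g_k) = R(\tilde g_k) = \{u_*\}$. Applying the shift step $k$ times (choosing, at step $i$, a nontrivial element of $G_{u_{i-1}}$ and conjugating by it) produces conjugates $\tilde g_{k-1}, \ldots, \tilde g_0$ with $L(\tilde g_i) = R(\tilde g_i) = \{u_i\}$. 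Setting $\tilde g := \tilde g_0$ gives the desired element.

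The main obstacle is the subsidiary claim: given a cyclically reduced $g'$ and $u_* \in V(g')$, produce a conjugate with $L = R = \{u_*\}$. The strategy is to first rotate $g'$ by a head syllable so that a syllable in $G_{u_*}$ is placed at the front of the normal form (ensuring $u_* \in L(g')\cap R(g')$), and then iteratively shrink $L(g')\cup R(g')$: any additional vertex $u \in (L(g')\cup R(g'))\setminus\{u_*\}$ necessarily satisfies $\{u,u_*\}\in E$ by the definition of head/tail, and so can be eliminated by conjugating by a well-chosen element built using Lemma~\ref{L.GraphGeneralities} and the syllable-commutation rules. The detailed normal-form bookkeeping for this shrinking procedure is the most delicate step of the proof.
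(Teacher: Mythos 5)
Your Step~2 (the shift step) is correct and neatly stated, and Step~3 (iterating along a path in $\bar\Gamma$) is sound. The gap is entirely in the subsidiary claim, and unfortunately it is a fatal one, for two related reasons.

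First, the claim you use to justify the shrinking procedure --- that ``any additional vertex $u \in (L(g')\cup R(g'))\setminus\{u_*\}$ necessarily satisfies $\{u,u_*\}\in E$ by the definition of head/tail'' --- is false as stated. Two vertices that both lie in $L(g')$ must indeed be $\Gamma$-adjacent (head syllables commute), and likewise for $R(g')$, but a vertex in $L(g')$ and a vertex in $R(g')$ need satisfy no such constraint. Your preparatory step (``rotate so that a syllable in $G_{u_*}$ is placed at the front'') only ensures $u_*\in L(g')$, not $u_*\in L(g')\cap R(g')$; the parenthetical claim is not justified, so you cannot conclude that every other $u\in L\cup R$ is $\Gamma$-adjacent to $u_*$.

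Second, and more fundamentally, your argument never invokes the hypothesis that $|G_{\tilde v}|\geq 3$ for some $\tilde v$. This hypothesis is essential: the lemma's conclusion is \emph{false} for the right-angled Coxeter group $(\bbZ/2\bbZ)*(\bbZ/2\bbZ)$, as the paper explicitly points out (the element $ab$, and indeed any reduced word of even length, has every conjugate of even length, hence with $L\neq R$). Your ``cyclically reduce, then shrink $L\cup R$'' strategy, if it worked as sketched, would apply just as well in that example, so something must break --- and what breaks is exactly the shrinking procedure, which needs a ``buffer'' element. The paper's proof handles this by first routing the head to the distinguished vertex $\tilde v$ (via a spanning tree of $\bar\Gamma$), and then, if $\tilde v\notin R(g')$, conjugating by $c\in G_{\tilde v}\setminus\{e,a^{-1}\}$ --- this is precisely where $|G_{\tilde v}|\geq 3$ is used, to avoid cancellation while forcing $\tilde v$ into both $L$ and $R$. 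Any successful proof must use this hypothesis somewhere, and yours does not, so the ``delicate bookkeeping'' you defer cannot be carried out in the generality you need.
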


\begin{proof} 
	Fix $g\in \Gamma\cG\setminus\{e\}$ and let $L(g)=\{v(1), \dots, v(m)\}$. Since the syllables in $\head(g)$ commute, $\{v(i),v(j)\}$ is not an edge of $\bar \Gamma$ for all $i,j\leq m$.
	Since $\bar\Gamma$ is connected, there is $v(*)\in V\setminus L(g)$. 
	Choose a spanning tree $T$ for $\bar \Gamma$. (That is, $T$ is an acyclic connected subgraph of $\bar\Gamma$ with the same vertex set.) By connectivity of the complement, for every $i\leq m$ there is a unique path $P(i)$ in $T$ connecting $v(*)$ and $v(i)$. Consider $T(0)=\bigcup_{i\leq m} P(i)$ as a subgraph of $T$ and let $d$ denote the graph distance on $T(0)$.  Let $\sqsubseteq$ be a linear ordering of the vertices of $T(0)$  such that $x\sqsubseteq y$ implies $d(v(*),x)\leq d(v(*),y)$, and let $x(1), \dots, x(k)$ be the $\sqsubseteq$-increasing list of all vertices of $T(0)$. Then $x(1)=v(*)$. For each $1\leq j\leq k$ pick $h_j\in G_{x(j)}\setminus \{e\}$.

	Let $a=h_1 h_2\dots h_k$.  		If $v(*)=\tilde v$ let $g'=a g a^{-1}$. 
	If $v(*)\neq \tilde v$, then let $x(1),\dots, x(p)$ be a $\bar{\Gamma}$-path from $\tilde v$ to $v(*)$. Choose $h'_i\in G_{x(i)}$  for $1\leq i< p$, let $b=h'_1 h'_2\cdot\ldots \cdot h'_{p-1} $. Then Lemma~\ref{L.GraphGeneralities} \eqref{1.GraphGeneralities} implies $L(ba)=\{\tilde v\}$.  With  $g'=b a g a^{-1} b^{-1}$, Lemma~\ref{L.GraphGeneralities} \eqref{3.GraphGeneralities} implies   $L(g')=\{\tilde v\}$.

	At this point in the construction we don't know what is the relation of $\tilde v$ to the vertices in $R(g')$ but we would like to be able to assume that $\tilde v\in R(g')$. 
	Assume this is not the case. Since $\tilde v\in L(g')$ and $|G_{\tilde v}|\geq 3$, if $a\in \head(g')\cap G_{\tilde v}$ then we can choose $c\in G_{\tilde v}\setminus\{e,a^{-1}\}$.  In this case we have  $L(cg')=L(g')$ and   
	$g''=cg'c^{-1}$ satisfies $L(g'')=\{\tilde v\}$ and $\tilde v\in R(g'')$.

	Let $R(g')=\{w(1),\dots, w(n)\}$ (with $w(1)=\tilde v$). The next portion of the proof, we look for a conjugate $\tilde{g}$ of $g'$ so that  such that  $R(\tilde{g})=L(\tilde{g})=\{\tilde v\} $. This is analogous to previous part of the proof where we were obtained $g'$. Since  $\{w(i),w(j)\}$ is not an edge of $\bar \Gamma$ for all $i,j\leq n$ and $\bar \Gamma$ is connected, there is $w(*)\in V\setminus R(g')$. Find a tree $U(0)$ with root $w(*)$ and leaves  $\{w(1),\dots, w(n)\}$, let $d(\cdot,\cdot)$ denote the graph distance in $U(0)$, fix a linear ordering of its vertices such that  $x\sqsubseteq' y$ implies $d(w(*),x)\leq d(w(*),y)$, and let $y(1), \dots, y(k')$ be the  $\sqsubseteq'$-increasing list of all vertices of $U(0)$. For each $1\leq j<k'$ pick $h'_j\in G_{y(j)}\setminus \{e\}$. 
	Let $a'=h_{k'-1}\cdot\ldots \cdot h'_1$. Then, using Lemma~\ref{L.GraphGeneralities} as before,  $g'''=(a')^{-1} g'' a'$ satisfies $R(g''')=L(g''')=\{w(*)\}$. 
	
	It remains to show that we can assure that $w(*)$ is equal to the distinguished vertex $v$ fixed in the statement of this lemma. If $w(*)\neq v$, choose a path in $\bar\Gamma$ from $w(*)$ to $v$, $w(*)=x(0),x(1), \dots , x(m)=v$, fix $f_i\in G_{x(i)}\setminus \{e\}$, let $d= f_m \cdot f_{m-1}\cdot\ldots \cdot f_1$ and let $\tilde g=d g''' d^{-1}$. Once again,  Lemma~\ref{L.GraphGeneralities} implies  that $L(\tilde g)=R(\tilde g)=\{v\}$. 
\end{proof}

 The reader may be under the impression that the fact that every $g\in \Gamma\cG$ is conjugate to a word $g'$ such that $L(g')\cap R(g')=\emptyset$ (this is \cite[Lemma~3.16]{green1990graph}, where such words $g'$ are called \emph{cyclically reduced}) may be used to remove the assumption that $|G_{\tilde v}|\geq 3$ in Lemma~\ref{L.GraphConjugacy} and  Theorem~\ref{T.GraphProduct}. The proof of  that $(\bbZ/2\bbZ)*(\bbZ/2\bbZ)$ recognizes coordinates (Proposition~\ref{P.Z/2Z*Z/2Z})  is very different from the above proof, suggesting that a common generalization requires additional nontrivial ideas. 
   In particular, using the notation from Proposition~\ref{P.Z/2Z*Z/2Z}, one should note that the element $ab$ of $(\bbZ/2\bbZ)*(\bbZ/2\bbZ)$ (and any other reduced word of even length in this group) does not satisfy the conclusion of Lemma~\ref{L.GraphConjugacy}. 

\begin{lemma}
	\label{L.GraphRecognizing} If $\cC$ is the class of all nontrivial graph products $\Gamma\cG$ such that~$\bar \Gamma$ is connected and   $|G_{\tilde v}|\geq 3$ for some $\tilde v\in V$,   then $\cC$ recognizes coordinates. 
\end{lemma}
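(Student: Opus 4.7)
The plan is to apply Theorem~\ref{T.RecognizingCoordinates}\eqref{2.RecognizingCoordinates}: I will show that every graph product $G=\Gamma\cG$ in $\cC$ satisfies $C_G(a^G)=\{e\}$ for every $a\in G\setminus\{e\}$, which by Theorem~\ref{T.eq.groups} suffices for $\cC$ to recognize coordinates. So fix $a,b\in G\setminus\{e\}$; I must produce some $a'\in a^G$ with $a'b\neq ba'$.

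The first move is to choose the target vertex $v$ wisely. Pick any $u\in V(b)$. By the nontriviality hypothesis, $|V(\Gamma)|\geq 2$, and since $\bar\Gamma$ is connected, $u$ is not isolated in $\bar\Gamma$: thus there exists $v\neq u$ with $\{u,v\}\notin E(\Gamma)$, equivalently $u\notin\mathrm{star}(v):=\{v\}\cup\mathrm{link}(v)$. By Lemma~\ref{L.GraphConjugacy}, there exists $a'\in a^G$ with $L(a')=R(a')=\{v\}$.

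The heart of the proof is the following centralizer claim: \emph{if $a'\in G\setminus\{e\}$ satisfies $L(a')=R(a')=\{v\}$ and $c\in G$ commutes with $a'$, then $V(c)\subseteq\mathrm{star}(v)$}. Granting this and applying it to $c=b$, the presence of $u\in V(b)\setminus\mathrm{star}(v)$ forces $b\notin C_G(a')$, hence $a'b\neq ba'$. Since $b\neq e$ was arbitrary, this shows $C_G(a^G)=\{e\}$ and finishes the proof.

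To establish the centralizer claim I would argue directly with Green's normal form. Writing $a'=g_1 g_2\cdots g_m$ in normal form, the hypothesis $L(a')=R(a')=\{v\}$ forces $g_1,g_m\in G_v\setminus\{e\}$ and, when $m\geq 3$, forces $g_2$ and $g_{m-1}$ to lie in $G_w$ with $w\notin\mathrm{star}(v)$; otherwise an extra syllable could be commuted to position $1$ or $m$, contradicting $|L(a')|=|R(a')|=1$ (the case $m=2$ is excluded since consecutive $G_v$-syllables would merge). Supposing toward a contradiction that some syllable of $c$ lies in $G_u$ with $u\notin\mathrm{star}(v)$, I would compare the normal forms of $a'c$ and $ca'$: in $a'c$ the head syllable $g_1\in G_v$ is locked to the left by $g_2\in G_w$ ($\{v,w\}\notin E$), whereas in $ca'$ no $G_v$-syllable can be commuted past the $G_u$-syllable of $c$ to reach a symmetric head position. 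Carrying out this bookkeeping—tracking the extreme positions that a $G_v$-syllable can occupy on each side, using that $G_u$- and $G_v$-elements never commute—yields the desired contradiction. The main obstacle is executing this normal-form comparison rigorously (rather than verifying a single case); the claim is also a special instance of the general centralizer theorem for graph products in the vein of Servatius and Antolin--Minasyan, which one could cite instead.
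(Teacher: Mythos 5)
Your proof has a genuine gap: the centralizer claim at its heart is false. You assert that if $a'\in G\setminus\{e\}$ satisfies $L(a')=R(a')=\{v\}$ and $c\in G$ commutes with $a'$, then $V(c)\subseteq\mathrm{star}(v)$. But $a'$ always commutes with itself, and in general $V(a')\not\subseteq\mathrm{star}(v)$. Concretely, take $\Gamma$ the null graph on two vertices $\{v,u\}$, so $\Gamma\cG=G_v\ast G_u$ and $\mathrm{star}(v)=\{v\}$; with $g\in G_v\setminus\{e\}$ and $h\in G_u\setminus\{e\}$, the element $a'=ghg$ has $L(a')=R(a')=\{v\}$, but $a'\in C_G(a')$ and $V(a')=\{v,u\}\not\subseteq\{v\}$. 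This is not a special case of the Servatius/Antolin--Minasyan centralizer theorems; those give a more complicated description involving $V(a')$, not $\mathrm{star}(v)$. As a result, your derivation that $u\in V(b)\setminus\mathrm{star}(v)$ forces $b\notin C_G(a')$ breaks down --- for instance when $b=a$ and $a'$ happens to be $a$ itself.

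The paper sidesteps this entirely by applying Lemma~\ref{L.GraphConjugacy} to \emph{both} $a$ and $b$: choose $v,w$ with $\{v,w\}\in E(\bar\Gamma)$, conjugate to obtain $\tilde a$ with $L(\tilde a)=R(\tilde a)=\{v\}$ and $\tilde b$ with $L(\tilde b)=R(\tilde b)=\{w\}$, and then observe that $L(\tilde a\tilde b)=\{v\}\neq\{w\}=L(\tilde b\tilde a)$ because $\{v,w\}\notin E(\Gamma)$ blocks any syllable migration across the seam. This is an elementary normal-form comparison, needs no centralizer theory, and uses exactly the information Lemma~\ref{L.GraphConjugacy} provides. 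If you want to salvage a one-sided argument, you would need to replace your claim by something like $V(c)\subseteq\mathrm{star}(v)\cup V(a')$ (still nontrivial to prove and still not immediately sufficient, since $u$ may lie in $V(a')$); conjugating both elements is the cleaner route.
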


\begin{proof} By Theorem~\ref{T.RecognizingCoordinates} \eqref{2.RecognizingCoordinates}, it suffices to show that for every $\Gamma\cG\in \cC$, for all $a$ and $b$ in $\Gamma\cG\setminus \{e\}$, some $\tilde a$ conjugate to $a$ does not commute to some~$\tilde b$ conjugate to $b$. Since $\bar \Gamma$ is connected and it has at least two vertices, we can fix vertices $v$ and $w$ adjacent in $\bar\Gamma$. 
	By Lemma~\ref{L.GraphConjugacy}, there are $\tilde a$ and $\tilde b$ conjugate to $a$  and $b$, respectively, such that $L(\tilde a)=R(\tilde a)=\{v\}$ and $L(\tilde b)=R(\tilde b)=\{w\}$. Since $\{v,w\}$ is not an edge in $\Gamma$,  $L(ab)=L(a)\neq L(b)=L(ba)$, hence $ab\neq ba$.  
\end{proof}

\begin{proof}[Proof of Theorem~\ref{T.GraphProduct}] By Lemma~\ref{L.graph.disconnected} below, if $\bar\Gamma$ is not connected then $\Gamma\cG$ is decomposable and it therefore does not recognize coordinates by Theorem~\ref{Fact:construction}. By Lemma~\ref{L.GraphRecognizing}, the class $\cC$ recognizes coordinates. 
\end{proof}

\subsection{Symmetric groups}

For an integer $n\geq 2$, by $\Sym_n$ we denote the symmetric group on $n$ elements. By $\Sym$ we denote an arbitrary  finite symmetric group. As mentioned in the proof of Theorem~\ref{T.RC} \ref{2.Cplus}, the 
class of finite symmetric groups containing four or more elements recognizes coordinates. In this section we slightly modify this argument to be able to include the symmetric group on $3$ elements. Then, we pursue the model-theoretic motivation described in the introduction and prove a quantifier elimination result. Here, we will however have to exclude $\Sym_6$, because of the existence of a nontrivial outer automorphism.

\begin{lemma}\label{L.Defining2Cycles}
	\begin{enumerate}
		\item   \label{1.Defining2Cycles}      There is an $h$-formula  $\psi(x)$ which uniformly defines the set $C_2\cup \{e\}$ where $C_2$ is the set of transpositions, in all symmetric groups of the form  $\Sym_n$ for $n \neq 2$ and $n \neq 6$. 
		\item \label{2.Defining2Cycles}        For every $k\geq 3$,  there is a formula $\phi_k(x)$ which uniformly defines the set $C_k$ of $k$-cycles in all symmetric group $\Sym_n$ with $n \neq 6$.% and $n \neq 6$.
	\end{enumerate}
	\end{lemma}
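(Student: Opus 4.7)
My approach for part (1) is to construct $\psi(x)$ of the form $x \cdot x = e \wedge \chi(x)$, where $\chi(x)$ is an $h$-formula that automatically holds for all involutions in $\Sym_3$ (where every involution is already a transposition) but rules out the $k$-fold disjoint transpositions for $k \geq 2$ in $\Sym_n$, $n \geq 4, n\neq 6$. The structural property I would use is the classical one: for $n \neq 6$, transpositions are the unique conjugacy class of non-identity involutions $x$ with the property that for every $y \in \Sym_n$ the element $x \cdot y^{-1} x y$ (which is $[x,y]$ since $x^2 = e$) has order at most $3$. Because conjugates of a transposition are transpositions, and two transpositions either commute (share no point) or their product is a $3$-cycle (share one point), this characterizes $C_2 \cup \{e\}$ exactly once $n \neq 6$; the $\Sym_6$ exclusion is unavoidable because the outer automorphism swaps the classes of transpositions and triple transpositions, so no $h$-formula can separate them there.

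The principal obstacle is to express ``order at most $3$'' as an $h$-formula, since disjunction, negation, and $\neq$ are unavailable. I would exploit the defining $h$-formula construct $(\exists u)\varphi \wedge (\forall u)(\varphi \to \psi)$ to assert the existence of a witness of a Coxeter-style decomposition of $x \cdot y^{-1} x y$ inside a generated $\Sym_3$-like subgroup $\langle a, b \mid a^2 = b^2 = (ab)^3 = e\rangle$, together with a universal statement pinning down the order bound. A subtle case is $\Sym_4$, where the three double transpositions together with $e$ form the Klein normal subgroup $V$, so products of double transpositions stay in $V$ and satisfy any naive ``order $\leq 2$'' clause. To exclude this I would add to $\chi(x)$ an auxiliary $h$-clause asserting the existence of two conjugates $y^{-1}xy$ and $z^{-1}xz$ of $x$ whose product has order exactly $3$; this succeeds for a transposition (take $y, z$ producing two transpositions sharing a single point) but fails for a double transposition in $\Sym_4$ since all products inside $V$ have order $\leq 2$. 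Correctness is then verified case-by-case for $n \in \{3,4,5\}$, and for $n \geq 7$ by the general observation that for any $k$-fold transposition with $k \geq 2$ one can shift conjugates to obtain a product that is an odd cycle of length $2k+1$, violating the order-$\leq 3$ condition.

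For part (2), no $h$-formula is required, so I have access to arbitrary first-order formulas and in particular to $\neq$. Using $\psi$ from part (1), I would define
\[
\phi_k(x) \;:=\; x^k = e \,\wedge\, \bigwedge_{0<j<k} x^j \neq e \,\wedge\, (\exists t_1, \dots, t_{k-1})\left( \bigwedge_{i} \psi(t_i) \wedge t_i \neq e \;\wedge\; x = t_1 \cdots t_{k-1} \;\wedge\; \chi_k(\bar t)\right),
\]
where $\chi_k(\bar t)$ asserts the ``path'' condition: $t_i t_{i+1} \neq t_{i+1} t_i$ for each $i$ (consecutive transpositions share a point) and $t_i t_j = t_j t_i$ for $|i-j| > 1$ (non-consecutive transpositions are disjoint). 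This captures the decomposition $(a_1 \ldots a_k) = (a_1 a_2)(a_2 a_3)\cdots(a_{k-1} a_k)$. The main obstacle in part (2) is ruling out elements of order $k$ that are not single $k$-cycles (e.g.\ products of disjoint cycles whose lcm is $k$); the combination of the path condition (which forces the supports of the $t_i$ to form a connected chain of $k$ distinct points) together with the order-exactly-$k$ clause ensures the product acts as a single $k$-cycle. The exclusion $n \neq 6$ is inherited from the use of $\psi$.
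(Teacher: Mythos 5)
Your structural idea for part~(1) matches the paper's — transpositions are the involutions $x$ for which $x \cdot g x g^{-1}$ always has small order, and $\Sym_6$ must be excluded because its outer automorphism conflates transpositions with triple transpositions — but the execution has a real gap at exactly the point you flag as the ``principal obstacle.'' You never produce a concrete $h$-formula expressing the order bound; the ``Coxeter-style decomposition'' is an idea, not a formula, and it is not evident that the required universal and existential conditions can be written without negation. The paper's key move, which you are missing, is much simpler: ``order divides $6$'' is already the \emph{atomic} formula $(x g x g^{-1})^6 = e$, so the core conjunct $(\forall g)\, (x g x g^{-1})^6 = e$ is an $h$-formula verbatim. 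One then argues case by case that involutions that are products of $k \geq 2$ disjoint transpositions violate it: when a fixed point exists one gets an odd cycle of length $2k+1 \geq 5$ (this is your argument), but when $n = 2k$ with $k \geq 4$ there is no fixed point and one needs a different witness (the paper exhibits an explicit order-$4$ product, e.g.\ in $\Sym_8$), a case your sketch does not cover. Finally, your auxiliary clause for $\Sym_4$ — two conjugates of $x$ whose product has order \emph{exactly} $3$ — is not an $h$-formula as stated: ``order exactly~$3$'' requires a $\ne e$ conjunct, and the fact that $x \ne e$ is equivalent to an $h$-formula in the common theory of the $\Sym_n$ is Lemma~\ref{L.Sym.xneqe}, which is proved \emph{after} and \emph{from} the present lemma, so you cannot invoke it here without circularity.

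Part~(2) is correct and is essentially the paper's argument: both decompose a $k$-cycle as a chain of $k-1$ transpositions $t_1, \dotsc, t_{k-1}$ satisfying $t_i t_j \neq t_j t_i$ iff $|i - j| = 1$. The one genuine difference is that the paper imposes pairwise distinctness of the $t_i$ outright, whereas you omit it and compensate with the order clause $x^k = e \wedge \bigwedge_{0 < j < k} x^j \neq e$. This works: consecutive $t_i$ are automatically distinct (non-commuting); a repeat at distance $\geq 3$ is incompatible with the commutation pattern; and a repeat at distance $2$ collapses the product to a permutation of order $< k$, which your clause rules out. Since part~(2) only asks for a first-order formula, using $\neq$ is unobjectionable, and this is a legitimate alternative to the paper's phrasing.
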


\begin{proof} \eqref{1.Defining2Cycles}
	We claim that the following $h$-formula defines, uniformly in $\Sym_{n}$ for all $n>2$ and $n\neq 6$, the set $C_2 \cup \{e\}$: 
	\[
	\psi(x):=x^2=e \wedge (\forall g)~ (x gxg^{-1})^6=e\wedge (\exists g)~(x gxg^{-1})^3=e. 
	 \]
	 We remark that the last conjunct is required only in $\Sym_4$. 
     
     Since the conjugate of a 2-cycle is a 2-cycle and the product of two distinct 2-cycles is either a 3-cycle or a 2-2-cycle, every 2-cycle satisfies each of the above conjuncts. 
	 
	We now prove the converse. Every element of order 2 is a product of transpositions with disjoint supports (for convenience we will call such permutations $2$-$2$-\dots -$2$-cycles). 
	 
	 The case of $n = 3$ is trivial. Suppose that $\sigma$ is an element of order 2 in $\Sym_n$ for $n\geq 4$ and $n\neq 6$ that is not in $C_2$.
	    
	    If $n=4$ then $\sigma$ is a $2$-$2$-cycle. Any two $2$-$2$-cycles are conjugate, and  in $\Sym_4$, the product of two distinct $2$-$2$-cycles is a $2$-$2$-cycle (e.g.
	 		$(12)(34) \circ (14)(23)=(13)(24)$). Thus in this case $\sigma$  does not satisfy the last conjunct of the formula.
	 
	 Now consider the case when $\sigma$ has a fixed point. Let $\sigma$ be the product of $k\geq 2$ disjoint transpositions, $\sigma=\prod_{j=1}^k (m_{2j-1},m_{2j})$ and let $m_{2k+1}$ be a fixed point of $\sigma$. Then $\sigma':=\prod_{j=1}^k (m_{2j},m_{2j+1})$ is conjugate to $\sigma$, and $\sigma'\circ \sigma$ is the $2k+1$-cycle is the $2k+1$ cycle $(m_1 m_3\dots m_{2k+1} m_{2k} m_{2k-2}\dots m_2)$. Since $2k+1\geq 5$, we have that  $\sigma$ does not satisfy the middle conjunct of the formula. 
	 
	 This proves the claim in the case when $n$ is odd, and when $k \leq 3$, and $n \geq  8$.  We may therefore assume $k\geq 4$; let us assume $k=4$ and $n=8$ for a moment. In the following special case,  we obtain a conjugate $\sigma'$ of sigma such that $\sigma\circ \sigma'$ has order 4 and therefore fails the middle conjunct of the formula: 
	         \[
	         (16)(47)(52)(38)\circ (15)(64)(73)(28)= (1234)(5678).
	         \]   
	      In general, if $k\geq 4$, then by restricting $\sigma$ to an 8-element $\sigma$-invariant subset of its support we can find $\sigma'$ such that $\sigma\circ \sigma'$ has order 4. This concludes the proof of \eqref{1.Defining2Cycles}.    
%	This is due to the following facts:
%	\begin{itemize}
%		\item     in $\Sym_4$, the product of $2$-$2$-cycle with a conjugate (other than itself) is always a $2$-$2$-cycle. (e.g.
%		$(12)(34) \circ (14)(23)=(13)(24)$). Thus, a $2$-$2$-cycle does not satisfy the last part of the formula and only $2$-cycles and the identity satisfy it.
%		\item in $\Sym_5$, the product of a $2$-$2$-cycle with a conjugate can have order $5$: 
%        \[ (12)(35) \circ (13)(45)=(12345). \] Thus $2$-$2$-cycles don't satisfy the middle part of the formula and only $2$-cycles and the identity satisfy it. 
%		\item in $\Sym_n$ for $n>6$, $n$ odd, we see as in the previous case that only $2$-cycles and the identity satisfy the formula.
%		
%		\item in $\Sym_n$ for $n>6$, $n$ even, we can observe that  $2$-torsion elements composed of 4 cycles or more don't satisfy the middle part of the formula: 
%        \[(16)(47)(52)(38)\circ (15)(64)(73)(28)= (1234)(5678).\]   We argue as before to that the formula excludes $2$-$2$-cycles and $2$-$2$-$2$-cycles.  
%	\end{itemize}
	Prior to embarking on the proof of \eqref{2.Defining2Cycles}, 
	we note that in $\Sym_n$ for all $n\geq 3$, $n\neq 6$,  the set of transpositions $C_2$ is given by the formula
	\[
	\psi(x)\wedge x\neq e.
	\]
      This is however not an $h$-formula. 
     
\eqref{2.Defining2Cycles}
      We need a formula that defines $C_2$ in $\Sym_n$ for all $n\neq 6$. To include the case $n= 2$, we can consider the formula:
    \[
    [\psi(x)\vee (\exists y)(\forall z)~ (z=y\vee z=e) ]\wedge x\neq e .
    \]
The set $C_k$ of $k$-cycles can now be defined as the set of products of $(k-1)$ pairwise distinct $2$-cycles $c_1,\dots, c_{k-1}$ such that $c_ic_j \neq c_jc_i$ if and only if $\vert i-j\vert=1$ for $i,j<k$.  
\end{proof}

Since there is an automorphism of $\Sym_6$ sending $2$-cycles to $2$-$2$-$2$-cycles,  there is no parameter-free definition of the set of $2$-cycles in $\Sym_6$ and Lem\-ma~\ref{L.Defining2Cycles} cannot be improved by including $\Sym_6$ in the set of groups to which it applies. 

Even though we cannot separate $2$-cycles from $2$-$2$-$2$-cycles in $\Sym_{6}$, we can still prove the following:

\begin{corollary}\label{C.SymAllExceptSix}
	The class of symmetric groups $\Sym_n$, $n\geq 3$ recognizes coordinates.
\end{corollary}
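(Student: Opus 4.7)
The plan is to apply \Cref{T.eq.groups}: it suffices to exhibit a single $h$-formula $\phi(x,y)$ that is equivalent to $x=e\rightarrow y=e$ in every $\Sym_n$ with $n\geq 3$. Let $\psi(z)$ be the $h$-formula from \Cref{L.Defining2Cycles}\eqref{1.Defining2Cycles}, which defines $C_2\cup\{e\}$ in $\Sym_n$ whenever $n\geq 3$ and $n\neq 6$. I will use $\psi$ to pick out a definable set of ``generators'' whose conjugacy classes have trivial centralizer.

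First I need to understand what $\psi$ defines in $\Sym_6$, the case excluded from the lemma. Repeating the kind of computation used in the proof of \Cref{L.Defining2Cycles} (one can find $g$ so that for $x$ a $(2,2)$-cycle, $xgxg^{-1}$ is a $5$-cycle) shows that $(2,2)$-cycles fail the middle conjunct, while $(2,2,2)$-cycles satisfy $\psi$ by applying the outer automorphism of $\Sym_6$, which maps transpositions to $(2,2,2)$-cycles and preserves $\psi$. Hence in $\Sym_6$, $\psi$ defines $C_2\cup C_{2,2,2}\cup\{e\}$. The property actually needed is that for every $n\geq 3$ and every $z\in\Sym_n\setminus\{e\}$ satisfying $\psi(z)$, one has $C_{\Sym_n}(z^{\Sym_n})=\{e\}$. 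For $n\neq 6$ this is because transpositions generate $\Sym_n$, so the normal closure of $z$ is $\Sym_n$ and its centralizer is $Z(\Sym_n)=\{e\}$. For $n=6$ the same holds for transpositions, and a $(2,2,2)$-cycle is odd, so the normal subgroup it generates is not contained in $A_6$ and therefore must equal $\Sym_6$ (as $\{e\},A_6,\Sym_6$ are the only normal subgroups of $\Sym_6$).

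With this in hand, set
\[
\phi(x,y)\ \coloneqq\ (\forall z)\bigl[\bigl(\psi(z)\wedge x\in C(z^G)\bigr)\ \rightarrow\ y\in C(z^G)\bigr],
\]
where $u\in C(z^G)$ is shorthand for the $h$-formula $(\forall t)(u\,tzt^{-1}=tzt^{-1}\,u)$. The antecedent and consequent are both $h$-formulas, and $(\exists z)(\psi(z)\wedge x\in C(z^G))$ is provable by taking $z=e$, so by the fact stated just after \Cref{Def.h-formulas} the formula $\phi(x,y)$ is equivalent to an $h$-formula. To verify the equivalence $\phi(x,y)\leftrightarrow(x=e\rightarrow y=e)$ in each $\Sym_n$, $n\geq 3$: if $x=e$ then every $z$ satisfies $x\in C(z^G)$, and choosing $z$ to be any transposition forces $y\in C_{\Sym_n}(z^{\Sym_n})=\{e\}$; if $y=e$ the consequent holds automatically; if both $x\neq e$ and $y\neq e$ there is nothing to check. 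Combining this with \Cref{T.eq.groups} yields the corollary.

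The only real obstacle is the $\Sym_6$ case, because \Cref{L.Defining2Cycles}\eqref{1.Defining2Cycles} explicitly excludes it: the outer automorphism of $\Sym_6$ prevents any parameter-free definition of the transpositions. The argument above succeeds because it never requires $\psi$ to pinpoint the transpositions; it only requires that every nontrivial solution of $\psi$ generate all of $\Sym_n$ as a normal subgroup, a property which the outer automorphism preserves by sending $C_2$ to $C_{2,2,2}$.
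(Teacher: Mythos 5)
Your proof is correct and follows essentially the same strategy as the paper's: both build the $h$-formula $(\forall z)\bigl(\psi(z)\wedge x\in C(z^G)\rightarrow y\in C(z^G)\bigr)$ from the formula $\psi$ of Lemma~\ref{L.Defining2Cycles}\eqref{1.Defining2Cycles} and handle $\Sym_6$ by observing that the nontrivial solutions of $\psi$ there still have conjugacy classes with trivial centralizer. Your treatment of $\Sym_6$ is a bit more explicit (identifying $\psi$ as defining exactly $C_2\cup C_{2,2,2}\cup\{e\}$ and using the classification of normal subgroups of $\Sym_6$), but it is the same argument.
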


\begin{proof} Let $\Sym$ denote an arbitrary $\Sym_n$ for $n\geq 3$ and $n\neq 6$. Let $\psi(x)$ be the $h$-formula defining $\mathcal{C}_2\cup \{e\}$ as in the proof of \Cref{L.Defining2Cycles}\eqref{1.Defining2Cycles}. For $x,y\in \Sym$, let $xRy$ denote the relation ``$x$ commutes with all conjugates of $y$'' used in the proof of Theorem~\ref{T.RecognizingCoordinates} \eqref{2.RecognizingCoordinates}.  Then $xRy$ is an $h$-formula and for all $x,x' \in \Sym$, we have 
\[ 
x'=e \rightarrow x=e \  \Leftrightarrow \ (\forall y\in \mathcal{C}_2\cup \{e\}) ~ \ \left( x'Ry \rightarrow xRy \right). 
\]

The right-hand side formula is an $h$-formula, as it is equivalent to
\begin{align*}
	(\exists y)  ~(\psi(y) ~\wedge \ x'Ry) 
	\wedge (\forall y) ~ \left((\psi(y) ~\wedge \ x'Ry) \rightarrow xRy \right). 
\end{align*}
%By the equivalence of \eqref{3.T.eq} and \eqref{1.T.eq}  in \Cref{T.eq}, 
Notice furthermore that the same equivalence holds in $\Sym_6$. Indeed, the formula $\psi(y)$ defines there the set containing $C_2\cup \{e\}$ and the $2$-$2$-$2$-cycles, and the centralizer of the conjugacy class of a $2$-$2$-$2$-cycles is also $\{e\}$.

The conclusion follows by \Cref{T.eq.groups}.  
\end{proof}

Since $\Sym_2$ is abelian, any class containing it cannot recognize coordinates by \Cref{T.example:DNR}\ref{DNR.decomposable}; this was originally observed in \cite[Example 2.6]{farah2022corona}.

\begin{lemma}\label{L.Sym.xneqe}
In  the common theory of $\Sym_n$, for all $n\geq 3$, the formula $x\neq e$ is equivalent to an $h$-formula. 
\end{lemma}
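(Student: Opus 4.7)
The plan is to derive this lemma directly from the preceding Corollary~\ref{C.SymAllExceptSix} together with a general observation about definability of $x \neq e$ in groups having more than two elements.

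First, by Corollary~\ref{C.SymAllExceptSix} the class $\fC := \{\Sym_n : n \geq 3\}$ recognizes coordinates, and its proof exhibits an explicit $h$-formula $\Phi_0(u,v)$ equivalent, in $\Th(\fC)$, to the implication $u = e \rightarrow v = e$; concretely, with $\psi(y)$ and the relation $uRy$ (``$u$ commutes with all conjugates of $y$'') as in that proof,
\begin{align*}
\Phi_0(u,v) \;:=\; (\exists y)(\psi(y) \wedge uRy) \,\wedge\, (\forall y)\bigl((\psi(y) \wedge uRy) \rightarrow vRy\bigr).
\end{align*}

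Next I would invoke the elementary observation used in the proof of Proposition~\ref{prop:interpretingAllZsupport}: in any structure with at least two elements, $x \neq e$ is equivalent to $(\forall v)(x = e \rightarrow v = e)$. Indeed, if $x = e$ one may pick any $v \neq e$ to falsify the inner implication, while if $x \neq e$ the implication holds vacuously for every $v$. Each $\Sym_n$ with $n \geq 3$ certainly has more than two elements, so the equivalence applies uniformly in $\Th(\fC)$.

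Combining these two steps, $x \neq e$ is equivalent, in $\Th(\fC)$, to $(\forall v)\Phi_0(x,v)$. Since the class of $h$-formulas is closed under universal quantification, this is an $h$-formula, which is what we wanted. There is no real obstacle here; the lemma is essentially a syntactic packaging of Corollary~\ref{C.SymAllExceptSix}.
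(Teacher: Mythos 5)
Your argument is correct and follows essentially the same strategy the paper uses: both express $x \neq e$ as the universal closure $(\forall v)$ of the $h$-formula $\Phi_0(x,v)$ for $x=e \rightarrow v=e$ produced in the proof of Corollary~\ref{C.SymAllExceptSix}, relying on the general observation (from the proof of Proposition~\ref{prop:interpretingAllZsupport}) that in any structure with more than one element $x \neq e$ is equivalent to $(\forall v)(x=e \rightarrow v=e)$. One small caveat worth flagging: the formula displayed in the paper's proof, $(\forall y)(\forall\tau)\,((\psi(\tau)\wedge\tau R y)\rightarrow\tau R x)$, appears to have $x$ and the universally quantified variable transposed---as written it is equivalent to $x = e$ rather than $x \neq e$ (take $y=e$ and $\tau$ a transposition with $x$ nontrivial and note that $C(x^G)$ contains no transposition). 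Your $(\forall v)\Phi_0(x,v)$, which after using the symmetry of the relation $R$ matches $(\forall y)(\forall\tau)\,((\psi(\tau)\wedge\tau R x)\rightarrow\tau R y)$, is the correct form.
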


\begin{proof}
Since both the formulas $\psi(\tau)$ (asserting that $\tau\in C_2\cup \{e\}$ if $n\neq 6$) and $xRy$ used in the proof of \Cref{C.SymAllExceptSix} are equivalent to $h$-formulas and since $eRx$ holds for all $x$, the right-hand side of  
\[
x\neq e \ \Leftrightarrow \ (\forall y) (\forall \tau ) \ ((\psi(\tau)\wedge \tau R y) \rightarrow \tau R x )
\]
 is an explicit $h$-formula. We remark that this Lemma also follows from \Cref{Prop:Omarov}.
\end{proof}

\subsection{Quantifier elimination in products of a symmetric group}

We proceed as discussed in \Cref{section:FV} to define a language interpretable in  $\mathcal{L}_{\textrm{groups}}$ in which reduced products of $\Sym_n$ for a fix $n \geq 4, n\neq 6$ eliminate quantifiers.
The following definition and fact are heavily inspired by a post on Math Stack Exchange (\cite{kruckman2021stack}), from which we borrow the terminology and some of the proofs, which are included for the reader's convenience. 
 %https://math.stackexchange.com/questions/4098806/identifying-the-finite-symmetric-groups}

\begin{definition}\label{D.identifying}
	A pair  of distinct transpositions in some symmetric group which have at least one common element $k$, is said to \emph{identify} $k$, or  that it is \emph{identifying}. The set of all identifying pairs is denoted by $P$.
On $P$ consider the equivalence relation $E$ given by 
\[
(\sigma,\tau) E (\sigma',\tau') \quad   \Leftrightarrow \quad \text{pairs $(\sigma,\tau)$ and $(\sigma',\tau')$ identify the same element $k$.}
\] 
Let $\mathcal{L}_X$ denote  the group language expanded by a sort for the base set $\{1,\dots, n\}$ and the natural projection $\pi_X: P \rightarrow X$.
\end{definition}

Clearly, $E$ is an equivalence relation on the set $P$ and every $\Sym_n$ has a natural expansion to an $\calL_X$-structure. Also, in $\Sym_n$ the quotient $X:=P/E$ is naturally identified with the base set $\{1,\dots,n\}$. It will be convenient to write $\Sym_X$ for the symmetric group on the base set $X$.

\begin{fact}\label{Fact:symmetricgroups}  The following facts hold (uniformly) in the theory of all $\Sym_n$, for $n\geq 4$ and $n\neq 6$.

    \begin{enumerate}
        \item\label{f.sym1} The set $P$ of identifying pairs is (uniformly) $h$-definable, namely there exists an $h$-formula defining $P$ in all $\Sym_n$, for $n \geq 4$ and $n \neq 6$.
        \item\label{f.sym2} The relation $E$ on $P$ is (uniformly) $h$-definable.
          \item\label{f.sym4}       For $k$ and $h$ in the base set  $X$ and $\nu \in \Sym_X$, the relation 
        \[\nu=\begin{cases}
            ~(kh) & \text{ if }k\neq h\\
            ~e & \text{ otherwise}
        \end{cases}\]
        is defined by an $h$-formula $\psi(\nu,k,h)$ in the language $\mathcal{L}_X$. 
        
        \item\label{f.sym5} For $s\geq 2$ and distinct $k_1, \dots, k_s$ 
        in $X$, and $x\in \Sym_X$, the relation:
        \[x=
\begin{pmatrix}
1 & \dots & s \\
k_1 & \dots & k_s
\end{pmatrix},
\]        
is defined by an $h$-formula in the language $\mathcal{L}_X$.
    
    \item\label{f.sym6} For every $n\geq 4$, $n\neq 6$, $k\geq 1$ and a $k$-tuple $(\sigma_1,\dots,\sigma_k)$ of elements of $\Sym_n$,  the type $p=\tp(\sigma_1,\cdots,\sigma_k/\emptyset)$ is isolated by an $h$-formula $\phi_p(x_1,\dots,x_k)$. 
    \end{enumerate}
\end{fact}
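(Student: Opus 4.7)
The plan is to prove the five items in order, each leveraging its predecessors along with Lemmas~\ref{L.Defining2Cycles} and \ref{L.Sym.xneqe}.

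For \eqref{f.sym1}, the set of transpositions is defined by the $h$-formula $\psi(x)\wedge x\neq e$, combining Lemma~\ref{L.Defining2Cycles}\eqref{1.Defining2Cycles} with Lemma~\ref{L.Sym.xneqe}. Two distinct transpositions $\sigma,\tau$ share an element if and only if $\sigma\tau$ is a $3$-cycle, equivalently $\sigma\tau\neq e$ and $(\sigma\tau)^3=e$. Thus $P$ is $h$-defined uniformly by the conjunction
\[
\psi(\sigma)\wedge\sigma\neq e\wedge\psi(\tau)\wedge\tau\neq e\wedge \sigma\tau\neq e\wedge(\sigma\tau)^3=e,
\]
each $\neq e$ conjunct being $h$ by Lemma~\ref{L.Sym.xneqe}.

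The main work is \eqref{f.sym2}, which appears circular because the identified element $k$ of a pair $(\sigma,\tau)$ has no direct representation in the group language. My approach is to $h$-define the set $K'(\sigma,\tau)\subseteq \Sym_n$ of transpositions through $k$, then characterize $E$ by comparing intersections. The key identity is that for transpositions $\mu,\sigma$ the product $\mu\sigma$ has order in $\{1,2,3\}$, and $(\mu\sigma)^3=e$ precisely when $\mu=\sigma$ or $\mu$ and $\sigma$ share an element. Writing $\sigma\tau\sigma$ for the transposition between the two non-common points of $\sigma$ and $\tau$, one checks that
\[
\mu\in K'(\sigma,\tau)\ \Longleftrightarrow\ \psi(\mu)\wedge\mu\neq e\wedge(\mu\sigma)^3=e\wedge(\mu\tau)^3=e\wedge\mu\sigma\tau\sigma\neq e,
\]
an $h$-formula: the final conjunct excludes $\mu=\sigma\tau\sigma$, which is the only transposition other than those through $k$ that shares points with both $\sigma$ and $\tau$. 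A counting check then gives $|K'(\sigma,\tau)|=n-1$, $K'(\sigma,\tau)=K'(\sigma',\tau')$ whenever both pairs identify the same element, and for distinct identified elements the intersection reduces to at most the singleton $\{(kk')\}$. Hence $E$ is $h$-defined by
\[
(\sigma,\tau),(\sigma',\tau')\in P\ \wedge\ (\exists\mu_1,\mu_2)\bigl(\mu_1\mu_2\neq e\wedge\mu_1,\mu_2\in K'(\sigma,\tau)\cap K'(\sigma',\tau')\bigr).
\]

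For \eqref{f.sym4}, working in $\mathcal{L}_X$, I express the relation $\psi(\nu,k,h)$ as $\nu^2=e\wedge \nu(k)=h\wedge \nu(h)=k\wedge(\forall x\in X)(x\neq k\wedge x\neq h\to \nu(x)=x)$. The action relation $\nu(k)=h$ is encoded via $\pi_X$: for every $(\sigma,\tau)\in P$ with $\pi_X(\sigma,\tau)=k$, the conjugate pair $(\nu\sigma\nu^{-1},\nu\tau\nu^{-1})$ satisfies $\pi_X=h$; this is $h$-expressible via \eqref{f.sym1}--\eqref{f.sym2}, and the universal conjunct fits the $h$-schema because its antecedent is satisfiable whenever $|X|=n\geq 4$. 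For \eqref{f.sym5}, the matrix $\binom{1\ldots s}{k_1\ldots k_s}$ encodes the constraints $x(j)=k_j$ for $j\leq s$, each $h$-definable by \eqref{f.sym4}, and their conjunction remains $h$. Finally, \eqref{f.sym6} exploits the classical fact that $\Aut(\Sym_n)=\Inn(\Sym_n)$ whenever $n\neq 6$, so the $\emptyset$-type of any tuple coincides with its simultaneous-conjugacy orbit in $\Sym_n$. After fixing a concrete enumeration of $\{1,\ldots,n\}$ in which $\sigma_j$ acts by $h\mapsto c^j_h$, the isolating formula asserts the existence of pairwise distinct $e_1,\ldots,e_n\in X$ satisfying $x_j(e_h)=e_{c^j_h}$ for every $j\leq k$ and $h\leq n$. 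Each such equality is $h$-definable by \eqref{f.sym5}, pairwise distinctness is $h$, and existentially closing preserves the $h$-form. Pulling back through the $h$-definable interpretations of $P$, $E$, and $\pi_X$ yields the desired isolating $h$-formula in the group language; the hardest technical step throughout is the $K'$-characterization in \eqref{f.sym2}.
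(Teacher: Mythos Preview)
Your argument is essentially correct, and your treatment of \eqref{f.sym2} is genuinely different from the paper's and rather cleaner. The paper defines $E$ by a somewhat intricate formula quantifying over $\nu\in C_2\cup\{e\}$ disjoint from $\sigma$ (or from $\tau$) and checking that various products $\sigma\nu\sigma'\nu$, $\tau\nu\tau'\nu$, etc.\ have order dividing $3$, followed by a case analysis. Your route via the $h$-definable set $K'(\sigma,\tau)$ of transpositions through the identified point is more transparent: the verification that $K'(\sigma,\tau)$ consists exactly of the $n-1$ transpositions through $k$, and that two such sets coincide or meet in a singleton according as the identified points agree, is a short direct computation. Both approaches yield uniform $h$-definability; yours just has fewer moving parts.

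There is one point in \eqref{f.sym4} you should tighten. Your formula contains the conjunct $(\forall x\in X)(x\neq k\wedge x\neq h\to \nu(x)=x)$, and you argue it fits the $h$-schema because the antecedent is satisfiable for $n\geq 4$. But you also need the antecedent itself to be an $h$-formula, and you have not explained why $x\neq k$ is $h$-definable for variables $x,k$ of the imaginary sort $X$. This is not automatic: $x=k$ is atomic, but its negation is not. The fix is exactly the device the paper deploys later (in its proof of \eqref{f.sym6}): for representing pairs, $\pi_X(\sigma_x,\tau_x)\neq\pi_X(\sigma_k,\tau_k)$ is equivalent to
\[
(\forall\nu)\bigl[(\nu\sigma_x\nu,\nu\tau_x\nu)\,E\,(\sigma_k,\tau_k)\ \to\ \nu\neq e\bigr],
\]
which is $h$ since the antecedent is an $h$-formula that is always satisfiable and $\nu\neq e$ is $h$ by Lemma~\ref{L.Sym.xneqe}. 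Once you insert this, your \eqref{f.sym4} goes through; alternatively, since downstream you only ever invoke $\psi(\nu,k,h)$ with $k\neq h$, you could bypass the universal conjunct entirely by using $\psi(\nu)\wedge\nu\neq e\wedge[\nu(k)=h]$, which already forces $\nu=(kh)$ when $\nu$ is constrained to $C_2$. Parts \eqref{f.sym1}, \eqref{f.sym5}, and \eqref{f.sym6} match the paper's approach.
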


We briefly justify  these facts:
\begin{proof}
    (\ref{f.sym1}) As two distinct transpositions move a common element if and only if their product has  order three, we may use the following formula:
       \[\sigma,\tau\in C_2\wedge \sigma\tau^{-1} \neq e  \wedge  \ (\sigma\tau)^3=1.\] 
    This is equivalent to an $h$-formula by Lemma~\ref{L.Sym.xneqe} 
    
    (\ref{f.sym2}) We will prove that the relation $E$ is defined by the following formula:
        \begin{equation}\label{eq.E}
        \begin{aligned}
           (\forall \nu\in C_2\cup\{e\})\quad  
            & [\nu\sigma \nu=\sigma \wedge \sigma \neq \nu ] \rightarrow \\ & [(\tau  \nu \tau'\nu)^3= (\tau  \nu \sigma'\nu)^3=(\sigma  \nu \tau'\nu)^3=(\sigma  \nu \sigma'\nu)^3=1 ]   \\
            \wedge &  [\nu\tau \nu=\tau \wedge \tau \neq \nu ] \rightarrow \\
            &[(\tau  \nu \tau'\nu)^3= (\tau  \nu \sigma'\nu)^3=(\sigma  \nu \tau'\nu)^3=(\sigma  \nu \sigma'\nu)^3=1 ].
        \end{aligned}
        \end{equation}
    Indeed, if both $(\sigma,\tau)$ and $(\sigma',\tau')$ identify an element $k$, then any transposition $\nu$ disjoint from $\sigma$ or $\tau$ does not move $k$. Then, as $\tau'$ and $\sigma'$  move $k$, so do $\nu\tau'\nu$ and $\nu\sigma'\nu$. Therefore each one of $\tau  \nu \tau'\nu,\tau  \nu \sigma'\nu,\sigma  \nu \tau'\nu$ and $\sigma  \nu \sigma'\nu$ has order $1$ or $3$, and formula \eqref{eq.E} holds.
        
    Conversely, assume that  $(\sigma,\tau)$ identifies $k$ and $(\sigma',\tau')$ identifies some $k'\neq k$. We prove that \eqref{eq.E} doesn't hold by considering cases. 
    \begin{itemize}
        \item If at least one of the pairs of transpositions among $(\sigma,\sigma'),(\sigma,\tau'),(\tau,\sigma')$, and $(\tau,\tau')$ is disjoint, then the formula \eqref{eq.E} above fails by taking $\nu=e$.
        \item Now assume that none of these pairs are disjoint. Since $k\neq k'$, there is $k''$ such that  $\tau,\sigma,\tau'$, and $\sigma'$ are transpositions of $k,k'$, and $k''$. By swapping $\sigma$ with $\tau$ and $\sigma'$ with $\tau'$ if needed, we may assume that $\sigma=(kk'),\tau=(kk''),\sigma'=(k'k)$, and $\tau'=(k'k'')$.
        As the base set has at least four  elements, we may take $l\notin \{k,k',k''\}$ and let $\nu:=(lk'')$.  Then $\tau \nu \tau' \nu =(kk'')(k'l)$ has order $2$ and \eqref{eq.E} fails.
    \end{itemize}    
    As $\sigma \neq \nu$ is equivalent to an $h$-formula by Lemma~\ref{L.Sym.xneqe}, one sees that~\eqref{eq.E} is also equivalent to an $h$-formula.

    (\ref{f.sym4}) Let $\psi(\nu,k,h)$ be the following formula (using the $h$-formulas for $P$ and $E$ provided by the above)
    \[(\forall \sigma, \tau \in C_2) ~ \left( [(\sigma,\tau) \in P \wedge \pi_X(\sigma, \tau)=k ] \rightarrow \pi_X(\nu\sigma\nu, \nu\tau\nu)=h \right).\]
    In words, if  $\psi(\nu,k,h)$ holds, then for all $\sigma,\tau\in C_2$,  if $(\sigma,\tau)$ identifies $k$, then $(\nu \sigma \nu ,\nu \tau\nu)$ identifies $h$.
    Clearly, if $k,h$ are distinct elements of $X$, then $\nu=(k,h)$, and if $k=h$, then $\nu=e$ as required.
    
    (\ref{f.sym5}) The permutation $\rho:=\begin{pmatrix}
1 & \dots & s \\
k_1 & \dots & k_s
\end{pmatrix}$ can be written as a product of transposition. Therefore, this relation is expressible as a conjunction of $h$-formulas of the form $\psi(\nu,k,h)$ as in (\ref{f.sym4}).

(\ref{f.sym6}) 
For $i\leq k$, consider the following permutation
    \[\sigma_i:=
\begin{pmatrix}
1 & \cdots & n \\
\sigma_i(1) & \cdots & \sigma_i(n)
\end{pmatrix}. 
\]
Then, the type $p$ is isolated by the following $\mathcal{L}_X$-formula:
    \[\label{formula.Ip}\tag{$I_p$}(\exists ~k_1,\dots,k_n \in X) ~ \bigwedge_{\substack{i,j\leq n\\ i\neq j}} k_i\neq k_j   ~\wedge x_i= \begin{pmatrix}
1 & \cdots & n \\
k_{\sigma_i(1)} & \cdots & k_{\sigma_i(n)}
\end{pmatrix}.\]
    We claim that the formula above is equivalent to an $h$-formula in the language of groups. 
    We use the following facts: 
    \begin{itemize}
        \item Every existential quantifier over $X$ can be replaced by an existential quantifier over the group: 
        \[(\exists k\in X) ~ \phi(X)  \Leftrightarrow \exists \sigma_k,\tau_K  ~ (\sigma_k,\tau_k)\in P \wedge \phi(\pi_X(\sigma_k,\tau_k))\]
        \item We have the equivalence 
        \[\psi( \nu , \pi_X((\sigma_k,\tau_k)), \pi_X((\sigma_h,\tau_h)) ) \Leftrightarrow \nu \in C_2\cup\{e\} \wedge (\nu\sigma_k\nu,\nu\sigma_h\nu)E(\sigma_h,\tau_h). \]
        \item for pairs $(\sigma_h,\tau_h)$ and $(\sigma_k,\tau_k)$ in $P$, the relation   $\pi_X((\sigma_k,\tau_k)) \neq  \pi_X((\sigma_h,\tau_h)) $
        can be expressed in the language of group as follows:
        \[ (\forall \nu)~ [(\nu\sigma_k\nu,\nu\sigma_h\nu)E(\sigma_h,\tau_h)\rightarrow \nu\neq e]. \]
    \end{itemize}
    Using these facts, as well as the facts that the set $P$ and  the relations $E$ and $x\neq e$ are $h$-definable, we can deduce that (\ref{formula.Ip}) can be expressed by an $h$-formula $\phi_p$ in the language of groups. 
\end{proof}

With $\phi_p$ as provided by  \Cref{Fact:symmetricgroups} (\ref{f.sym6}), we obtain the following amusing corollary which is certainly folklore but we could not find it in the literature. 

\begin{corollary}
    Let $n> 6$. The parameter-free type of a permutation $a\in \Sym_n$ is uniformly isolated by the $h$-formula $\phi_p$ in the theory of all $\Sym_N$ for $N\geq n$. \qed 
\end{corollary}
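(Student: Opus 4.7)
The plan is to apply Fact~\ref{Fact:symmetricgroups}~(\ref{f.sym6}) with tuple length $k=1$ to the element $a\in \Sym_n$, producing the $h$-formula $\phi_p(x)$ in the language of groups that isolates $p=\tp^{\Sym_n}(a/\emptyset)$ in $\Sym_n$. What must then be verified is that this same parameter-free formula $\phi_p$ continues to isolate $\tp^{\Sym_N}(a/\emptyset)$ in every larger $\Sym_N$, where $a$ is viewed in $\Sym_N$ via the natural embedding that fixes the points $n+1,\ldots,N$. Uniformity in $N$ will be automatic once correctness in each $\Sym_N$ is established, since $\phi_p$ is one fixed formula.

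First I would check that $\phi_p(a)$ holds in every $\Sym_N$ for $N\geq n$: the $n$ existentially quantified witnesses in $I_p$ can be supplied by the underlying set $\{1,\ldots,n\}\subseteq X$, on which the image of $a$ acts precisely as $\sigma$. Next, for isolation, I would assume $b\in \Sym_N$ satisfies $\phi_p$ and argue that $b$ is conjugate to $a$ in $\Sym_N$. Unwinding the translation of $I_p$ into the group language used in the proof of Fact~\ref{Fact:symmetricgroups}~(\ref{f.sym6}), the ``top-row'' indices of the matrix are replaced by additional existential variables $j_1,\ldots,j_n$, constrained to coincide as a set with $\{k_1,\ldots,k_n\}$ (otherwise, by the convention of Fact~\ref{Fact:symmetricgroups}~(\ref{f.sym5}), the matrix fails to represent a permutation of $X$). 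Consequently $b$ has support contained in some $n$-element subset $S\subseteq X$ and $b\rs S$ is a conjugate of $\sigma$. This forces $b$ to have the same cycle type as the natural image of $a$ in $\Sym_N$.

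Finally, since $n>6$ forces $N>6$ for every $N\geq n$, each $\Sym_N$ has no outer automorphism, so two elements of $\Sym_N$ have the same parameter-free type if and only if they are conjugate. Combined with the previous paragraph this yields $\tp^{\Sym_N}(b/\emptyset)=\tp^{\Sym_N}(a/\emptyset)$, completing the argument.

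The main obstacle is the careful bookkeeping in the translation $I_p\rightsquigarrow \phi_p$: one must confirm that when the formula is interpreted in $\Sym_N$ for $N>n$, the convention of Fact~\ref{Fact:symmetricgroups}~(\ref{f.sym5}) — that the matrix $\begin{pmatrix} 1 & \cdots & n \\ k_{\sigma(1)} & \cdots & k_{\sigma(n)} \end{pmatrix}$ specifies a permutation whose support lies in an $n$-subset of $X$ and which fixes the complement — is preserved by the translation into the language of groups. Without this fix-complement constraint, an element of $\Sym_N$ with strictly more non-trivial cycles than $a$ (for example, $a$ composed with a disjoint transposition) could spuriously satisfy $\phi_p$, and the isolation would fail. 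Verifying this point is the only nontrivial step in reducing the corollary to Fact~\ref{Fact:symmetricgroups}~(\ref{f.sym6}).
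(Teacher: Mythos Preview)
Your proposal is correct and matches the paper's approach; the paper itself offers no argument beyond the \qed, treating the corollary as immediate from Fact~\ref{Fact:symmetricgroups}~(\ref{f.sym6}). Regarding the obstacle you flag: in the translation used for (\ref{f.sym5}) the element $x$ is expressed as a product of transpositions each of which is completely pinned down by a clause $\psi(\nu,k_i,k_j)$, so no separate ``top-row'' variables are introduced and the fix-complement constraint holds automatically --- any $b\in\Sym_N$ satisfying $\phi_p$ equals that specific product supported on $\{k_1,\dots,k_n\}$ and is therefore conjugate to (the image of) $a$.
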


We may finally give, for all $n\geq4$ and $n\neq 6$, an interpretable language where reduced power of the symmetric group $\Sym_n$ eliminates quantifiers.
We denote by $\mathcal{L}_{n}^+$ the $2$-sorted language consisting of the following:
	\begin{itemize}
		\item The language of groups, $(G, \cdot)$. 
		\item The language of Boolean algebras $(\Bool{I}{I}, \subseteq)$ %where $c_n:= \{i \in \Index{I} \mid G_i=\Sym_n\} \mod I$,
		\item $\{\supp_{\phi_{p}}: G^{\vert k \vert}  \rightarrow \Bool{I}{I}: p(x_1,\dots ,x_k)$ is an $\Sym_{n}$-type$\}$,      where $\phi_{p}(x_1,\dots, x_k)$ is as in \Cref{Fact:symmetricgroups} \eqref{f.sym6}.
	\end{itemize}

\begin{corollary}\label{lem:cyclesformula}
    Fix an $n\geq 4$, $n \neq 6$, an index set $\mathbb{I}$, and an ideal on $\mathcal{I}$ on $\mathbb{I}$. Let $G:=\prod_{\Ideal{I}}G_i$ with $G_i= \Sym_n$ for every $i\in \Index{I}$. 
	Then $(G,\mathcal{L}_{n}^+)$ is an interpretable expansion of the group $(G,\cdot)$, which eliminates quantifiers relative to $(\Bool{I}{I}, \subseteq)$.
\end{corollary}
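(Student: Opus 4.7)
The plan is to recognize this corollary as a direct application of Theorem~\ref{C.QuestionDecomp.1} once we have verified its two hypotheses, namely that $G$ interprets the relative support function $\supp_{=}$ and that we have in hand a fundamental set of $h$-formulas for the indexed family $(\Sym_n)_{i \in \bbI}$ and the ideal $\cI$. The first hypothesis is delivered for free: by Corollary~\ref{C.SymAllExceptSix} the class $\{\Sym_n : n \geq 3\}$ recognizes coordinates, so by the equivalence of \eqref{1.T.eq} and \eqref{2.T.eq} in Theorem~\ref{T.eq} the reduced product $G$ interprets both $\Bool{I}{I}$ and the relative support function $\supp_{=}$.

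For the second hypothesis, I would show that the collection $\Phi := \{\phi_p(\bar x) : p \text{ is a parameter-free type of } \Sym_n\}$ provided by Fact~\ref{Fact:symmetricgroups}\eqref{f.sym6} is a fundamental set of $h$-formulas. The point is that $\Sym_n$ is a finite structure, so for every finite tuple of variables $\bar x$ there are only finitely many complete types over $\emptyset$, and every $\mathcal{L}$-formula $\psi(\bar x)$ defines a subset of $\Sym_n^{|\bar x|}$ which is a finite union of such types. Consequently, in $\Sym_n$ (and therefore in $\Th_\cI((\Sym_n)_i)$) every $\mathcal{L}$-formula is equivalent to a finite disjunction of elements of $\Phi$; that each $\phi_p$ is itself an $h$-formula is exactly the content of Fact~\ref{Fact:symmetricgroups}\eqref{f.sym6}.

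With both hypotheses verified, Theorem~\ref{C.QuestionDecomp.1} directly yields that $G$, as an $\mathcal{L}$-structure, already interprets every function of $\mathcal{L}^+$ (in particular the various $\supp_{\phi_p}$), and that $G$ eliminates quantifiers relative to $\Bool{I}{I}$ in the language $\mathcal{L}^+_\Phi$. The final bookkeeping step is to observe that $\mathcal{L}^+_\Phi$ as defined in Corollary~\ref{C.FV.reduced} coincides with $\mathcal{L}_n^+$: both contain the group language, the Boolean algebra language, and the support functions $\supp_{\phi_p}$ for $p$ ranging over types of $\Sym_n$. The only nominal difference is the family of sentence-constants $c_\theta$ appearing in Corollary~\ref{C.FV.reduced}; but in our situation all the factors $\cM_i$ are literally $\Sym_n$, so every sentence $\theta$ has the same truth value in every factor, making the corresponding $c_\theta$ equal to either $0$ or $1$ in $\Bool{I}{I}$ and hence definable without parameters. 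Thus $\mathcal{L}_n^+$ suffices.

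I expect no real obstacle here: once one has Theorem~\ref{C.QuestionDecomp.1} and Fact~\ref{Fact:symmetricgroups}\eqref{f.sym6} the argument is assembly. The only slightly subtle point, which I would spell out carefully in a full write-up, is the reduction from $\mathcal{L}^+_\Phi$ to $\mathcal{L}_n^+$ via the triviality of the $c_\theta$'s in a reduced \emph{power} (as opposed to a general reduced product), so that nothing of substance is lost by omitting the sentence-constants.
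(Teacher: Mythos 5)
Your proof is correct and follows essentially the same route as the paper: recognize the statement as an instance of Theorem~\ref{C.QuestionDecomp.1}, verify that $G$ interprets $\supp_=$ (via recognition of coordinates for symmetric groups), and verify that the isolating $h$-formulas $\phi_p$ from Fact~\ref{Fact:symmetricgroups}\eqref{f.sym6} form a fundamental set. You supply more detail than the paper does at two points — the finiteness argument justifying fundamentality, and the reconciliation of $\mathcal{L}^+_\Phi$ with $\mathcal{L}^+_n$ by noting the $c_\theta$'s are trivial in a reduced power (and indeed, since Fact~\ref{Fact:symmetricgroups}\eqref{f.sym6} only produces $\phi_p$ for $k\geq1$, $\Phi$ contains no sentences and the $c_\theta$'s simply do not occur) — but these are elaborations, not a different argument.
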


\begin{proof}
    %We deal with the case $n=3$ in \Cref{P.prodSym3} below. 
    %Fix $n\geq 4$, $n \neq 6$.
    In $\Sym_n$, every formula $\phi(\bar{x})$ is equivalent to a Boolean combination of $h$-formulas of the form $\phi_{p}(\bar{x})$ for some types $p$,  i.e., $\{\phi_{p}(\bar{x}) \ : \ p\in \mathcal{S}(\Sym_{n})\}$ is a fundamental set of satisfiable $h$-formulas in $\Sym_n$. Since $G$ recognizes coordinates, it interprets the support function by \Cref{T.eq.groups}. It follows from \Cref{C.QuestionDecomp} that the language $\mathcal{L}_{n}^+$ is interpretable in the language of groups and that  $G$ eliminates quantifiers relative to $\Bool{I}{I}$ in this language.
\end{proof}
\begin{remark}
    \begin{itemize}
        \item One could slightly improve the above corollary and, for a fixed $N$, eliminate quantifiers in a reduced product $G \coloneq \prod_\Ideal{I} G_i$, where $G_i \in \{\Sym_4,\Sym_5,\Sym_7,\Sym_8,\dots, \Sym_N\}$. 
        \item We don't know a language where an arbitrary reduced products of symmetric groups among $\Sym_n$, for $n\geq 4$, $n\neq 6$,  eliminate quantifiers relative to the corresponding Boolean algebra. This would require a uniform description of definable sets in all symmetric groups, which our analysis  does not provide. For example, we don't know whether the formula $x^2=e$ can be described uniformly as a Boolean combination of $\phi_p$.
         \end{itemize}
\end{remark}

We proceed to analyze quantifier elimination in reduced powers of  $\Sym_3$. For $p\in \{2,3\}$, let $\supp_p$ denote the function $\supp_{x^p=e}: \mathcal{G}\rightarrow \Bool{I}{I}$. Consider the language $\mathcal{L}^{\star} := \{\cdot,  (\Bool{I}{I},\subseteq), \supp_{2},\supp_{3} \}$.

\begin{proposition}\label{P.prodSym3}
	We use a back-and-forth argument with patching.
Let $\mathbb{I}$ be an index set, and $\mathcal{I}$ be an ideal on $\mathbb{I}$. Let $G:=\prod_{\Ideal{I}}\Sym_3$. Then $(G,\mathcal{L}^{\star})$ is an interpretable expansion of the group $(G,\cdot)$, which eliminates quantifiers relative to $(\Bool{I}{I}, \subseteq)$.

\end{proposition}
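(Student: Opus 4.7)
The plan is to apply Corollary~\ref{C.FV.reduced} with the set $\Phi := \{x^2=e,\; x^3=e\}$ playing the role of the fundamental set, and combine this with the fact (Theorem~\ref{T.RC}\ref{3.C}) that $\Sym_3 = D_3$ recognizes coordinates; the latter will turn the Feferman--Vaught language $\mathcal{L}^+_\Phi$ into an interpretable expansion of the pure group language.

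First I would verify that $\Phi$ is fundamental for $(\Sym_3)_{i\in\Index{I}}$ and $\Ideal{I}$: every group formula is equivalent modulo $\Th(\Sym_3)$ to a Boolean combination of formulas of the form $t(\bar x)^2=e$ and $t(\bar x)^3=e$ for group terms $t$. The three key inputs are: (i) $\Sym_3$ has exactly three conjugacy classes $\{e\}$, the transpositions, and the $3$-cycles, distinguished by the pair $(x^2=e,\; x^3=e)$; (ii) since $\gcd(2,3)=1$, an element satisfies both $x^2=e$ and $x^3=e$ iff it equals $e$, hence every atomic formula $s=t$ is equivalent modulo $\Th(\Sym_3)$ to $(st^{-1})^2=e \wedge (st^{-1})^3=e$; and (iii) $\Aut(\Sym_3)=\Inn(\Sym_3)$ since $\Sym_3$ is complete. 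With these in hand, every parameter-free formula defines a union of $\Inn(\Sym_3)$-orbits on $\Sym_3^n$, and fundamentality reduces to the following claim, which is the main technical step: two tuples $\bar g,\bar h\in\Sym_3^n$ are in the same $\Inn(\Sym_3)$-orbit iff for every group word $w(\bar x)$, $w(\bar g)$ and $w(\bar h)$ lie in the same conjugacy class of $\Sym_3$. I would prove this by a case analysis on the image $H$ of the representation $F_n\to\Sym_3$ sending $x_i$ to $g_i$: when $H$ is proper (trivial, an order-$2$ subgroup, or the unique $A_3$) the claim is immediate from (i) and direct bookkeeping; when $H=\Sym_3$ one picks a coordinate $i$ for which $g_i$ is a transposition, pre-conjugates $\bar h$ so that $h_i=g_i$, and then inspects the conjugacy classes of the products $g_ig_j$ and $g_jg_i$ to force $h_j=g_j$ for every remaining index $j$.

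Second, once $\Phi$ is fundamental, Corollary~\ref{C.FV.reduced} yields that $G=\prod_{\Ideal I}\Sym_3/\Ideal I$ eliminates quantifiers relative to $\Bool{I}{I}$ in the language obtained from the group language by adjoining $\supp_{x^2=e}$, $\supp_{x^3=e}$ and the Boolean algebra sort; this is exactly $\mathcal{L}^\star$, since no sentence constants $c_\theta$ enter because $\Phi$ contains no sentences. Interpretability of $\mathcal{L}^\star$ in the pure group structure then follows from the recognizing-coordinates property: Theorem~\ref{T.eq} interprets $(\Bool{I}{I},\subseteq)$ together with the relative support $\supp_=$, and Lemma~\ref{lemma:PhiSupport} applied to the satisfiable $h$-formulas $x^2=e$ and $x^3=e$ then interprets $\supp_2$ and $\supp_3$.

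The main obstacle is the conjugacy claim in the second paragraph: elementwise conjugacy on all word evaluations implies simultaneous conjugacy in $\Sym_3$, a property specific to the smallness of $\Sym_3$ and the fact that its automorphism group is inner; this is precisely the feature that fails in larger symmetric groups (notably $\Sym_6$), which is why such cases required separate treatment elsewhere in the paper.
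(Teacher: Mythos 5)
Your proposal takes a genuinely different route from the paper. The paper proves this proposition by a direct semantic back-and-forth argument: it fixes an $\aleph_0$-saturated model, takes a partial isomorphism with elementary Boolean part, and, given a new element $a$, splits the Boolean top into seven pieces $I_1,\dots,I_7$ (tracking where $a$ is $2$-torsion, $3$-torsion, coincides with or differs from elements of the domain, etc.), finds a correct answer on each piece using saturation, and patches. You instead propose to verify that a set of $h$-formulas built from $t(\bar x)^2=e$ and $t(\bar x)^3=e$ is fundamental for $\Sym_3$ and then invoke Corollary~\ref{C.FV.reduced} and Lemma~\ref{lemma:PhiSupport}. This is more in the spirit of what the paper does for $\Sym_n$, $n\geq 4,n\neq 6$, via Fact~\ref{Fact:symmetricgroups}\eqref{f.sym6}, and if it were carried out correctly it would be an attractive uniform treatment. (One small bookkeeping point: the fundamental set you actually use, and need, is $\{t(\bar x)^2=e,\;t(\bar x)^3=e : t\text{ a term}\}$, not the two one-variable formulas; the associated language $\mathcal{L}^+_\Phi$ is nevertheless quantifier-free interdefinable with $\mathcal{L}^\star$ because $\supp_{t(\bar x)^p=e}(\bar a)=\supp_p(t(\bar a))$, so this is harmless.)

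There is, however, a genuine gap in your proof of the key technical claim in the case $H=\Sym_3$. You assert that after pre-conjugating $\bar h$ so that $h_i=g_i$ (with $g_i$ a transposition), inspecting the conjugacy classes of $g_ig_j$ and $g_jg_i$ \emph{forces} $h_j=g_j$. This is false. Take $g_i=(12)$ and $g_j=(13)$. After fixing $h_i=(12)$, the constraints ``$h_j$ is a transposition'' and ``$h_ih_j$ and $h_jh_i$ are $3$-cycles'' allow both $h_j=(13)$ and $h_j=(23)=(12)(13)(12)$; no word in just $x_i,x_j$ can distinguish these two possibilities because the tuples $\bigl((12),(13)\bigr)$ and $\bigl((12),(23)\bigr)$ are conjugate by $(12)\in C_{\Sym_3}(g_i)$ and therefore have identical word-conjugacy profiles. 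So at best you conclude $h_j\in\{g_j,\;g_ig_jg_i\}$ for each $j$, and you still owe the argument that the same element of $C_{\Sym_3}(g_i)=\{e,g_i\}$ works for \emph{all} $j$ simultaneously; that requires looking at words involving three or more indices (e.g.\ $x_ix_jx_k$), not just pairwise products. The claim you need is true --- for instance it follows from the fact that $\Sym_3$ has a faithful irreducible $2$-dimensional representation whose character separates the three conjugacy classes, together with the standard fact that irreducible representations with equal characters are equivalent, and that a matrix normalizing the image of $\Sym_3$ induces an inner automorphism --- but your sketch does not establish it, and the specific step it relies on is incorrect.
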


The proof will be given below. 
Observe that, since every nontrivial element of $\Sym_3$ is either a $2$- or a $3$-cycle, we have $\supp(x)= \supp_2(x)\sqcup \supp_3(x)$.

\begin{lemma}[Patching]
	The following can be expressed by a first-order sentence in $\mathcal{L}^{\star}$-theory of $\mathcal{G}$:
	For all $A,B \in \Bool{I}{I}$ such that $A\cap B = \emptyset$ and $a,b \in G$, there exists $c$ such that $c\restriction{A}=a\restriction{A}$ and $c\restriction{B}=b\restriction{B}$.
\end{lemma}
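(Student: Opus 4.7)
The plan is to prove two things simultaneously: that the patching property can be written as an $\mathcal{L}^\star$-sentence, and that this sentence holds in every reduced power $G = \prod_\mathbb{I} \Sym_3/\mathcal{I}$. The key preliminary observation will be that the relative support function $\supp_=$ is quantifier-free definable in $\mathcal{L}^\star$, after which the verification of the patching property reduces to an explicit coordinatewise construction.

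First I would observe that since every nontrivial element of $\Sym_3$ is either a transposition (of order $2$) or a $3$-cycle, the coordinate-wise characterizations
\[
\supp_2(x) = [\{i : x_i^2 = e\}]_\mathcal{I}, \qquad \supp_3(x) = [\{i : x_i^3 = e\}]_\mathcal{I}
\]
yield the identity $\supp_2(x) \cap \supp_3(x) = [\{i : x_i = e\}]_\mathcal{I} = \supp(x)^\complement$. Combining this with $\supp_=(x, y) = \supp(xy^{-1})^\complement$ shows that $\supp_=$ is definable in the language $\mathcal{L}^\star$ (the Boolean operations on $\Bool{I}{I}$ are already part of $\mathcal{L}^\star$).

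Next, I would write the patching property as the $\mathcal{L}^\star$-sentence
\[
(\forall A, B \in \Bool{I}{I})(\forall a, b \in G)\bigl[A \cap B = 0 \to (\exists c \in G)\,(A \subseteq \supp_=(c,a) \wedge B \subseteq \supp_=(c,b))\bigr],
\]
which by the previous paragraph is indeed a first-order sentence in $\mathcal{L}^\star$. To see it holds, fix $A, B \in \Bool{I}{I}$ disjoint, and $a, b \in G$; choose representative sets $\tilde A, \tilde B \subseteq \mathbb{I}$ that we may assume are actually disjoint (by replacing $\tilde B$ with $\tilde B \setminus \tilde A$, which represents the same class since $A \cap B = 0$). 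Fix representatives $(a_i)_i$ and $(b_i)_i$ of $a$ and $b$, and define $c = [(c_i)_i]_\mathcal{I}$ by $c_i = a_i$ if $i \in \tilde A$, $c_i = b_i$ if $i \in \tilde B$, and $c_i = e$ otherwise. Then $\{i : c_i \neq a_i\} \subseteq \mathbb{I} \setminus \tilde A$ and $\{i : c_i \neq b_i\} \subseteq \mathbb{I} \setminus \tilde B$, so $\supp_=(c, a) \supseteq A$ and $\supp_=(c, b) \supseteq B$, as required.

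There is no real obstacle here: the only substantive content is the definability of $\supp$ from $\supp_2$ and $\supp_3$, which is a direct consequence of the elementary structure of $\Sym_3$. The existence of the witness $c$ is then pointwise and requires no further saturation or combinatorial argument.
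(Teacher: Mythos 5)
Your proof is correct and follows essentially the same route as the paper's: the paper writes the same sentence (using $\supp(ca^{-1})\subseteq A^\complement$, which is equivalent to your $A\subseteq\supp_=(c,a)$), relying on the observation stated just before the lemma that $\supp$ is expressible from $\supp_2$ and $\supp_3$. You make two implicit steps explicit — the precise identity $\supp(x)=(\supp_2(x)\cap\supp_3(x))^\complement$ (note the paper's remark ``$\supp(x)=\supp_2(x)\sqcup\supp_3(x)$'' is slightly misstated, so your version is actually the correct one) and the pointwise verification that the sentence holds.
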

\begin{proof}
	This condition can be written as follows: 
	\[
	(\exists c\in \mathcal{G}) A^\complement \supseteq \supp(c\cdot a^{-1}) \wedge  B^\complement \supseteq \supp(c\cdot b^{-1}).\qedhere
	\]
\end{proof}

\begin{proof}[Proof of \Cref{P.prodSym3}]
	
	We prove the statement via the standard semantic quantifier elimination argument (see e.g. \cite[Paragraph 2.27]{Cha11}). For convenience, we denote by $\mathcal{P}$ the sort $\Bool{I}{I}$. 
    To prove quantifier elimination relative to $\mathcal{P}$, consider:
    \begin{itemize}
        \item $\mathcal{M}=(\mathcal{G}_\mathcal{M},\mathcal{P}_\mathcal{M})$ and $\mathcal{N}=(\mathcal{G}_\mathcal{N},\mathcal{P}_\mathcal{N})$, two models of $\Th(\mathcal{G})$, such that $\mathcal{N}$ is $\aleph_0$-saturated,
        \item two finitely generated (and therefore finite) substructures $\mathcal{A}=(\mathcal{G}_A,\mathcal{P}_A) \subseteq \mathcal{M}$ and $\mathcal{B}=(\mathcal{G}_B,\mathcal{P}_B) \subseteq \mathcal{N}$,
        \item a partial isomorphism 
    \[
    f=(f_{\mathcal{G}},f_\mathcal{P}):\mathcal{A}\rightarrow \mathcal{B},
    \] 
    such that  $f_\mathcal{P}$ is elementary. 
    \end{itemize}
    We need to show that, for all $a\in \mathcal{M}$, we can extend $f$ to a partial isomorphism $\tilde{f}=(\tilde{f}_{\mathcal{G}},\tilde{f}_\mathcal{P})$ with $\tilde{f}_\mathcal{P}$ elementary, and with domain containing $a$.
	
	We can extend $f_{\mathcal{P}}$ to a full embedding $g: \mathcal{P}_\mathcal{M} \rightarrow \mathcal{P}_{\mathcal{N}}$.
	Notice that the sort $\mathcal{P}$ is closed: there are no function symbol in the language from $\mathcal{P}$ to $\mathcal{M}$. It follows therefore automatically that $f \cup  g: (\mathcal{G}_A,\mathcal{P}_\mathcal{M}) \rightarrow (\mathcal{G}_B,\mathcal{P}_\mathcal{N})$ is a partial isomorphism.
	
	Consider $a\in \mathcal{G}_{\mathcal{M}} \setminus \mathcal{G}_A$. We will give a concrete partition $I_1\sqcup \cdots \sqcup I_7$ of the top element of $\mathcal{P}_\mathcal{M}$, $I_i\in \mathcal{P}_\mathcal{M}$. For $i\leq 7$, we will denote by $a_i$ the element in of $\mathcal{G}_{\mathcal{M}}$ such that:
    \begin{itemize}
        \item $\supp(a_i) \subseteq I_i$,
        \item $\supp(a^{-1}\cdot a_i) \subseteq I_i^\complement$.
    \end{itemize}
    (we can identify $a_i$ with $a\restriction I_i$).
    Then, for each $i\leq 7$, we will find a correct answer ${b}_i$ for $a_i$: for all formulas $\phi(x,a',I) \in \qftp(a_i/\mathcal{A})$, we have $\mathcal{N}\models \phi({b}_i,f(a'),g(I)) $. Then we can conclude by patching that there is a good answer $b$ for $a$ i.e. such that $f(\qftp(a/\mathcal{A})) = \qftp(b/\mathcal{B})$.

	\subsubsection*{Case 1:} Consider $I_1=\supp(\mathcal{G}_A)^\complement \cap \supp_2(a)$ where $\supp(\mathcal{G}_A) = \bigcup_{a'\in \mathcal{G}_A} \supp(a')$. Set $f({a}_1)$ to be any $2$-torsion element $b_1$ of $\mathcal{N}$ such that $\supp(b) = g(I_1)$.

	\subsubsection*{Case 2:} 
	Consider $I_2=\supp(\mathcal{G}_A)^\complement \cap \supp_3(a)$. Set $f({a}_2)$ any $3$-torsion element $b_2$ of $\mathcal{N}$ such that $\supp(b_2)=g(I_2)$.

	\subsubsection*{Case 3:}
	For $a'\in \mathcal{G}_A$, $I_{3,a'}= \supp(a'a^{-1})^c$. This is the part of $\mathcal{P}_\mathcal{M}$ where $a'$ and $a$ coincide. Set ${b}_{3,a'}$ of support $g(I_{3,a'})$ such that $\supp(f(a')^{-1}\cdot {b}_{3,a'}) \subseteq g(I_{3,a'})^\complement$ ( 
    i.e. ${b}_{3,a'}$ can be identified with ${f(a')}\restriction{g(I_{3,a'})}$). Set $I_3= \bigcup_{a'\in \mathcal{G}_A} I_{3,a'}$. 
	 By saturation, compactness and the patching property, we find ${b}_3$ in $\mathcal{N}$ such that for all $a'\in \mathcal{G}_A$, ${b}_3\restriction g(I_{3,a'})={a'}\restriction{g(I_{3,a'})}$.

	\subsubsection*{Case 4:} 
	Let $a' \in \mathcal{G}_A$. Set $I_{4,a'}=\supp_3(a) \cap I_3^\complement \cap \supp_3(a')$. This is a part of $\mathcal{P}$ where $a'$ and $a$ are both $3$-cycle, but do not coincide. In particular, we have ${a}\restriction{I_{4,a'}}= ({a'}\restriction{I_{4,a'}})^2$ and we need to have ${b}\restriction{g(I_{4,a'})}= {f(a')^2}\restriction{g(I_{4,a'})}$ for all such $a'$.
    Set $I_4= \bigcup_{a'\in \mathcal{G}_A} I_{4,a'}$. By saturation, compactness and the patching property, we find ${b}_4$ in $\mathcal{N}$ such that for all $a'\in \mathcal{G}_A$, ${b}_4\restriction {g(I_{4,a'})}={f(a')^2}\restriction{g(I_{4,a'})}$.
	
	\subsubsection*{Case 5:} 
	Let $a',a'' \in \mathcal{G}_A$. Set $I_{5,a',a''}=\supp_2{a} \cap I_3^\complement \cap \supp_2(a') \cap \supp_3(a'') \cap (\supp({a''}a'a))^\complement$.
	This is the part of $\mathcal{P}$ where $a$ is a $2$-cycle, $a'$ is another $2$-cycle, $a''$ is a $3$-cycle  and  $a$ is equal to $a''a'$ . 
    We need that ${b}\restriction{g(I_{5,a',a''})}= {f(a'')}\restriction{g(I_{5,a',a''})}\cdot {f(a')}\restriction{g(I_{5,a',a''})}.$
	
	Set $I_5= \bigcup_{a',a''\in \mathcal{G}_A} I_{5,a',a''}$. By saturation, compactness and patching, we find ${b}_5$ in $\mathcal{N}$ with support included in $g(I_5)$ such that for all $a',a''\in \mathcal{G}_A$, \[ {b}_5\restriction{g(I_{5,a',a''})}={f(a'')}\restriction{g(I_{5,a',a''})}\cdot {f(a')}\restriction{g(I_{5,a',a''})}.\]
	
	\subsubsection*{Case 6:} 
	Let $a',a'' \in \mathcal{G}_A$. Set $I_{6,a',a''}=\supp_2(a) \cap I_3^\complement \cap \supp_2(a') \cap \supp_2(a'') \cap \supp(aa'a''a')^\complement$.
	This is the part of $\mathcal{P}$ where $a$ is a  $2$-cycle, and $a',a''\in \mathcal{G}_A$ are the other two $2$-cycles and $a$ coincides with $a'a''a'^{-1}=a'a''a'$.
	We need to have: 
    \[{b}\restriction{g(I_{6,a',a''})}= f(a')f(a'')f(a')\restriction{g(I_{6,a',a''})}. \]
    for all such $a',a''$.
	Set $I_6= \bigcup_{a',a''\in \mathcal{G}_A} I_{6,a',a''}$. By saturation, compactness and the patching property, we find ${b}_6$ in $\mathcal{N}$ with support included in $g(I_6)$ such that for all $a',a''\in \mathcal{G}_A$, 
    \[ {b}_6\restriction{g(I_{6,a',a''})}=f(a')f(a'')f(a') \restriction {g(I_{6,a',a''})}.\]
	
	\subsubsection*{Case 7:}
	Let $a' \in \mathcal{G}_A$. Set $I_{7,a'}=\supp_2{a} \cap I_3^\complement \cap \supp_2(a') \cap (I_6)^\complement$. This is the part of the support where $a$ is a $2$-cycle, $a'\in \mathcal{G}_A$ is another $2$-cycle but $a'aa'$ is not in $\mathcal{G}_A$.
	Then ${b}\restriction{g(I_{7,a'})}$ needs (and only needs) to be a two torsion element $\alpha_{a'}$ with support $g(I_{7,a'})$ which is nowhere equal element to $f(a')$ on $g(I_{7,a'})$.  
	Set $I_7= \bigcup_{a'\in \mathcal{G}_A} I_{7,a'}$. By saturation, compactness and patching, we find ${b}_7$ in $\mathcal{N}$ such that for all $a'\in \mathcal{G}_A$, ${b}_7 \restriction {g(I_{7,a'})}=\alpha_{a'} \restriction{g(I_{7,a'})}$.
    
\vspace{.1in}

    Set $b$ to be the unique element such that $b\restriction{g(I_i)}=b_i\restriction{g(I_i)}$ for $i\leq 7$. 
    One can observe easily that $\bigcup I_i$ is a partition of $\mathcal{P}_\mathcal{M}$ definable over $a$ and $\mathcal{G}_{A}$. Since we have for all $i\leq 7$, $f(\qftp(a_i/\mathcal{A})) = \qftp(b_i/\mathcal{B})$, we also have $f(\qftp(a/\mathcal{A})) = \qftp(b/\mathcal{B})$. Therefore $f\cup \{(a,b)\}$ extends the partial isomorphism and this concludes the proof.  
\end{proof}

\section{Groups not recognizing coordinates}
\label{S.GroupsNotRecognizing}
\subsection{General criteria for not recognizing coordinates}
\label{S.NotRecognizing}

\begin{theorem}\label{Fact:construction}
	Suppose that $\fC$ is a class of groups that satisfies one of the following two conditions. 
	\begin{enumerate}
		\item \label{1.construction} It contains groups $G$ and $H$ and a nontrivial homomorphism $f\colon G\to Z(H)$.
		\item \label{2.construction} It contains a group that  is decomposable as a direct product of nontrivial groups. 
	\end{enumerate}  
	Then $\fC$ does not recognize coordinates
\end{theorem}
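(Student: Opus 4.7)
Plan: In both cases the strategy will be to exhibit a concrete reduced product $P$ of structures from $\fC$ together with an automorphism $\Phi$ of $P$ that fails to be isomorphically coordinate-respecting, thereby violating \Cref{D.recognizes-coordinates}. The guiding principle is that a coordinate-respecting isomorphism must preserve the support function up to an automorphism $\alpha$ of the quotient Boolean algebra, so in each case I will find two elements of $P$ with disjoint supports whose $\Phi$-images have overlapping supports --- which is incompatible with $\alpha$ being a Boolean-algebra automorphism.

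For case~\eqref{1.construction}, I will take $\Index{I} := \bbN \sqcup \bbN$, set $M_{(0,n)} := G$ and $M_{(1,n)} := H$, and consider $P := \prod_{i \in \Index I} M_i/\Fin$, a reduced product of $\fC$-structures. As an abstract group, $P \cong (\prod_{\bbN} G/\Fin) \times (\prod_{\bbN} H/\Fin)$ and $\cP(\Index I)/\Fin \cong (\cP(\bbN)/\Fin)^2$. I define
\[
\Phi\bigl((g_n)_n,(h_n)_n\bigr) := \bigl((g_n)_n,\,(f(g_n)\cdot h_n)_n\bigr).
\]
The hypothesis $f(G)\subseteq Z(H)$ is exactly what makes $\Phi$ a homomorphism, and its inverse replaces $f(g_n)$ by $f(g_n)^{-1}$, so $\Phi\in \Aut(P)$. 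To expose the failure, pick $g\in G$ with $f(g)\ne e$ (possible since $f$ is nontrivial) and $h\in H \setminus \{e\}$, and look at $x := ((g)_n,(e)_n)$ and $y := ((e)_n,(h)_n)$. Then $\supp(x) = \{0\}\times\bbN$, $\supp(y) = \{1\}\times \bbN$, $\supp(\Phi(x)) = \Index I$, and $\supp(\Phi(y)) = \{1\}\times\bbN$. A candidate $\alpha$ would have to send the disjoint pair $\{0\}\times\bbN,\{1\}\times\bbN$ to the pair $\Index I,\{1\}\times\bbN$ --- which intersect in $\{1\}\times\bbN$ --- contradicting that $\alpha$ preserves meets.

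For case~\eqref{2.construction}, I will write $M = A\times B \in \fC$ with $A, B$ both nontrivial, set $P := \prod_{\bbN} M/\Fin \cong N_A \times N_B$ where $N_A := \prod_{\bbN} A/\Fin$ and similarly for $N_B$, fix any permutation $\sigma$ of $\bbN$ that is not finitary (e.g.\ the pair-swap $2k\leftrightarrow 2k+1$), and let $\sigma^* \in \Aut(N_A)$ be $(a_n)_n \mapsto (a_{\sigma(n)})_n$. The map $\Phi(a,b):=(\sigma^*(a),b)$ is an automorphism of $P$. If $\Phi$ were coordinate-respecting via some $\alpha \in \Aut(\cP(\bbN)/\Fin)$, then feeding in elements of the form $(a,e)$ with $a$ supported exactly on a prescribed $S\in \cP(\bbN)/\Fin$ (realizable because $A$ is nontrivial) forces $\alpha(S) = \sigma^{-1}(S)$ for every $S$, while feeding in elements $(e,b)$ (using nontriviality of $B$) forces $\alpha(T) = T$ for every $T$. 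Combined, these give $\sigma^{-1}(S) = S$ modulo finite for all $S\subseteq \bbN$, contradicting that $\sigma$ is not finitary.

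The main verification burden is a careful tracking of supports through the identifications of $P$ with the product of reduced powers and of the index-set Boolean algebra with the corresponding product; once those are in place, both arguments reduce to the disjointness check sketched above. No deeper structural input about $G$, $H$, $A$, $B$ is needed beyond the centrality/decomposability hypotheses, and the forcing-free/set-theoretic-free nature of the construction means the counterexamples exist outright in ZFC.
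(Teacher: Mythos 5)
Your proof is correct, and for case~\eqref{1.construction} it uses the same coordinate-wise twist $(g,h)\mapsto(g,f(g)h)$ as the paper, but you verify the failure of coordinate-recognition in a genuinely more self-contained way. You read off directly from \Cref{D.recognizes-coordinates} that a coordinate-respecting $\Phi$ satisfies $\supp(\Phi(x))\subseteq\alpha(\supp(x))$ (this is just well-definedness of $\Phi_S$ with $S=\supp(x)^{\complement}$ applied to the pair $x,e$), and then exhibit $x=((g)_n,(e)_n)$ whose image has full support although $\supp(x)$ does not, so $\alpha$ would carry a proper element to the top of $\cP(\bbN\sqcup\bbN)/\Fin$. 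The paper instead routes through a lemma --- coordinate-recognition forces preservation of finite support sizes --- whose proof invokes Theorem~\ref{T.InterpretingSupport} (and hence the main Theorem~\ref{T.A}), and the paper stops after showing $\Sigma_f\neq\mathrm{id}$, leaving the reader to supply the computation that an element of support size~$1$ over the index set $\bbN\times\{0,1\}$ is sent to one of support size~$2$. Your route avoids the detour through the main theorem entirely. For case~\eqref{2.construction} the paper declares the claim obvious and gives no proof, so your shift-on-one-factor construction is a genuine addition and it works. Two small points to make explicit when writing it up: the definition gives only the inclusion $\supp(\Phi(x))\subseteq\alpha(\supp(x))$, not equality; in case~\eqref{1.construction} a single inclusion already yields the contradiction, and in case~\eqref{2.construction} the equalities $\alpha(S)=\sigma^{-1}(S)$ and $\alpha(T)=T$ follow by also applying the inclusion to complements (both $\alpha$ and $\sigma^{-1}$ commute with $(\,\cdot\,)^{\complement}$). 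Finally, one should record the witness that the chosen $\sigma$ is genuinely incompatible with $\alpha=\mathrm{id}$: for the pair-swap, $S=2\bbN$ has $\sigma^{-1}(S)\,\Delta\,S$ infinite.
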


The converse of this theorem does not hold. Namely, there is a family $\fD$ of groups that  fails both \eqref{1.construction} and \eqref{2.construction} of Theorem~\ref{Fact:construction} but it does not recognize coordinates (see  Theorem~\ref{T.Limiting}). 
The sufficiency of the second condition is obvious, and the 
proof  of the sufficiency of the first uses the following obvious lemma. 

\begin{lemma}\label{L.size} Suppose that $T$ recognizes coordinates. Let $(\mathcal{M}_i)_{i \in I}$ and $(\mathcal{N}_j)_{j \in J}$ be indexed families of models of $T$. Consider $\theta: \prod \mathcal{M}_i \to \prod \mathcal{N}_j$ an isomorphism between (non-reduced) products. If $\vert \supp(a) \vert$ is finite, then $|\supp(\theta(a))| = |\supp(a)|$. 
\end{lemma}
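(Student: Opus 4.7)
The plan is to unfold the definition of recognizing coordinates in the special case of the trivial ideal $\{\emptyset\}$, for which the reduced product coincides with the direct product and the quotient Boolean algebra $\mathcal{P}(\mathbb{I})/\{\emptyset\}$ is literally $\mathcal{P}(\mathbb{I})$. Since $T$ recognizes coordinates, the isomorphism $\theta$ is isomorphically coordinate respecting, so it induces a Boolean algebra isomorphism $\alpha \colon \mathcal{P}(\mathbb{I}) \to \mathcal{P}(\mathbb{J})$ together with a coherent family of isomorphisms $\theta_S$ between the partial products making the square in Definition~\ref{D.recognizes-coordinates} commute.

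The first key observation is that any Boolean algebra isomorphism between power-set algebras restricts to a bijection on atoms, so $\alpha$ is induced by a unique bijection $\pi \colon \mathbb{I} \to \mathbb{J}$ satisfying $\alpha(S) = \pi[S]$. Applying the coordinate-respecting diagram to the singleton $S = \{i\}$ then produces, for each $i \in \mathbb{I}$, a group isomorphism $\theta_{\{i\}} \colon \mathcal{M}_i \to \mathcal{N}_{\pi(i)}$ such that the $\pi(i)$-th coordinate of $\theta(a)$ equals $\theta_{\{i\}}(a_i)$. Because $\theta_{\{i\}}$ is a group isomorphism, it sends $e$ to $e$ and is injective, so $a_i = e$ if and only if $\theta(a)_{\pi(i)} = e$.

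Putting these together gives $\supp(\theta(a)) = \pi[\supp(a)]$, and since $\pi$ is a bijection this yields $|\supp(\theta(a))| = |\supp(a)|$. The equality in fact holds for arbitrary supports; the finiteness hypothesis is only what is needed in the intended application, where one aims to construct an isomorphism between products that fails to preserve support in order to establish Theorem~\ref{Fact:construction}. There is no real obstacle: the lemma is flagged as obvious because the only substantive ingredient is the standard fact that a Boolean algebra isomorphism between $\mathcal{P}(\mathbb{I})$ and $\mathcal{P}(\mathbb{J})$ is induced by a bijection of the index sets.
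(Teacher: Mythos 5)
Your proof is correct, but it takes a genuinely different route from the paper's. The paper's proof is a one-liner once the main machinery is available: it invokes Theorem~\ref{T.InterpretingSupport} (via the fact that recognizing coordinates implies the support function is interpretable), observes that therefore the set $\{a : |\supp(a)| = n\}$ is definable by a first-order formula in the language of groups, and concludes that this set is preserved by any isomorphism. Your proof instead works directly from Definition~\ref{D.recognizes-coordinates} with the trivial ideal $\{\emptyset\}$, uses the standard fact that a Boolean algebra isomorphism $\cP(\Index I)\to\cP(\Index J)$ is induced by a bijection $\pi$ of the index sets, and applies the commutative diagram to singletons $S=\{i\}$ to extract coordinate-wise maps $\theta_{\{i\}}\colon \cM_i\to \cN_{\pi(i)}$. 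This is more elementary in that it bypasses the characterization theorem entirely, and it yields a strictly stronger conclusion: $\supp(\theta(a)) = \pi[\supp(a)]$ for \emph{all} $a$, not merely equality of cardinalities for finite supports. The one point you pass over silently is the claim that each $\theta_{\{i\}}$ is a group isomorphism: Definition~\ref{D.recognizes-coordinates} only asserts that $\Phi_S$ is a well-defined function making the square commute, so to get that $\theta_{\{i\}}$ reflects the identity you should either note that (since $T$ recognizes coordinates) the inverse $\theta^{-1}$ is likewise coordinate respecting with $\alpha^{-1}$, or simply cite Lemma~\ref{L.RecognizingCoordinates} and Lemma~\ref{Claim.RestrictionInterpretable}, which establish that the restricted maps are isomorphisms. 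That gap is small and easily filled, so the argument stands; and its greater explicitness makes it a reasonable alternative to the paper's more abstract appeal to interpretability.
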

\begin{proof}
    By \Cref{T.InterpretingSupport}, the support map is interpretable. In particular, the set $\{a \mid |\supp(a)| =n\}$ is definable and thus preserved by isomorphisms.
\end{proof}

\begin{proof}[Proof of Theorem~\ref{Fact:construction}] 
For $(1)$, consider the map $\tau_{f}: G \times H \to G \times H$ defined via $\tau_{f}((a,b)) = (a,f(a)b)$. Then $\tau_{f}$ is an automorphism of $G \times H$. 
	\begin{enumerate}
		\item Homomorphism: Notice that 
		\begin{align*}
			\tau_{f}((a,b)(c,d)) &= \tau_{f}((ac,bd)) = (ac,f(ac)bd)\\ &= (ac,f(a)bf(c)d) = (a,f(a)b)(c,f(c)d) \\ 
			&= \tau_{f}((a,b)) \tau_{f}((c,d)). 
		\end{align*}
		\item Injective: Suppose that $\tau_{f}((a,b)) = \bar{e}$. Then $(a,af(b)) = (e,e)$. Hence $a = e$ and so $e = f(a)b = f(e)b = b$. It follows that $\tau_{f}$ is injective.
		\item Surjective: Fix $(a,b) \in G \times G$. Then 
		\begin{equation*}
			\tau_{f}(a,f(a)^{-1}b) = (a,b). 
		\end{equation*}
	\end{enumerate} 
	In particular, this allows us to build an automorphism $\Sigma_{f}: \prod_{i\in \bbN} (G\times H)$ via $\Sigma_{f}((a_i,b_i)_{i \in \mathbb{N}}) = (a_i,f(a_i)b_i)_{i\in \bbN}$. Since $f$ is nontrivial, we can find some $a_* \in G$ such that $f(a_*) \neq e$. Then with $a_i=a_*$ for all $i$ and arbitrary $b\in G$ we have 
	\begin{equation*}
		\Sigma_{f}((a_i,b_i)_i)) \neq (a_i,b_i)_{i\in \bbN}. 
	\end{equation*}
	Moreover,  		$\Sigma_f$ is an automorphism of $\prod_{i\in \bbN} (G\times H)$ and its restriction to  $\bigoplus_{i\in \bbN} (G\times H)$ is an automorphism of $\bigoplus_{i\in \bbN} (G\times H)$. It therefore lifts an automorphism of the quotient $\prod_{\Fin} (G\times H)$. 

For $(2)$, the argument is similar to $(1)$. Notice that if $G$ is a decomposable group, then $G \cong H_1 \times H_2$ for nontrivial groups $H_1$ and $H_2$. Consider the map $\rho: G \times G \to G \times G$ defined via $\rho((h_1,h_2),(h_3,h_4)) = ((h_1, h_4), (h_3,h_2))$ where $(h_1,h_2), (h_3,h_4) \in H_1 \times H_2$. Then, 
\begin{align*}
    2 &= |\supp((h_1,e),(e,h_4))| \\ 
    &> |\supp(\rho((h_1,e),(e,h_4)))| = |\supp((h_1,h_4),(e,e))| = 1.  
\end{align*}
As consequence of \Cref{L.size}, $\mathfrak{C}$ cannot recognize coordinates. 
\end{proof}

\subsection{Classes of groups that do not recognize coordinates}

\begin{theorem}\label{T.eDNR} 
 Any class of groups that contains some of the following does not recognize coordinates. 
\begin{enumerate}[label=(\alph*)]
	\item  Any product of nontrivial groups. 
	\item   Groups $G$ and $H$ such that $G$ admits a nontrivial homomorphism into the center of $H$. 
	\item  Group $\GL(n,F)$ for $n\geq 2$ and any field $F$.
	\item   Group $Q_{8} = \langle -1,i,j,k: (-1)^{2} = e, i^{2} = j^{2} = k^{2} =ijk = -1  \rangle$. 
	\item  The dihedral group $D_{2n}$  of the $2n$-gon for $n\geq 1$. 
	\item  Any nilpotent group. 
	\item  Any nontrivial graph product $\Gamma\cG$ such that the complement graph $\bar\Gamma$ is not connected (see \S\ref{S.GraphProducts}). 
	\item  Any class of groups that contains $\Sym_3$ and $\SL(2,5)$.
\end{enumerate} 
\end{theorem}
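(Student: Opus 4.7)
The plan is to deduce each of items \ref{DNR.abelian}--\ref{DNR.Graph} from Theorem~\ref{Fact:construction}: items \ref{DNR.abelian} and \ref{DNR.decomposable} are essentially restatements of conditions \eqref{2.construction} and \eqref{1.construction} of that theorem, and for each of the remaining items I will exhibit either a direct product decomposition into nontrivial factors or a nontrivial homomorphism from a group in the class into the center of a group in the class, and invoke Theorem~\ref{Fact:construction} accordingly.

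For \ref{DNR.GLn}, the determinant composed with $\lambda\mapsto \lambda I$ gives a homomorphism $\GL(n,F)\to Z(\GL(n,F))$ whose image is $\{\lambda I:\lambda\in F^{*}\}\cong F^{*}$, nontrivial whenever $|F|>2$. For \ref{DNR.Q8}, the center $Z(Q_{8})=\{\pm 1\}\cong\bbZ/2\bbZ$, and any surjection $Q_{8}\twoheadrightarrow Q_{8}/\langle i\rangle\cong \bbZ/2\bbZ$ composed with the inclusion into the center is a nontrivial map into $Z(Q_{8})$. For \ref{DNR.D2n}, rotation by $\pi$ is central in $D_{2n}$ for every $n\geq 1$, so the sign map $D_{2n}\to \bbZ/2\bbZ$ composed with the embedding $\bbZ/2\bbZ\hookrightarrow Z(D_{2n})$ suffices. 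For \ref{DNR.Graph}, Lemma~\ref{L.graph.disconnected} already shows that a nontrivial graph product whose complement graph is disconnected is a direct product of nontrivial factors, so \ref{DNR.abelian} applies. Finally, for the assertion labeled \ref{DNR.S3SL25}, one has $Z(\SL(2,5))=\{\pm I\}\cong \bbZ/2\bbZ$, and the sign homomorphism $\Sym_{3}\to \bbZ/2\bbZ\cong Z(\SL(2,5))$ is nontrivial, so Theorem~\ref{Fact:construction}\eqref{1.construction} applies with $G=\Sym_{3}$ and $H=\SL(2,5)$.

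Item \ref{DNR.nilpotent} is the only one requiring any real thought. If $G$ is abelian then $G=Z(G)$ and $\id_{G}$ is a nontrivial map $G\to Z(G)$. Otherwise $G$ is nonabelian nilpotent, so both $G/[G,G]$ and $Z(G)$ are nontrivial abelian; I would first reduce the finite case to $p$-groups (if $G$ has two distinct nontrivial Sylow subgroups then $G$ is already a direct product and \ref{DNR.abelian} applies), where $G^{\mathrm{ab}}$ and $Z(G)$ are both nontrivial $p$-groups and hence admit a nonzero homomorphism via a common $\bbZ/p\bbZ$ quotient-then-subgroup. For infinite nilpotent $G$, one uses the upper central series to locate a common cyclic (finite or infinite) factor through which one can route a map $G\twoheadrightarrow G/[G,G]\to Z(G)$. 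This last step is the main obstacle: unlike the clean $p$-group argument, writing down a uniform construction for all nilpotent groups requires a small structural argument ruling out pathological torsion/divisibility mismatches between $G^{\mathrm{ab}}$ and $Z(G)$, even though both are guaranteed to be nontrivial abelian. All other items are one-line verifications once the relevant standard facts about the group in question have been recorded.
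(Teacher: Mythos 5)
Your reduction of items (a)--(e), (g), (h) to Theorem~\ref{Fact:construction} matches the paper's proof. For $Q_8$ and $D_{2n}$ the specific homomorphisms you pick differ cosmetically from the paper's (the paper uses $\alpha_l$ for $l\in\{i,j,k\}$, and the parity map $s^\epsilon r^k\mapsto \epsilon+k\bmod 2$; you use $Q_8\twoheadrightarrow Q_8/\langle i\rangle$ and the sign/determinant map), but all are correct nontrivial maps into the respective centers. Your observation that the determinant-into-scalar-matrices construction for $\GL(n,F)$ is only nontrivial when $|F|>2$ is a legitimate caveat: the paper's construction is identical and silently has the same restriction, and in fact $\GL(n,\mathbb{F}_2)=\SL(n,\mathbb{F}_2)$ is $\Sym_3$ for $n=2$ and simple for $n\geq3$, both of which \emph{do} recognize coordinates by Theorem~\ref{T.RC}, so the hypothesis "any field $F$" in the statement really should exclude $\mathbb{F}_2$.

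The genuine gap is item (f). Your reduction of the finite case to $p$-groups via the direct-product decomposition is fine, and for a finite $p$-group the quotient-then-subgroup route through $\bbZ/p\bbZ$ works. But, as you concede, your plan for infinite nilpotent $G$ — factoring a map $G\twoheadrightarrow G^{\mathrm{ab}}\to Z(G)$ through a "common cyclic factor" — is not a proof; it is not clear that $G^{\mathrm{ab}}$ and $Z(G)$ admit a common cyclic quotient/subgroup pair, and you explicitly leave that step open. The paper sidesteps this entirely with Lemma~\ref{L.nilpotent0}: take the lower central series $G=G_0\supseteq G_1\supseteq\cdots\supseteq G_{n+1}=\{e\}$ with $G_n\neq\{e\}$, pick $c\in G_{n-1}$, and observe that $x\mapsto[x,c]$ is a homomorphism $G\to Z(G)$ (because all commutators $[z,c]$ lie in the central subgroup $G_n$ and hence commute with everything, giving $[xy,c]=[x,c][y,c]$); since $G_n$ is generated by commutators $[x,c]$ with $x\in G$, $c\in G_{n-1}$, some choice of $c$ makes the range nontrivial. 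This is uniform in $G$, avoids the finite/infinite and torsion case analysis entirely, and is essentially one line once stated; you should replace your sketch for (f) with it.
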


\begin{proof}
\ref{DNR.abelian} follows by Theorem~\ref{Fact:construction}. 

%By Theorem~\ref{Fact:construction}, it suffices to find a nontrivial homomorphism into the centre of the given group. 

\ref{DNR.decomposable} This is Theorem~\ref{Fact:construction} \eqref{2.construction}. 

\ref{DNR.GLn} Compose the determinant map with the map that sends scalars to scalar matrices. 

\ref{DNR.Q8}
Recall that $Z(Q_{8}) = \{\pm 1\} \cong \mathbb{Z}/2\mathbb{Z}$. 
For any $l\in \{i,j,k\}$, we have a homomorphism $\alpha_{l}: Q_{8} \to Z(Q_8) $ via
\begin{equation*}
	\alpha_{l}(d)=\begin{cases}
		\begin{array}{cc}
			1 & d=l, -l, 1,-1\\
			-1 & else.
	\end{array}\end{cases}
\end{equation*} 
Note that $Q_{8}$ cannot be written as a nontrivial semi-direct product. 

\ref{DNR.D2n} We have $D_{2n} = \langle r,s : r^{2n} = s^{2} = e, srs = r^{-1} \rangle$. Then if $n$ is even, then the center is $\{e, r^{n/2}\}$. Each element can be written as $s^{\epsilon}r^{k}$ where $\epsilon \in \{0,1\}$ and $k \in \{0,\dots,n-1\}$. Then the map $f:D_{n} \to \mathbb{Z}/2\mathbb{Z}$ via $f(s^{\epsilon}r^{k}) = \epsilon + k \mod 2$ is a homomorphism from $D_{n}$ into its center. 

We remark that $D_{n}$ can be written as a semidirect product. $D_{n} \cong Z_n \rtimes Z_{2}$. 

\ref{DNR.nilpotent} If $G$ is nilpotent, then  Lemma~\ref{L.nilpotent0} below implies that there is a nontrivial homomorphism from $G$ into its center. 

\ref{DNR.Graph} This is the easier half of Theorem~\ref{T.GraphProduct}. 

\ref{DNR.S3SL25} The group $\Sym_3$ has $\bbZ/2\bbZ$ as a quotient, and $\bbZ/2\bbZ$ is the center of $\SL(2,5)$. Therefore Theorem~\ref{Fact:construction} implies that  the class $\{\Sym_3,\SL(2,5)\}$ does not recognize coordinates.
\end{proof}

Regarding \ref{DNR.GLn} in Theorem~\ref{T.example:DNR}, the following may be worth pointing out (e.g., see \cite{ycormathflow}).

\begin{lemma} If $F$ is a field in which every element has a unique $n$-th root then $\GL(n,F)$ is decomposable. In particular $\GL(3,\mathbb{R})$ is decomposable and also $\GL(p,\mathbb{F}^{\alg}_p)$ where $\mathbb{F}^{\alg}_p$ is an algebraically closed field of characteristic~$p$. 
\end{lemma}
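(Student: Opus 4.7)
The plan is to exhibit an explicit direct product decomposition $\GL(n,F)\cong \SL(n,F)\times F^{\times}$. The hypothesis that the map $\lambda\mapsto \lambda^n$ is a bijection on $F$ will be used precisely to split the determinant short exact sequence by a central section.

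First I would consider the scalar embedding
\[
s\colon F^{\times}\to \GL(n,F),\quad \lambda\mapsto \lambda I_n,
\]
which is a group homomorphism whose image is contained in the center $Z(\GL(n,F))$. The composition
\[
\det\circ s\colon F^{\times}\to F^{\times},\quad \lambda\mapsto \lambda^n
\]
is an isomorphism by the hypothesis on $n$-th roots (the map $x\mapsto x^n$ sends $F^{\times}$ into $F^{\times}$, is injective, and surjectivity on $F^{\times}$ follows from unique existence of $n$-th roots in $F$ together with the fact that the $n$-th root of a nonzero element cannot be $0$).

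Next I would define
\[
s'\coloneqq s\circ(\det\circ s)^{-1}\colon F^{\times}\to \GL(n,F).
\]
By construction $\det\circ s'=\id_{F^{\times}}$, so $s'$ is a section of $\det$, and its image still lies in $Z(\GL(n,F))$ since it lies in the image of $s$. The usual splitting argument then yields $\GL(n,F)=\SL(n,F)\cdot s'(F^{\times})$, with $\SL(n,F)\cap s'(F^{\times})=\{I_n\}$ (because $s'$ is injective and its image intersects $\ker\det$ trivially). Because $s'(F^{\times})$ is central, this internal decomposition is a direct product, giving $\GL(n,F)\cong \SL(n,F)\times F^{\times}$.

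Finally, both factors are nontrivial in the concrete cases: for $n\geq 2$ the group $\SL(n,F)$ contains all elementary transvections and is nontrivial, while $F^{\times}$ is nontrivial whenever $|F|\geq 3$, which holds for both $F=\mathbb{R}$ (with $n=3$, noting that every real has a unique real cube root) and $F=\mathbb{F}_p^{\alg}$ (with $n=p$, noting that the Frobenius $x\mapsto x^p$ is bijective on an algebraically closed field of characteristic $p$). There is no real obstacle here; the only point requiring a moment of care is the verification that $s'(F^{\times})$ actually lies in the center, which is immediate from $s'(F^{\times})\subseteq s(F^{\times})\subseteq Z(\GL(n,F))$.
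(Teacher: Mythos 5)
Your proof is correct and takes essentially the same approach as the paper: both establish $\GL(n,F)\cong \SL(n,F)\times F^{\times}$ by exploiting the unique $n$-th root hypothesis to normalize the determinant. The paper simply writes down the explicit isomorphism $a\mapsto(\det(a)^{-1/n}a,\det(a))$, which is exactly the map your central section $s'$ produces once unwound.
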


\begin{proof} An isomorphism $\GL(n,F)\cong \SL(n,F)\times F^\times$ is given by the map 	$a\mapsto (\det(a)^{-1/n} a, \det(a))$. 
\end{proof}

The following is well-known but maybe not the easiest to find. We include a proof for the reader's convenience. 

\begin{lemma}\label{L.nilpotent0} If $G$ is nilpotent, then there is a non-identity homomorphism from $G$ into its center.
\end{lemma}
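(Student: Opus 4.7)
The plan is to construct the required homomorphism directly from iterated commutators. First I would dispose of the easy cases: if $G=\{e\}$ there is nothing to prove, and if $G$ is abelian then $Z(G)=G$, so the identity map is a nontrivial homomorphism into the center. Hence we may assume $G$ has nilpotency class $c\geq 2$.

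I would then work with the lower central series
\[
G=G_1\supseteq G_2=[G,G]\supseteq \cdots \supseteq G_c \supsetneq G_{c+1}=\{e\}.
\]
Two ingredients about the last nontrivial term $G_c$ are needed: it lies in $Z(G)$, since $[G,G_c]=G_{c+1}=\{e\}$; and it is generated by left-normed iterated commutators of weight $c$ (writing $[a_1,\dots,a_n]:=[[\cdots[a_1,a_2],a_3],\dots,a_n]$), so by the choice of $c$ we can pick $y_1,\dots,y_c\in G$ with $[y_1,\dots,y_c]\neq e$. The candidate homomorphism is then
\[
\phi\colon G\to Z(G),\qquad \phi(x):=[x,y_2,y_3,\dots,y_c],
\]
which visibly lands in $G_c\subseteq Z(G)$ and satisfies $\phi(y_1)\neq e$.

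The only thing left to verify is that $\phi$ is a group homomorphism. For this I would invoke the identity $[xy,z]=[x,z]^y[y,z]=[x,z]\cdot[[x,z],y]\cdot[y,z]$, which immediately gives $[xy,z]\equiv [x,z]\cdot[y,z]\pmod{G_3}$. The stronger claim needed is that, with $y_2,\dots,y_c$ fixed, the weight-$c$ left-normed commutator is multiplicative in the first slot. I would prove this by induction on $c$, combining the identity above with the standard inclusion $[G_s,G_t]\subseteq G_{s+t}$: every error term that appears in the induction lies in some $[G_s,G_t]$ with $s+t\geq c+1$, hence in $G_{c+1}=\{e\}$, so the multiplicative identity is exact.

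The main obstacle is precisely this inductive bookkeeping verifying multilinearity of the iterated commutator in the first coordinate modulo $G_{c+1}$; the argument is standard (see e.g.\ Robinson, \emph{A Course in the Theory of Groups}, ch.~5) but somewhat tedious. Once it is in hand, $\phi$ is a nontrivial homomorphism from $G$ into $Z(G)$, as required.
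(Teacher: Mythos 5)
Your proof is correct, but it takes a slightly heavier route than the paper's. Both arguments construct a homomorphism landing in the last nontrivial term of the lower central series (which lies in the center). The paper's observation is to fix a \emph{single} element $c$ in the penultimate term of the lower central series: then $[x,c]$ is already central for every $x\in G$, and the homomorphism identity $[xy,c]=[x,c][y,c]$ drops out of a direct one-line calculation, with no induction and no appeal to the inclusion $[G_s,G_t]\subseteq G_{s+t}$. Nontriviality is then arranged by choosing $c$ appropriately, since the last nontrivial term is generated by commutators $[x,c]$ with $c$ in the penultimate one. You instead keep the fixed parameters $y_2,\dots,y_c$ as arbitrary group elements and use a full left-normed weight-$c$ commutator in the variable, which forces you to establish multilinearity of iterated commutators modulo the lower central series --- the inductive bookkeeping you flag as the main obstacle. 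This is fine (the error terms all land in $G_{c+1}=\{e\}$ via $[G_s,G_t]\subseteq G_{s+t}$), but the paper's choice of $c$ in the penultimate term is precisely what collapses that induction to a single step: morally the two constructions produce the same weight-$c$ commutator, the paper just pre-assembles $y_2,\dots,y_c$ into one element so that the only remaining verification involves elements already known to be central. Either version is valid; the paper's is shorter and self-contained, while yours leans on a standard textbook fact.
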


\begin{proof} Let $n+1$ be the nilpotency class of $G$. Thus we have $G_0=G$, $G_{k+1}=[G_k,G]$ for $k\leq n$,  so that $G_n$ is nontrivial abelian, including in the center, and $G_{n+1}=\{e\}$. 
If $G$ is abelian then the assertion is trivial, hence we may assume $n\geq 1$.  We will prove that for every $c\in G_{n-1}$ the mapping $x\mapsto [x,c]$ from $G$ into $G$ is a homomorphism. 
Let $c\in G_{n-1}$. Then for every $x\in G$ the commutator $[x,c]$ belongs to the center.
Fix $c,x$, and $y$ in~$G$. Then, repeatedly using the fact that all commutators of the form  $[z,c]$ commute,    we have the following (extra brackets inserted for readability)
\begin{align*}
	[x,c][y,c]&= xcx^{-1} c^{-1} (ycy^{-1} c^{-1})\\
	%&= (ycy^{-1} c^{-1}) xcx^{-1} c^{-1}\\
	&= x (ycy^{-1} c^{-1} )cx^{-1} c^{-1}\\
	&= x ycy^{-1} x^{-1} c^{-1}\\
	&=[xy,c].
\end{align*}
Since $G$ is nonabelian, we can choose a non-central $c$. By the above, $x\mapsto [x,c]$ is a group homomorphism from $G$ into $Z(G)$.  Since $G_n$ is nontrivial and it is generated by commutators $[x,c]$ for $x\in G$ and $c\in G_{n-1}$, we can choose $c\in G_{n-1}$ so that the range of the homomorphism $x\mapsto [x,c]$ is nontrivial.  
\end{proof} 

Finally, we treat the reader to a \emph{surprise} example. 

\begin{example} For any $n \geq 2$, neither the \emph{Artin braid group on $n$ strands} (denoted $B_n$) nor the \emph{pure braid group on $n$ strands} (denoted $P_n$) recognize coordinates. For $n \geq 2$, both $B_n$ and $P_n$ are indecomposable (i.e., see \cite[Proposition 4.2]{paris2004artin}), yet they both admit nontrivial maps to their respective centers. We refer the reader to \cite{kassel2008braid} as a basic reference. For $n \geq 2$, the group $B_n$ is defined as follows:
\[
B_n = \left\langle \sigma_1, \dots, \sigma_{n-1} \,\middle|\,
\begin{array}{ll}
\sigma_i \sigma_j = \sigma_j \sigma_i & \text{for } |i - j| \geq 2, \\
\sigma_i \sigma_{i+1} \sigma_i = \sigma_{i+1} \sigma_i \sigma_{i+1} & \text{for } 1 \leq i \leq n - 2
\end{array}
\right\rangle.
\]
The half-twist $\Delta$ in $B_{n}$ is defined as,
\[
\Delta = (\sigma_1)(\sigma_2 \sigma_1)(\sigma_3 \sigma_2 \sigma_1) \cdots (\sigma_{n-1} \cdots \sigma_1),
\]
and the center of $B_{n}$ is precisely $\langle \Delta^{2} \rangle \cong \mathbb{Z}$. On the other hand, $B_{n}/[B_n,B_n] \cong \mathbb{Z}$. Hence, $B_{n}$ admits a nontrivial homomorphism to its center and so it does not recognize coordinates.

Additionally, the pure braid group on $n$ strands is the kernel of the surjective homomorphism from $B_{n}$ onto $\Sym_{n}$ generated via $\sigma_i \to (i,i+1)$. For $n \geq 2$, the group $P_{n}$ has nontrivial center, again $Z(P_n) = \langle \Delta^{2} \rangle \cong \mathbb{Z}$. By \cite[Corollary 1.20]{kassel2008braid}, 
\[
    P_{n} /[P_{n},P_{n}] \cong \mathbb{Z}^{\binom{n}{2}}.
\]
Hence $P_{n}$ also admits a nontrivial homomorphism to its center and thus does not recognize coordinates. 
\end{example}

\section{Other structures}\label{S.Other}
As mentioned in the introduction, in \cite[\S 2.2]{de2023trivial} it was pointed out that 
    from the model-theoretic point of view, a \emph{morally} satisfactory proof that a theory recognizes coordinates would proceed by exhibiting a copy of $\cP(\bbN)/\cI$ as well as the projections $\pi_S$, for $S\in \Bool {\bbN} \cI$ inside every reduced product $\prod_{\Fin} \cM_n$ of models of $T$.  In this sense the proofs that linear orders and sufficiently random graphs recognize coordinates (\cite[Proposition~2.7]{de2023trivial}) are unsatisfactory. 
    We therefore give below new proofs, using \Cref{T.A} (3) and explicit $h$-formulas,  that sufficiently random graphs and linear orders recognizes coordinates. We will restrict to linear orders with no maximal element for simplicity.
    First we study coordinate recognition in a particular a commutative magma.

\subsection{Rock Paper Scissors } 
Let $\mathfrak{M}= (\{R,P,S\},\cdot)$ be the commutative magma on the three elements set $\{R,P,S\}$ with the following operation table:
\begin{center}
	$\begin{array}{ | c || c | c | c |}
		\hline
		\cdot & R & P & S  \\
		\hline
		\hline
		R & R & P & R \\ 
		\hline
		P & P & P & S \\ 
		\hline
		S & R & S & S  \\ 
		\hline
	\end{array}$
\end{center}

\begin{proposition}
	Let  $\Sh= \prod_\Ideal{I} \mathfrak{M}$ be a reduced product of the magma $\mathfrak{M}$. Then
	$\Sh$ interprets parameter-freely the Boolean algebra $\Bool{I}{I}$  and the relative support function 
	\[\supp_=: \Sh^2\rightarrow \mathcal{P}(\mathbb{N})/I,   (a,a') \mapsto  \big[\{i\in \Index{I} \mid a_i=a_i'\}\big]_\Ideal{I} .\] 
\end{proposition}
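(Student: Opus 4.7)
The plan is to apply Theorem~\ref{T.eq}: since $\{\mathfrak{M}\}$ is full in the sense of Definition~\ref{Def.Full}, it will suffice to exhibit an $h$-formula in $\Th(\mathfrak{M})$ equivalent to $x = z \rightarrow y = z$. The conclusion will then follow from Proposition~\ref{prop:interpretingAllZsupport} and Lemma~\ref{lemma:RecognizingCoordinateEquivalence}.

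I propose the candidate
\[
\Psi(x,y,z) := (\forall u,v)\bigl((u\cdot x = v\cdot x) \wedge (u\cdot z = v\cdot z) \rightarrow u\cdot y = v\cdot y\bigr).
\]
The antecedent is a conjunction of atomic formulas and is satisfiable in every model of $\Th(\mathfrak{M})$ (take $u = v$), so by the observation recorded after Definition~\ref{Def.h-formulas}, $\Psi$ is equivalent to an $h$-formula in $\Th(\mathfrak{M})$.

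The key verification will be that $\mathfrak{M}\models \Psi(x,y,z) \leftrightarrow (x = z \rightarrow y = z)$. For each $w \in \{R,P,S\}$ the equivalence relation $\sim_w$ on $\mathfrak{M}$ defined by $u\cdot w = v\cdot w$ has exactly two classes: a two-element class $\{w, w'\}$, where $w'$ is the element that $w$ beats, and a singleton $\{w''\}$, where $w''$ is the element that beats $w$. The three resulting two-element classes $\{R,S\}$, $\{P,R\}$, $\{S,P\}$ are pairwise distinct. When $x \neq z$, a short case analysis will show that $\sim_x \cap \sim_z$ is the discrete partition, so the antecedent of $\Psi$ forces $u = v$ and $\Psi$ is vacuously satisfied. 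When $x = z$, $\Psi$ reduces to the assertion that $\sim_y$ is coarser than $\sim_x$; since both partitions have a unique two-element class, this forces $\{y, y'\} = \{x, x'\}$, and pairwise distinctness then yields $y = x = z$.

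The main obstacle will be pinpointing the correct $h$-formula $\Psi$; the guiding intuition is that $\Psi$ asserts that the right-multiplication map $u \mapsto u\cdot y$ factors through the joint map $u \mapsto (u\cdot x,\, u\cdot z)$, giving a term-level expression of the condition that the $y$-coordinate is determined by the $x$- and $z$-coordinates. Once the formula is written down, all remaining verifications reduce to routine finite computations in the three-element magma.
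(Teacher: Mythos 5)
Your proof is correct, and it reaches the conclusion by the same general-theory reduction as the paper (exhibit an $h$-formula equivalent to $x=z\rightarrow y=z$ in $\Th(\mathfrak{M})$, then invoke \Cref{T.eq} via \Cref{prop:interpretingAllZsupport} and \Cref{lemma:RecognizingCoordinateEquivalence}), but the $h$-formula you produce is genuinely different from the one in the paper. The paper's formula first $h$-defines the ``loser'' function $L(t)$ --- the unique $s\neq t$ with $s\cdot t = t$ --- and then tests the condition $x=t\rightarrow y=t$ via the identity $(x\cdot L(t))\cdot(y\cdot L(t))=y\cdot L(t)$; this is tailored to the specific combinatorics of rock-paper-scissors. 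Your formula $\Psi(x,y,z) = (\forall u,v)\bigl((u\cdot x=v\cdot x)\wedge(u\cdot z=v\cdot z)\rightarrow u\cdot y=v\cdot y\bigr)$ instead says that the translation kernel $\sim_y$ is refined by $\sim_x\cap\sim_z$; the verification reduces to two clean structural facts about $\mathfrak{M}$, namely that the map $w\mapsto\sim_w$ is injective with nontrivial image and that distinct $\sim_x,\sim_z$ meet in the diagonal. Your formulation is more conceptual and has the advantage of making visible a sufficient abstract condition on a magma for this particular $h$-formula to do the job, whereas the paper's is a hands-on computation. Two small points you should tighten when writing this up: (i) the Fact following \Cref{Def.h-formulas} is stated for a single bound variable, so you should iterate it (first over $v$, then apply closure under $\forall u$) rather than invoking it in one shot for the block $(\forall u,v)$; (ii) your semantic verification is currently a sketch (``a short case analysis will show'') --- the three cases $x\neq z$ reduce to the pairwise-distinctness of the two-element classes $\{R,S\},\{R,P\},\{P,S\}$, which you did identify, so this is routine, but it should be written out.
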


It is relatively easy to define various copies of Boolean algebra $\Bool{I}{I}$. With a small trick, we can interprets the support function. %It looks however harder to identify them in the structure. %After identification, we can properly interprets the relative support function.
\begin{proof}
	%To show that $\Sh$ interprets the support function $\supp_=$, it is enough to show that the formula $x=t \rightarrow y=t$ is equivalent to an $h$-formula in $\mathfrak{M}$. 
	To show that $\Sh$ interprets the support function $\supp_=$, it is enough to show by \Cref{T.eq} that the formula $x=t \rightarrow y=t$ is equivalent to an $h$-formula in $\mathfrak{M}$. 
	Fix $t\in \mathfrak{M}$. We denote by $L(t)$ the element $s$ losing against $t$, i.e. such that $s\neq t$ and $s\cdot t = t$. The following shows that $t\mapsto L(t)$ is defined by the $h$-formula:
	\[L(t)=t' \quad  \Leftrightarrow \quad t'\cdot t = t \wedge \forall s' ~s'\cdot t = t \rightarrow t'\cdot s' = s'.\]
	
	We see that $x=t \rightarrow y=t$ is equivalent to 
	\[\tag{*}\label{eq.RPS}(x\cdot L(t))\cdot (y\cdot L(t))=(y\cdot L(t))\]
	Indeed, if  (\ref{eq.RPS}) holds and $x=t$, then we have \begin{align*}
		&\quad (x\cdot L(t))\cdot (y\cdot L(t))=(y\cdot L(t))\\
		\Leftrightarrow &\quad (t\cdot L(t))\cdot (y\cdot L(t))=(y\cdot L(t))\\
		\Leftrightarrow& \quad t\cdot (y\cdot L(t))=(y\cdot L(t)).\\
		%   \Leftrightarrow& \quad t=(y\cdot L(t))
		%   \\
		%   \Leftrightarrow& \quad t=y
	\end{align*}
	Then, since  $y\cdot L(t)$ is either $t$ or $L(t)$, we have:
	\[ \quad t\cdot (y\cdot L(t))=(y\cdot L(t))\Leftrightarrow t=(y\cdot L(t)) \Leftrightarrow t=y.\]
	Conversely, assume $x=t \rightarrow y=t$, and we need to show that (\ref{eq.RPS}) holds. If $y=t$, then $(y\cdot L(t))=t$, and since $(x\cdot L(t))$ is either $t$ or $L(t)$, we have
	\[(x\cdot L(t))\cdot (y\cdot L(t))= t = (y\cdot L(t))\]
	and (\ref{eq.RPS}) holds. If $x\neq t$, then $(x\cdot L(t))=L(t)$ and (\ref{eq.RPS}) also clearly holds. 
	
	At the end, we get that the formula $x=t \rightarrow y=t$ is equivalent to the formula:
	\[(\exists s) ~\left[s\cdot t = t \wedge (\forall s') ~(s'\cdot t = t \rightarrow s\cdot s' = s') \wedge (x\cdot s)\cdot (y\cdot s)=(y\cdot s) \right].\]
	
	This is clearly (equivalent to) an $h$-formula since $(\exists s') ~s'\cdot t = t $ always holds.
\end{proof}

By \Cref{T.A}, we have:

\begin{corollary}
	The magma $\mathfrak{M}$ recognizes coordinates.
\end{corollary}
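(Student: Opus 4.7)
The plan is to deduce the corollary directly from the preceding proposition via the main equivalence theorem. First I would check that the singleton class $\fC := \{\mathfrak{M}\}$ is full in the sense of Definition~\ref{Def.Full}: the binary function $\cdot$ is total on $\{R,P,S\}$, so condition (1) is automatic (all predicates are functional, hence trivially nonempty as per the remark after Definition~\ref{Def.Full}), and condition (2) holds since $|\mathfrak{M}|=3$.

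With fullness in hand, the proposition just proved establishes that for every index set $\Index I$ and every ideal $\Ideal I$ on $\Index I$, the reduced product $\prod_\Ideal{I}\mathfrak{M}$ parameter-freely interprets both the Boolean algebra $\Bool{I}{I}$ and the relative support function $\supp_=$. This is precisely condition (4) of Theorem~\ref{T.eq}, which is equivalent via the chain of implications established in that theorem (via Theorem~\ref{T.InterpretingSupport} and Lemma~\ref{L.RecognizingCoordinates}) to condition (1), namely that $\fC$ recognizes coordinates. I would simply invoke this equivalence to conclude.

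There is no real obstacle: all the work has already been performed in the preceding proposition, where the explicit $h$-formula witnessing $x=t\to y=t$ was produced using the auxiliary definable map $L(t)$ and the identity (*). One could equivalently phrase the argument by noting that this explicit $h$-formula, together with Proposition~\ref{prop:interpretingAllZsupport} (whose hypothesis $|\mathfrak{M}|\geq 3$ is satisfied), gives an $h$-formula equivalent to $x=x'\to y=y'$, i.e.\ condition \eqref{3.T.eq} of Theorem~\ref{T.eq}; this route bypasses the explicit interpretation step and yields the same conclusion by the same theorem.
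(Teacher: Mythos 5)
Your proposal is correct and takes essentially the same route as the paper: the paper concludes the corollary directly from Theorem~\ref{T.A} (equivalently \ref{T.eq}), just as you do, with the preceding proposition supplying the needed hypothesis. Your explicit verification of fullness and your alternative phrasing via the $h$-formula for $x=z\to y=z$ (i.e.\ condition \eqref{3.b.T.eq} of Theorem~\ref{T.eq}, which is in fact exactly what the proposition's proof establishes) are both accurate and in the spirit of the paper's argument.
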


Finally, we notice that every (parameter-free) formulas in $\mathfrak{M}$ are equivalent to a Boolean combination of (quantifier-free) atomic formulas in the language $(\mathfrak{M},\cdot,L)$. It follows that any reduced power $\Sh$ of $\mathfrak{M}$, $\{(\Sh,\cdot),(\Bool{I}{I},\subseteq), \supp_= \}$ is a definable expansion of the magma $
(\Sh,\cdot)$ and by \Cref{C.QuestionDecomp}:

\begin{corollary}
	Any reduced power $\Sh= \prod_\Ideal{I} \mathfrak{M}$ of the magma $\mathfrak{M}$ eliminates quantifiers relative to $\Bool{I}{I}$ in the following interpretable language:
    \[\{(\Sh,\cdot,L),(\Bool{I}{I},\subseteq), \supp_= \}.\]
\end{corollary}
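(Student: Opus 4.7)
The plan is to apply Theorem~\ref{C.QuestionDecomp.1} to the constant indexed family $\mathcal{M}_i=\mathfrak{M}$, working in the definitional expansion $\mathcal{L} = \{\cdot, L\}$. Two hypotheses must be checked: interpretability of $\supp_=$ in $\Sh$, and existence of a fundamental set $\Phi$ of $h$-formulas for the pair $(\mathfrak{M})_{i\in\Index{I}}$ and $\Ideal{I}$. The first is exactly the preceding proposition; passing to the expanded language is harmless because $L$ was shown above to be $h$-definable over $\{\cdot\}$.

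For the second, I would take $\Phi$ to be the set of all atomic $\mathcal{L}$-formulas, which are $h$-formulas by definition. Showing that $\Phi$ is fundamental amounts to proving that $(\mathfrak{M},\cdot, L)$ eliminates quantifiers in the classical sense. This is the step that requires the most care, though it is not deep: the automorphism group of $\mathfrak{M}$ is the three-element cyclic group generated by the rotation $R\mapsto P\mapsto S\mapsto R$, so $\bbZ/3\bbZ$ acts on each $\mathfrak{M}^n$ with finitely many orbits. Each orbit is already separated by atomic $(\cdot, L)$-formulas---for instance, on the six pairs $(x,y)$ with $x\neq y$, the two orbits are distinguished by whether $L(y)=x$ or $L(x)=y$---and a routine check, made tractable by the finiteness of $\mathfrak{M}$, shows that any two tuples with the same quantifier-free $\mathcal{L}$-type are conjugate by an element of $\Aut(\mathfrak{M})$ and therefore realize the same complete type.

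With both hypotheses verified, Theorem~\ref{C.QuestionDecomp.1} yields that $\Sh$ interprets all the functions of $\mathcal{L}^+$ and eliminates quantifiers relative to $\Bool{I}{I}$ in the language $\mathcal{L}_\Phi^+$. It remains to collapse $\mathcal{L}_\Phi^+$ to the language advertised in the corollary: by Lemma~\ref{lemma:PhiSupport}, every $\supp_\phi$ with $\phi\in\Phi$ satisfiable in $\mathfrak{M}$ is already definable from $(\cdot, L)$ and $\supp_=$ (unsatisfiable atomics yield the constant zero of $\Bool{I}{I}$), and since $\mathcal{L}$ contains no constant symbols there are no closed atomic terms and hence no sentence constants $c_\theta$ to add. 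Thus $\Sh$ eliminates quantifiers relative to $\Bool{I}{I}$ in exactly the two-sorted language $\{(\Sh,\cdot,L), (\Bool{I}{I},\subseteq), \supp_=\}$, as claimed.
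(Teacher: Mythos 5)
Your proof is correct and follows the same route as the paper: apply Theorem~\ref{C.QuestionDecomp.1} with $\Phi$ the atomic $\{\cdot,L\}$-formulas, after noting $\supp_=$ is interpretable (the preceding proposition) and atomics are fundamental (quantifier elimination for $(\mathfrak{M},\cdot,L)$). The only difference is that you supply the orbit-counting verification of QE in $\mathfrak{M}$, which the paper merely states as an observation, and you note explicitly how $\mathcal{L}_\Phi^+$ collapses via $\supp_\phi(\bar a)=\supp_=(t_1(\bar a),t_2(\bar a))$ for atomic $\phi$.
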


\subsection{Linear orders}
%In \cite{de2023trivial}, it was shown that linear orders recognize coordinate.

In this paragraph, $\mathcal{L}$ denotes the language $\mathcal{L}=\{<\}$ of orders. Consider $\fC$ the class of linear orders with no larger element, and $\mathcal{N} \coloneqq \prod_\Ideal{I} \mathcal{M}$ a reduced product. %Equivalently, $\mathcal{N}$ is the lattice of functions from $\mathbb{N}$ to $\mathcal{M}$, modulo the equivalence relation $f\sim g$ if $\{ n\in \mathbb{N} \mid f(n)\neq g(n)\}\in \Ideal{I}$.  
%We observe that the relative support function in $\mathcal{N}$ is definable.
The relation 
\[\min(x,y)\leq z,\]
can be expressed with an $h$-formula, namely:
\[(\forall w) ~\left[ (w\leq x \wedge w\leq y) \rightarrow w \leq z \right].\]
It is an $h$-formula because for all $x,y$, there is always $w$ such that $(w\leq x \wedge w\leq y)$.
Clearly, $\min(x,y)\geq z$ is equivalent to the $h$-formula $z\leq x \wedge z\leq y$.
%It follows that, for all elements $f,g$ the following relation: 
%\[A_{f,g}(x,y) :  \max(x,y)\geq \max(g,f) \wedge \min(x,y)\leq \min(g,f) \]
%is also $h$-definable.

%\begin{definition}
%	A binary relation $A(x,y)$ is called \emph{rectangular} if for all $x,x',y,y'$, the following hold: 
%	\[A(x,y)\wedge A(x',y) \wedge A(x,y') \rightarrow A(x',y').\]
	
%\end{definition}

%We will take advantage of the following fact to show, thanks an explicit $h$-formula, that linear orders recognize coordinates: 
%\begin{fact}
%	$A_{f,g}$ is rectangular  if and only if $f\neq g$. 
%\end{fact}

%Notice that both statements "$A_{f,g}$ is rectangular" and $f\neq g$ are $h$-formulas, are equivalent in $\mathcal{M}$, but define different sets in $\mathcal{N}$. This is simply because the equivalence 
%"$A_{f,g}$ is rectangular" $\Leftrightarrow f\neq g$ is not (equivalent to) an $h$-formula. 

%The formula $f=z \rightarrow g=z$ is therefore equivalent to ``if $A_{fz}$ is not rectangular then  $A_{gz}$ is not rectangular". 
%Since non-rectangularity of $A_{gz}$ and $A_{fz}$ are witnessed by the same elements $x,y,x',y'$, we have:

%\begin{corollary}
%	The formula  $f=z \rightarrow g=z$ is equivalent to the following $h$-formula:
%	\begin{align*}
%		\forall x,x',y,y' ~ (A_{fz}(x,y)\wedge A_{fz}(x,y')  \wedge  A_{fz}(x',y) \\ \wedge A_{gz}(x,y) \wedge A_{gz}(x,y') \wedge A_{gz}(x',y) ) \rightarrow \\
%		(A_{gz}(x',y') \rightarrow A_{fz}(x',y') )
%	\end{align*}
%\end{corollary}
    
	\begin{proposition}
	    In all structures in $\fC$, the formula  $f=k \rightarrow g=k$ is equivalent to the following $h$-formula:
    \begin{align*}
        \tag{*} \label{eq.LO}(\exists u) &~ \  u>f\wedge u >g \wedge u>k \\
        &\wedge (\forall g') ~ [\min(g',k)=\min(g,g')=\min(g,k)] \rightarrow\\
         &  \quad \qquad \left [(\exists f')~\min(f',k)=\min(f,f')=\min(f,k) \wedge \max(u,f')\geq \max(u,g') \right].  
    \end{align*}
    
	\end{proposition}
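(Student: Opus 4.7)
The plan is to verify two things: that the right-hand side of \eqref{eq.LO} is syntactically an $h$-formula, and that it is semantically equivalent to $f = k \rightarrow g = k$ in every linear order with no largest element.

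For the syntactic part, the subformulas $\min(g',k) = \min(g,g') = \min(g,k)$ and its $f$-analog are already $h$-expressible by the preceding paragraphs. The only new ingredient is $\max(u,f') \geq \max(u,g')$, which I would first rewrite as $g' \leq \max(u,f')$ (since $u \leq \max(u,f')$ is automatic) and then as the $h$-formula $(\forall w)\big((u \leq w \wedge f' \leq w) \rightarrow g' \leq w\big)$; satisfiability of the antecedent follows from the no-largest-element assumption. Combined with the satisfiability of $(\exists u)(u > f \wedge u > g \wedge u > k)$ and of the inner hypothesis on $g'$ (witnessed by $g' := g$ when $g \leq k$ and by $g' := k$ when $g > k$), the full formula \eqref{eq.LO} fits the $h$-formula schema recorded in Definition \ref{Def.h-formulas}, using the fact observed earlier that whenever a theory proves $(\exists x)\varphi$, the formula $(\forall x)(\varphi \rightarrow \psi)$ is equivalent to an $h$-formula.

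For the equivalence, the crucial semantic step is to decode the hypothesis $C_g(g') := [\min(g',k) = \min(g,g') = \min(g,k)]$ by a trichotomy on the order of $g$ and $k$. Setting $m := \min(g,k)$ and examining cases, I would verify that $C_g(g')$ is equivalent to $g' = g$ when $g < k$, to $g' \geq g$ when $g = k$, and to $g' = k$ when $g > k$; the same trichotomy applies to $C_f(f')$. With this dictionary, the body of \eqref{eq.LO} becomes a concrete universal--existential condition on admissible witnesses, which I would compare with $f = k \rightarrow g = k$ in each of the nine configurations indexed by the relative orderings of $(f,k)$ and $(g,k)$.

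The main obstacle is the case analysis itself. The easy cases are $f = g = k$ (where taking $f' := g'$ trivially succeeds) and those in which $C_g$ forces $g'$ to a single value (so matching is immediate). The delicate sub-cases are those in which $C_g$ admits unboundedly large witnesses, which happens precisely when $g = k$: there the existentially quantified $f'$ must dominate an arbitrary $g'$ under the $\max$ comparison with $u$. This is exactly where the choice of $u$ above all the fixed parameters becomes indispensable: $u$ acts as a threshold above which the $\max$-comparison degenerates to the raw comparison of $f'$ and $g'$, and one then checks that a suitable witness $f'$ (either $f$, $k$, or $\geq k$, depending on the ordering of $f$ and $k$) can be matched to any admissible $g'$ exactly when the implication $f = k \rightarrow g = k$ is true.
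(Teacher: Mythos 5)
Your syntactic analysis and your semantic decoding of the guard $C_g(g')$ are both correct, and your overall plan (trichotomy on $g$ vs.\ $k$, matched against a trichotomy on $f$ vs.\ $k$, with $u$ as a threshold so that $\max(u,\cdot)$ degenerates to a raw comparison above $u$) is the same strategy the paper uses, read off from its picture. The syntactic rewriting of $\max(u,f')\geq\max(u,g')$ as $(\forall w)\bigl((u\leq w\wedge f'\leq w)\rightarrow g'\leq w\bigr)$ is fine, and the witnesses you give for the guards are the right ones.

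However, the final step you leave as ``one then checks'' would \emph{not} confirm $f=k\rightarrow g=k$; it confirms the reverse implication $g=k\rightarrow f=k$. Concretely: the only case where the universally quantified $g'$ can escape above $u$ is $g=k$ (then $C_g(g')$ only forces $g'\geq k$), and the existential $f'$ can keep up if and only if $f=k$ as well (otherwise $C_f$ pins $f'$ to $\min(f,k)<u$, so $\max(u,f')=u<\max(u,g')$ for $g'>u$). If instead $g\neq k$, then $g'$ is pinned below $u$ and any valid $f'$ works. Thus $(*)$ holds iff $g=k\rightarrow f=k$. A one-line counterexample to the stated equivalence: take $k=g=0$, $f=1$ in $\bbQ$; then $f=k\rightarrow g=k$ is vacuously true, yet $(*)$ fails by the argument above. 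The discrepancy is already present in the proposition as printed (the paper's own informal proof says ``assume $f$ coincides more often with $k$ than $g$ coincides with $k$,'' i.e.\ $g=k\rightarrow f=k$, matching the formula rather than the statement). So either $f$ and $g$ should be swapped in $(*)$, or the statement should read $g=k\rightarrow f=k$. For the purpose of Theorem~\ref{T.eq} this is immaterial (just rename variables), but your proposal should not assert that the check comes out as $f=k\rightarrow g=k$: if you actually carry it out, it won't.
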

    \begin{proof}
            This is an $h$-formula as for all $g,k$,
    \[(\exists g') ~ \min(g',k)=\min(g,g')=\min(g,k)\]
    always holds (we may take $g'=\min(g,k)$).
    We see $f,g,k,u$ as functions from a nonempty set $A$ to a total order $B$ with no top element.
    Assume that $f$ coincides more often with $k$ than $g$ coincides with $k$. An element $g'$ satisfying  $\min(g',k)=\min(g,g')=\min(g,k)$ must be equal to the minimum of $g$ and $k$, except where $g$ and $k$ coincides.  Since $f$ and $k$ coincide more often than $g$ and $k$, then for all such $g'$ we may find an $f'$ such that $\min(f',k)=\min(f,f')=\min(f,k)$, and which is larger than $g'$ on the part where $f$ and $k$ coincide. To express the later, we use an element $u> f,g,k$ and compare $\max(g',u)$ and $\max(f',u)$.
    Conversely, if $g$ and $k$ coincide where $f$ and $k$  don't, then some $g'$ not smaller than $u$, we see that one can't find such a $f'$.
    
    \begin{center}
        
\begin{tikzpicture}
    \draw (0,0) ..  controls (1,0.5) .. (2,0);
    \draw (0,1) ..  controls (1,0.5) .. (2,0);
    
    \draw (2,0) ..  controls (2.5,-0.2) .. (4,0);
    \draw (2,-0.5) ..  controls (2.5,-0.2) .. (4,0);
    
    \draw (0,-0.5) ..  controls (1,-1) .. (2,-0.5);
    
    \draw (4,0) ..  controls (5.5,+0.2) .. (6,0);
    \draw (4,0) ..  controls (5.5,+0.2) .. (6,0.4);
    
    \draw (6,0.4) ..  controls (7,+1) .. (8,1);
    
    \draw (6,0) ..  controls (7.5,-0.5) .. (8,-0.5);
    \draw (6,0) ..  controls (7.5,-0.5) .. (8,0);

    \draw[red] (1.4,2.5) ..  controls (3,3) and (6,2)  .. (7.2,2.4);
    \draw[red] (7.5,2.4) node{$f'$};

    \draw[red] (3,2.2) ..  controls (4,3) and (5,2)  .. (5.2,2.1);
        \draw[red] (5.4,2.1) node{$g'$};

    \draw (0,1.7) -- (8,1.7);
    \draw[dotted] (1.4,0.3) -- (1.4,3);
    \draw[dotted] (3,-0.1) -- (3,3);
    \draw[dotted] (5.2,0.07) -- (5.2,3);
    \draw[dotted] (7.2,-0.3) -- (7.2,3);
    \draw (8.3,1) node{$g$};
    \draw (8.3,0.1) node{$k$};
    \draw (8.3,-0.5) node{$f$};
    \draw (8.3,1.7) node{$u$};
    \draw (-0.3,-0.5) node{$g$};
    \draw (-0.3,1) node{$k$};
    \draw (-0.3,0) node{$f$};
    \draw (-0.3,1.7) node{$u$};

\end{tikzpicture}

    \end{center}
    Assume now that $\vert A\vert=1$ and identify $B$ with function from $A$ to $B$. The facts above gives the equivalence between $f=k \rightarrow g=k$ and (\ref{eq.LO}).
    \end{proof}

	By \Cref{T.eq} and \Cref{T.A}, it follows in particular:
    
    \begin{corollary}
        Reduced products $\mathcal{M} \coloneq \prod_\Ideal{I}\cM_i$ of total orders with no top element interprets (uniformly) the boolean algebra $\Bool{I}{I}$ and the relative support function $\supp_=: \cM^2 \rightarrow \Bool{I}{I}$, and the class of such orders recognizes coordinates.
    \end{corollary}

%Note that the formula is indeed (equivalent to) an $h$-formula, since the condition
%\begin{align*}
%	A_{fz}(x,y)\wedge A_{fz}(x,y')  \wedge  A_{fz}(x',y) \\ \wedge A_{gz}(x,y) \wedge A_{gz}(x,y') \wedge A_{gz}(x',y) 
%\end{align*}
%can be realised by taking $x,x'\leq  \min(f,g,z)$ and $y,y'\geq \max(f,g,z)$.
We can deduce from Feferman--Vaught theorem a language where a reduced product of dense linear orders (without endpoints) eliminates quantifiers.

\begin{corollary}
	Reduced products $\mathcal{M}:=\prod_{\mathcal{I}} \mathcal{M}_i$ of models of DLO eliminates quantifiers relative to the boolean alegbra $\Bool{I}{I}$ in the interpretable language 
    \[\{(\mathcal{M},\leq) , (\Bool{I}{I},\subseteq), \supp_=, \supp_\leq\}\]
    where for all $a,b\in \mathcal{N}$:
	\[\supp_=(a,b) \coloneqq \big[\{i \mid a_i=b_i\}\big]_\mathcal{I}\]
	and 
	\[\supp_\leq (a,b) \coloneqq \big[\{i \mid a_i\leq b_i\}\big]_\mathcal{I}\]
\end{corollary}
\begin{proof}
	We have quantifier elimination in DLO (the theory of dense linear orderings without the endpoints) in the language of pure orders, therefore all formulas are equivalent to a Boolean combination of formulas of the form $x\geq y$ and $x=y$, which are satisfiable $h$-formulas. We can conclude by \Cref{C.QuestionDecomp}.% and the Ferferman-Vaught theorem.
\end{proof}

\subsection{Sufficiently random graphs}
In this paragraph, $\mathcal{L}$ denotes the language $\mathcal{L}=\{R\}$ of (undirected and simple) graphs. We recall first the definition of sufficiently random graphs:
\begin{definition}[ {\cite[Definition 2.20]{de2023trivial}}]
    A graph $G$ is sufficiently random if it has at least three vertices and if for all pairwise distinct vertices $u$, $v_1,v_2$, there is a vertex $w$ distinct from $u$ such that $\neg uRw \wedge  v_1Rw \wedge  v_2Rw$ holds. The theory of sufficiently random graphs will be denoted $T_{\textrm{SRG}}$. 
\end{definition}

Notice that the class $\fC$ of models of $T_{\textrm{SRG}}$ is full in the sense of \Cref{Def.Full}.

We claim that the formula $x=z \vee y=z$ is equivalent to 
\begin{align*}
    \phi(x,y,z) \colon ~  (\forall d) ~ dRx \wedge dRy \rightarrow dRz .\end{align*}
Indeed, the direct implication is obvious. To prove the converse, assume that $z\notin \{x,y\}$ holds. Then, by sufficient randomness, there is a vertex $d$ which is connected to $x$ and $y$ and not to $z$. This contradicts the formula $\phi(x,y,z)$ above.  

The formula $\phi(x,y,z)$ is clearly equivalent  to an $h$-formula modulo $T_{\textrm{SRG}}$, as 
$(\exists d) ~ dRx \wedge dRy$ holds in every sufficiently random graph. 

We can now use the following fairly general lemma.  
\begin{lemma}
 Assume that $x=z \vee y=z$ is equivalent to an $h$-formula $\phi(x,y,z)$ modulo some theory $T$. Then $(\forall v) ~ \phi(x,v,z) \rightarrow \phi(y,v,z)$ is 
    an $h$-formula  equivalent modulo $T$ to $x=z \rightarrow y=z$.
\end{lemma} 
\begin{proof}
    The fact that $\forall v ~ \phi(x,v,z) \rightarrow \phi(y,v,z)$ is (equivalent to) an $h$-formula is clear, as  for all $x$ and $z$, the sentence $(\exists v) ~\phi(x,v,z)$ holds in every sufficiently random graph.
To prove the converse implication, assume that $x=z \rightarrow y=z$ holds. If $x\neq z$, then for every vertex $v$, $\phi(x,v,z)$ implies $z=v$ and therefore  it implies $\phi(y,v,z)$. We may therefore assume $x=z$, which implies $x=y=z$.  In this case, both $\phi(x,v,z) $ and $\phi(y,v,z) $ hold for all $v$, proving the converse implication. 
It remains to prove that $(\forall v) ~ \phi(x,v,z) \rightarrow \phi(y,v,z)$ implies $x=z \rightarrow y=z$. Assuming in addition $x=z$,  it suffices to show that $y=z$. Towards this, fix any $v\neq z$. Then both $\phi(x,v,z)$ and $\phi(y,v,z)$ hold, and the latter implies $y=z$.
\end{proof}
By \Cref{T.eq} and \Cref{T.A}, we have the following corollary :
\begin{corollary}
     Reduced products $\cM \coloneq \prod_\Ideal{I}\cM_i$ of sufficiently random graphs interprets (uniformly) the Boolean algebra $\Bool{I}{I}$ and the support function $\supp_=: \cM \rightarrow \Bool{I}{I}$, and the class such graphs recognizes coordinates.
\end{corollary}

Now, let us consider $T_{RG}$, the theory of random graphs. It is of course a completion of  $T_{\textrm{SRG}}$ and it eliminates quantifiers in the language $\mathcal{L}=\{R\}$. By \Cref{C.QuestionDecomp}, we have:
\begin{corollary}
    Reduced products $\mathcal{G}=\prod_\mathcal{I}\mathcal{G}_i$ of random graphs eliminates quantifiers relative to $\Bool{I}{I}$ in the interpretable language 
    \[\{(\mathcal{G},R),\Bool{I}{I}, \supp_= , \supp_R\},\]

        where 
    \[\supp_=(a,b) \coloneq \big[\{i \mid a_i=b_i\}\big]_\mathcal{I}\]
    and 
    \[\supp_R(a,b) \coloneq \big[\{i \mid a_iRb_i\}\big]_\mathcal{I}.
    \]
    
\end{corollary}

\subsection{Connected rings}
In this paragraph, we highlight some connections between coordinate recognition and the work of D'Aquino and Macintyre in~\cite{DAM23}, and of Derakhshan and Macintyre (see e.g.  \cite{DM22}).
Recall that a ring $R$ is connected if $0$ and $1$ are the only idempotent elements in $R$ and $0 \neq 1$. In this paragraph, we denote $\fC$ the class of connected rings.

\begin{lemma}
Modulo the theory of connected rings, the formula  \[x=0\rightarrow y=0\]
is equivalent to the $h$-formula 
\[(\forall z) [(z^2=z\wedge zx=0 )\rightarrow zy=0].\]
\end{lemma}
We leave the proof to the reader. By \Cref{T.eq.groups}, we have:

\begin{corollary}
    Reduced products $\prod_{\Ideal{I}} R_i$ of connected rings interpret (uniformly) the Boolean algebra $\Bool{I}{I}$ and the support map 
    \[\supp: \prod_{\Ideal{I}} R_i \rightarrow \Bool{I}{I}; ~a \mapsto \big[\{~ i \mid a_i \neq 0\} \big]_\Ideal{I},\]
    and the class of connected rings recognizes coordinates.
\end{corollary}

This corollary, at least in its version for products $R \coloneq \prod_{\Index{I}} R_i$, is explicit in \cite{DAM23} and \cite{DM22}, once the Boolean algebra $\cP(\Index{I})$ is identified as the set of idempotents of $R$. This plays an important role, for example, in \cite[Corollary 7.2]{DM22} -- which states a quantifier elimination result for the rings of adeles -- and in \cite[Theorem 2.10]{DAM23} -- which gives a list of five axioms for the class of products of connected rings. %: a ring satisfies this list of axioms if and only if it is equivalent to a product of connected rings (and the Feferman-Vaught theorem applies). 
Connected rings were discussed in \cite[\S 4.1]{de2023trivial} where they were called rings with no nontrivial idempotents,  and the result relevant for recognising coordinates was attributed to S. Burris (\cite{burris1979sheaf}). The authors of \cite{de2023trivial} were not aware of \cite{DAM23} and \cite{DM22}.
Axiomatisation of classes of (reduced) products will not be investigated in this paper.

\section{Limiting examples}\label{S.Limiting}
In this section we collect results that together suggest that the problem of characterizing classes of groups that recognize coordinates is nontrivial. 

\subsection{The failure of compactness}\label{S.FailureOfCompactness}
We have already seen that each one of~$\Sym_3$ and $\SL(2,5)$ recognizes coordinates but $\{\Sym_3,\SL(2,5)\}$ does not (Theorem~\ref{T.example:DNR}). In this section we prove the following.

\begin{theorem}\label{T.Limiting}
	There is a family $\fD$  of perfect groups that  does not recognize coordinates, but every finite subset of $\fD$ does. 
	All groups in $\fD$ are indecomposable and perfect. In particular, there is  no nontrivial homomorphism from a group in $\fD$ into the center of a group in $\fD$. 
\end{theorem}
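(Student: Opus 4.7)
The plan is to construct an explicit countable family $\fD = \{G_n : n \geq 1\}$ of perfect, indecomposable groups satisfying condition $(\dagger)$ of Proposition~\ref{P.Perfect}, whose commutator widths $\mathrm{cw}(G_n)$ tend to infinity. That such groups exist is classical (cf.\ \cite{nikolov2004commutator}); I would pick them to also be indecomposable (e.g.\ quasisimple, or close to it) and to satisfy $(\dagger)$. Perfectness immediately yields the last sentence of the theorem, since any homomorphism from a perfect group to an abelian group (and in particular into the center of another group) is trivial.

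For the positive half, let $\{G_{n_1},\ldots,G_{n_k}\} \subseteq \fD$ be a finite subfamily and set $M = \max_j \mathrm{cw}(G_{n_j})$. Each of these groups is perfect, satisfies $(\dagger)$, and has commutator width at most $M$, so Proposition~\ref{P.Perfect} produces an $h$-formula $\phi_M(x,y)$ equivalent to $x = e \rightarrow y = e$ in their common theory. By Theorem~\ref{T.eq.groups}, this finite subfamily recognizes coordinates.

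For the negative half, which is the main content, I invoke Theorem~\ref{T.eq}\eqref{3.T.eq}: it suffices to show that $x = e \to y = e$ is \emph{not} equivalent to any $h$-formula in $\Th(\fD)$. Suppose for contradiction that some $h$-formula $\phi(x,y)$ realizes the equivalence uniformly across $\fD$. I would then pass to a nonprincipal ultrapower $G^\star = \prod_\cU G_n$; by \L o\'s's theorem and the unboundedness of the $\mathrm{cw}(G_n)$, the group $G^\star$ admits an element $b \neq e$ that is not a product of $m$ commutators for any fixed $m \in \bbN$. The goal is to show that such a $b$ must nevertheless satisfy $\phi(e,b)$ in $G^\star$, contradicting $\phi \leftrightarrow (x=e \to y=e)$. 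The intuitive reason is that an $h$-formula of bounded quantifier rank can, at the level of $\Th(\fD)$, only certify $y=e$ by expressing $y$ through a bounded-length manipulation involving commutator identities; it is therefore fooled by an element $b$ of infinite commutator length.

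The main obstacle is precisely turning this intuition into a proof. One viable route is an Ehrenfeucht--Fra\"\i ss\'e-style game tailored to the constructor $(\exists x)\varphi \wedge (\forall x)(\varphi \to \psi)$ defining the class of $h$-formulas, showing that the rank-$r$ $h$-type of the pair $(e,b)$ in $G^\star$ coincides with the rank-$r$ $h$-type of $(e,e)$ once $r$ is smaller than the commutator length of $b$. Since $b$ has infinite commutator length in $G^\star$, no $\phi$ of fixed finite rank can separate $(e,b)$ from $(e,e)$, and the required contradiction falls out. An alternative, more model-theoretic route is to apply Proposition~\ref{Prop:Omarov}\eqref{2.Omarov} to a reduced product of the $G_n$ over an atomless ideal: any $h$-formula equivalent to $x=e\to y=e$ in such a reduced product would have to be equivalent to $x=e\to y=e$ in all but ideal-many $G_n$ via a \emph{fixed} syntactic template, and a combinatorial argument using the growth of $\mathrm{cw}(G_n)$ then rules this out.
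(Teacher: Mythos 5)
Your positive half is fine: bounding the commutator width on a finite subfamily and invoking Proposition~\ref{P.Perfect} is exactly right (and is what the paper does). Your last-sentence observation about perfect groups and homomorphisms into centers is also correct.

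The negative half has a fundamental gap, and I do not think it can be repaired along the lines you propose. You want to take a single family $\{G_n\}$ of perfect groups with $\cw(G_n)\to\infty$ and argue that $x=e\to y=e$ is not equivalent to any $h$-formula modulo $\Th(\fD)$, on the heuristic that an $h$-formula ``can only certify $y=e$ by a bounded-length manipulation involving commutator identities'' and so is fooled by an element of infinite commutator length in the ultrapower. This heuristic is false: the class of \emph{all} simple groups recognizes coordinates (Theorem~\ref{T.RC}\ref{1.C}), via the $h$-formula $(\forall z)(\,y\in C(z^G)\to x\in C(z^G)\,)$ coming from Theorem~\ref{T.RecognizingCoordinates}\eqref{2.RecognizingCoordinates}. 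That formula involves centralizers of conjugacy classes, not commutators, and it works uniformly in simple groups of arbitrarily large commutator width. In particular, Muranov's simple groups $K_n$ with $\cw(K_n)\to\infty$, taken on their own, \emph{do} recognize coordinates; your proposed EF-game or Omarov-style argument would therefore have to fail on them. So growing commutator width, by itself, cannot give you the desired non-recognition, and the ``infinite commutator length element'' in the ultrapower does not contradict the existence of a suitable $h$-formula.

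The ingredient you are missing is that non-recognition is produced in the paper by \emph{combining two different families} and invoking Theorem~\ref{Fact:construction} at the level of reduced products. Take $K_n$ perfect, simple, with $\cw(K_n)\to\infty$ (Muranov), and $G_n=\SL(p_n-1,p_n)$ as in Proposition~\ref{P.Q/Z}, so that $\bbQ/\bbZ$ embeds into $Z(\prod_{\Fin}G_n)$. Set $\fD=\{G_n,K_n\mid n\in\bbN\}$. Every finite subfamily has uniformly bounded commutator width and satisfies $(\dagger)$, so recognizes coordinates by Proposition~\ref{P.Perfect}. For the whole family: since $\cw(K_n)\to\infty$, the reduced product $\tilde K=\prod_{\Fin}K_n$ is \emph{not} perfect, hence has a nontrivial abelian quotient $A$; by Proposition~\ref{P.Q/Z} there is a nontrivial homomorphism from $A$ into $Z(\prod_{\Fin}G_n)$, and the automorphism $\tau_f$ construction from Theorem~\ref{Fact:construction} then gives an automorphism of $\prod_{\Fin}(G_n\times K_n)$ that is not coordinate-respecting. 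The point is subtle: each group in $\fD$ admits no nontrivial map into any group's center, yet the \emph{reduced products} of the two subfamilies do --- the construction lives entirely at the level of the quotient structures.
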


Once proved, this  will show that the converse of Theorem~\ref{Fact:construction} is false  and dampens any hope that there is a simple characterization of when a class of groups recognizes coordinate.  It also shows that  the class of all perfect groups does not recognize coordinates (cf. Proposition~\ref{P.Perfect}).  The proof of this theorem is given at the end of this section, as a consequence of Proposition~\ref{P.Q/Z} below and  \cite[Theorem~4]{muranov2007finitely}.  

Let $\cw(H)$ denote the commutator width of a group $H$ (see \S\ref{S.Perfect}). 
In \cite[Theorem~1.1]{nikolov2004commutator}, Nikolov constructed a sequence of finite perfect groups~$H_n$ such that $\lim_{n\to \infty} \cw(H_n)=\infty$. We do not know whether Nikolov's groups satisfy the condition ($\dagger$) of Proposition~\ref{P.Perfect}. If infinitely many of them do, then this (together with Proposition~\ref{P.Q/Z}) would imply a failure of compactness 
for the notion of  recognizing coordinates and a failure of the converse to Theorem~\ref{Fact:construction}.

\begin{proposition} \label{P.Q/Z} 
		There is a family $\fC=\{G_n\mid n\in \bbN\}$ of quasisimple groups of commutator width 1 such that every abelian group admits a nontrivial homomorphism into the center of $\prod_{\Fin} G_n$. 
\end{proposition}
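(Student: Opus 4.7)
The plan is to use \cite[Theorem~4]{muranov2007finitely} as a black box to produce, for each $n\geq 1$, a quasisimple group $G_n$ of commutator width $1$ whose center contains a cyclic subgroup of order $n!$ (or, more modestly, of order $n$; any unbounded family of cyclic torsion orders will do). Enumerate these groups as $\fC = \{G_n \mid n \in \bbN\}$. By the defining property $Z(\prod_{\Fin} G_n) \supseteq \prod_{\Fin} Z(G_n)$, it suffices to embed every abelian group's image inside $\prod_{\Fin} Z(G_n)$.

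The key intermediate claim is that $\mathbb{Q}/\mathbb{Z}$ embeds into $\prod_{\Fin} Z(G_n)$. For this, fix a generator $z_n$ of a subgroup $\mathbb{Z}/n!\mathbb{Z}\subseteq Z(G_n)$ and define $\varphi\colon \mathbb{Q}/\mathbb{Z} \to \prod_n Z(G_n)$ by sending $\tfrac{a}{b}+\mathbb{Z}$ to the sequence whose $n$-th coordinate is $\tfrac{a\cdot n!}{b}\cdot z_n$ when $n\geq b$ and $e$ otherwise. A routine check shows $\varphi$ is a well-defined homomorphism; moreover, for $\tfrac{a}{b}+\mathbb{Z}\neq 0$ (i.e.\ $b\nmid a$) the $n$-th coordinate is nonzero for \emph{all} $n\geq b$, so $\varphi(\tfrac{a}{b}+\mathbb{Z})$ has cofinite support and survives the quotient $\prod_n Z(G_n)\twoheadrightarrow\prod_{\Fin} Z(G_n)$. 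Hence the induced map $\bar\varphi\colon \mathbb{Q}/\mathbb{Z}\hookrightarrow \prod_{\Fin} Z(G_n)\subseteq Z(\prod_{\Fin} G_n)$ is injective.

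To conclude, invoke the standard fact that $\mathbb{Q}/\mathbb{Z}$ is an injective cogenerator of the category of abelian groups: for any nontrivial abelian group $A$ and any nonzero $a \in A$, the cyclic subgroup $\langle a \rangle$ admits a nonzero homomorphism into $\mathbb{Q}/\mathbb{Z}$ (sending $a\mapsto 1/|a|$ if $a$ has finite order, or any nonzero target if $a$ has infinite order), and by divisibility of $\mathbb{Q}/\mathbb{Z}$ this extends to a nontrivial homomorphism $A\to \mathbb{Q}/\mathbb{Z}$. Composing with $\bar\varphi$ yields a nontrivial homomorphism $A\to Z(\prod_{\Fin}G_n)$, as required.

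The main obstacle is not the group-theoretic manipulation above but the invocation of Muranov's result: one needs quasisimple groups with commutator width \emph{exactly} $1$ and with prescribed (or at least flexibly chosen) centers, which is strictly stronger than the ``width $\leq 2$'' guarantee of \cite{liebeck2011commutators} used in Corollary~\ref{C.SL(2.5)}. Assuming \cite[Theorem~4]{muranov2007finitely} supplies such groups with arbitrary finite cyclic centers, the rest of the argument is essentially a diagonal/cogenerator exercise; if Muranov's statement only produces a single group with a large divisible (or otherwise ``universal'') center, the embedding step simplifies correspondingly but the overall structure of the proof is identical.
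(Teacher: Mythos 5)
Your overall strategy is right -- embed $\bbQ/\bbZ$ into the center of $\prod_{\Fin} G_n$ via the factorial trick and then use the fact that $\bbQ/\bbZ$ is an injective cogenerator for abelian groups -- and your explicit embedding of $\bbQ/\bbZ$ is correct and arguably cleaner than what the paper writes (the paper's assertion that the image of the subgroup $Q$ is divisible is left unjustified and, taken literally, is not true for arbitrary elements of $Q$; your direct construction sidesteps this). The cogenerator step is also fine.

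However, the invocation of \cite[Theorem~4]{muranov2007finitely} to produce the groups $G_n$ is a genuine gap: that theorem (and the paper's own citation of it, in the proof of Theorem~\ref{T.Limiting}) produces \emph{simple} groups $K_n$ with $n+1 \le \cw(K_n) \le 2n+2$, i.e.\ groups with \emph{trivial} center and \emph{large} commutator width -- precisely the opposite of what you need here. It does not supply quasisimple groups with commutator width $1$ and prescribed cyclic centers, so the conditional in your last paragraph fails and the construction has no input. The paper instead builds the family concretely: choose an increasing sequence of primes $p_n$ with $(n-1)! \mid p_n - 1$ (Dirichlet on the arithmetic progression $k\cdot m! + 1$), and set $G_n := \SL(p_n - 1, p_n)$. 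Then each $G_n$ is quasisimple (as $\PSL(p_n-1,p_n)$ is simple), has commutator width $1$, and every scalar in $\GL(p_n-1,p_n)$ has determinant $\lambda^{p_n-1}=1$, so $Z(G_n) \cong \bbZ/(p_n-1)\bbZ \supseteq \bbZ/(n-1)!\bbZ$. Plugging these into your $\bbQ/\bbZ$ embedding would repair the proof.

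A minor secondary point: your parenthetical ``any unbounded family of cyclic torsion orders will do'' is not correct. If, for instance, $Z(G_n) \cong \bbZ/q_n\bbZ$ for distinct primes $q_n$, then $\prod_{\Fin} Z(G_n)$ has no nontrivial torsion and hence no copy of $\bbQ/\bbZ$ (nor any element of order $p$ for a fixed prime $p$), so the cogenerator argument fails. What you really need is that for every prime power $p^k$, all but finitely many $Z(G_n)$ contain an element of order $p^k$; the factorial sequence guarantees this, but unboundedness alone does not.
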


\begin{proof} For any $n\geq 2$, there are, by Dirichlet's theorem, infinitely many primes in the arithmetic sequence $(k\cdot n!+1)_{k\geq 1}$. We can therefore choose an increasing sequence of primes $(p_n)_{n\geq 2}$ such that $n!$ divides $p_n-1$ for all $n$. In particular $m!$ divides $p_n-1$ for all $n>m$.
	Let $G_n:=\SL(p_n-1,p_n)$.  By  Proposition~\ref{C.SL(n,F)},  the class $\fC=\{G_n\mid n\in \bbN\}$ recognizes coordinates. Also, since quasisimple groups satisfy ($\dagger$) of Proposition~\ref{P.Perfect}, this proposition implies that every finite subfamily of $\fD$ recognizes coordinates.

	We will now prove that $\bbQ/\bbZ$ embeds into $\prod_{\Fin} G_n$. 
	Since the multiplicative group of $F_{p^n}$ is the cyclic group $\bbZ/(p_n-1)\bbZ$,  every scalar matrix in $\GL(p_n-1,p_n)$ has determinant equal to $1$, and therefore the center of $\SL(p_n-1,p_n)$ is isomorphic to $\bbZ/(p_n-1)\bbZ$. Therefore, for every fixed $m\geq 2$ and all but finitely many $n$ we have that $Z(G_n)$ includes an isomorphic copy of $\bbZ/m!\bbZ$. 
	
	Let  ($o(a)$ denotes the order of the element $a$)
	\[
\textstyle 	Q:=\{(a_n)\in \prod_n Z(G_n)\mid (\forall m)(\forall^\infty n) o(a_n)\geq m\}. 
	\]
	Since for every $m$ and all but finitely many $n$,   $Z(G_n)$ is a finite cyclic group whose order is a multiple of~$m!$, $Q$ is a nontrivial (even uncountable) group. The image of $Q$ under the quotient map $\prod_n G_n\to \prod_{\Fin} G_n$ is divisible and we can recursively choose an isomorphic copy of $\bbQ/\bbZ$ inside it. This copy is clearly included in the center of $\prod_{\Fin} G_n$.

It remains to note that every nontrivial abelian group admits a nontrivial homomorphism into $\bbQ/\bbZ$ and every singly generated abelian group has a nontrivial homomorphism into $\bbQ/\bbZ$. The second part is trivial, and  the first part is an immediate consequence of Baer's theorem that divisible groups are injective in the category of abelian groups (e.g., \cite[Theorem~21.1]{fuchs2015abelian}). 	
	 This completes the proof.
\end{proof}

\begin{proof}[Proof of Theorem~\ref{T.Limiting}] 
	We first need to construct a family $\fD$ of perfect groups that does not recognize coordinates, but every finite subset of~$\fD$ does.

	By \cite[Theorem~4]{muranov2007finitely} for every $n$ there is a perfect, simple group $K_n$ such that $n+1\leq \cw(K_n)\leq 2n+2$.  All of these groups satisfy condition ($\dagger$) of Proposition~\ref{P.Perfect}.  Using groups $G_n=\SL(p_n-1,p_n)$ constructed in Proposition~\ref{P.Q/Z}, let   
	\[
	\fD:=\{G_n,K_n\mid n\in \bbN\}.
	\] 
	All groups in $\fD$ are perfect and of finite commutator width, hence every finite subset of $\fD$ recognizes coordinates by Proposition~\ref{P.Perfect}. (We also know that $\{K_n\mid n\in \bbN\}$ recognizes coordinates by Theorem~\ref{T.RecognizesCoordinates} \ref{1.C}, but this result does not apply to the non-simple groups $G_n$. Thus each one of $\{G_n\mid n\in \bbN\}$ and $\{K_n\mid n\in \bbN\}$ recognizes coordinates, but their union does not.)
	
	It remains to verify that $\fD$ does not recognize coordinates. 
	Since $\cw(K_n)\to \infty$ as $n\to \infty$, neither of the groups $\prod_n K_n$ and $\tilde K:=\prod_{\Fin} K_n$ is perfect. Therefore the latter group has a nontrivial abelian quotient, $A:=\tilde K/[\tilde K,\tilde K]$. 
	
Proposition~\ref{P.Q/Z} implies that there is a nontrivial homomorphism from~$A$ into $Z(\prod_{\Fin} G_n)$. 	The argument from the proof of the first part of Theorem~\ref{Fact:construction} gives an automorphism of $\prod_{\Fin} (G_n\times K_n)$ that does not respect coordinates.

	Finally, all groups in $\fD$ are perfect, hence none of them has an abelian quotient and in particular it does not admit a nontrivial homomorphism into the center of any other group. 
\end{proof}

\subsection{The failure of a weak converse to Theorem~\ref{Fact:construction}}\label{S.Failure}

Theorem~\ref{T.Limiting} implies that  the converse to Theorem~\ref{Fact:construction}  is false.  In the original draft of this paper we asked whether a weak converse to Theorem~\ref{Fact:construction}, asserting that every indecomposable group $G$ that does not admit nontrivial homomorphism into its center recognizes coordinates, holds. This was quickly answered  in the negative by Forte Shinko (\cite{shinko2025recognzing}) 
who proved that the unrestricted wreath product $ \bbZ\wr \bbQ$ (i.e., the semidirect product of $\bbZ^\bbQ$ and $\bbQ$) admits no nontrivial homomorphism into its center,  but its ultrapower does.  This group is clearly infinite, and we do not know whether every finite indecomposable group that does not admit a nontrivial homomorphism into its center recognizes coordinates and even whether the class of all finite groups with this property recognizes coordinates.  
A potential angle of attack to resolve the former question may be via Sylows theorems. The reason why the proof that $\Sym_3$ recognizes coordinates (and Proposition~\ref{P.2}) requires an additional effort is because it has a unique (hence normal) 3-Sylow subgroup. A finite indecomposable  group necessarily has non-unique $p$-Sylow subgroup for some $p$, and a generalization of Proposition~\ref{P.2} (taking $Z(G)=\{e\}$ into the account) may show that such $G$ recognizes coordinates. 

Shinko's proof shows that the property `$G$ does not admit a nontrivial homomorphism into $Z(G)$' is not first-order. 
We remark that the other property figuring in the statement of Theorem~\ref{Fact:construction}, of being \emph{indecomposable}, is not first-order either. 

\begin{example}\label{Ex.(in)deecomposable}
	There is an indecomposable group $G$ whose ultrapower $G^\cU$ is decomposable.  In particular, neither the class of decomposable groups  nor the class indecomposable groups is axiomatizable.   
	
	Take $G=\bbZ$ and $H=\bbZ^{\cU}$, an ultrapower of $\bbZ$ associated with a nonprincipal ultrafilter $\cU$ on $\bbN$. Then $H$ has a nontrivial maximal divisible subgroup.  Let 
	\[
	K=\{(a_n)\in \bbZ^{\bbN}\vert (\forall k\geq 2)(\forall^\cU n) k|a_n\}.
	\] 
	Then the image of $K$ under the quotient map from $\bbZ^\bbN$ onto $\bbZ^\cU$ is clearly a nontrivial divisible subgroup. By \cite[Section 4, Theorem 2.5]{fuchs2015abelian} every abelian group has a maximal (under inclusion) divisible subgroup and this subgroup is a direct summand. Since the elements of the diagonal copy of  $\bbZ$ in $\bbZ^\cU$ are not divisible, this gives a decomposition of $\bbZ^\cU$ into two nontrivial direct summands. 
\end{example}

\subsection{\texorpdfstring{$|\mathcal{L}|$-}{|L|-}compactness} 
In Theorem~\ref{T.Limiting} we have seen that compactness fails for the notion of recognizing coordinates. %We however have $\aleph_1$-com\-pact\-ness.
The following gives some compactness-like result (as common, the cardinality $|\calL|$ of a language $\calL$ is the cardinality of the set of its sentences).

\begin{proposition} \label{P.CountableCompactness} Suppose that $\fC$ is a class of structures of the same language $\calL$. Then $\fC$ recognizes coordinates if and only if every subset  of $\fC$  of cardinality $|\calL|$  recognizes coordinates. 
\end{proposition}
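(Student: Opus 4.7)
The forward direction is immediate from the definition: any reduced product of structures from a subclass $\fC_0 \subseteq \fC$ is in particular a reduced product of structures from $\fC$, so every isomorphism between such products is coordinate-respecting whenever $\fC$ recognizes coordinates.

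For the reverse direction, I plan to argue the contrapositive and invoke Theorem~\ref{T.eq}, assuming $\fC$ is full (as the characterization via $h$-formulas requires fullness). Suppose $\fC$ does not recognize coordinates. By the equivalence \eqref{1.T.eq} $\Leftrightarrow$ \eqref{3.T.eq} of Theorem~\ref{T.eq}, which holds even when $\calL$ is uncountable, there is no $h$-formula $\phi(x,x',y,y')$ that is equivalent to $x=x'\rightarrow y=y'$ in $\Th(\fC)$. Since every $\calL$-formula is a finite string over a symbol set of cardinality $|\calL|$ together with countably many logical symbols, the collection $H$ of $h$-formulas in the variables $x,x',y,y'$ satisfies $|H| \leq |\calL|$.

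For each $\phi \in H$, the sentence
\[
\theta_\phi \;:=\; \forall x\,x'\,y\,y'\bigl(\phi(x,x',y,y')\leftrightarrow(x=x'\rightarrow y=y')\bigr)
\]
does not belong to $\Th(\fC) = \bigcap_{\cM \in \fC} \Th(\cM)$, so I would choose some $\cM_\phi \in \fC$ with $\cM_\phi \not\models \theta_\phi$. Setting $\fC_0 := \{\cM_\phi : \phi \in H\} \subseteq \fC$, we have $|\fC_0| \leq |\calL|$. Since $\cM_\phi \in \fC_0$ refutes $\theta_\phi$, no $h$-formula in $H$ is equivalent to $x=x'\rightarrow y=y'$ in $\Th(\fC_0)$. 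Applying Theorem~\ref{T.eq} to the full subclass $\fC_0$, it follows that $\fC_0$ does not recognize coordinates, contradicting the hypothesis.

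The main obstacle is really only bookkeeping: one has to verify that the set of $h$-formulas over $\calL$ has cardinality at most $|\calL|$, and one has to check that fullness of $\fC$ descends to the constructed subclass $\fC_0$, which is automatic since fullness is a pointwise property of structures. The crux of the argument is entirely absorbed into Theorem~\ref{T.eq}; the rest is a routine reflection argument, indexing one witness $\cM_\phi$ per candidate $h$-formula. I do not expect any substantive difficulty beyond these verifications, and the proof as sketched should yield the proposition in a short paragraph.
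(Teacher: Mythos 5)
Your proof is correct and follows essentially the same route as the paper's: both reduce to the syntactic criterion of Theorem~\ref{T.eq}\eqref{3.T.eq} and then run a diagonal argument over the $\leq|\calL|$ many $h$-formulas in $x,x',y,y'$, collecting one witness per failing candidate into a subclass of size $\leq|\calL|$. The only cosmetic differences are that you argue the contrapositive and select a single witness structure $\cM_\varphi$ per formula, whereas the paper argues directly via a ``uniform formula'' claim and selects a size-$\lambda$ witness subclass $\fD_\varphi$ per formula; you also rightly flag that fullness must be carried along (a hypothesis the paper's statement of the proposition leaves implicit).
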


\begin{proof}
	To prove the nontrivial direction, suppose that every subset $\fD$ of $\fC$ of cardinality $\lambda$ recognizes coordinates. By Theorem~\ref{T.A}, 
	the formula $x =x'\to y= y'$ is equivalent to an $h$-formula in $\Th(\fD)$.  We claim that there is an $h$-formula $\varphi$ such that for every $\fD\subseteq \fC$ of cardinality $\lambda$, $\varphi$ defines the support in reduced products. Assume otherwise, and for every $h$-formula $\varphi$ fix  $\fD_\varphi\subseteq \fC$ of cardinality $\lambda$ such that $x=x'\to y=y'$ is not equivalent to $\varphi$ in $\Th(\fD_\varphi)$. Then $\fD=\bigcup_\varphi \fD_\varphi$ has cardinality $\lambda$, hence by the assumption some $h$-formula $\psi$  is equivalent to $x=x'\to y=y'$ in $\Th(\fD)$. However, $\Th(\fD)\subseteq \Th(\fD_\psi)$, contradiction. 
\end{proof}

\begin{example}\label{Ex.incompactness}
	For every regular cardinal $\lambda$ there are a language $\calL$ of cardinality $\lambda$ and a class $\fC$ of $\calL$-structures that does not recognize coordinates, but every $\fD\subseteq \fC$ of smaller cardinality recognizes coordinates. In particular, if $\lambda$ is uncountable then every reduced product $\prod_{i\in \bbN} \cM_i/\cI$ of structures in~$\fC$ recognizes coordinates but some reduced product of structures in~$\fC$ does not recognize coordinates.

	Let $\lambda$ be a cardinal and consider the $\mathcal{L}$-language $\{P_\beta : \beta < \lambda\}$, consisting of~$\lambda$ many ternary predicates.
	For $\alpha < \lambda$, denote by $\mathcal{M}_\alpha$ an infinite structure such that for every $\beta<\lambda$ we have 
	\[
	P_\beta(x,y,z)^{\cM_\alpha} \Leftrightarrow
	\begin{cases}
		x = y = z & \text{if } \beta \leq \alpha, \\
		x = z \rightarrow y = z & \text{if } \beta > \alpha. 
	\end{cases}
	\]
	It will suffice to prove that the following holds. 
	\begin{enumerate}
		\item[(a)] The class $\fC \coloneq \{M_\alpha\}_{\alpha < \lambda}$ does not recognize coordinates.
		\item[(b)] Every subclas of cardinality $< \operatorname{cf}(\lambda)$ recognize coordinates.  
	\end{enumerate}
	To prove (a),  consider the product $M := \prod_{\alpha < \lambda} M_\alpha / \mathcal{I}$ where $\mathcal{I}$ is the ideal of subsets of size $< \lambda$. All ternary predicates $P_\beta(x, y, z)$ define on $M$ the equality $x = y = z$; thus, the structure is just an infinite set and does not recognize coordinates. 
	
	To prove (b), consider $\fD\subseteq \fC$ of size $\mu < \operatorname{cf}(\lambda)$. Then, for some $\beta < \lambda$, the class is included in $\{M_\alpha\}_{\alpha < \beta}$, and it suffices to show that this latter class recognizes coordinates. This follows from \Cref{T.A}, as $P_\beta(x, y, z)$ is an $h$-formula defining the support function in any reduced product. The second part of (b) follows immediately. 
	
	Then (b) implies that the class $\{M_\alpha\}_{\alpha < \mu}$ recognizes coordinates in $\mu$ for every $\mu < \operatorname{cf}(\lambda)$ and concludes the proof. 
\end{example}

\subsection{Interpreting \texorpdfstring{$\Bool{I}{I} $ }{P(I)/I} is not enough}
In \Cref{T.eq.groups}, we saw that a group recognizes coordinates if and only if all reduced product $\prod_\Ideal{I} G$ interprets the support function for every ideal $\Ideal{I}$. It is however not sufficient that all reduced product $\prod_\Ideal{I} G$ interprets the Boolean algebra $\Bool{I}{I}$.
In this paragraph, we use a little variation of \Cref{T.RecognizingCoordinates} to show that reduced products of the group of quaternion $Q_8$ interprets the Boolean algebra; however $Q_8$ does not recognize coordinates by \Cref{T.example:DNR}.

\begin{proposition}\label{P.interpretingBool}
	Let $G$ be a non-abelian group with center $\mathcal{Z}$ such that for all $a\in G\setminus \mathcal{Z}$, $C(a^G)=\mathcal{Z}$. Then for every ideal $\Ideal{I}$ on an index set $\Index{I}$, the restricted product $\mathcal{M}=\prod_\Ideal{I} G$ interprets the support modulo $\mathcal{Z}(\mathcal{M})$, that is, the function 
    \[\mathcal{M} \rightarrow \Bool{I}{I}, ~a=(a_i) \mapsto \big[\{i \in \Index{I} \mid a_i\notin \mathcal{Z}\}\big]_\Ideal{I}.\]
    
\end{proposition}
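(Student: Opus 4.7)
\begin{proofsketch}
The plan is to mimic the argument in Theorem~\ref{T.RecognizingCoordinates}\eqref{2.RecognizingCoordinates}, where the same $h$-formula
\[
\phi(x,y) \ \coloneq \ (\forall z)\,\bigl(y\in C(z^{G}) \rightarrow x\in C(z^{G})\bigr)
\]
was used to define the relation $\supp(y)\subseteq \supp(x)$. Since $x\in C(z^G)$ is the universal formula $(\forall t)(xtzt^{-1}=tzt^{-1}x)$ (an $h$-formula), and since $\psi(y,e)$ is trivially satisfied, the formula $\phi(x,y)$ is equivalent to an $h$-formula in any group.

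First, I compute how $\phi$ behaves in $G$ itself. The centralizer of a conjugacy class $z^G$ equals $G$ when $z\in\mathcal{Z}$ (since $z^G=\{z\}$) and equals $\mathcal{Z}$ when $z\notin\mathcal{Z}$ (by hypothesis). A moment's inspection shows that, for $y\in\mathcal{Z}$, the hypothesis $y\in C(z^G)$ always holds, forcing $x\in C(z^G)$ for every $z$, hence $x\in\mathcal{Z}$; while for $y\notin\mathcal{Z}$, the hypothesis $y\in C(z^G)$ holds exactly when $z\in\mathcal{Z}$, in which case $C(z^G)=G$ and the conclusion is automatic. Thus in $G$,
\[
G\models \phi(x,y) \quad\Longleftrightarrow\quad (y\in\mathcal{Z}\Rightarrow x\in\mathcal{Z}).
\]

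Next, I transfer this to $\mathcal{M}=\prod_{\Ideal I}G$. Since $\phi$ is an $h$-formula, Theorem~\ref{T.Palyutin} yields
\[
\mathcal{M}\models \phi(a,b) \ \Longleftrightarrow \ \{i\in \Index I\mid G\not\models \phi(a_i,b_i)\}\in \Ideal I,
\]
and the right-hand side is precisely the statement $\{i\mid b_i\in\mathcal{Z},\ a_i\notin\mathcal{Z}\}\in \Ideal I$, i.e.\ $\supp_{\mathcal{Z}}(a)\subseteq \supp_{\mathcal{Z}}(b)$, where $\supp_{\mathcal{Z}}(a)=[\{i\mid a_i\notin\mathcal{Z}\}]_{\Ideal I}$. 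So $\supp_{\mathcal{Z}}(a)\subseteq \supp_{\mathcal{Z}}(b)$ is a $\emptyset$-definable relation on $\mathcal{M}^{2}$.

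Finally, exactly as in the proof of Lemma~\ref{lemma:equiv} (or the first implication of Lemma~\ref{L.RecognizingCoordinatesGroup}), interpretability of this preorder yields interpretability of the Boolean algebra $\Bool{I}{I}$ together with the quotient map, which after identification is the function $\supp_{\mathcal{Z}}$. Surjectivity onto $\Bool{I}{I}$ is immediate: since $G$ is non-abelian, pick $a_{\ast}\in G\setminus\mathcal{Z}$ and, for a representative $\tilde S$ of $S\in\Bool{I}{I}$, the element $b$ with $b_{i}=a_{\ast}$ for $i\in\tilde S$ and $b_{i}=e$ otherwise satisfies $\supp_{\mathcal{Z}}(b)=S$. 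There is no real obstacle here; the only subtlety is the case analysis verifying that in $G$ itself, $\phi(x,y)$ captures exactly the implication $y\in\mathcal{Z}\Rightarrow x\in\mathcal{Z}$, which uses the hypothesis $C(a^{G})=\mathcal{Z}$ for all $a\notin\mathcal{Z}$ in an essential way.
\end{proofsketch}
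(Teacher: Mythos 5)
Your proof is correct and rests on essentially the same idea as the paper's: the paper defines the relation $x\trianglelefteq y$ by $(\forall w)(wx\neq xw\rightarrow (\exists u)\,w^u y\neq yw^u)$, which a contrapositive/conjugation argument shows is logically equivalent to your $\phi(x,y)$. Where the paper then verifies the support characterization by an explicit case analysis with representatives in $\mathcal{M}$, you instead compute the meaning of $\phi$ pointwise in $G$ and transfer via the $h$-formula theorem (Theorem~\ref{T.Palyutin}), a slightly cleaner way to get the same conclusion.
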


\begin{proof}
	On $\mathcal{M}$, consider the binary relation $\trianglelefteq$:
	\[x \trianglelefteq y \quad  \Leftrightarrow \quad  (\forall w) ~\left(wx \neq xw  \rightarrow (\exists u) ~w^{u}y \neq yw^{u} \right).\]
	The relation $x \trianglelefteq y$ says that any element who does not commute with $x$ has a conjugate which does not commute with $y$. This is clearly a preorder: for all $x,y,z \in \mathcal{M}$ $x \trianglelefteq y\trianglelefteq z$. Consider the associated equivalence relation $x\sim y$ and the quotient $B:=\mathcal{M}/\sim$.  We need to prove the following:
	\begin{claim}
		For all $x,y\in \mathcal{M}$, $x\sim y$ if and only if $x$ and $y$ have the same support modulo $\mathcal{Z}(\mathcal{M})$. In particular $(B,\trianglelefteq)$ can be identify with $(\Bool{I}{I},\subseteq)$
	\end{claim}

	Consider two elements $x=(x_i)_i$ and $y=(y_i)_i$ who does not have the same support modulo $\mathcal{Z}(G)$. We may assume: 
	\[J:= \{i\in \Index{I} \mid y_i\notin \mathcal{Z} \wedge x_i\in \mathcal{Z} \} \notin \Ideal{I}.\]
	%\[\supp(y\mathcal{Z}(G))\setminus \supp(x\mathcal{Z}(G)) \notin \Ideal{I}.\]
	For all $i\in J$, let $a_i$ be an element in $G\setminus \mathcal{Z}$ which doesn't commute with  $y_i$. We set $w=(w_i)$ with 
	\[w_i=\begin{cases}  a_i,  & \text{ if } i \in J, \\
		e, & \text{otherwise.}
	\end{cases}\]
	Then $w\in \mathcal{M}$ and clearly $w$ doesn't commute with $y$ but all conjugates of $w$ commute with $x$, and therefore $y \not\trianglelefteq x $. Conversely, assume that $x$ and $y$ have the same support modulo $\mathcal{Z}(\mathcal{M})$ and let $z=(z_i)_i\in \mathcal{M}$ which does not commute with $x$. Then, \[J:=\{ i \in \Index{I} \mid z_ix_i \neq x_iz_i  \} \notin\mathcal{I}.\]
	In particular, for $i\in J$, $z_i$ and $y_i$ are not in $\mathcal{Z}$. Then by assumption, for all $i\in J$, there is $h_i$ such that $z_i^{h_i}$ doesn't commute with $x_i$ for $i\in J$. If $i\notin J$, set $h_i=e$ and let $h=(h_i)\in \mathcal{M}$. It follows that $z^h$ doesn't commute with $x$ in $\mathcal{M}$. Therefore  $x \trianglelefteq y$ and by symmetry, $x \sim y$. This concludes the proof.
\end{proof}

\begin{corollary} \label{C.Q8} The quaternions 
	$Q_8:=\left\langle a,b \, \middle| \, a^4=e, b^2=a^2, ba=a^{-1}b\right\rangle$ have the property that $\Bool II$ is definable in reduced product $\prod_\Ideal I Q_8$ but do not recognize coordinates. 
\end{corollary}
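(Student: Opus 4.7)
The second assertion is immediate from Theorem~\ref{T.example:DNR}~\ref{DNR.Q8}. For the first, write $G \coloneq \prod_{\mathcal{I}} Q_{8}$. Note that Proposition~\ref{P.interpretingBool} does \emph{not} apply here: for $a = i \in Q_{8}$, the conjugacy class is $\{i, -i\}$ and its centralizer is $\langle i\rangle = \{1, -1, i, -i\}$, strictly larger than $Z(Q_{8}) = \{1, -1\}$. Nevertheless, I will interpret $\mathcal{P}(\mathbb{I})/\mathcal{I}$ directly in $G$ by equipping the center with a Boolean order coming from commutators.

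First, $Z(G)$ is the definable subgroup $\{a \in G : a^{2} = e\}$, since the elements of $Q_{8}$ of order dividing $2$ are exactly $\{\pm 1\} = Z(Q_{8})$; so $Z(G) = \prod_{\mathcal{I}} Z(Q_{8})$. I identify $Z(G)$ with $\mathcal{P}(\mathbb{I})/\mathcal{I}$ via $z \leftrightarrow \{i : z_{i} = -1\}$. Under this identification the group operation on $Z(G)$ corresponds to symmetric difference, and what remains is to interpret the Boolean partial order $\leq$, from which meet, join, and complement follow by first-order universal characterizations (for example, $x \wedge y$ is the greatest element below both $x$ and $y$).

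The key step is to show that $\leq$ on $Z(G)$ is definable via the formula
\[
z \leq x \ \Longleftrightarrow\ (\exists a, b \in G)\ \ a^{2} = b^{2} = x \ \wedge\ [a, b] = z.
\]
For the forward direction, I would argue coordinate-wise: at any $i$ with $x_{i} = 1$, $a_{i}^{2} = b_{i}^{2} = 1$ forces $a_{i}, b_{i} \in Z(Q_{8})$, so $[a_{i}, b_{i}] = 1 = z_{i}$, yielding $\supp(z) \subseteq \supp(x)$ modulo $\mathcal{I}$. For the converse, given $z \leq x$, I would construct witnesses in $\prod Q_{8}$ coordinate-wise: $a_{i} = b_{i} = 1$ where $x_{i} = 1$; $a_{i} = b_{i} = i$ where $x_{i} = -1$ and $z_{i} = 1$ (so $[a_{i},b_{i}] = [i,i] = 1$); $a_{i} = i$, $b_{i} = j$ where $x_{i} = -1$ and $z_{i} = -1$ (so $[a_{i},b_{i}] = [i,j] = -1$). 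The main subtlety is bookkeeping with $\mathcal{I}$-null sets, which is routine. The conceptual point is that the non-degenerate symplectic commutator pairing on $Q_{8}/Z(Q_{8})$ over $\mathbb{F}_{2}$ lets every sub-support of $x$ be realized as a commutator $[a, b]$ with $a^{2} = b^{2} = x$, which is exactly what makes the Boolean order first-order definable even though Proposition~\ref{P.interpretingBool} fails.
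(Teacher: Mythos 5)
Your proof is correct and takes a genuinely different route --- in fact a necessary one. The paper cites Proposition~\ref{P.interpretingBool} and claims $Q_8$ satisfies its hypothesis; as you observe, it does not: for $a = i$ the conjugacy class is $\{i,-i\}$ and $C_{Q_8}(\{i,-i\}) = \langle i\rangle = \{\pm 1, \pm i\}$, strictly larger than $Z(Q_8) = \{\pm 1\}$. This is not a cosmetic gap: the relation $\trianglelefteq$ used in that proposition's proof genuinely fails to detect support modulo the center in reduced powers of $Q_8$. Taking $x = (i,i,\dots)$ and $y = (j,j,\dots)$, both with full support modulo the center, the element $w = (j,j,\dots)$ does not commute with $x$, yet every conjugate of $w$ has coordinates in $\{j,-j\}$ and so commutes with $y$; hence $x \not\trianglelefteq y$, contradicting the claim in that proof. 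Your direct construction --- identifying $\Bool{I}{I}$ with the definable set $\{a : a^{2}=e\} = Z\left(\prod_{\Ideal{I}} Q_{8}\right)$ and recovering the Boolean order via $z \leq x \Leftrightarrow (\exists a,b)\,(a^{2}=b^{2}=x \wedge [a,b]=z)$ --- is sound. The coordinate-wise witnesses in your converse direction check out (indeed $[i,j]=-1$, $[i,i]=1$), and the $\Ideal{I}$-small exceptional set where $z_i=-1$ but $x_i=1$ is harmless. What you exploit is exactly the structural feature that Proposition~\ref{P.interpretingBool} cannot see: the nondegenerate commutator pairing on $Q_8/Z(Q_8)$ over $\mathbb{F}_2$, together with the fact that every noncentral element squares to $-1$. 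You have located a genuine error in the paper's argument for this corollary; your proof should replace it.
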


\begin{proof}
	The quaternions satisfy the assumptions of Proposition~\ref{P.interpretingBool} but they do not recognize coordinates by Corollary~\ref{T.example:DNR} \ref{DNR.Q8}. 
\end{proof}

Thus \cite[Theorem~7]{de2023trivial} does not apply 
to prove that forcing axioms imply all automorphisms of $\prod_{\Fin} Q_8$ are trivial, and on the other hand results of \cite{de2024saturation} cannot be used to prove that $\prod_{\Fin}Q_8$ is fully saturated by Proposition~\ref{P.stable} below. This appears to leave the possibility that forcing axioms imply all automorpisms of $\prod_{\Fin} Q_8$ are trivial. We show that this is not the case.

\begin{corollary} \label{C.Q8-automorphisms}The reduced power of $Q_8$ associated with $\Fin$ has $2^{2^{\aleph_0}}$ automorphisms, regardless of whether CH holds or not. 
	\qed  \end{corollary}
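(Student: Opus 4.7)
The plan is to construct $2^{2^{\aleph_0}}$ distinct automorphisms of $M \coloneqq \prod_{\Fin} Q_8$ by a general version of the construction in the proof of \Cref{Fact:construction}. The matching upper bound is automatic from $|M| = 2^{\aleph_0}$, since a set of that cardinality admits at most $(2^{\aleph_0})^{2^{\aleph_0}} = 2^{2^{\aleph_0}}$ self-maps, and no part of this cardinal arithmetic uses CH.

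For any group homomorphism $\phi \colon M \to Z(M)$, I define $\tau_\phi \colon M \to M$ by $\tau_\phi(x) = \phi(x) \cdot x$. Since $\phi(x)$ is central, $\tau_\phi$ is a homomorphism: $\tau_\phi(xy) = \phi(x)\phi(y) \, xy = \phi(x) x \phi(y) y = \tau_\phi(x)\tau_\phi(y)$. Provided $\phi$ vanishes on $Z(M)$, I claim $\tau_\phi$ is an automorphism: if $\tau_\phi(x) = e$ then $x = \phi(x)^{-1} \in Z(M)$, forcing $\phi(x) = e$ and so $x = e$; and the map $x \mapsto \phi(x)^{-1} x$ serves as a two-sided inverse, using again that $\phi$ is trivial on $Z(M)$. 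Distinct $\phi$'s produce distinct $\tau_\phi$'s, since $\phi$ is recovered by $\phi(x) = \tau_\phi(x) \, x^{-1}$.

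The key structural computation is that $[M,M] = Z(M)$. The inclusion $[M,M] \subseteq Z(M)$ is immediate from $[Q_8, Q_8] = Z(Q_8) = \{\pm 1\}$ applied coordinatewise; the reverse follows from the pointwise identity $-1 = [i,j]$, which lets one realize any coordinatewise $\{\pm 1\}$-sequence as a single commutator in $M$. Hence $Z(M) \cong \cP(\bbN)/\Fin$ and $M/[M,M] \cong \prod_{\Fin}(Q_8/[Q_8,Q_8]) \cong (\cP(\bbN)/\Fin)^2$. Every homomorphism $\phi \colon M \to Z(M)$ factors through the abelianization and therefore automatically vanishes on $Z(M) = [M,M]$; consequently the set of such $\phi$ is in canonical bijection with $\mathrm{Hom}_{\bbF_2}\bigl((\cP(\bbN)/\Fin)^2, \cP(\bbN)/\Fin\bigr)$.

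To conclude, I count these $\bbF_2$-linear maps. Since $|\cP(\bbN)/\Fin| = 2^{\aleph_0}$, its dimension as an $\bbF_2$-vector space is $2^{\aleph_0}$, and so is that of $(\cP(\bbN)/\Fin)^2$. An $\bbF_2$-linear map is determined by its values on a basis of the domain, yielding $(2^{\aleph_0})^{2^{\aleph_0}} = 2^{2^{\aleph_0}}$ homomorphisms and hence that many distinct automorphisms $\tau_\phi$. Combined with the upper bound, $|\mathrm{Aut}(M)| = 2^{2^{\aleph_0}}$ regardless of CH. The only nonroutine checkpoint I anticipate is the identity $[M,M] = Z(M)$; the remaining steps are all bookkeeping within a construction already present, in germ, in \Cref{Fact:construction}.
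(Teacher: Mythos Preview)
Your proof is correct and proceeds by a genuinely different route than the paper's. The paper invokes Lemma~\ref{L.GxH}, whose engine is the external result \cite[Theorem~1]{de2024saturation}: the abelian group $\prod_{\Fin} Z(Q_8)$ has stable theory, hence is saturated, hence has $2^{2^{\aleph_0}}$ automorphisms; composing these with a single fixed surjection $\Psi\colon M\to Z(M)$ induced coordinatewise by one of the maps $\alpha_l\colon Q_8\to Z(Q_8)$ yields the desired family of automorphisms $a\mapsto \Phi(\Psi(a))\cdot a$. Your argument instead counts \emph{all} homomorphisms $M\to Z(M)$ directly via elementary $\bbF_2$-linear algebra, using the structural identity $[M,M]=Z(M)$ to see that every such homomorphism vanishes on the center. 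This is more self-contained (no appeal to saturation or stability), and the computation $[M,M]=Z(M)$ you flag as the one nonroutine step is indeed correct and easy: $-1=[i,j]$ in $Q_8$ realizes any element of $\prod_{\Fin}\{\pm 1\}$ as a single commutator. The paper's approach, by contrast, is shorter once the saturation black box is granted and makes transparent \emph{where} the $2^{2^{\aleph_0}}$ automorphisms live---in $\Aut(Z(M))$---which is conceptually in keeping with the paper's theme that abelian (stable) reduced products are always saturated.
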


\begin{proof}
	By the proof of Theorem~\ref{T.example:DNR} \ref{DNR.Q8},  $Q_8$ admits a homomorphism onto its  (nontrivial) center. Therefore the conclusion follows by Lemma~\ref{L.GxH} below. 
\end{proof}

\begin{lemma}\label{L.GxH}
	If $G$ and $H$ are groups such $Z(H)$ is nontrivial and  $G$ admits a homomorphism $f$ onto $Z(H)$, then the reduced power of $G\times H$ associated with $\Fin$ has $2^{2^{\aleph_0}}$ automorphisms, provably in ZFC. 
\end{lemma}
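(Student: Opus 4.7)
The plan is to exhibit $2^{2^{\aleph_0}}$ pairwise distinct automorphisms of $A := \prod_{\Fin}(G \times H)$ via a twist construction. Using the identification $A \cong \prod_{\Fin} G \times \prod_{\Fin} H$, note that $Z(A) = \prod_{\Fin} Z(G \times H)$ contains $V := \prod_{\Fin} Z(H)$ via the embedding $h \mapsto (e, h)$. For each group homomorphism $\phi \colon \prod_{\Fin} G \to V$, define $\tau_\phi \colon A \to A$ by $\tau_\phi(a,b) := (a, \phi(a) \cdot b)$. A direct check using $\phi(a) \in Z(\prod_{\Fin} H)$ shows that $\tau_\phi$ is a group automorphism, with inverse $\tau_{\phi^{-1}}$ where $\phi^{-1}(a) := \phi(a)^{-1}$; and evaluating at $(a, e_H)$ recovers $\phi$, so distinct $\phi$ produce distinct $\tau_\phi$. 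Therefore $|\Aut(A)| \geq |\Hom(\prod_{\Fin} G, V)|$.

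Next, since $Z(H)$ is abelian, $f \colon G \twoheadrightarrow Z(H)$ factors through the abelianization $G^{ab}$ as a surjection $\bar f$; applying $\bar f$ coordinatewise and composing with the natural surjection $\prod_\Fin G \twoheadrightarrow \prod_\Fin G^{ab}$ yields a surjective group homomorphism $\pi \colon \prod_{\Fin} G \twoheadrightarrow V$. Pre-composition with $\pi$ injects $\operatorname{End}(V) \hookrightarrow \Hom(\prod_{\Fin} G, V)$, reducing matters to establishing $|\operatorname{End}(V)| \geq 2^{2^{\aleph_0}}$. For this last step, the idea is to exploit a large $\bbF_p$- or $\bbQ$-vector-space subquotient of $V$. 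In the case of \Cref{C.Q8-automorphisms}, where $Z(H) = \bbZ/2\bbZ$, and more generally whenever some prime $p$ witnesses that $Z(H)$ has $p$-torsion \emph{and} is not $p$-divisible, both $V[p] = \prod_\Fin Z(H)[p]$ and $V/pV \cong \prod_{\Fin}(Z(H)/pZ(H))$ are $\bbF_p$-vector spaces of dimension $2^{\aleph_0}$; any $\bbF_p$-linear map $V/pV \to V[p]$ then lifts to a group endomorphism of $V$ via the composition $V \twoheadrightarrow V/pV \to V[p] \hookrightarrow V$, and there are $(2^{\aleph_0})^{2^{\aleph_0}} = 2^{2^{\aleph_0}}$ such linear maps, all distinct.

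When $Z(H)$ is divisible and torsion-free, $V$ is itself a $\bbQ$-vector space of dimension $2^{\aleph_0}$, and the $\bbF_p$-argument above is replaced by the analogous $\bbQ$-linear one. The main obstacle I anticipate is producing a uniform argument in the remaining intermediate cases --- for instance $Z(H) = \bbZ/p^\infty$ or $Z(H) = \bbZ$ --- where both $V[p]$ and $V/pV$ can simultaneously fail to be suitable $\bbF_p$-vector spaces of large dimension, and where Specker-type results prevent $\operatorname{End}(V)$ from being counted directly through the coordinatewise structure; handling these likely requires either an $\Ext^1$-based construction or a twist depending on both the $G$- and $H$-coordinates rather than only the $H$-factor.
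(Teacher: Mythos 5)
Your twist construction $\tau_\phi(a,b)=(a,\phi(a)\cdot b)$, together with the reduction to the assertion $|\operatorname{End}(\prod_{\Fin} Z(H))|\geq 2^{2^{\aleph_0}}$ via the surjection $\pi\colon\prod_{\Fin}G\twoheadrightarrow\prod_{\Fin}Z(H)$, matches the paper's approach precisely: the paper likewise composes automorphisms of $\prod_{\Fin}Z(H)$ with the surjection $\Psi\colon(a_n)/\Fin\mapsto (f(a_n))/\Fin$ to manufacture distinct twists. The gap is exactly where you flagged it. Your final step, showing $|\operatorname{End}(V)|\geq 2^{2^{\aleph_0}}$ for $V=\prod_{\Fin}Z(H)$, relies on a case split on the algebraic structure of $Z(H)$ ($p$-torsion and non-$p$-divisible, versus divisible torsion-free), and you correctly observe that this split does not cover cases such as $Z(H)=\bbZ$ or $Z(H)=\bbZ/p^\infty$, where both $V[p]$ and $V/pV$ can fail to supply the needed $\bbF_p$-vector space of dimension $2^{\aleph_0}$.

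The paper sidesteps this case analysis entirely. Since $\prod_{\Fin}Z(H)$ is an abelian group, its theory is stable; by \cite[Theorem~1]{de2024saturation}, stable reduced products over $\Fin$ of countable structures are (provably in ZFC) saturated, and a saturated structure of cardinality $2^{\aleph_0}$ in a countable signature has $2^{2^{\aleph_0}}$ automorphisms by the standard back-and-forth argument. This yields the needed $2^{2^{\aleph_0}}$ maps $\phi$ uniformly, without ever inspecting whether $Z(H)$ is torsion, divisible, $p$-divisible, or mixed. If you want to complete your more elementary argument you would need to fill in the remaining cases (e.g.\ for $Z(H)=\bbZ$ the group $V=\prod_{\Fin}\bbZ$ has a nontrivial divisible direct summand of cardinality $2^{\aleph_0}$, which can be used; for $Z(H)=\bbZ/p^\infty$ the group $V$ is itself divisible torsion, hence a large direct sum of copies of $\bbZ/p^\infty$), and the general case requires care about which abelian-group invariants of $Z(H)$ survive passage to the reduced power. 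The saturation argument is both shorter and correct-by-citation.
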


	\begin{proof}
	By \cite[Theorem~1]{de2024saturation}, $\prod_{\Fin} Z(H)$ is, being stable,  saturated. It therefore has $2^{2^{\aleph_0}}$ automorphisms.  Also,  
	\[
	\Psi\colon (a_n)/\Fin\to f(a_n)/\Fin,
	\]
	defines a surjective homomorphism from $\prod_{\Fin} G$ onto $\prod_{\Fin}Z(H)=Z(\prod_{\Fin}H)$. 
	Since $\prod_{\Fin} (G\times H)\cong \prod_{\Fin} G\times \prod_{\Fin} H$, and $\Psi$ defines an endomorphism $\tilde \Phi$ of this group whose range is $Z(\prod_{\Fin} H)$. 
	For every automorphism $\Phi$ of $\prod_{\Fin} Z(H)$ we there have a unique automorphism of $\prod_{\Fin}(G\times H)$ defined by $a\mapsto a\tilde\Phi(a)$. 
\end{proof}

\section{Concluding remarks}

\subsection{An abstract criterion for recognizing coordinates}
We give an abstraction (and a generalization) of Theorem~\ref{T.RecognizingCoordinates} \eqref{2.RecognizingCoordinates}. The proof is analogous (the relation $R(x,y)$ corresponds to `$x$ and $y$ are in the same conjugacy class' and $S(x,y)$ to $xy=yx$).  This also gives a template for abstracting other results from \S\ref{S.Recognizing}.  

In the following, $h$-definable is short for `definable by an $h$-formula', and by `uniformly' we mean that the same formula works in all models of $T$.

\begin{proposition}
	Suppose that $T$ is a theory such that all models $M$ of $T$ satisfy the following.
	\begin{enumerate}
		\item There is a uniformly $h$-definable element $e$. 
		\item There is a uniformly $h$-definable binary relations $R$ and $S$ with the following properties for all $x,y$ in $M$.  
		\begin{enumerate}
			\item $R(e,x)$ $\Leftrightarrow$ $R(x,e)$ $\Leftrightarrow$ $x=e$. 
			\item  $S(e,x)$ and $S(x,e)$. 
			\item If $x\neq e$ and $y\neq e$, then there is $z$ such that $R(x,z)\land \lnot S(z,y)$. 
		\end{enumerate}
	\end{enumerate}
	Then  $T$ recognizes coordinates. 
\end{proposition}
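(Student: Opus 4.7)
Proof plan. The plan is to follow the template of Theorem~\ref{T.RecognizingCoordinates}\eqref{2.RecognizingCoordinates} essentially verbatim: write down an explicit $h$-formula, verify with (a)--(c) that it is equivalent modulo $T$ to ``$x=e\to y=e$'', and then invoke Theorem~\ref{T.eq} (playing the role that Theorem~\ref{T.eq.groups} did in the group setting) to conclude that $T$ recognizes coordinates. The hint identifies $R$ with conjugacy and $S$ with commutation; the formula I build is the abstract transcription of ``$y\in C(z^G)\to x\in C(z^G)$''.

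First I would set
\[\varphi(u,z)\;:=\;(\forall w)\bigl(R(z,w)\to S(w,u)\bigr),\]
which is $h$ because $R$ and $S$ are. Think of $\varphi(u,z)$ as ``$u$ centralizes the $R$-class of $z$.'' The candidate formula is then
\[\psi(x,y)\;:=\;(\forall z)\bigl(\varphi(x,z)\to\varphi(y,z)\bigr).\]
To confirm that $\psi$ has $h$-shape I would check that $(\exists z)\varphi(x,z)$ is provable in $T$: take $z=e$, so (a) collapses the quantifier over $w$ to $w=e$ and (b) then gives $S(e,x)$; hence $\varphi(x,e)$ is universally true, and $\psi$ is genuinely an $h$-formula in the sense of Definition~\ref{Def.h-formulas}.

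The core step is the verification $\psi(x,y)\Leftrightarrow (x=e\to y=e)$ in every model of $T$, carried out by a case split on $z$. For $z=e$, both $\varphi(x,e)$ and $\varphi(y,e)$ are true by the paragraph above, so the inner implication is vacuous. For $z\neq e$, condition (b) yields $\varphi(e,z)$ unconditionally, while condition (c) applied to $z$ and any $u\neq e$ produces a witness $w$ with $R(z,w)\wedge \neg S(w,u)$, falsifying $\varphi(u,z)$. Thus for every $z\neq e$ one has $\varphi(u,z)\Leftrightarrow u=e$, so the inner implication collapses to $x=e\to y=e$. Since every nontrivial model contains some $z\neq e$, conjoining over $z$ gives $\psi(x,y)\Leftrightarrow (x=e\to y=e)$, and Theorem~\ref{T.eq} finishes the argument.

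The main obstacle is a bookkeeping subtlety about the slot order of $S$: condition (c) is stated as $\neg S(z,y)$, while $\varphi$ uses $S(w,u)$. If $S$ is symmetric the question evaporates; otherwise I would simply rewrite $\varphi(u,z):=(\forall w)(R(z,w)\to S(u,w))$ so that the ``test element'' $u$ sits in the slot matching $y$ in~(c), or symmetrize by replacing $S$ with $\tilde S(u,v):=S(u,v)\wedge S(v,u)$, which is still $h$ and still satisfies (b). A secondary point is that Theorem~\ref{T.eq} is formulated in terms of $x=x'\to y=y'$ rather than $x=e\to y=e$; this is handled in the group case by right-multiplication by $x'^{-1}$, and in the present abstract setting one either treats $e$ as an $h$-definable constant (so the Theorem~\ref{T.eq.groups}-style reduction applies directly in the expanded language) or reruns the same template with $e$ replaced by an arbitrary parameter $z$, relying on the uniform shape of (a)--(c) to transport them.
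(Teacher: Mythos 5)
Your argument tracks the paper's own proof essentially line by line: define $\varphi(u,z):=(\forall w)(R(z,w)\to S(w,u))$, check via (a)--(c) that $\varphi(u,z)$ holds exactly when $u=e$ or $z=e$, deduce that $(\forall z)(\varphi(x,z)\to\varphi(y,z))$ is equivalent to $x=e\to y=e$, and invoke the general characterization. The slot-order worry you raise is spurious: renaming (c)'s triple $(x,y,z)$ as (class, test, witness) $=(c,u,w)$, the conclusion reads $R(c,w)\wedge\lnot S(w,u)$, which is precisely the position of $S$ in $\varphi(u,c)=(\forall w)(R(c,w)\to S(w,u))$, so no symmetrization is needed.

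There are, however, two places where the argument is not yet complete. First, ``$\varphi$ is $h$ because $R$ and $S$ are'' is not a justification: $(\forall w)$ of an implication is not one of the closure operations of Definition~\ref{Def.h-formulas}. One must use the construct $(\exists w)R(z,w)\wedge(\forall w)(R(z,w)\to S(w,u))$, whose equivalence to $\varphi$ requires $T\vdash(\forall z)(\exists w)R(z,w)$. This does hold --- by (a) with witness $w=e$ when $z=e$, and by (c) (applied with, e.g., $y:=z$) when $z\neq e$, assuming models are nontrivial --- but it needs to be stated. (Note that the paper's own justification here, ``since $R(z,e)$ holds in all models,'' is at odds with (a), which makes $R(z,e)$ equivalent to $z=e$; the argument just given is the one that works.) Second, your ``secondary point'' is in fact the most serious one, and neither of your sketched bridges closes it: Theorem~\ref{T.eq} requires the three-variable $x=z\to y=z$, not the two-variable $x=e\to y=e$. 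In the group case Lemma~\ref{L.RecognizingCoordinatesGroup} passes from the $e$-based support to $\supp_=$ via $\supp_=(a,b)=\supp(a\cdot b^{-1})^{\complement}$, which uses the group operation. Your option (2) would need (a)--(c) to hold with $e$ replaced by an arbitrary $z$, which they do not; your option (1), in invoking ``the Theorem~\ref{T.eq.groups}-style reduction,'' quietly reuses the missing algebraic structure. The paper's own proof jumps from $x=e\to y=e$ being $h$ to ``the relative support function $\supp_=$ is interpretable'' without explanation, so you inherit this gap from the source rather than create it, but flagging it is not the same as filling it.
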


\begin{proof}  %The proof is analogous to the proof of Theorem~\ref{T.RecognizingCoordinates} \eqref{2.RecognizingCoordinates} which shows  that the relation $x=e\to y=e$ is uniformly $h$-definable in the theory of every model $\cM$ of~$T$. 
	
	Let $\varphi(x,z)$ be the formula $(\forall t)(R(z,t)\to S(t,x))$. Since $R(z,e)$ holds in all models of $T$, this is equivalent to an  $h$-formula.  Moreover, since  $\varphi(x,e)$ is true in every model of $T$, 
	\begin{equation}\label{eq.x=e.y=e}
		(\forall z)(\varphi(y,z)\lthen \varphi(x,z)),
	\end{equation}
	is equivalent to an $h$-formula.  By the assumption that $x\neq e$ implies there is $z$ such that $\lnot S(x,z)$, $\varphi(x,z)$ is equivalent to stating that $x=e$ or $z=e$. Therefore the displayed formula is equivalent to $x=e\to y=e$. Hence the relative support function $\supp_=$ is interpretable by an $h$-formula, and Lemma~\ref{L.RecognizingCoordinates}  implies the desired conclusion. 
\end{proof}

\subsection{Isomorphisms between (non-reduced) products}\label{S.Products}

Here we make some connections between recognizing coordinates and recognizing coordinates in products and direct sums. The following is the analog of Definition~\ref{D.recognizes-coordinates} in the context of products. 

\begin{definition}\label{D.product.recognizes-coordinates}
	An isomorphism $\Phi$ between products $\cM=\prod_i \cM_i$ and $\cN=\prod_i \cN_j $ is \emph{isomorphically coordinate respecting} if  there is a bijection $\pi\colon \Index J\to \Index I$ such that for all $j\in \Index J$ there is an isomorphism $\varphi_j\colon \cM_{\pi(j)} \to \cN_{j}$ such that 
	\[
	\Phi((a_i)_{i\in \Index I})=(\varphi_i(a_{\pi(i)})),
	\]
	for all $(a_i)_{i\in \Index I}$ in $\cM$. 
	
	A first-order theory $T$ is said to \emph{recognize coordinates in products} if every isomorphism between products of models of $T$ is isomorphically coordinate respecting. 
	
	More generally, if $\fC$ is a class of structures of the same language (not necessarily axiomatizable), then $\fC$ is said to \emph{recognize coordinates} if for every isomorphism between products of structures from $\fC$ is isomorphically coordinate respecting. 

\end{definition}
We remark that because products are a kind of reduced product, the following theorem holds trivially. 

\begin{theorem}
	Every class of structures that recognizes coordinates, in particular every class of groups listed in Theorem~\ref{T.RecognizesCoordinates} recognizes coordinates in products. 
\end{theorem}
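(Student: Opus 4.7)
The plan is to observe that any direct product is literally a reduced product associated to the trivial ideal $\mathcal{I} = \{\emptyset\}$ (this is explicitly allowed by the definition of reduced product in the paper). Under this identification, the Boolean algebra $\mathcal{P}(\mathbb{I})/\{\emptyset\}$ is just $\mathcal{P}(\mathbb{I})$ itself, and for every $S \subseteq \mathbb{I}$ the restriction $\mathcal{M}\restriction S$ is the product $\prod_{i \in S} \mathcal{M}_i$. Given any isomorphism $\Phi\colon \prod_{i \in \mathbb{I}} \mathcal{M}_i \to \prod_{j \in \mathbb{J}} \mathcal{N}_j$ between products of structures from~$\fC$, the hypothesis that $\fC$ recognizes coordinates supplies a Boolean algebra isomorphism $\alpha\colon \mathcal{P}(\mathbb{I}) \to \mathcal{P}(\mathbb{J})$ and a commuting family of maps $\Phi_S\colon \mathcal{M}\restriction S \to \mathcal{N}\restriction \alpha(S)$.

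Next I would extract the coordinate bijection from $\alpha$. Since $\alpha$ is an isomorphism of Boolean algebras, it preserves atoms; the atoms of $\mathcal{P}(\mathbb{I})$ are precisely the singletons $\{i\}$, so for each $i \in \mathbb{I}$ there is a unique $j =: \sigma(i) \in \mathbb{J}$ with $\alpha(\{i\}) = \{\sigma(i)\}$. The map $\sigma\colon \mathbb{I} \to \mathbb{J}$ is a bijection, and we set $\pi := \sigma^{-1}\colon \mathbb{J} \to \mathbb{I}$. For each $j \in \mathbb{J}$ the induced map $\varphi_j := \Phi_{\{\pi(j)\}}\colon \mathcal{M}_{\pi(j)} \to \mathcal{N}_j$ is an isomorphism, using the identifications $\mathcal{M}\restriction\{\pi(j)\} = \mathcal{M}_{\pi(j)}$ and $\mathcal{N}\restriction\{j\} = \mathcal{N}_j$.

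Finally, I would verify that $\Phi$ agrees with the tuple $(\varphi_j)_{j \in \mathbb{J}}$ coordinatewise. The commuting diagram in the definition of recognizing coordinates, specialized to the atom $S = \{\pi(j)\}$, states that $\Phi_{\{\pi(j)\}}(\pi_{\{\pi(j)\}}(a)) = \pi_{\{j\}}(\Phi(a))$ for every $a \in \mathcal{M}$. Under the trivial ideal, the projection $\pi_{\{i\}}$ is literally evaluation at the $i$-th coordinate, so this reads $\varphi_j(a_{\pi(j)}) = \Phi(a)_j$, which is exactly the requirement in Definition~\ref{D.product.recognizes-coordinates}. The only mild subtlety, which is the one point that needs care rather than effort, is confirming that the product is being treated as a reduced product in the precise sense of the paper's definition, and that the atoms of $\mathcal{P}(\mathbb{I})$ correspond under $\alpha$ to atoms of $\mathcal{P}(\mathbb{J})$ (so that $\alpha$ lifts to a coordinate bijection, not merely a Boolean isomorphism). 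With these identifications in hand the proof is essentially a bookkeeping exercise and involves no new content beyond Definition~\ref{D.recognizes-coordinates}.
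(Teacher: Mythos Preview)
Your proof is correct and follows the same approach as the paper, which simply remarks that ``because products are a kind of reduced product, the following theorem holds trivially.'' You have spelled out the details the paper omits—identifying the product with the reduced product over $\{\emptyset\}$, extracting the coordinate bijection from the atoms of $\mathcal{P}(\mathbb{I})$, and reading off the coordinate isomorphisms from the commuting diagram at singletons.
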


Recognizing coordiantes in products is closely related to the classical Renek--Krull--Schmidt--Azumaya theorem that we now discuss, following \cite{kurosh1956theory}. In the original context of groups (or groups with operators---that is, groups with additional operations) this theorem asserts that if $G$ is a group which has a (finite) principal series of normal subgroups, then any two decompositions of $G$ into direct product of indecomposable factors are centrally isomorphic (\cite[p. 120]{kurosh1956theory}; see also its strengthening `The Fundamental Theorem' \cite[p. 114]{kurosh1956theory}).\footnote{Note that the assumptions are stated in terms of the product group, and not in terms of the indecomposable factors  as in our case.}
In other words, if $G=\prod_{i<m} G_i$ has principal series and $\Phi\colon \prod_{i<m} G_i\to \prod_{j<n} H_j$ is an isomorphism where all $G_i$ and all $H_j$ are indecomposable, then $m=n$, there is a bijection $\pi\colon n\to m$, and there are isomorphisms $f_j\colon H_j\to G_{f(j)}$ such that  $\Psi((a_i)_{i<m}):=(f_j(a_{\pi(j)})_{j<n})) $ defines an isomorphism which satisfies that $\Phi(a)\Psi(a)^{-1}$ is in $ Z(G)$ for all $a\in G$.

Our requirements on the factor groups are more stringent, since our assumptions on classes of groups in  Theorem~\ref{T.RecognizesCoordinates} imply that no nontrivial homomorphism from $G$ into its center exists. On the other hand, to the best of our knowledge, our result is the first extension of the 
Renek--Krull-Schmidt--Azumaya theorem to arbitrary infinite products. Along these lines,  Azumaya's theorem (see e.g., \cite{facchini2003krull}) and  the results of \cite{crawley1964refinements} are about infinite direct \emph{sums} of modules and arbitrary algebraic structures, respectively. 

It is curious that admitting nontrivial homomorphisms into the center gives a limiting example in this type of theorem. In \cite[p. 81]{kurosh1956theory} Kurosh gives an example of indecomposable groups $A$, $B$, $C$ and $D$ such that $A\times B\cong C\times D$ but neither $A$ nor $B$ is isomorphic to $C$ or to $D$. Each one of these groups has the center isomorphic to $\bbZ$ (actually, $D$ is isomorphic to $\bbZ$) and admits a homomorphism onto $\bbZ$. An even more interesting class of examples, showing that even the number of indecomposable factors is not an isomorphism invariant,  is given in \cite{baumslag1975direct}. 

\begin{remark} The main theorem of \cite{bidwell2008automorphisms} records a version of recognizing coordinates with respect to \emph{finite products} of finite groups. More explicitly, if $\fC = \{G_1,\dots ,G_n\}$ such that 
\begin{enumerate}
    \item For $i \leq n$, each $G_i$ is a finite group. 
    \item For $i \leq n$, each $G_i$ is indecomposible. 
    \item For $i, j \leq n$, there does not exist a nontrivial homomorphism from $G_i$ into the center of $G_j$.
\end{enumerate}
Then the class $\fC$ recognizes coordinates with respect to finite products. We remark that if $\fC$ is a finite class of finite groups with the properties above, then the proof from \cite{bidwell2008automorphisms} can be extended to show that any automorphism between direct sums of groups from $\fC$ is isomorphically coordinate respecting. A priori, the assumption of finiteness of the groups cannot be removed. At a critical juncture in the proof, one needs to use that any injective homomorphism between groups in $\fC$ is surjective. 
\end{remark}

Finally, we provide a example which demonstrates how the collection of automorphisms in the context of direct sums can appear wildly different than the collection of automorphisms in the context of reduced products.

\begin{example}  Let $p(n)$, for $n\in \bbN$, be distinct primes and let $G_n=\bbZ/p(n)\bbZ$  Then every automorphism of $\bigoplus_n G_n$ is trivial, but $\prod_{\Fin} G_n$ has nontrivial automorphisms (in ZFC). 
	
	For the former, note that in $\bigoplus_n G_n$ an element $g$ has order $p(j)$ if and only if $\supp(g)=\{j\}$. Therefore every automorphism of $\bigoplus_n G_n$ sends $G_n$ to itself. 
	
	For the latter, since all $G_n$ are abelian, the structure $\prod_{\Fin} G_n$ is  an abelian group. Its theory is therefore stable and it is saturated by \cite{de2024saturation}.  It therefore has $2^{2^{\aleph_0}}$ automorphisms, while clearly there are only $2^{\aleph_0}$ trivial automorphisms. 
\end{example}

\subsection{Rigidity corollaries}\label{S.RigidityCorollaries}
 As pointed out in the introduction, part of the motivation for this paper comes from the study of rigidity of quotient structures (see \cite{farah2022corona} for the current state of the art). We fix a language $\calL$ throughout. 
 If $\cM_i$, for $i\in \bbN$, is a sequence of $\calL$-structures, then an isomorphism between $\cM:=\prod_{\Fin} \cM_i$ and $\cN:=\prod_{\Fin} \cN_i$ is \emph{trivial} if there are a bijection $\pi$ between cofinite subsets of $\bbN$ and isomorphisms\footnote{The official definition requires $f_i$ only to be bijections. In case when the signature is finite, this is equivalent to asking that all $f_i$ be isomorphisms, but not in general;  see \cite[Definition~2.1, Lemma 2.2 (3), Example 2.3]{de2023trivial}. For the sake of brevity, we consider only finite signatures.} $f_i\colon \cM_{\pi(i)}\to \cN_i$ such that the map from $\prod_i \cM_i$ to $\prod_i \cN_i$ defined by 
 \[
 (a_i)_i\mapsto f_i(a_{\pi(i)})_i,
 \]
 lifts it. 

A moment of reflection reveals that every map that has a lifting of this sort is an isomorphism. A bit more work is required to figure out whether every isomorphism has such lifting.  This is reasonably well understood in case of reduced products of countable (possibly finite) structures over $\Fin$, and we concentrate on this case. The Continuum Hypothesis (CH) implies that reduced products of this sort are saturated and are therefore isomorphic if and only if they are elementarily equivalent. In this case, there are $2^{2^{\aleph_0}}$ isomorphisms. Since there are only $2^{\aleph_0}$ trivial isomorphisms, CH implies the existence of nontrivial isomorphisms.  See \cite[\S 6]{farah2022corona} for more on nontrivial isomorphisms. Also note that even the number of automorphisms of $\cP(\bbN)/\Fin$ can be strictly between $2^{\aleph_0}$ and $2^{2^{\aleph_0}}$ (\cite{Ste:Autohomeomorphism}). 

An automorphism $\Phi$ of $\cP(\bbN)/\Fin$ is \emph{trivial} if there is a bijection $\pi$ of cofinite subsets of $\bbN$ (such $\pi$ is called almost permutation) such that $X\mapsto \pi[X]$ lifts $\Phi$. (If one identifies $\cP(\bbN)/\Fin$ with the reduced power of the two-element Boolean algebra, then this is a special case of the general definition of a trivial isomorphism.) Thus the group of trivial automorphisms of $\cP(\bbN)/\Fin$ is the quotient of the semigroup of all almost permutations and its subsemigroup of eventually equal almost permutations. 
 By a seminal result of Shelah (\cite[\S V]{Sh:Proper}), it is relatively consistent with ZFC that all automorphisms of $\cP(\bbN)/\Fin$ are trivial.

 Theorem~\ref{T.Rigidity} below uses $\MA$ and $\OCAT$, consequences of the Proper Forcing Axiom commonly used in proofs of rigidity of quotient structures since the seminal paper \cite{Ve:OCA}; see \cite[\S 7.3]{farah2022corona} for additional background. These axioms are independent of ZFC. 
Part~\eqref{1.T.Rigidity} of the following implies Corollary~\ref{C.Rigidity} by Theorem~\ref{T.RecognizesCoordinates}. 
 
 \begin{theorem} \label{T.Rigidity} Assume $\OCAT$ and $\MA$. 
 	Suppose that $\fC$ is a class of groups that recognizes coordinates  and that $\cG:=\prod_{\Fin}\cG_i$ and $\calH:=\prod_{\Fin}\calH_i$ are reduced products of countable  or finite structures in $\fC$. 
 	\begin{enumerate}
 		\item  	\label{1.T.Rigidity} Then every  isomorphism between $\cG$ and $\calH$ is trivial.
\item \label{3.T.Rigidity} The automorphism group of $\cG$ is the semidirect product of $\prod_{\Fin} \Aut(\cG_i)$ and the group of trivial automorphisms of $\cP(\bbN)/\Fin$ associated with almost permutations $\pi$ such that $\cG_i\cong \cG_{\pi(i)}$ for all $i$. 
  	\end{enumerate}
In particular, if $G$ is any ground that recognizes coordinates, then the automorphism group of $\prod_{\Fin} G$ is isomorphic to the  semidirect product of $\prod_{\Fin} \Aut(\cG_i)$ and the group of trivial automorphisms of $\cP(\bbN)/\Fin$.
 \end{theorem}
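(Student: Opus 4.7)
\medskip

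\noindent\textbf{Proof proposal.} The plan is to deduce both parts from the rigidity result \cite[Theorem~7]{de2023trivial} together with the characterization of coordinate recognition established in Theorem~\ref{T.eq}. The overall strategy is: first produce an isomorphism of the underlying Boolean algebras that is canonically associated with $\Phi$; then apply forcing-axiom rigidity to that Boolean-algebra isomorphism to obtain an almost-permutation of $\bbN$; finally use the commuting diagram from Definition~\ref{D.recognizes-coordinates} together with the fact that each projection $\pi_S$ collapses onto a countable (or finite) factor to extract the coordinate-wise isomorphisms.

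First, let $\Phi\colon \cG\to\calH$ be an isomorphism. Since $\fC$ recognizes coordinates, Theorem~\ref{T.eq} (equivalence of \eqref{1.T.eq} and \eqref{2b.T.eq}) provides, uniformly and parameter-freely, interpretations of $\cP(\bbN)/\Fin$ and of the projection maps $\pi_S\colon \cG\to \cG\rs S$ (and similarly in $\calH$) inside $\cG$ and $\calH$ as $\calL$-structures. Being definable, these are preserved by $\Phi$: it induces an isomorphism $\alpha\colon \cP(\bbN)/\Fin\to\cP(\bbN)/\Fin$ and, for each $S\in \cP(\bbN)/\Fin$, an isomorphism $\Phi_S\colon \cG\rs S\to \calH\rs \alpha(S)$ making the usual square commute. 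Thus $\Phi$ is isomorphically coordinate respecting, i.e., it satisfies the hypothesis of \cite[Theorem~7]{de2023trivial}.

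Next, under $\OCAT+\MA$ every automorphism of $\cP(\bbN)/\Fin$ is trivial by the Shelah--Veli\v{c}kovi\'c--Farah program (see \cite[\S 7]{farah2022corona}), so $\alpha$ is lifted by an almost-permutation $\pi$ of $\bbN$. Localizing the commuting diagram to each atom $S=\{n\}$ (identified with its equivalence class in $\cP(\bbN)/\Fin$, which is atomic at the singletons) yields, on cofinitely many coordinates, a family of bijective maps $f_i\colon \cG_{\pi(i)}\to \calH_i$; since the coordinate projections are definable and $\Phi$ respects them, each $f_i$ is an isomorphism of $\calL$-structures. The pointwise map $(a_i)_i\mapsto (f_i(a_{\pi(i)}))_i$ then lifts $\Phi$, proving \eqref{1.T.Rigidity}. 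The countability/finiteness hypothesis on the factors is what allows the Boolean-algebra lifting and the pointwise decomposition to interact cleanly --- this matches exactly the setting of \cite[Theorem~7]{de2023trivial}, which we invoke as a black box.

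For \eqref{3.T.Rigidity}, apply \eqref{1.T.Rigidity} with $\calH_i=\cG_i$: every automorphism of $\cG$ is encoded by a pair $(\pi,(f_i))$ where $\pi$ is an almost-permutation of $\bbN$ with $\cG_{\pi(i)}\cong \cG_i$ for cofinitely many $i$, and $f_i\in \mathrm{Iso}(\cG_{\pi(i)},\cG_i)$. The composition law reads $(\pi_1,(f_i^{(1)}))\cdot(\pi_2,(f_i^{(2)}))=(\pi_2\circ\pi_1,(f_i^{(1)}\circ f^{(2)}_{\pi_1(i)}))$ modulo finite changes, which exhibits $\Aut(\cG)$ as the asserted semidirect product of $\prod_{\Fin}\Aut(\cG_i)$ by the group of trivial automorphisms of $\cP(\bbN)/\Fin$ associated with almost-permutations preserving the isomorphism type of the factors. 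The final ``in particular'' is the special case $\cG_i\equiv G$, where every almost-permutation is admissible. The main obstacle, and the only place where set-theoretic axioms are used, is the rigidity of $\alpha$: everything else is bookkeeping around the interpretability provided by Theorem~\ref{T.eq}.
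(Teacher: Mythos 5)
Your proposal ultimately rests on the same pivot as the paper: invoking \cite[Theorem~7]{de2023trivial} as the source of triviality, with parts \eqref{3.T.Rigidity} and the ``in particular'' statement being bookkeeping. The paper simply cites that theorem and stops; you attempt to sketch its inner workings, and that is where a genuine error appears.

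The specific flaw is the step ``Localizing the commuting diagram to each atom $S=\{n\}$ (identified with its equivalence class in $\cP(\bbN)/\Fin$, which is atomic at the singletons).'' The Boolean algebra $\cP(\bbN)/\Fin$ is \emph{atomless}: every singleton $\{n\}$ belongs to the ideal $\Fin$ and hence is identified with the zero element. There are no atoms to localize at, so this step produces nothing. The actual extraction of the coordinate-wise isomorphisms $f_i$ from the coordinate-respecting isomorphism $\Phi$ and the lifted almost-permutation $\pi$ is the nontrivial content of \cite[Theorem~7]{de2023trivial}; it cannot be done by passing to atoms of the quotient algebra, and any argument along those lines must work at the level of $\cP(\bbN)$ before quotienting, using $\OCAT$ and $\MA$ in an essential way. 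You do hedge by invoking that theorem ``as a black box,'' which saves the conclusion, but as written the sketch you interleave with the citation is internally incorrect and would mislead a reader into thinking the hard part is trivial. A secondary (minor) redundancy: deriving ``$\Phi$ is isomorphically coordinate respecting'' via Theorem~\ref{T.eq} is unnecessary, since this is precisely the definition of $\fC$ recognizing coordinates (Definition~\ref{D.recognizes-coordinates}).
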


\begin{proof}\eqref{1.T.Rigidity} is an immediate consequence of \cite[Theorem~7]{de2023trivial}. 
	The remaining two claims follow. 
	\end{proof}

The following diagonalization argument combined with the Feferman--Vaught theorem and saturation to produce reduced products that are nontrivially isomorphic (and typically, by rigidity results, nonisomorphic if forcing axioms assumed) is an application of Ghasemi's trick (\cite[Lemma~4.5]{brian2024conjugating}, also \cite[Lemma~5.2]{ghasemi2016reduced}). 

\begin{corollary}\label{C.Ghasemi}
	Suppose that $\fC$ is an infinite  class of groups that recognizes coordinates. Then there are groups $G_i$, for $i\in \bbN$, in $\fC$ such that for all infinite $X$ and $Y$ in $\cP(\bbN)$ for which $X\Delta Y$ is infinite the assertion $\prod_{\Fin} G_i\rs X\cong \prod_{\Fin} G_i\rs Y$ is independent from ZFC. 
\end{corollary}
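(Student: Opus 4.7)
The plan is to apply the ``Ghasemi trick'', now powered by Theorem~\ref{T.Rigidity}: pick the sequence $(G_i)$ inside $\fC$ so that (a) CH makes all the restricted products $\prod_{\Fin}G_i\rs X$ saturated of size $\aleph_1$ with a common theory, forcing isomorphism, while (b) $\OCAT+\MA$ forces every isomorphism to be trivial, where a trivial isomorphism cannot exist if $X\Delta Y$ is infinite. First I would use the infiniteness of $\fC$ to extract a sequence $(G_i)_{i\in\bbN}$ of pairwise non-isomorphic countable members of $\fC$ sharing a common complete first-order theory $T$. Passing if needed to the class of all models of $\Th(\fC)$, which still recognizes coordinates by Corollary~\ref{C.Th(C)}, and invoking Löwenheim--Skolem together with a pigeonhole over countably many sentences, one finds such a $T$; one then arranges each $G_i$ to actually belong to the original $\fC$.

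For the CH half I would argue as follows. By Feferman--Vaught (Proposition~\ref{P.FV.reduced}), the theory of $\prod_{\Fin}G_i\rs X$ depends only on the constant function $i\mapsto \Th(G_i)=T$ and on the Boolean algebra $\cP(X)/\Fin$; since $X,Y$ are infinite, $\cP(X)/\Fin\cong\cP(Y)/\Fin\cong\cP(\bbN)/\Fin$, so $\prod_{\Fin}G_i\rs X\equiv \prod_{\Fin}G_i\rs Y$. Under CH both sides are saturated models of the same complete theory of cardinality $\aleph_1$, hence isomorphic. For the $\OCAT+\MA$ half, suppose $\Phi\colon\prod_{\Fin}G_i\rs X\to\prod_{\Fin}G_i\rs Y$ is an isomorphism. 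Since $\fC$ recognizes coordinates, Theorem~\ref{T.Rigidity}(1) forces $\Phi$ to be trivial: there exist an almost bijection $\pi$ between cofinite subsets of $X$ and $Y$ and isomorphisms $f_i\colon G_{\pi(i)}\to G_i$ for all but finitely many $i$. Because the $G_i$ are pairwise non-isomorphic, $G_{\pi(i)}\cong G_i$ forces $\pi(i)=i$ for cofinitely many $i$, so $X\Delta Y$ is finite, contradicting the hypothesis.

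The main obstacle is Step~1, particularly if one insists the sequence live in $\fC$ rather than in the larger class of models of $\Th(\fC)$. For $\fC$ consisting purely of finite groups (such as $\{\Sym_n:n\geq 3\}$), pairwise non-isomorphism rules out a common theory, so the simple version of the argument fails. The remedy is to relax the common-theory requirement and instead arrange $(G_i)$ so that for every infinite $X,Y\subseteq\bbN$ the Feferman--Vaught invariants of $\prod_{\Fin}G_i\rs X$ and $\prod_{\Fin}G_i\rs Y$ coincide modulo $\Fin$; this is done by enumerating the isomorphism types along a sufficiently ``dense and balanced'' sequence so that each sentence-support $[\![\{i:G_i\models\theta\}]\!]_{\Fin}$ is either $\emptyset$ or everywhere positive, together with ensuring that the subsequence of pairwise distinct types witnessing non-rigidity is retained. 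Modulo this bookkeeping, Steps~2 and~3 go through unchanged, giving independence from ZFC of the isomorphism $\prod_{\Fin}G_i\rs X\cong \prod_{\Fin}G_i\rs Y$ whenever $X\Delta Y$ is infinite.
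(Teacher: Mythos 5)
There is a genuine gap in your proposal, concentrated exactly where you locate the difficulty: the choice of the sequence $(G_i)$. Your first plan (pairwise non-isomorphic members of $\fC$ sharing a common complete theory $T$) is, as you correctly note, impossible when $\fC$ consists of finite groups. But the proposed remedy is both imprecise and incorrect. For the Feferman--Vaught invariants of $\prod_{\Fin}G_i\rs X$ and $\prod_{\Fin}G_i\rs Y$ to agree for \emph{all} infinite $X,Y$, what you need is that each constant $c_\theta^X=[\{i\in X: G_i\models\theta\}]_{\Fin}$ takes the same value in the definable closure of $\emptyset$ in $\cP(X)/\Fin$, and the only way to arrange this uniformly over all infinite $X$ is to force each $\{i:G_i\models\theta\}$ to be finite or cofinite, so that $c_\theta^X\in\{0,1\}$. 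Your stated condition --- that the support be ``$\emptyset$ or everywhere positive'' --- is weaker: a set intersecting every infinite set in an infinite set need not be cofinite, and in that case $c_\theta^X$ can be $1$ for one $X$ and a nontrivial element for another, breaking elementary equivalence. Worse, your phrase ``enumerating the isomorphism types along a sufficiently dense and balanced sequence'' implies repetitions $G_i\cong G_j$ for $i\neq j$, which destroys the rigidity half: an almost bijection $\pi$ with $G_i\cong G_{\pi(i)}$ would then not be forced to be close to the identity, so one cannot conclude $X\Delta Y$ is finite. The vague caveat about ``retaining a subsequence of pairwise distinct types'' is in direct tension with the density requirement and cannot be made to work as stated.

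The paper's fix is the clean version of what you were reaching for. Since $\fC$ is infinite, pick any pairwise non-isomorphic $G_i\in\fC$, and then diagonalize over the countably many sentences of the group language, passing to a subsequence so that every sentence holds for finitely many or cofinitely many $G_i$ (``the theories converge''). Passing to a subsequence preserves pairwise non-isomorphism, and the converging-theories condition makes $c_\theta^X\in\{0,1\}$ independent of the choice of infinite $X$, so $\prod_{\Fin}G_i\rs X\equiv\prod_{\Fin}G_i$ for all infinite $X$. From there your CH step (saturation) and your $\OCAT+\MA$ step (Theorem~\ref{T.Rigidity} plus pairwise non-isomorphism) go through exactly as you describe. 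So the skeleton of your argument is right; the missing idea is to replace ``common complete theory'' by ``finitely-or-cofinitely converging theories along a pairwise non-isomorphic sequence'' rather than by a density condition.
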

	
	\begin{proof}
		Since $\fC$ is infinite, we can choose $G_i$ so that the theories of $G_i$ converge, in the sense that every sentence $\varphi$ of the language of the theory of groups either holds in all but finitely many $G_i$ or it holds in only finitely many of the $G_i$.
	Then the Feferman--Vaught theorem implies  $\prod_{\Fin} G_i\rs X\equiv \prod_{\Fin}G_i$ for all infinite $X\subseteq \bbN$. Thus CH implies that $\prod_{\Fin}G_i\cong \prod_{\Fin}G_i\rs X$, as these are elementarily equivalent saturated models. 
	
	On the other hand, since $\fC$ recognizes coordinates, by Theorem~\ref{T.Rigidity} $\OCAT$ and $\MA$ imply that every isomorphism between $\prod_{\Fin}\cG_i\rs X$ and $\prod_{\Fin} \cG_i\rs Y$  is associated with an almost bijection $\pi\colon X\to Y$ such that $\cG_i\cong \cG_{\pi(i)}$ for all $i\in \dom(\pi)$. Since $\cG_i\not\cong \cG_j$ for all $i\neq j$, such $\pi$ exists if and only if $X\Delta Y$ is finite. 
	\end{proof}
	
	There is no known `dividing line' for theories that recognize coordinates, and our results from \S\ref{S.FailureOfCompactness} suggest that the line, even if it exists, is rather rugged. 	
	By \cite[Theorem~1]{de2024saturation}, if the theory of a reduced product $\cM$ over $\Fin$  is stable and all $\cM_i$ have cardinality not greater than $2^{\aleph_0}$, then $\cM$ is fully saturated and therefore has $2^{2^{\aleph_0}}$ automorphisms. 
	 The class of stable groups that are reduced products is not very interesting---all such groups are abelian. 
	
	\begin{proposition}\label{P.stable}
		Suppose that $\cP(\bbN)/\cI$ is an atomless Boolean algebra and $\prod_\cI G_n$ is stable, or even NIP. Then the set $\{n\mid G_n$ is not abelian$\}$ belongs to $\cI$. 
	\end{proposition}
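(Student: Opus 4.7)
The plan is to argue by contraposition: assuming that $S:=\{n \mid G_n \text{ is not abelian}\}\notin\cI$, we will construct arbitrarily long independent configurations witnessing that the formula
\[
\phi(x,y):=\neg(xy=yx)
\]
has the independence property in $G:=\prod_n G_n/\cI$, contradicting NIP (and in particular stability).

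First, for each $n\in S$ pick a noncommuting pair $a_n,b_n\in G_n$, and set $a_n=b_n=e$ for $n\notin S$. Using the atomlessness of $\cP(\bbN)/\cI$, restrict to $S$ and recursively choose, by repeatedly splitting positive sets in half, a sequence $A_1,A_2,\dots\subseteq S$ that is \emph{Boolean-independent modulo $\cI$}: for every $n$ and every $J\subseteq\{1,\dots,n\}$, the piece
\[
P_J^{(n)}:=\bigcap_{i\in J} A_i \cap \bigcap_{i\in \{1,\dots,n\}\setminus J}(S\setminus A_i)
\]
is $\cI$-positive. This is a standard recursive construction using only that the quotient Boolean algebra is atomless.

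Next, for each fixed $n$, define elements of $\prod_m G_m$ as follows. For $i\leq n$, let $x_i(m)=a_m$ if $m\in A_i$ and $x_i(m)=e$ otherwise; for $J\subseteq\{1,\dots,n\}$, let $y_J(m)=b_m$ if $m\in P_J^{(n)}$ and $y_J(m)=e$ otherwise. A coordinatewise check shows
\[
\{\,m\mid x_i(m)y_J(m)\neq y_J(m)x_i(m)\,\}=A_i\cap P_J^{(n)},
\]
because only on $A_i\cap P_J^{(n)}$ are both $x_i(m)$ and $y_J(m)$ nontrivial, and there they are the noncommuting pair $(a_m,b_m)$. If $i\in J$ then $P_J^{(n)}\subseteq A_i$ so this set equals $P_J^{(n)}\notin\cI$; if $i\notin J$ then the set is empty, hence in $\cI$. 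Passing to the reduced product, we obtain
\[
G\models \phi([x_i],[y_J])\iff i\in J,
\]
so $\phi$ has independent configurations of every finite length, contradicting NIP.

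The only conceptual step is the construction of the Boolean-independent sequence $(A_i)$, which is routine in any atomless Boolean algebra; I do not expect it to be a genuine obstacle. Everything else is a direct coordinatewise computation and an appeal to the characterization of NIP via arbitrary-length independent patterns.
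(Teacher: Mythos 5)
Your proof is correct, and it proceeds by a genuinely different route from the paper's. The paper's argument is a one-line black-box appeal to \cite[Theorem~2.10 (3)]{de2024saturation}: it observes that for the formula $\varphi(x,y)$ expressing $xy=yx$, each non-abelian $G_n$ contains $a_n,b_n$ with $\varphi(G_n,a_n)$ and $\varphi(G_n,b_n)$ distinct but intersecting (both contain $e_n$), and the cited theorem then yields that $\Th(G)$ is unstable. Your argument instead constructs an explicit independence configuration in the reduced product: it uses atomlessness of $\cP(\bbN)/\cI$ to build a Boolean-independent family of $\cI$-positive subsets of the non-abelian index set, and then realizes arbitrarily long shattered patterns for $\neg(xy=yx)$ by patching noncommuting pairs along these sets. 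Two things your route buys: it is entirely self-contained, needing no external theorem; and it directly exhibits the independence property, which honestly supports the ``or even NIP'' clause in the statement, whereas the paper's proof as written only concludes instability, leaving the NIP strengthening to rest on whatever extra content the cited result carries. The one step you wave at — constructing a Boolean-independent sequence — is indeed routine: a binary tree of $\cI$-positive splittings of $S$ gives it literally (after lifting each Boolean-algebra element to an actual subset of $\bbN$), and with such a lift your containment $P_J^{(n)}\subseteq A_i$ for $i\in J$ and disjointness $P_J^{(n)}\cap A_i=\emptyset$ for $i\notin J$ hold on the nose, so the coordinatewise computation of the non-commutation set as $A_i\cap P_J^{(n)}$ goes through exactly as you say.
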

	
	\begin{proof}
		Assume otherwise and consider the formula $\varphi(x,y)$, `$xy=yx$'. 
		In each $G_n$ there are $a_n$ and $b_n$ such that $\varphi(G_n,a_n)$ and $\varphi(G_n,b_n)$ are distinct and have nonempty intersection (it contains $e_n$). By \cite[Theorem~2.10 (3)]{de2024saturation}, the theory of $G$ is not stable.
	\end{proof}

It is not difficult to see that stability is not a necessary condition for the existence of $2^{2^{\aleph_0}}$ nontrivial automorphisms of a reduced product over $\Fin$. We even have a natural example. 
By Corollary~\ref{C.Q8} and Corollary~\ref{C.Q8-automorphisms}, the reduced power of the quaternion group $Q_8$ has $2^{2^{\aleph_0}}$ automorphisms in ZFC although $\cP(\bbN)/\Fin$ is interpretable in it (and in particular its theory is unstable). 
As the referee pointed out, an even more natural example can be obtained as follows. If $G$ is a group that recognizes coordinates (e.g., any nonabelian  simple group) and $H$ is a nontrivial abelian group, then the reduced product of $G\oplus H$ is isomorphic to the direct sum of reduced products of $G$ and of $H$, and it has $2^{\aleph_0}$ automorphisms because  the second component does. On the other hand, the theory of the first component is not stable.  

\subsection{Continuous logic}
This paper is concerned with recognizing coordinates in classical, discrete, logic.  We conclude with a few words on the study of recognizing coordinates in the setting of continuous logic.   Metric analog of \cite[Theorem~7]{de2023trivial} was proved in \cite{de2024metric}. It asserts that   the usual forcing axioms imply that coordinate-respecting functions between  reduced products of separable metric structures are trivial. 
This result, together with the usual forcing axioms,  was applied in \cite{de2024automorphism} and in \cite{vignati2025rigidity} to prove rigidity results analogous to those of \S\ref{S.RigidityCorollaries} for universal sofic groups and Higson coronas, respectively. 

\begin{problem}
	State and prove the analog of Theorem 1 for continuous logic. 
\end{problem}

Solving this problem would require the analog of  Palyutin's theory of $h$-formulas in continuous logic,    developed in \cite{fronteau2023produits}.

	\subsection{Questions}
We conclude this paper with a few open problems. The first batch is  concerned with possible weak converses to Theorem~\ref{Fact:construction} (see also \S\ref{S.Failure} for related limiting examples).

\begin{question} 
		Does the class of all indecomposable groups with  trivial center recognize coordinates?  
\end{question}
	
\begin{question} Suppose that $G$ is a group with any of the following properties. Can one conclude that  it recognizes coordinates? 
\begin{enumerate}
	\item $G$ is indecomposable with trivial center. 
	\item $G$ is indecomposable and finite, with trivial center. 
	\item $G$ is indecomposable and finite and it does not admit a nontrjvial homomorphism into its center. 
\end{enumerate}		
\end{question}
	As pointed out earlier, Forte Shinko proved that the unrestricted wreath product $ \bbZ\wr \bbQ$ (i.e., the semidirect product of $\bbZ^\bbQ$ and $\bbQ$) is an indecomposable group that admits no nontrivial homomorphism into its center,  but its ultrapower does.

There are two different sources of coordinate recognition in groups, one given by the two parts of  Theorem~\ref{T.RecognizingCoordinates} and the other given by Proposition~\ref{P.2}. 
Each one of these sources comes in infinitely many varieties, the first indexed by prime numbers and 0 (the latter case corresponding with the domains, as in Corollary~\ref{C.Montse}) and the other indexed by a `length' (typically a commutator length, but see also the proof of Proposition~\ref{P.Z/2Z*Z/2Z}).  
	
	\begin{question} \label{Q.AnythingElse} Is there a class of groups that recognizes coordinates for a reason not covered by the results described in the previous paragraph? 
	\end{question}

		Theorem~\ref{T.A}  implies that if the answer to Question~\ref{Q.AnythingElse}  is positive, then the reason for coordinate recognition has to be coded by an $h$-formula.

	\bibliographystyle{plain}
	\bibliography{library}

\begin{thebibliography}{10}

\bibitem{baumslag1975direct}
G.~Baumslag.
\newblock Direct decompositions of finitely generated torsion-free nilpotent
  groups.
\newblock {\em Math. Zeitschrift}, 145(1):1--10, 1975.

\bibitem{baumslag1999algebraic}
G.~Baumslag, A.~Myasnikov, and V.~Remeslennikov.
\newblock Algebraic geometry over groups. {I}: {Algebraic} sets and ideal
  theory.
\newblock {\em J. Algebra}, 219(1):16--79, 1999.

\bibitem{bidwell2008automorphisms}
J.N.S. Bidwell.
\newblock Automorphisms of direct products of finite groups ii.
\newblock {\em Archiv der Mathematik}, 91:111--121, 2008.

\bibitem{brian2024conjugating}
W.~Brian and I.~Farah.
\newblock Conjugating trivial automorphisms of {$\cP(\bbN)/\Fin$}.
\newblock {\em Trans. Am. Math. Soc.}, 379(4):2919--2954, 2026.

\bibitem{BrDoFi:Unitary}
L.G. Brown, R.G. Douglas, and P.A. Fillmore.
\newblock Unitary equivalence modulo the compact operators and extensions of
  {$C\sp{\ast} $}-algebras.
\newblock In {\em Proceedings of a Conference on Operator Theory}, volume 345
  of {\em Lecture Notes in Math.}, pages 58--128. Springer, 1973.

\bibitem{burris1979sheaf}
S.~Burris and H.~Werner.
\newblock Sheaf constructions and their elementary properties.
\newblock {\em Transactions of the American Mathematical Society},
  248(2):269--309, 1979.

\bibitem{casals-ruiz2024on}
M.~Casals-Ruiz, I.~Kazachkov, and J.~de~la Nuez~Gonz{\'a}lez.
\newblock On the elementary theory of graph products of groups.
\newblock {\em Diss. Math.}, 595:1--92, 2024.

\bibitem{cassella2025first-order}
A.~Cassella, G.~Paolini, and G.~Paolini.
\newblock First-order aspects of {Artin} groups.
\newblock Preprint, {arXiv}:2507.21575 [math.{LO}] (2025), 2025.

\bibitem{CK90}
C.C. Chang and H.J. Keisler.
\newblock {\em Model Theory -}.
\newblock Elsevier, Amsterdam, 1990.

\bibitem{Cha11}
Z.~Chatzidakis.
\newblock {\em Introductory notes on the model theory of valued fields}, pages
  35--79.
\newblock London Mathematical Society Lecture Note Series. Cambridge University
  Press, 2011.

\bibitem{conrad2020simplicity}
K.~Conrad.
\newblock Simplicity of {PSL$_n (F)$},
  https://math.uconn.edu/~kconrad/blurbs/grouptheory/pslnsimple.pdf.
\newblock available at
  https://math.uconn.edu/~kconrad/blurbs/grouptheory/PSLnsimple.pdf, 2020.

\bibitem{crawley1964refinements}
P.~Crawley and B.~J\'{o}nsson.
\newblock Refinements for infinite direct decompositions of algebraic systems.
\newblock {\em Pacific J. Math.}, 14(3):797--855, 1964.

\bibitem{DAM23}
P.~D'Aquino and A.J. Macintyre.
\newblock Commutative unital rings elementarily equivalent to prescribed
  product rings.
\newblock {\em Fund. Math.}, 263(3):235--251, 2023.

\bibitem{de2024saturation}
B.~De~Bondt, I.~Farah, and A.~Vignati.
\newblock Saturation of reduced products.
\newblock {\em arXiv preprint arXiv:2401.12539}, 2024.

\bibitem{de2023trivial}
B.~De~Bondt, I.~Farah, and A.~Vignati.
\newblock Trivial automorphisms of reduced products.
\newblock {\em Israel J. Math.}, to appear.

\bibitem{de2024automorphism}
B.~De~Bondt and A.~Thom.
\newblock On automorphism groups of metric reduced products of symmetric
  groups.
\newblock {\em arXiv preprint arXiv:2412.10802}, 2024.

\bibitem{de2024metric}
B.~De~Bondt and A.~Vignati.
\newblock A metric lifting theorem.
\newblock {\em C. R. Math. Acad. Sci. Paris}, 363:415--424, 2025.

\bibitem{DM22}
J.~Derakhshan and A.~Macintyre.
\newblock Model theory of adeles {I}.
\newblock {\em Ann. Pure Appl. Logic}, 173(3):Paper No. 103074, 43, 2022.

\bibitem{epstein1962ends}
D.B.A. Epstein.
\newblock Ends.
\newblock In {\em Topology of 3-manifolds and related topics. Proceedings of
  the University of Georgia Institute 1961.}, pages 110--117. Englewood Cliffs,
  N.J.: Prentice-Hall, Inc., 1962.

\bibitem{facchini2003krull}
A.~Facchini.
\newblock The {K}rull--{S}chmidt theorem.
\newblock In M.~Hazewinkel, editor, {\em Handbook of algebra}, volume~3, pages
  357--397. Elsevier, 2003.

\bibitem{Fa:Rigidity}
I.~Farah.
\newblock Rigidity conjectures.
\newblock In {\em Logic Colloquium 2000}, volume~19 of {\em Lect. Notes Log.},
  pages 252--271. Assoc. Symbol. Logic, Urbana, IL, 2005.

\bibitem{Fa:All}
I.~Farah.
\newblock All automorphisms of the {C}alkin algebra are inner.
\newblock {\em Ann. of Math. (2)}, 173:619--661, 2011.

\bibitem{Fa:STCstar}
I.~Farah.
\newblock {\em Combinatorial Set Theory and \cstar-algebras}.
\newblock Springer Monographs in Mathematics. Springer, 1st edition edition,
  2019.

\bibitem{farah2022between}
I.~Farah.
\newblock Between reduced powers and ultrapowers.
\newblock {\em Journal of the European Mathematical Society},
  25(11):4369--4394, 2022.

\bibitem{farah2022corona}
I.~Farah, S.~Ghasemi, A.~Vaccaro, and A.~Vignati.
\newblock Corona rigidity.
\newblock {\em Bull. Symb. Logic}, 31(2):195--287, 2025.

\bibitem{fronteau2023produits}
I.~Fronteau.
\newblock Produits r\' eduits en logique continue.
\newblock Master's thesis, Universit\' e Paris Cit\' e, 2023.

\bibitem{fuchs2015abelian}
L.~Fuchs.
\newblock Abelian groups.
\newblock {\em Springer monographs in mathematics}, 2015.

\bibitem{genevois2019automorphisms}
A.~Genevois and A.~Martin.
\newblock Automorphisms of graph products of groups from a geometric
  perspective.
\newblock {\em Proc. London Math. Soc.}, 119(6):1745--1779, 2019.

\bibitem{Gha:FDD}
S.~Ghasemi.
\newblock Isomorphisms of quotients of {FDD}-algebras.
\newblock {\em Israel J. Math.}, 209(2):825--854, 2015.

\bibitem{ghasemi2016reduced}
S.~Ghasemi.
\newblock Reduced products of metric structures: a metric {F}eferman--{V}aught
  theorem.
\newblock {\em J. Symb. Log.}, 81(3):856--875, 2016.

\bibitem{green1990graph}
E.R. Green.
\newblock {\em Graph products of groups}.
\newblock PhD thesis, University of Leeds, 1990.

\bibitem{halevi2023saturated}
Y.~Halevi and I.~Kaplan.
\newblock Saturated models for the working model theorist.
\newblock {\em Bull. Symb. Logic}, pages 1--5, 2023.

\bibitem{Hodg:Model}
W.~Hodges.
\newblock {\em Model theory}, volume~42 of {\em Encyclopedia of Mathematics and
  its Applications}.
\newblock Cambridge university press, 1993.

\bibitem{hsu1999on}
T.~Hsu and D.T. Wise.
\newblock On linear and residual properties of graph products.
\newblock {\em Mich. Math. J.}, 46(2):251--259, 1999.

\bibitem{jech2006set}
T.~Jech.
\newblock {\em Set theory: The third millennium edition, revised and expanded},
  volume~3.
\newblock Springer, 2006.

\bibitem{judson2020abstract}
T.W. Judson.
\newblock {\em Abstract algebra: theory and applications}.
\newblock available at abstract.pugetsound.edu, 2020.

\bibitem{KanRe:Ulam}
V.~Kanovei and M.~Reeken.
\newblock On {U}lam's problem concerning the stability of approximate
  homomorphisms.
\newblock {\em Tr. Mat. Inst. Steklova}, 231:249--283, 2000.

\bibitem{KanRe:New}
V.~Kanovei and M.~Reeken.
\newblock New {R}adon--{N}ikodym ideals.
\newblock {\em Mathematika}, 47:219--227, 2002.

\bibitem{kassel2008braid}
C.~Kassel and V.~Turaev.
\newblock {\em Braid groups}, volume 247.
\newblock Springer Science \& Business Media, 2008.

\bibitem{kruckman2021stack}
A.~Kruckman.
\newblock Answer to ``{I}dentifying the finite symmetric groups''.
\newblock
  https://math.stackexchange.com/questions/4098806/identifying-the-finite-symmetric-groups,
  2021.

\bibitem{Ku:Set}
K.~Kunen.
\newblock {\em Set theory}, volume~34 of {\em Studies in Logic (London)}.
\newblock College Publications, London, 2011.

\bibitem{kurosh1956theory}
A.~G. Kurosh.
\newblock {\em The theory of groups. {V}ol. {II}}.
\newblock Chelsea Publishing Co., New York, 1956.

\bibitem{kvaschuk2005algebraic}
A.~Kvaschuk, A.~Myasnikov, and V.~Remeslennikov.
\newblock Algebraic geometry over groups. {III}: {Elements} of model theory.
\newblock {\em J. Algebra}, 288(1):78--98, 2005.

\bibitem{liebeck2011commutators}
M.W. Liebeck, E.A. O'Brien, A.~Shalev, and P.H. Tiep.
\newblock Commutators in finite quasisimple groups.
\newblock {\em Bulletin of the London Mathematical Society}, 43(6):1079--1092,
  2011.

\bibitem{magnus2004combinatorial}
W.~Magnus, A.~Karrass, and D.~Solitar.
\newblock {\em Combinatorial group theory: {P}resentations of groups in terms
  of generators and relations}.
\newblock Courier Corporation, 2004.

\bibitem{mckenney2018ulam}
P.~McKenney and A.~Vignati.
\newblock Ulam stability for some classes of \cstar-algebras.
\newblock {\em Proceedings of the Royal Society of Edinburgh Section A:
  Mathematics}, pages 1--15, 2018.

\bibitem{mckenney2020forcing}
P.~McKenney and A.~Vignati.
\newblock Forcing axioms and coronas of \cstar-algebras.
\newblock {\em J. Math. Logic}, 21(02):2150006, 2021.

\bibitem{MvA18}
A.~Medvedev and A.~Van~Abel.
\newblock Variations on the feferman-vaught theorem, with applications to
  $\prod_{p} \mathbb{F}_{p}$.
\newblock {\em Archive for Mathematical Logic}, pages 1--18, 2024.

\bibitem{minasyan2019acylindrically}
A.~Minasyan and D.~Osin.
\newblock Acylindrical hyperbolicity of groups acting on trees.
\newblock {\em Math. Ann.}, 362:895--900, 2019.

\bibitem{moore2010proper}
Justin~Tatch Moore.
\newblock The proper forcing axiom.
\newblock In {\em Proceedings of the International Congress of Mathematicians
  2010 (ICM 2010) (In 4 Volumes) Vol. I: Plenary Lectures and Ceremonies Vols.
  II--IV: Invited Lectures}, pages 3--29. World Scientific, 2010.

\bibitem{muranov2007finitely}
A.~Muranov.
\newblock Finitely generated infinite simple groups of infinite commutator
  width.
\newblock {\em International Journal of Algebra and Computation},
  17(03):607--659, 2007.

\bibitem{nikolov2004commutator}
N.~Nikolov.
\newblock On the commutator width of perfect groups.
\newblock {\em Bull. London Math. Soc.}, 36(1):30--36, 2004.

\bibitem{Oma91}
A.I. Omarov.
\newblock Filtering formulas.
\newblock {\em Algebra and Logic}, 30:693--704, 1991.

\bibitem{omarov1991syntactical}
A.I. Omarov.
\newblock A syntactical description of filtering formulas.
\newblock In {\em Doklady Akademii Nauk}, volume 319(1), pages 58--60. Russian
  Academy of Sciences, 1991.

\bibitem{osin2016acylindrically}
D.~Osin.
\newblock Acylindrically hyperbolic groups.
\newblock {\em Trans. Am. Math. Soc.}, 368(2):851--888, 2016.

\bibitem{palmgren}
E.~Palmgren.
\newblock A direct proof that certain reduced products are countably saturated.
\newblock {\em {U.U.D.M.} Report}, 27:1--7, 1994.

\bibitem{palyutin1}
E.~A. Palyutin.
\newblock Categorical {Horn} classes {I}.
\newblock {\em Algebra Logika}, 19:582--614, 1980.

\bibitem{palyutin2}
E.~A. Palyutin.
\newblock Categorical {Horn} classes {II}.
\newblock {\em Algebra Logika}, 49:782--802, 2010.

\bibitem{paris2004artin}
L.~Paris.
\newblock Artin groups of spherical type up to isomorphism.
\newblock {\em Journal of Algebra}, 281(2):666--678, 2004.

\bibitem{PhWe:Calkin}
N.C. Phillips and N.~Weaver.
\newblock The {C}alkin algebra has outer automorphisms.
\newblock {\em Duke Math. Journal}, 139:185--202, 2007.

\bibitem{platek1969eliminating}
R.A. Platek.
\newblock Eliminating the continuum hypothesis.
\newblock {\em The J. Symb. Logic}, 34(2):219--225, 1969.

\bibitem{poizat2000course}
Bruno Poizat.
\newblock {\em A Course in Model Theory: An Introduction to Contemporary
  Mathematical Logic}.
\newblock Universitext. Springer-Verlag, New York, 2000.
\newblock Translated from the French by Moses Klein.

\bibitem{Rid17}
S.~Rideau-Kikuchi.
\newblock Some properties of analytic difference valued fields.
\newblock {\em Journal of the Institute of Mathematics of Jussieu},
  16(3):447--499, 2017.

\bibitem{Sh:Proper}
S.~Shelah.
\newblock {\em Proper Forcing}.
\newblock Lecture Notes in Mathematics 940. Springer, 1982.

\bibitem{shelah1992vive}
S.~Shelah.
\newblock Vive la diff{\'e}rence {I}: {N}onisomorphism of ultrapowers of
  countable models.
\newblock In H.~Judah, W.~Just, and W.H. Woodin, editors, {\em Set theory of
  the continuum}, volume~26 of {\em MSRI Publications}, pages 357--405.
  Springer, 1992.

\bibitem{ShSte:PFA}
S.~Shelah and J.~Stepr{\=a}ns.
\newblock {PFA} implies all automorphisms are trivial.
\newblock {\em Proceedings of the American Mathematical Society},
  104:1220--1225, 1988.

\bibitem{shinko2025recognzing}
F.~Shinko.
\newblock Recognizing coordinates: (counter)examples.
\newblock {\em preprint}, 2025.

\bibitem{Ste:Autohomeomorphism}
J.~Stepr{\=a}ns.
\newblock The autohomeomorphism group of the \v {C}ech-{S}tone compactification
  of the integers.
\newblock {\em Trans. Amer. Math. Soc.}, 355(10):4223--4240, 2003.

\bibitem{thompson1961commutators}
R.C. Thompson.
\newblock Commutators in the special and general linear groups.
\newblock {\em Trans. Amer. Math. Soc.}, 101(1):16--33, 1961.

\bibitem{Ul:Problems}
S.M. Ulam.
\newblock {\em Problems in modern mathematics}.
\newblock John Wiley \& Sons, 1964.

\bibitem{varghese2020on}
Olga Varghese.
\newblock On number of ends of graph products of groups.
\newblock {\em Commun. Algebra}, 48(6):2418--2427, 2020.

\bibitem{Ve:OCA}
B.~Veli{\v{c}}kovi{\'c}.
\newblock {OCA} and automorphisms of {${\mathcal P}(\omega) /\Fin$}.
\newblock {\em Top. Appl.}, 49:1--13, 1993.

\bibitem{vignati2018rigidity}
A.~Vignati.
\newblock Rigidity conjectures for continuous quotients.
\newblock {\em Ann. Sci. \'{E}c. Norm. Sup\'{e}r. (4)}, 55(6):1687--1738, 2022.

\bibitem{vignati2022rigidity}
A.~Vignati.
\newblock Rigidity conjectures for continuous quotients.
\newblock In {\em Ann. Sci. Ec. Norm. Super.}, volume~55, pages 1687--1738,
  2022.

\bibitem{vignati2025rigidity}
A.~Vignati.
\newblock Rigidity of {H}igson coronas.
\newblock {\em arXiv preprint arXiv:2502.10073}, 2025.

\bibitem{white2023abstract}
S.~White.
\newblock Abstract classification theorems for amenable \cstar-algebras.
\newblock In {\em I{CM}---{I}nternational {C}ongress of {M}athematicians.
  {V}ol. 4. {S}ections 5--8}, pages 3314--3338. EMS Press, Berlin, 2023.

\bibitem{ycormathflow}
Ycor.
\newblock Answer to ``general linear group as a direct product?''.
\newblock
  https://mathoverflow.net/questions/222249/general-linear-group-as-a-direct-product,
  2015.

\end{thebibliography}
	
\end{document}